\def\myarabic#1{\normalfont(\roman{#1})}
\newlist{theoremlist}{enumerate}{1}
\setlist[theoremlist]{label=\myarabic{theoremlisti},ref={\myarabic{theoremlisti}},itemindent=0pt,labelindent=0pt,
  leftmargin=*,noitemsep}
\renewcommand{\p@theoremlisti}{\perh@ps{\thetheorem}}
\protected\def\perh@ps#1#2{\textup{#1#2}}
\newcommand{\itemrefperh@ps}[2]{\textup{#2}}
\newcommand{\itemref}[1]{\begingroup\let\perh@ps\itemrefperh@ps\ref{#1}\endgroup}
\newtheorem{theorem}{Theorem}[section]
\newtheorem{lemma}[theorem]{Lemma}
\newtheorem{proposition}[theorem]{Proposition}
\newtheorem{corollary}[theorem]{Corollary}
\theoremstyle{definition}
\newtheorem{notation}[theorem]{Notation}
\theoremstyle{definition}
\newtheorem{remark}[theorem]{Remark}
\theoremstyle{definition}
\newtheorem{definition}[theorem]{Definition}
\newtheorem{conjecture}[theorem]{Conjecture}
\newtheorem{question}[theorem]{Question}
\theoremstyle{definition}
\newtheorem{problem}[theorem]{Problem}
\theoremstyle{definition}
\newtheorem{example}[theorem]{Example}
\crefname{figure}{Figure}{Figures}
\def\figref#1(#2){Figure~\hyperref[#1]{\ref*{#1}(#2)}}
\def\Acal{\mathcal{A}}\def\Bcal{\mathcal{B}}\def\Fcal{\mathcal{F}}\def\Ical{\mathcal{I}}\def\Mcal{\mathcal{M}}\def\Xcal{\mathcal{X}}
\def\C{{\mathbb{C}}}
\def\R{{\mathbb{R}}}
\def\N{{\mathbb{N}}}
\def\Z{{\mathbb{Z}}}
\newcommand\parr[1]{{({#1})}}
\def\<{{\langle}}
\def\>{{\rangle}}
\def\eps{{\epsilon}}
\def\id{\operatorname{id}}
\def\Span{ \operatorname{Span}}
\def\wt{\operatorname{wt}}
\def\xing{{\operatorname{xing}}}
\def\supp{\operatorname{supp}}
\def\Cast{\C^\ast}
\def\xrasim{\xrightarrow{\sim}}
\def\GL{\operatorname{GL}}
\def\Gr{\operatorname{Gr}}
\def\Grtnn{\Gr_{\ge 0}}
\def\Grtp{\Gr_{>0}}
\def\alt{\operatorname{alt}}
\def\Pio{\Pi^\circ}
\def\Povtp_#1{\Pi_{#1}^{>0}}
\def\Povtnn_#1{\Pi_{#1}^{\geq0}}
\def\BND{\Bcal}
\def\Bound{\BND}%
\def\Boundkn{\BND(k,n)}
\newcounter{todocnt} %
\newcounter{todoex} %
\newcounter{todofigure}
\numberwithin{equation}{section}
\def\tikzscl{1}
\def\M{M}
\def\Reg{R}
\def\curve{\bm{\gamma}}
\def\bth{{\bm{\theta}}}
\def\btht{{\tilde{\bm{\theta}}}}
\def\g{\gamma}
\def\J{J}
\def\Tiling{{\mathbb{T}}}
\crefname{figure}{Figure}{Figures}
\def\Jt{{\tilde\J}}
\def\th{\theta}
\def\tht{{\tilde\theta}}
\def\eps{\varepsilon}
\def\ggg{\Gamma}
\def\GGG{{\bm{\Gamma}}}
\def\gggt{\tilde\ggg}
\def\gg{\gamma^\C}
\def\curvec{\curve^\C}
\def\curvecd{{\widehat{\curve}}^\C}
\def\ggd{\widehat{\gamma}^\C}
\def\gt{\tilde\g}
\def\ggt{{\tilde\gamma}^\C}
\def\ggp_#1{\gamma^{\C}_{f',#1}}
\def\u#1{{u_\ft^\parr{#1}}}
\def\Ising{{\operatorname{Ising}}}
\def\elec{{\operatorname{elec}}}
\def\La{\Lambda_\elec}
\def\la{\Lambda}
\def\M{M_{\Ising}}
\def\match{\Acal}
\def\pmatch{I_\Acal}
\def\RowSpan{\operatorname{RowSpan}}
\def\Ptp_#1{\Pi^{>0}_{#1}}
\def\Ptnn_#1{\Pi^{\geq0}_{#1}}
\def\fb{{\bar f}}
\def\t{u}
\def\Measop{\operatorname{Meas}}
\def\Meas(#1,#2){\Measop_{#1}(#2)}
\def\Measp(#1,#2){\Measop'_{#1}(#2)}
\def\ft{f}
\def\Bkn{\Boundkn}
\def\m#1{b_{#1}^{-}}
\def\p#1{b_{#1}^{+}}
\def\md#1{d_{#1}^{-}}
\def\pd#1{d_{#1}^{+}}
\def\Crit{\operatorname{Crit}}
\def\Ctp{\Crit^{>0}}
\def\Ctpkn{\Crit^{>0}_{k,n}}
\newcommand{\raisemath}[1]{\mathpalette{\raisem@th{#1}}}
\newcommand{\raisem@th}[3]{\raisebox{#1}{$#2#3$}}
\def\Ctpd{\Critd^{\raisemath{-3.5pt}{>0}}}
\def\J{J}
\def\eps{\epsilon}
\def\Ctnn{\Crit^{\geq0}}
\def\N{N}
\def\fel{f^\elec_\Reg}
\def\fis{f^\Ising_\Reg}
\def\felt{f^\elec_\tau}
\def\fist{f^\Ising_\tau}
\def\feltr{f^\elec_{\taur}}
\def\fistr{f^\Ising_{\taur}}
\def\permel{\bar{f}^\elec_\Reg}
\def\permis{\bar{f}^\Ising_\Reg}
\def\pel{\phi^\elec}
\def\pis{\phi^\Ising}
\def\br[#1]{[\![#1]\!]}
\def\brx[#1]{(#1)}
\def\dual#1{\widehat{#1}}
\def\wtd{\dual\wt}
\def\fd{\dual f}
\def\altp{\alt^\perp}
\def\xrasim{\xrightarrow{\sim}}
\def\bt{{\mathbf{t}}}
\def\t{t}
\def\Tspace{\Theta^\circ}
\def\CCrit{\Crit}
\def\Cio{\CCrit^{\circ}}
\def\ft{{f,\bt}}
\def\v{v}
\def\Mis{\operatorname{Mis}}
\def\Misud{\Mis^\updown}
\def\Misdu{\Mis^\downup}
\def\MisDdu{\MisD^\downup}
\newcommand{\cev}[1]{\reflectbox{\ensuremath{\vec{\reflectbox{\ensuremath{#1}}}}}}
\def\H#1#2{\br[t_{#1},t_{#2}]}
\def\HT#1{\br[\t,t_{#1}]}
\def\Hun#1#2{\br[t_{#1},t_{#2}]_+}
\def\Critkn{\Crit_{k,n}}
\def\CioknR{\Cio_{k,n}(\R)}
\def\CioR_#1{\Cio_{#1}(\R)}
\def\TspaceR{\Theta^{\R}}
\def\TspaceRkn{\Theta^{\R}_{k,n}}
\def\fkn{{f_{k,n}}}
\def\GX{G^{\times}}
\def\GXV{\vec G^{\times}}
\def\conn_#1{c_{#1}}
\def\dimf{d_f}
\def\D_#1{D_{#1}}
\def\THtp{\Theta^{>0}}
\def\THtpd{\dual{\Theta}^{>0}}
\def\Hyp_#1{\Delta_{#1}}
\def\Cyct_#1{\overline{\operatorname{Cyc}}_{#1}}
\def\Cl(#1){#1^\boxtimes}
\let\ge\geqslant
\let\geq\geqslant
\let\leq\leqslant
\def\Meascl(#1,#2){{\overline{\Measop}}_{#1}(#2)}
\def\dsh#1{#1^\downarrow}
\def\Rtp{\R_{>0}}
\def\Gau{\operatorname{Gauge}}
\def\bGau{\Gau^{\hspace{0.01in}\raisebox{-2pt}{\tribl}}}
\def\wGau{\Gau^{\hspace{0.01in}\raisebox{-2pt}{\triwh}}}
\def\Pmid_#1{\Pi^{>0}_{#1,\dsh{#1}}}
\def\Measf{\Measop_f}
\def\UW{\operatorname{UW}}
\def\aperm{an $f$}
\def\perm{\fb}
\def\tt{\tilde{t}}
\def\btt{\tilde{\bt}}
\def\bthr{\bth^{\operatorname{reg}}}
\def\paragraph#1{\subsubsection*{#1}}
\def\pkn{{\perm_{k,n}}}
\def\pf{f}
\def\Xis{\Xcal^\Ising}
\def\Xel{\Xcal^\elec}
\def\shift{S}
\def\Gd{\widehat{G}}
\def\shcomb{\sigma}
\def\shf{\shcomb^{-1} f\shcomb}
\def\shbtht{\btht\circ \shcomb}
\def\str{S}
\def\cpt{x}
\def\Gtop{\vec{G}^{\operatorname{top}}}
\def\cyc{C}
\def\Conc{C}
\def\pme{\varepsilon}
\def\npm{[n]_{\pme}}
\def\Cpm{\Conc_\pme}
\def\Apm{A_\pme}
\def\Bpm{B_\pme}
\def\It{\tilde I}
\def\ctau{\cev\tau}
\def\updown{{\uparrow\!\downarrow}}
\def\downup{{\downarrow\!\uparrow}}
\def\Mleft{\overleftarrow{M}}
\def\ncrs{x}
\def\nc{y}
\def\common{R}
\def\bZ{\mathbf{Z}}
\def\Tspaced{\widehat\Theta^\circ}
\def\oddop{\operatorname{odd}}
\def\evop{\operatorname{even}}
\def\nodd{[n]_{\oddop}}
\def\nev{[n]_{\evop}}
\def\taur{\tau_\Reg}
\def\b{B}
\def\Avec{D_\Acal}
\def\Gel{G^{\elec}}
\def\Gis{G^{\Ising}}
\def\wtel{\wt^{\elec}}
\def\wtis{\wt^{\Ising}}
\def\XO#1#2{X_0^\parr{#1,#2}}
\def\XOkn{\XO kn}
\def\Mb{(M2)$^\tribl$\xspace}
\def\Mw{(M2)$^\triwh$\xspace}
\def\Maw{(M1)$^\triwh$\xspace}
\def\Mcb{(M3)$^\tribl$\xspace}
\def\Gall{\mathcal{G}}
\def\Gred{\Gall_{\operatorname{red}}}
\newcommand*\tribl{{\includegraphics[height=.6em]{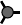}}}
\newcommand*\triwh{{\includegraphics[height=.6em]{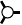}}}
\def\WTV{\Gred^\triwh}
\def\BTV{\Gred^\tribl}
\def\Gkn{G_{k,n}}
\def\Gknd{G_{k-1,n}}
\def\Vwh{V_\circ}
\def\Mscl{1}
\def\Mbsmall{\scalebox{\Mscl}{\Mb}}
\def\Mcbsmall{\scalebox{\Mscl}{\Mcb}}
\def\Mwsmall{\scalebox{\Mscl}{\Mw}}
\def\Mawsmall{\scalebox{\Mscl}{\Maw}}
\def\crat(#1,#2;#3,#4){(\v_{#1},\v_{#2};\v_{#3},\v_{#4})}
\def\Pet{\operatorname{Pet}}
\def\parag#1{\subsection{#1}}
\def\green{green!80!black}
\def\BT{\mathbf{T}}
\def\GH_#1{G^{\Mcal}_{#1}}
\def\Measdop{\widehat{\Measop}}
\def\Measd(#1,#2){\Measdop_{#1}(#2)}
\def\Ciod{\widehat{\Crit}^{\raisemath{-3.5pt}{\circ}}}
\def\Ctpd{\widehat{\Crit}^{\raisemath{-3.5pt}{>0}}}
\def\MisD{\widehat{\Mis}}
\begin{document}
\numberwithin{equation}{section}

\title{Critical varieties in the Grassmannian}
\author{Pavel Galashin}
\address{Department of Mathematics, University of California, Los Angeles, CA 90095, USA}
\email{{\href{mailto:galashin@math.ucla.edu}{galashin@math.ucla.edu}}}
\thanks{P.G.\ was supported by an Alfred P. Sloan Research Fellowship and by the National Science Foundation under Grants No.~DMS-1954121 and No.~DMS-2046915.}
\date{\today}

\subjclass[2020]{
  Primary:
  14M15. %
  Secondary:
  15B48, %
  82B27, %
  05E99. %
}

\keywords{Critical varieties, totally nonnegative Grassmannian, positroids, critical dimer model, Ising model, electrical networks, zonotopal tilings}

\begin{abstract}
We introduce a family of spaces called \emph{critical varieties}. Each critical variety is a subset of one of the positroid varieties in the Grassmannian. The combinatorics of positroid varieties is captured by the dimer model on a planar bipartite graph $G$, and the critical variety is obtained by restricting to Kenyon's critical dimer model associated to a family of isoradial embeddings of $G$. This model is invariant under square/spider moves on $G$, and we give an explicit boundary measurement formula for critical varieties which does not depend on the choice of $G$. This extends our recent results for the critical Ising model, and simultaneously also includes the case of critical electrical networks.

We systematically develop the basic properties of critical varieties. In particular, we study their real and totally positive parts, the combinatorics of the associated strand diagrams, and introduce a shift map motivated by the connection to zonotopal tilings and scattering amplitudes.
\end{abstract}

\maketitle

\hypersetup{bookmarksdepth=2}
\setcounter{tocdepth}{1}
\tableofcontents

\section*{Introduction}
The \emph{totally nonnegative Grassmannian} $\Grtnn(k,n)$ is a remarkable space introduced by Lusztig~\cite{Lus2,LusIntro} and Postnikov~\cite{Pos}, who described a stratification of $\Grtnn(k,n)$ into \emph{positroid cells}. Building on Postnikov's work, Knutson--Lam--Speyer~\cite{KLS} studied \emph{positroid varieties} which are Zariski closures of positroid cells. These objects have been studied extensively in the recent years, making surprising appearances in such fields as 
 the physics of scattering amplitudes~\cite{AHT,abcgpt}, 
 knot theory~\cite{FPST,STWZ,GL2}, and
 statistical mechanics~\cite{CoWi,Lam,GP}. In fact, this paper is directly influenced by ideas from statistical mechanics: our goal is to apply them to introduce \emph{critical parts} of positroid varieties and study their properties from the point of view of total positivity.

Postnikov gave a parametrization of each positroid cell using a weighted planar bipartite graph $G$ in a disk. His construction was later recast in~\cite{Talaska,PSW} in terms of the dimer model on $G$. Allowing arbitrary positive real edge weights of $G$ parametrizes the whole positroid cell. To obtain a \emph{critical cell} (which is the ``totally positive part'' of the corresponding critical variety), one restricts to the \emph{critical dimer model} on $G$, introduced by Kenyon~\cite{Kenyon}. Special cases of the critical dimer model yield 
 Baxter's critical $Z$-invariant Ising model~\cite{Bax,Bax2} and critical electrical resistor networks as discussed e.g. in~\cite[Section~6]{Kenyon}. Our construction is compatible with the recently discovered embeddings of the planar Ising model~\cite{GP} and electrical networks~\cite{Lam} into $\Grtnn(k,n)$.

Strictly speaking, Kenyon's critical dimer model is attached not just to a planar bipartite graph $G$, but to an isoradial embedding~\cite{Mercat} of $G$. An embedding is called \emph{isoradial} if every interior face of $G$ is inscribed in a circle of radius $1$. The main observation that led to our below results was that the graphs appearing in Postnikov's parametrizations of positroid varieties admit natural isoradial embeddings known as \emph{plabic tilings}, introduced by Oh--Postnikov--Speyer~\cite{OPS}. While the critical dimer model and its connections to the Ising model and electrical networks are well known, the specialization to plabic tilings and the totally nonnegative Grassmannian appears to not have been studied before.

A given positroid cell can be parametrized by many different planar bipartite graphs, all of which are related by  \emph{square moves} (also known as spider moves or urban renewals)~\cite{KPW,Pos}. A crucial feature of the critical dimer model is that its \emph{boundary measurements} are unchanged under square moves. An important consequence for our purposes is that \emph{the critical variety depends only on the ambient positroid variety}, and not on a particular choice of the planar bipartite graph $G$. In fact, we give a simple explicit formula for the boundary measurement map that does not depend on the choice of $G$, generalizing our previous results~\cite{ising_crit} for the critical Ising model. 

We initiate a systematic study of critical varieties, which aims to be parallel to the well-developed theory of positroid varieties. We prove many results in different directions, some of which are highlighted below.

\section{Main results}\label{sec:main-results}
We explain our results and constructions, roughly following the order in which they appear in the main body of the paper.

\subsection{Planar bipartite graphs}\label{sec:intro:plan-bipart-graphs}
We start by giving a brief background on the totally nonnegative Grassmannian. See \cref{sec:background} for further details.

Let $G$ be a planar bipartite graph embedded in a disk. We assume that $G$ has $n$ black boundary vertices, each of degree $1$, labeled $b_1,b_2,\dots,b_n$ in clockwise order. A \emph{strand} (or a \emph{zig-zag path}) in $G$ is a path that makes a sharp right turn at each black vertex and a sharp left turn at each white vertex. Thus $G$ gives rise to a \emph{strand permutation} $\perm_G\in S_n$: for each $1\leq p\leq n$, the strand that starts at $b_p$ terminates at $b_{\perm_G(p)}$. See \figref{fig:plabic}(b). We say that $G$ is \emph{reduced}~\cite{Pos} if it has the minimal number of faces among all graphs with the same strand permutation. It is known that a reduced graph contains no closed strands, thus each strand starts and ends at the boundary of $G$. When $p$ is a fixed point of $\perm_G$ (i.e., $\perm_G(p)=p$), we assume that $b_p$ is incident to an interior white vertex of degree $1$.

\begin{remark}\label{rmk:intro:S_n_loopless}
There is a bijection $\perm\mapsto f$ between permutations $\perm\in S_n$ and \emph{loopless bounded affine permutations} $f$ defined in \cref{sec:BAP}. For a permutation $\perm\in S_n$, the map $f:\Z\to\Z$ is uniquely determined by the conditions $f(p+n)=f(p)+n$ and $p<f(p)\leq p+n$ for all $p\in\Z$ together with $f(p)\equiv\perm(p)\pmod n$ for all $1\leq p\leq n$.  We will use the permutation $\perm$ to construct a critical variety, but we will label it by $f$ in order to match the labeling of positroid varieties.
\end{remark}

Given a reduced planar bipartite graph $G$, one can consider the dimer model on it. Let us assign a positive real weight $\wt(e)$ to each edge $e$ of $G$. An \emph{almost perfect matching} $\match$ of $G$ is a collection of edges of $G$ that uses each interior vertex exactly once, and uses some subset of boundary vertices. We denote by $\{b_p\}_{p\in\pmatch}$ for $\pmatch\subset[n]:=\{1,2,\dots,n\}$ the set of boundary vertices used by $\match$. It is easy to check that there exists an integer $1\leq k\leq n$ such that any almost perfect matching of $G$ satisfies $|\pmatch|=k$. The number $k$ depends only on $\pf_G$.

\begin{figure}

\def\wh#1(#2){\node[draw,circle,scale=0.4,fill=white] (#1) at (#2) {};}
\def\bl#1(#2){\node[draw,circle,scale=0.4,fill=black!40] (#1) at (#2) {};}
\def\blbnd#1(#2){\node[draw,circle,scale=0.25,fill=black!60] (#1) at (#2) {};}
\def\rad{3}
\def\bnd#1#2{
\blbnd{B#2}(#1:\rad)
\node[anchor=180+#1,scale=0.8] (T#2) at (#1:\rad) {$b_{#2}$};
}
\def\lw{1pt}
\def\edgecl{black}
\def\edg#1#2{\draw[line width=\lw,\edgecl] (#1)--(#2);}
\def\RAD{2.4}
\def\wei#1#2(#3)#4{
\node[scale=0.5,blue,anchor=#4,inner sep=2pt] (ZZZ) at (#3) {$\brx[#1#2]$};
}
\def\dang{10}
\def\strandlw{1pt}
\def\tikzscl{0.5}
\scalebox{0.97}{
\makebox[1.0\textwidth]{
\begin{tabular}{cccc}

\begin{tikzpicture}[scale=\tikzscl,baseline=(ZUZU.base)]
\coordinate(ZUZU) at (0,0);
\draw[line width=0.5,black!20,dashed] (0,0) circle (\rad);
\wh A(-1,1)
\bl B(1,1)
\wh C(1,-1)
\bl D(-1,-1)
\bnd{135}{1}
\bnd{45}{2}
\bnd{-45}{3}
\bnd{-135}{4}
\draw[line width=\lw,\edgecl] (B1)--(A)--(B)--(C)--(D)--(A);
\draw[line width=\lw,\edgecl] (B)--(B2);
\draw[line width=\lw,\edgecl] (C)--(B3);
\draw[line width=\lw,\edgecl] (D)--(B4);

\wh X(45:\RAD)
\wh Y(-135:\RAD)
\end{tikzpicture}

 & 
\def\lw{0.5pt}
\def\edgecl{black!50}
\begin{tikzpicture}[scale=\tikzscl,baseline=(ZUZU.base)]
\coordinate(ZUZU) at (0,0);
\draw[line width=0.5,black!20,dashed] (0,0) circle (\rad);
\wh A(-1,1)
\bl B(1,1)
\wh C(1,-1)
\bl D(-1,-1)
\bnd{135}{1}
\bnd{45}{2}
\bnd{-45}{3}
\bnd{-135}{4}
\draw[line width=\lw,\edgecl] (B1)--(A)--(B)--(C)--(D)--(A);
\draw[line width=\lw,\edgecl] (B)--(B2);
\draw[line width=\lw,\edgecl] (C)--(B3);
\draw[line width=\lw,\edgecl] (D)--(B4);

\draw [blue,->,line width=\strandlw] plot [smooth] coordinates {(135-\dang:\rad) (-0.6,1.2) (0.7,0.7) (1.2,-0.6) (-45+\dang:\rad)};
\draw [red,->,line width=\strandlw] plot [smooth] coordinates {(-135-\dang:\rad) (-135-\dang:\RAD) (-0.6,-1.2) (0.7,-0.7) (1.2,0.6) (45+\dang:\RAD) (45+\dang:\rad) };

\begin{scope}[xscale=-1,yscale=-1]
\draw [brown,->,line width=\strandlw] plot [smooth] coordinates {(135-\dang:\rad) (-0.6,1.2) (0.7,0.7) (1.2,-0.6) (-45+\dang:\rad)};
\draw [\green,->,line width=\strandlw] plot [smooth] coordinates {(-135-\dang:\rad) (-135-\dang:\RAD) (-0.6,-1.2) (0.7,-0.7) (1.2,0.6) (45+\dang:\RAD) (45+\dang:\rad) };
\end{scope}

\wh X(45:\RAD)
\wh Y(-135:\RAD)
\end{tikzpicture}

 &

\begin{tikzpicture}[scale=\tikzscl,baseline=(ZUZU.base)]
\coordinate(ZUZU) at (0,0);
\draw[line width=0.5,black!20,dashed] (0,0) circle (\rad);
\wh A(-1,1)
\bl B(1,1)
\wh C(1,-1)
\bl D(-1,-1)
\bnd{135}{1}
\bnd{45}{2}
\bnd{-45}{3}
\bnd{-135}{4}
\draw[line width=\lw,\edgecl] (B1)--(A)--(B)--(C)--(D)--(A);
\draw[line width=\lw,\edgecl] (B)--(B2);
\draw[line width=\lw,\edgecl] (C)--(B3);
\draw[line width=\lw,\edgecl] (D)--(B4);
\wh X(45:\RAD)
\wh Y(-135:\RAD)
\wei 24(-1.35,-1.35){north west}
\wei 12(0,-1){south}
\wei 14(-1,0){east}
\wei 23(1,0){west}
\wei 34(0,1){north}
\wei 24(1.35,1.35){south east}

\end{tikzpicture}
&

\scalebox{0.8}{
\def\rbnd#1(#2){\node[draw=red,circle,scale=0.25,fill=red!60] (#1) at (#2) {};}
\def\bnr#1#2{
\rbnd{B#2}(#1:\rad)
\node[anchor=180+#1,scale=0.8,red] (TT#2) at (#1:\rad) {$#2$};
}
\begin{tikzpicture}[baseline=(A.base)]
\node(A) at (0,0){
\begin{tabular}{l}
$\Delta_{12}=\brx[23]\cdot \brx[24]$ \\
$\Delta_{23}=\brx[34]\cdot \brx[24]$ \\
$\Delta_{34}=\brx[14]\cdot \brx[24]$ \\
$\Delta_{14}=\brx[12]\cdot \brx[24]$ \\
$\Delta_{13}=\brx[24]\cdot \brx[24]$ \\
$\Delta_{24}=\brx[14]\cdot \brx[23]+\brx[12]\cdot \brx[34]\textcolor{red}{=}\brx[13]\cdot \brx[24]$ \\
\end{tabular}
};
\begin{scope}[shift={(1.5,0.7)}]
\def\rad{0.7}
\draw[red,dashed] (0,0) circle (\rad);
\bnr{0}1
\bnr{50}2
\bnr{100}3
\bnr{210}4
\draw[red](B1)--(B2)--(B3)--(B4)--(B1)--(B3);
\draw[red](B2)--(B4);
\end{scope}
\node[red,inner sep=1pt] (B) at (1.75,-0.4) {by Ptolemy's theorem};
\draw[red,line width=0.5pt,->] (B.south) --(1.75,-1.1);
\end{tikzpicture}

}
\\
(a) & (b) & (c) & (d)
\end{tabular}
}
}

  \caption{\label{fig:plabic} (a) A (reduced) planar bipartite graph $G$; (b) strands in $G$; (c) edge weights $\wt_\bth$, where the unmarked edges have weight $1$ and we abbreviate $\brx[pq]:=\sin(\th_q-\th_p)$; (d) the boundary measurements~${\Delta_I(G,\wt_\bth)}$.}
\end{figure}

 Denote the set of $k$-element subsets of $[n]$ by ${[n]\choose k}$, and for $I\in{[n]\choose k}$, define
\begin{equation}\label{eq:intro:Delta_I}
 \Delta_I(G,\wt):=\sum_{\match:\;\pmatch=I} \wt(\match),\quad\text{where}\quad \wt(\match):=\prod_{e\in\match} \wt(e).
\end{equation}
Here the summation runs over almost perfect matchings of $G$. We consider the tuple $(\Delta_I(G,\wt))_{I\in{[n]\choose k}}$ to be defined up to multiplication by a common scalar. The \emph{boundary measurements} $\Meas(G,\wt):=(\Delta_I(G,\wt))_{I\in{[n]\choose k}}$ give rise to a point in the \emph{totally nonnegative Grassmannian} $\Grtnn(k,n)$. The \emph{Grassmannian} $\Gr(k,n)$ is the set of all linear $k$-dimensional subspaces of $\C^n$. Each such subspace $V$ is the row span of a full rank $k\times n$ matrix $A$, and the \emph{Pl\"ucker coordinates} of $V$ are by definition the maximal minors of $A$. Pl\"ucker coordinates are defined up to multiplication by a common nonzero scalar, and $\Grtnn(k,n)$ is the subset of $\Gr(k,n)$ where the ratio of any two nonzero Pl\"ucker coordinates is a positive real number. 
We have \emph{positroid stratifications}~\cite{Pos,KLS}
\begin{equation*}%
  \Grtnn(k,n)=\bigsqcup_{f\in\Bkn} \Ptp_f \quad\text{and}\quad \Gr(k,n)=\bigsqcup_{f\in\Bkn} \Pio_f,
\end{equation*}
where $\Bkn$ is the set of \emph{$(k,n)$-bounded affine permutations}; see \cref{dfn:BAP}. The image of the map $\Measop_G$ is precisely the positroid cell $\Ptp_{f_G}$. The positroid stratification contains a unique open dense cell (called the \emph{top cell}) labeled by $\fkn\in\Bkn$. The map $\fkn:\Z\to\Z$ sends $p\mapsto p+k$ for all $p\in \Z$, and the permutation $\pkn\in S_n$ sends $p\mapsto p+k$ modulo $n$ for all $p\in[n]$. An example for $k=2$, $n=4$ is shown in \figref{fig:plabic}(b).

\emph{Square moves} are certain transformations of $(G,\wt)$ which preserve the boundary measurements; see \figref{fig:move}(a) for an example.  
 The weights of the edges are changed appropriately (\cref{fig:sq-mv}), and the resulting weighted graph $(G',\wt')$ satisfies $\Meas(G,\wt)=\Meas(G',\wt')$ and $\pf_G=\pf_{G'}$. Conversely, any two reduced planar bipartite graphs $G$ and $G'$ satisfying $\pf_G=\pf_{G'}$ can be related by a sequence of square moves.

\begin{figure}

\def\wh#1(#2){\node[draw,circle,scale=0.4,fill=white] (#1) at (#2) {};}
\def\bl#1(#2){\node[draw,circle,scale=0.4,fill=black!40] (#1) at (#2) {};}
\def\blbnd#1(#2){\node[draw,circle,scale=0.25,fill=black!60] (#1) at (#2) {};}
\def\rad{3}
\def\bnd#1#2{
\blbnd{B#2}(#1:\rad)
\node[anchor=180+#1,scale=0.8] (T#2) at (#1:\rad) {$b_{#2}$};
}
\def\lw{1pt}
\def\edgecl{black}
\def\edg#1#2{\draw[line width=\lw,\edgecl] (#1)--(#2);}
\def\RAD{2.4}
\def\wei#1#2(#3)#4{
\node[scale=0.5,blue,anchor=#4,inner sep=2pt] (ZZZ) at (#3) {$\brx[#1#2]$};
}
\def\dang{10}
\def\strandlw{1pt}
\def\tikzscl{0.5}
\begin{tabular}{c|ccc}

\begin{tabular}{ccc}

\begin{tikzpicture}[scale=\tikzscl,baseline=(ZUZU.base)]
\coordinate(ZUZU) at (0,0);
\draw[line width=0.5,black!20,dashed] (0,0) circle (\rad);
\wh A(-1,1)
\bl B(1,1)
\wh C(1,-1)
\bl D(-1,-1)
\bnd{135}{1}
\bnd{45}{2}
\bnd{-45}{3}
\bnd{-135}{4}
\draw[line width=\lw,\edgecl] (B1)--(A)--(B)--(C)--(D)--(A);
\draw[line width=\lw,\edgecl] (B)--(B2);
\draw[line width=\lw,\edgecl] (C)--(B3);
\draw[line width=\lw,\edgecl] (D)--(B4);
\wh X(45:\RAD)
\wh Y(-135:\RAD)
\wei 24(-1.35,-1.35){north west}
\wei 12(0,-1){south}
\wei 14(-1,0){east}
\wei 23(1,0){west}
\wei 34(0,1){north}
\wei 24(1.35,1.35){south east}

\end{tikzpicture}
&
                    \scalebox{1.5}{$\leftrightarrow$}
&

\begin{tikzpicture}[scale=\tikzscl,baseline=(ZUZU.base)]
\coordinate(ZUZU) at (0,0);
\draw[line width=0.5,black!20,dashed] (0,0) circle (\rad);
\bl A(-1,1)
\wh B(1,1)
\bl C(1,-1)
\wh D(-1,-1)
\bnd{135}{1}
\bnd{45}{2}
\bnd{-45}{3}
\bnd{-135}{4}
\draw[line width=\lw,\edgecl] (B1)--(A)--(B)--(C)--(D)--(A);
\draw[line width=\lw,\edgecl] (B)--(B2);
\draw[line width=\lw,\edgecl] (C)--(B3);
\draw[line width=\lw,\edgecl] (D)--(B4);
\wh X(135:\RAD)
\wh Y(-45:\RAD)
\wei 13(1.35,-1.35){north east}
\wei 13(-1.35,1.35){south west}
\wei 34(0,-1){south}
\wei 23(-1,0){east}
\wei 14(1,0){west}
\wei 12(0,1){north}
\end{tikzpicture}

\end{tabular}

&

\begin{tabular}{ccc}

\def\lw{0.5pt}
\def\edgecl{black!50}
\begin{tikzpicture}[scale=\tikzscl,baseline=(ZUZU.base)]
\coordinate(ZUZU) at (0,0);
\draw[line width=0.5,black!20,dashed] (0,0) circle (\rad);
\wh A(-1,1)
\bl B(1,1)
\wh C(1,-1)
\bl D(-1,-1)
\bnd{135}{1}
\bnd{45}{2}
\bnd{-45}{3}
\bnd{-135}{4}
\draw[line width=\lw,\edgecl] (B1)--(A)--(B)--(C)--(D)--(A);
\draw[line width=\lw,\edgecl] (B)--(B2);
\draw[line width=\lw,\edgecl] (C)--(B3);
\draw[line width=\lw,\edgecl] (D)--(B4);

\draw [blue,->,line width=\strandlw] plot [smooth] coordinates {(135-\dang:\rad) (-0.6,1.2) (0.7,0.7) (1.2,-0.6) (-45+\dang:\rad)};
\draw [red,->,line width=\strandlw] plot [smooth] coordinates {(-135-\dang:\rad) (-135-\dang:\RAD) (-0.6,-1.2) (0.7,-0.7) (1.2,0.6) (45+\dang:\RAD) (45+\dang:\rad) };

\begin{scope}[xscale=-1,yscale=-1]
\draw [brown,->,line width=\strandlw] plot [smooth] coordinates {(135-\dang:\rad) (-0.6,1.2) (0.7,0.7) (1.2,-0.6) (-45+\dang:\rad)};
\draw [\green,->,line width=\strandlw] plot [smooth] coordinates {(-135-\dang:\rad) (-135-\dang:\RAD) (-0.6,-1.2) (0.7,-0.7) (1.2,0.6) (45+\dang:\RAD) (45+\dang:\rad) };
\end{scope}

\wh X(45:\RAD)
\wh Y(-135:\RAD)
\end{tikzpicture}

&
                    \scalebox{1.5}{$\rightarrow$}
&

\begin{tikzpicture}[scale=\tikzscl,baseline=(ZUZU.base)]
\coordinate(ZUZU) at (0,0);
\draw[line width=0.5,black!20,dashed] (0,0) circle (\rad);
\bnd{135}{1}
\bnd{45}{2}
\bnd{-45}{3}
\bnd{-135}{4}

\draw [blue,->,line width=\strandlw] (135-\dang:\rad) -- (-45+\dang:\rad);
\draw [red,->,line width=\strandlw] (-135-\dang:\rad) -- (45+\dang:\rad);

\begin{scope}[xscale=-1,yscale=-1]
\draw [brown,->,line width=\strandlw] (135-\dang:\rad) -- (-45+\dang:\rad);
\draw [\green,->,line width=\strandlw] (-135-\dang:\rad) -- (45+\dang:\rad);
\end{scope}

\end{tikzpicture}

\end{tabular}

\\
(a)  & (b) 
\end{tabular}

  \caption{\label{fig:move} (a) A square move and its effect on $\wt_\bth$; (b) converting a plabic graph $G$ into a reduced strand diagram of $\pf_G$ from \cref{dfn:intro:strand_diag}.}
\end{figure}

\subsection{Critical dimer model}\label{sec:intro:critical-dimer-model}
Let $G$ be a reduced planar bipartite graph with strand permutation $\perm$.  Choose a tuple $\bth=(\th_1,\th_2,\dots,\th_n)\in\R^n$. For now, we assume that $\th_1<\th_2<\dots<\th_n<\th_1+\pi$; this condition will be weakened in \cref{sec:intro-critical-varieties}. We define a weight function $\wt_\bth$ on the edges of $G$ as follows. Observe that every edge $e$ of $G$ belongs to exactly two strands. Suppose that one strand terminates at $b_p$ and the other strand terminates at $b_q$ for some $1\leq p<q\leq n$. In this case, we say that $e$ is \emph{labeled by $\{p,q\}$}. We set %
\begin{equation}\label{eq:wt_sin}
  \wt_\bth(e):=
  \begin{cases}
   \sin(\th_q-\th_p), &\text{if $e$ is not adjacent to a boundary vertex,}\\
   1,&\text{otherwise.}
 \end{cases}
\end{equation}
\begin{remark}\label{rmk:side_lengths}
Setting $v_r:=\exp(2i\th_r)$ for all $r\in[n]$, we find $\sin(\th_q-\th_p)=\frac12 |v_q-v_p|$. Thus the edge weights record distances between cyclically ordered points on a circle.%
\end{remark}
\begin{remark}
Setting $\wt_\bth(e):=\sin(\th_q-\th_p)$ for \emph{all} edges of $G$ (including boundary edges) gives rise to \emph{dual critical varieties} discussed in \cref{sec:duality}.
\end{remark}

A crucial property of this choice of weights is that the resulting boundary measurement map is invariant under square moves: for any two reduced graphs $G,G'$ with the same strand permutation $\perm$, we have $\Meas(G,\wt_\bth)=\Meas(G',\wt'_\bth)$, where $\wt_\bth$ and $\wt'_\bth$ are defined by~\eqref{eq:wt_sin} on the edges of $G$ and $G'$, respectively. 
 For instance, the two graphs in \figref{fig:move}(a) produce the same boundary measurements (up to a common scalar). 
 Thus $\Meas(G,\wt_\bth)$ depends only on $\pf$ and $\bth$, therefore it  makes sense to denote $\Meas(\pf,\bth):=\Meas(G,\wt_\bth)$. In \cref{sec:intro:bound-meas-form}, we give an explicit simple formula for $\Meas(\pf,\bth)$ which depends only on $\pf$ and $\bth$ and does not involve choosing a reduced graph $G$.

\begin{remark}
As mentioned in the introduction, the formula~\eqref{eq:wt_sin} is obtained by combining the critical dimer model of~\cite{Kenyon} with the plabic tilings of~\cite{OPS}, in which case the construction of~\cite{Kenyon} simplifies considerably. %
It is not clear to us whether the complex edge weights in \cref{sec:intro:critical-varieties} below may also be obtained by specializing Kenyon's construction. We do not pursue this direction further since we will not rely on any known properties of isoradial embeddings in our approach.
\end{remark}

\subsection{Critical cells}\label{sec:intro-critical-varieties}
Unexpectedly, the combinatorics of critical cells and varieties turns out to be described by the associated \emph{reduced strand diagrams}. Let $\perm\in S_n$ be a permutation and $f$ be the corresponding loopless bounded affine permutation. %
\begin{definition}\label{dfn:intro:strand_diag}
Place the points $b_1,b_2,\dots, b_n$ on a circle in clockwise order, and for each $p\in[n]$, let $\m p$  (resp., $\p p$) be a point slightly before (resp., after) $b_p$ in clockwise order. The \emph{reduced strand diagram} of $\pf$ 
is obtained by drawing a straight arrow $\p s \to \m p$ whenever $\perm(s)=p$; see Figures~\hyperref[fig:move]{\ref*{fig:move}(b)} and~\ref{fig:align}. We say that $p\neq q\in[n]$ \emph{form \aperm-crossing} if the arrows $\p s\to \m {p}$ and $\p {t}\to\m{q}$ cross.
\end{definition}
\begin{definition}\label{dfn:theta}
A tuple $\bth=(\th_1,\th_2,\dots,\th_n)\in\R^n$ is called \emph{$\pf$-admissible} if for all $1\leq p<q\leq n$ such that $p$ and $q$ form an $f$-crossing, we have 
\begin{equation}\label{eq:adm_condition}
 \th_p<\th_q<\th_p+\pi.
\end{equation}
\end{definition}
\begin{remark}\label{rmk:intro:edge_wts_>0}
We show in \cref{prop:edges_conn_cpts} that if $\bth$ is $\pf$-admissible then all edge weights $\wt_\bth(e)$ are strictly positive. This is a surprising property since in general, $G$ contains edges of weight $\sin(\th_q-\th_p)$ where $p,q$ need not form \aperm-crossing. For example, the two graphs in \figref{fig:move}(a) contain edges of weight $\brx[24]$ and $\brx[13]$, respectively. 
\end{remark}

\begin{definition}
The \emph{critical cell} $\Ctp_\pf\subset\Ptp_\pf$ is defined by
\begin{equation*}%
  \Ctp_\pf:=\{\Meas(\pf,\bth)\mid \text{$\bth\in\R^n$ is $\pf$-admissible}\}.
\end{equation*}
\end{definition}

\begin{figure}
  \includegraphics{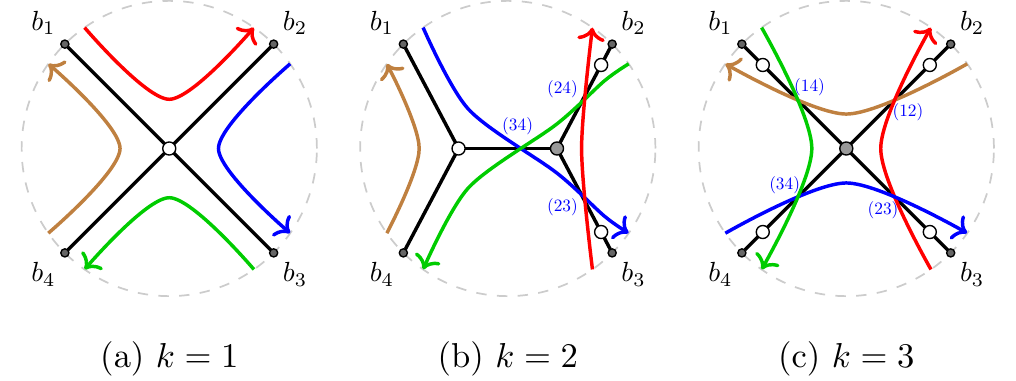}
  \caption{\label{fig:crit-ex} Examples of $\wt_\bth$ for $n=4$ and $f\in\Bkn$ for $k=1,2,3$. Unmarked edges have weight $1$.}
\end{figure}

\begin{example}
We give several examples in \cref{fig:crit-ex}. In the first example, we see that $\Ctp_{f_{1,4}}$ is a single point. In the second example, we see that $\Meas(f,\bth)$ does not depend on $\th_1$. In the third example, (the ratios of) the Pl\"ucker coordinates of $\Meas(f,\bth)\in\Grtnn(3,4)$ record (the ratios of) the side lengths of a convex inscribed quadrilateral with vertices $\v_1,\v_2,\v_3,\v_4$ (cf. \cref{rmk:side_lengths}).
\end{example}

We show in \cref{sec:conn-comp-strand} that the dimension of $\Ctp_\pf$ is at most (and conjecturally equal to) $n-\conn_f$, where $\conn_f$ is the number of connected components of the reduced strand diagram of $f$. For the top cell ($f=\fkn$), we establish the equality in \cref{thm:inj_fkn}. 

\begin{theorem}\label{thm:intro:inj_fkn}
  Let $1<k<n$ and $f=\fkn$. Then 
\begin{equation*}%
  \Ctp_{\fkn}\cong \Rtp^{n-1}.
\end{equation*}
For $k=1$ or $k=n$, $\Ctp_{\fkn}$ is a single point.
\end{theorem}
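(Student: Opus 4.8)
The plan is to treat $1<k<n$ first, and dispatch $k\in\{1,n\}$ at the end. The first task is to pin down the $\fkn$-admissible region. Since $\fkn$ is represented by the permutation $\pkn\colon p\mapsto p+k\pmod n$, in the reduced strand diagram the arrow leaving $\p s$ lands at $\m{s+k}$; comparing endpoints on the circle one checks that for $k\ge 2$ the arrows out of $\p s$ and $\p{s+1}$ always cross, so every cyclically consecutive pair $\{1,2\},\{2,3\},\dots,\{n,1\}$ forms an $\fkn$-crossing. Hence \cref{dfn:theta} forces $\th_1<\th_2<\dots<\th_n$ together with $\th_n<\th_1+\pi$; conversely, any $\bth$ with $\th_1<\th_2<\dots<\th_n<\th_1+\pi$ satisfies $\th_p<\th_q<\th_p+\pi$ for \emph{every} $p<q$, in particular for all $\fkn$-crossings, so such $\bth$ is $\fkn$-admissible. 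Thus the admissible region is exactly the open set $\Omega:=\{\bth\in\R^n:\th_1<\th_2<\dots<\th_n<\th_1+\pi\}$, and $\Ctp_{\fkn}$ is the image of $\Omega$ under $\bth\mapsto\Meas(\fkn,\bth)$. As all edge weights depend only on the differences $\th_q-\th_p$, this map is invariant under $\bth\mapsto\bth+c\cdot(1,\dots,1)$, so it descends to a map $\bar\mu\colon\Omega/\R\cdot(1,\dots,1)\to\Grtnn(k,n)$; and with $\delta_i:=\th_{i+1}-\th_i$ ($1\le i\le n-1$) and $\delta_n:=\th_1+\pi-\th_n$, the quotient $\Omega/\R\cdot(1,\dots,1)$ is the open simplex $\{\delta\in\Rtp^n:\sum_i\delta_i=\pi\}\cong\Rtp^{n-1}$. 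Since the $\fkn$-crossings already include the full $n$-cycle, the reduced strand diagram is connected, $\conn_\fkn=1$, and the dimension bound of \cref{sec:conn-comp-strand} gives $\dim\Ctp_{\fkn}\le n-1$; so everything reduces to showing that $\bar\mu$ is a homeomorphism onto $\Ctp_{\fkn}$.

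For this I would use the explicit boundary measurement formula of \cref{sec:intro:bound-meas-form}, which, specialized to the top cell, identifies $\Meas(\fkn,\bth)$ (up to a cyclic relabeling of $[n]$ and a common rescaling) with the point of $\Grtnn(k,n)$ represented by the $k\times n$ Vandermonde-type matrix whose $r$-th column is $\bigl(e^{i(2j-1-k)\th_r}\bigr)_{j=1}^{k}$, equivalently the point with Plücker coordinates
\[
  \Delta_I(\fkn,\bth)\ \propto\ \prod_{\{a,b\}\subseteq I,\ a<b}\sin(\th_b-\th_a),\qquad I\in\binom{[n]}{k},
\]
positivity of all of which on $\Omega$ is exactly the inequalities defining $\Omega$. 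From this shape one reads off an explicit continuous (indeed semialgebraic) inverse of $\bar\mu$: from the $\Delta_I$ one recovers, up to one common positive scalar, all the chord lengths $|v_b-v_a|=2|\sin(\th_b-\th_a)|$ of the cyclically ordered points $v_r:=e^{2i\th_r}$ on the unit circle, and a convex polygon inscribed in a circle is determined by its pairwise distances up to rotation and reflection — the reflection being excluded by the prescribed cyclic order $v_1,\dots,v_n$ — so $\bth$ is recovered up to an overall shift. Hence $\bar\mu$ is a continuous bijection with continuous inverse onto its image, giving $\Ctp_{\fkn}\cong\Omega/\R\cdot(1,\dots,1)\cong\Rtp^{n-1}$. (If one prefers, continuity of the inverse can be replaced by a properness argument: as $\bth\to\partial\Omega$ some $\delta_i\to 0$, a corresponding ratio $\Delta_I/\Delta_J\to 0$, so $\Meas(\fkn,\bth)$ leaves every compact subset of the cell $\Ptp_{\fkn}\supseteq\Ctp_{\fkn}$; thus $\bar\mu$ is proper, hence a homeomorphism onto $\Ctp_{\fkn}$.)

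For the boundary cases: when $k=1$, a reduced planar bipartite graph for the top cell of $\Gr(1,n)$ is a single interior white vertex joined to $b_1,\dots,b_n$, all of whose edges are boundary edges, so $\wt_\bth\equiv 1$, every almost perfect matching has weight $1$, and $\Meas(f_{1,n},\bth)=(1:1:\dots:1)$ independently of $\bth$ — a single point. (Consistently, no two strands of $f_{1,n}$ cross, so $\conn_{f_{1,n}}=n$ and the dimension bound already forces $\dim\Ctp_{f_{1,n}}\le 0$.) When $k=n$, $\Gr(n,n)$ is itself a single point, so $\Ctp_{f_{n,n}}$ is a single point.

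The step I expect to require the real work is the injectivity/reconstruction in the second paragraph: turning the boundary measurement formula for $\Meas(\fkn,\bth)$ into a genuine continuous inverse, i.e. proving that this Grassmannian point remembers the whole cyclic configuration $(v_1,\dots,v_n)$ up to rotation. Establishing the exact formula itself (carried out in \cref{sec:intro:bound-meas-form}) is a separate but essential input; granting it, the remaining delicate point is the uniform-in-$k$ verification that consecutive strands of $\fkn$ always cross, which is what makes the admissible region equal to $\Omega$.
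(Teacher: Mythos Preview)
Your identification of the admissible region $\Omega=\{\th_1<\dots<\th_n<\th_1+\pi\}$, the quotient $\Omega/\R\cong\Rtp^{n-1}$, and the boundary cases $k\in\{1,n\}$ are all fine. The gap is in the second paragraph: the Pl\"ucker formula
\[
  \Delta_I(\fkn,\bth)\ \propto\ \prod_{a<b\text{ in }I}\sin(\th_b-\th_a)
\]
(equivalently, the Vandermonde matrix with $r$-th column $(e^{i(2j-1-k)\th_r})_j$) is \emph{false} for $k\ge 3$. The curve in \cref{dfn:curve} has $r$-th coordinate $\g_r(t)=\eps_r\prod_{p\in J_r}\sin(t-\th_p)$ with $J_r=\{r+1,\dots,r+k-1\}$, so the $r$-th column of any basis matrix depends on $\th_{r+1},\dots,\th_{r+k-1}$, not on $\th_r$ alone. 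Concretely, for $k=3,\ n=5$, the Grassmann--necklace formula (\cref{prop:Gr_neck_formula}) gives
\[
  \Delta_{123}\ \propto\ \sin(\th_4-\th_3)\,\sin(\th_4-\th_2)\,\sin(\th_5-\th_3),
\]
and the three pairs $\{3,4\},\{2,4\},\{3,5\}$ are not the $2$-subsets of any $3$-element set --- so no cyclic relabeling can make your product formula hold. Since your reconstruction of the chord lengths $|v_b-v_a|$ rests entirely on this formula, the injectivity argument collapses for $k\ge 3$.

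The paper's proof does not attempt a closed formula for all $\Delta_I$. Instead it extracts, via the Muller--Speyer twist (\cref{sec:twist}) applied to a square face of some $G\in\Gred(\fkn)$, the cross--ratios
\[
  (\v_a,\v_b;\v_c,\v_d)=\frac{(\v_c-\v_a)(\v_d-\v_b)}{(\v_c-\v_b)(\v_d-\v_a)},\qquad \v_r=e^{2i\th_r},
\]
for all $a<b<c<d$, together with the bridge ratios $\sin(\th_p-\th_{p-k})/\sin(\th_p-\th_{p-1})$. From four well-chosen such ratios and the inscribed-polygon lemma (your step~4 in spirit) one first pins down $(\th_1,\th_2,\th_k,\th_{k+1})$, and then each remaining $\th_q$ is recovered from a further cross-ratio. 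The case $k=n-1$ is handled separately by the inscribed-polygon lemma directly. So your high-level strategy (reconstruct $\bth$ from the point) is exactly right, but the input you feed into it has to be cross-ratios obtained through the twist, not the nonexistent Vandermonde Pl\"ucker formula.
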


\begin{remark}
We caution that our notion of connectedness is different from the one studied in e.g.~\cite{OPS,ARW2}. For example, the permutation $f_{1,n}$ is usually considered connected in the literature, while in our case, the reduced strand diagram of $f_{1,n}$ has $n$ connected components (which is why $\Ctp_{f_{1,n}}$ has dimension $0$). We compare the two notions in \cref{prop:connected:refine}.
\end{remark}

\subsection{Critical varieties}\label{sec:intro:critical-varieties}
The \emph{critical variety} $\CCrit_\pf$ is the Zariski closure of $\Ctp_\pf$ inside the complex Grassmannian $\Gr(k,n)$. Our goal is to describe a certain subset $\Cio_\pf\subset\CCrit_\pf$ called an \emph{open critical variety}.

\begin{definition}\label{dfn:intro:f_adm_C}
A tuple $\bt=(t_1,t_2,\dots,t_n)\in(\Cast)^n$ is \emph{$f$-admissible} if $t_p\neq \pm t_q$ whenever $p,q$ form an $f$-crossing. 
\end{definition}

Given a reduced planar bipartite graph $G$ with strand permutation $\perm$ and a tuple $\bt\in(\Cast)^n$, we introduce a weight function $\wt_\bt:E(G)\to\C$ defined by
\begin{equation}\label{eq:wt_T}
  \wt_\bt(e):=
  \begin{cases}
   \H qp, &\text{if $e$ is not adjacent to a boundary vertex,}\\
   1,&\text{otherwise,}
 \end{cases}%
\end{equation}
where $e\in E(G)$ is labeled by $\{p,q\}$ with $1\leq p<q\leq n$ and $\br[x,y]:=\frac xy-\frac yx$ for $x,y\in\Cast$. Setting $t_p:=\exp(i\th_p)$ for all $p\in[n]$, $\wt_\bt$ specializes\footnote{Strictly speaking, we have $\wt_\bth(e)=\frac1{2i}\wt_\bt(e)$ for all non-boundary $e\in E(G)$, but this rescaling does not affect the boundary measurements since $\wt_\bt$ and $\wt_\bth$ are \emph{gauge-equivalent}; see the proof of \cref{lemma:GX_conn}.} to $\wt_\bth$ and the corresponding notions of $f$-admissibility coincide.
\begin{example}\label{ex:t1=t3_t2=t4}
Abbreviating $\H qp$ as $\brx[pq]$, the edge weights $\wt_\bt(e)$ in the case $f=f_{2,4}$ are given in \figref{fig:plabic}(c) and the corresponding boundary measurements are computed in \figref{fig:plabic}(d). Observe that any tuple $\bt=(t_1,t_2,t_3,t_4)\in(\Cast)^4$ satisfying $t_1=t_3$, $t_2=t_4$, and $t_1\neq\pm t_2$ is $f$-admissible. On the other hand, for any reduced graph $G$ with strand permutation $\perm$ (both of which are shown in \figref{fig:move}(a)), such a tuple $\bt$ gives $\wt_\bt(e)=0$ for some interior edge $e$ of $G$. Moreover, the dimer partition functions $\Delta_I(G,\wt_\bt)$ in~\eqref{eq:intro:Delta_I} will be zero for \emph{all} $I\in{[4]\choose 2}$. 
\end{example}

Nevertheless, it turns out that for \emph{any} $f$-admissible $\bt\in(\Cast)^n$, there is a well-defined element $\Meas(f,\bt)\in\Pio_f$ which coincides with the dimer partition functions $(\Delta_I(G,\wt_\bt))_{I\in{[n]\choose k}}$ up to multiplication by a common scalar whenever the latter are not all zero. This is a consequence of a remarkable property of critical varieties which we call \emph{the Laurent phenomenon} (\cref{thm:Laurent_ph}). Just as in the case of cluster algebras~\cite{FZ}, it involves a certain amount of non-trivial cancellation, which in the case of \figref{fig:plabic}(d) manifests itself in that all minors are divisible by $\brx[24]$. In fact, we conjecture that after writing $\Meas(f,\bt)$ in a certain canonical form, all \emph{cluster variables} in the cluster algebra structure~\cite{posit_cluster} on $\Pio_f$ are Laurent polynomials in the $\bt$-variables (\cref{conj:strong_Laurent}).

\begin{definition}\label{dfn:intro:Cio}
The \emph{open critical variety} $\Cio_f\subset\Pio_f$ is defined as
\begin{equation*}%
  \Cio_f:=\{\Meas(f,\bt)\mid \bt\in(\Cast)^n\text{ is $f$-admissible}\}.
\end{equation*}
\end{definition}
The question of whether $\Cio_f$ is actually an open subvariety of $\Crit_f$ remains unanswered (\cref{prob:Cio}). In \cref{sec:real-part-critical}, we give a conjectural description of its set $\Cio_f(\R)$ of real points (assuming $\bt$ is \emph{generic} as in \cref{dfn:intro:generic} below) and prove it in \cref{sec:proofs_top_cell} in the case $f=\fkn$ for $1\leq k\leq n-2$.

\subsection{Boundary measurement formula}\label{sec:intro:bound-meas-form}
Currently, in order to compute $\Meas(\pf,\bth)$, one needs to choose a reduced planar bipartite graph $G$, and then the result does not depend on this choice. It is therefore natural to look for an expression for $\Meas(\pf,\bth)$ purely in terms of $\pf$ and $\bth$. The answer turns out to be an explicitly defined point of $\Grtnn(k,n)$ as we now explain. The results of this section give a natural extension of our previous results~\cite{ising_crit} obtained in the case of the Ising model.

Take the reduced strand diagram of $\pf$ as in \figref{fig:move}(b). For each $r\in[n]$, let %
\begin{equation}\label{eq:intro:J_r}
  \J_r:=\{p\in[n]\mid \text{$b_r$ is to the left of the arrow $\p s\to \m p$}\}.
\end{equation}
Here  we set $s:=\perm^{-1}(p)$. The integer $k$ from  \cref{sec:intro:plan-bipart-graphs} satisfies $|\J_r|=k-1$. We have $r\notin \J_r$, and the collection $(\J_r\sqcup\{r\})_{r\in[n]}$ is known as the \emph{Grassmann necklace}~\cite{Pos} of $\pf$. 

For an index $r\in [n]$, let $\eps_r\in\{\pm1\}$ be given by %
\[\eps_r:=(-1)^{\#\{p\in[n]\mid \perm(p)\leq p<r\}}.\]
\begin{definition}\label{dfn:curve}
Let $\bth=(\th_1,\th_2,\dots,\th_n)$ be a tuple of angles. Define a curve $\curve_{\pf,\bth}:\R\to\R^{n}$ whose coordinates $\curve_{\pf,\bth}(t)=(\g_1(t),\g_2(t),\dots,\g_{n}(t))$ are given by
\begin{equation}\label{eq:curve}
  \g_r(t):= \eps_r\prod_{p\in \J_r} \sin(t-\th_p) \qquad\text{for $r\in[n]$.}
\end{equation}
\end{definition}

In \cref{sec:degen-bound-meas}, we give a boundary measurement formula for an arbitrary $\pf$-admissible tuple $\bth$. For simplicity, here we restrict to the case when $\bth$ satisfies a certain genericity assumption.
\begin{definition}\label{dfn:intro:generic}
A tuple $\bth\in\R^n$ is called \emph{generic} if all angles in $\bth$ are pairwise non-congruent modulo $\pi$. Similarly, $\bt\in(\Cast)^n$ is \emph{generic} if $t_p\neq \pm t_q$ for all $p\neq q\in[n]$.
\end{definition}
\begin{theorem}\label{thm:intro:bound_meas}
Suppose that $\bth$ is a generic $\pf$-admissible tuple. Then the linear span $\Span(\curve_{\pf,\bth})\subset\R^n$ has dimension $k$ and we have 
\begin{equation}\label{eq:*}
  \Meas(\pf,\bth)=\Span(\curve_{\pf,\bth}) \quad\text{inside $\Grtnn(k,n)$.}
\end{equation}
\end{theorem}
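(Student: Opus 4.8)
The plan is to compute the Pl\"ucker coordinates of $\Span(\curve_{\pf,\bth})$ explicitly and match them, up to one common scalar, with the dimer partition functions $\Delta_I(G,\wt_\bth)$. This is legitimate because \emph{both} sides of~\eqref{eq:*} are independent of the choice of reduced graph $G$ with strand permutation $\perm$: the right-hand side by its very definition, and the left-hand side by square-move invariance of $\wt_\bth$. So on the dimer side I am free to use whatever reduced $G$ is most convenient.

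\textbf{Step 1 (the span).} Set $t_p:=e^{i\th_p}$, $v_p:=e^{2i\th_p}$, and use $\sin(t-\th_p)=\tfrac1{2i}(e^{it}t_p^{-1}-e^{-it}t_p)$. Since $|\J_r|=k-1$, expanding the product in~\eqref{eq:curve} gives $\g_r(t)=e^{-i(k-1)t}D_rP_r(e^{2it})$, where $P_r(w):=\eps_r\prod_{p\in\J_r}(w-v_p)$ has degree $k-1$ and $D_r:=(2i)^{-(k-1)}\prod_{p\in\J_r}t_p^{-1}\neq0$. Thus $\curve_{\pf,\bth}(t)$ lies in the row span of the $k\times n$ matrix $B$ with $(j,r)$-entry $D_r\,[w^j]P_r(w)$ for $0\le j\le k-1$; since the functions $t\mapsto e^{2ijt}$ are linearly independent, for generic $t_1,\dots,t_k$ the vectors $\curve_{\pf,\bth}(t_b)$ already span the row span of $B$, which therefore has dimension $\rank B\le k$. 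Factoring $\bigl(P_{i_a}(e^{2it_b})\bigr)_{a,b}$ as the product of $\bigl([w^j]P_{i_a}(w)\bigr)_{a,j}$ with a Vandermonde matrix, one obtains $\Delta_I(\Span(\curve_{\pf,\bth}))=\kappa\,\bigl(\prod_{a}D_{i_a}\bigr)\det\bigl([w^j]P_{i_a}(w)\bigr)_{a,j}$ for $I=\{i_1<\dots<i_k\}$, with $\kappa$ independent of $I$. So the dimension claim will follow once this determinant is nonzero for some $I$, and the content of~\eqref{eq:*} reduces to matching this explicit Pl\"ucker formula with $\Delta_I(G,\wt_\bth)$.

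\textbf{Step 2 (the dimer side).} Here I see two routes. The first is an induction on the number of interior faces of $G$ via the bridge (BCFW) decomposition of reduced planar bipartite graphs: peeling off a bridge replaces $\perm$ by a product with an adjacent transposition, modifies the Grassmann necklace $(\J_r\sqcup\{r\})_{r\in[n]}$ in a controlled way, and acts on the matrix computing $\Meas(G,\wt_\bth)$ in a perfect orientation by an explicit elementary operation in a coordinate $2$-plane whose coefficient is read off from $\wt_\bth$; one then checks that this same operation carries the matrix $B$ for $\perm$ to the one for the new permutation, the base case (a graph with no interior faces) being immediate since then $\Delta_I(G,\wt_\bth)$ is a single monomial in the $\sin(\th_q-\th_p)$. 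The second route fixes a single convenient $G$ in which almost perfect matchings are easy to enumerate (e.g.\ via non-intersecting lattice paths) and argues directly: in the Lindstr\"om--Gessel--Viennot/Postnikov flow expansion of $\Delta_I(G,\wt_\bth)$, the weight of each directed path telescopes under~\eqref{eq:wt_sin}---this telescoping being precisely the feature that distinguishes Kenyon's critical model---and summing over flows with target $I$ recovers the Vandermonde-type determinant of Step~1.

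\textbf{Step 3 (conclusion; the main obstacle).} Since $\bth$ is $\pf$-admissible, all weights $\wt_\bth(e)$ are positive (\cref{prop:edges_conn_cpts}), so $\Meas(G,\wt_\bth)\in\Ptp_\pf$ has a nonzero Pl\"ucker coordinate; with Step~2 this forces the determinant in Step~1 to be nonzero for some $I$, hence $\rank B=k$, which gives the dimension statement, and then the equality~\eqref{eq:*} because the two Pl\"ucker vectors are proportional (and $\Span(\curve_{\pf,\bth})$ lands in $\Grtnn(k,n)$ automatically, being equal to $\Meas(G,\wt_\bth)$). Genericity of $\bth$ is used here only to guarantee $\sin(\th_q-\th_p)\neq0$ for all $p\neq q$, so that the support of $\Delta_{\bullet}(G,\wt_\bth)$ is exactly the positroid of $\pf$ and no ratio is indeterminate. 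I expect the real work to be Step~2: controlling $\Delta_I(G,\wt_\bth)$ and the matrix $B$ simultaneously under a bridge move (or making the path telescoping precise for a general reduced $G$), and in particular the bookkeeping of the signs $\eps_r$ and of the gauge freedom as the Grassmann necklace changes---this is where the combinatorics of reduced strand diagrams, the genuinely new ingredient beyond the Ising case of~\cite{ising_crit}, enters.
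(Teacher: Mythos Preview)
Your Route~(a) in Step~2 is exactly the paper's argument: the proof (of the complex version, Theorem~\ref{thm:bound_meas_C}) proceeds by bridge induction, checking that upon removing a bridge at $r$ both $\Meas(f,\bt)$ and the curve $\curvec_{f,\bt}(t)$ itself are right-multiplied by the same explicit matrix $g$ supported on columns $r,r{+}1$, with coloops and the degenerate case $f(r)=r{+}1$ (which creates a loop) handled separately in the induction. Your Fourier-matrix setup in Step~1 is the paper's Proposition~\ref{prop:Fourier}, though the paper applies the column operation directly to the curve rather than to $B$; Route~(b) is not pursued.
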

\noindent Replacing $\sin(t-\th_p)$ with $\HT p$ in~\eqref{eq:curve}, one obtains a formula for $\Meas(f,\bt)$ for all generic $\bt\in(\Cast)^n$; cf. \cref{thm:bound_meas_C}.

\begin{example}
Let $k=2$, $n=4$, and $\perm=\perm_{k,n}$ be the permutation sending $p\mapsto p+2$ modulo~$4$. The boundary measurement map $\Meas(\pf,\bth)$ was computed in \figref{fig:plabic}(d). Since the Pl\"ucker coordinates are defined up to a common scalar, the term $\brx[24]$ cancels out. The sets $\J_r$ are given by $\J_1=\{2\}$, $\J_2=\{3\}$, $\J_3=\{4\}$, and $\J_4=\{1\}$, so $\curve_{\pf,\bth}$ has coordinates
\[\curve_{\pf,\bth}(t)=(\sin(t-\th_2),\sin(t-\th_3),\sin(t-\th_4),-\sin(t-\th_1)).\]
We can choose a basis of $\Span(\curve_{\pf,\bth})$ consisting of e.g. $\curve_{\pf,\bth}(0)$ and $\curve_{\pf,\bth}(\pi/2)$, which we can write in the rows of the following matrix:
\[A=\begin{pmatrix}
-\sin(\th_2) & -\sin(\th_3) & -\sin(\th_4) & \sin(\th_1)\\
\cos(\th_2) &\cos(\th_3) &\cos(\th_4) & -\cos(\th_1) 
\end{pmatrix}. \]
We see that the maximal minors of $A$ coincide with the values computed in \figref{fig:plabic}(d).
\end{example}

\begin{remark}
For a generic $\bth$, an explicit basis of $\Span(\curve_{\pf,\bth})$ can be chosen by taking any $k$ distinct points on the curve $\curve_{\pf,\bth}$ (\cref{prop:basis}). A more canonical way to produce a basis of $\Span(\curve_{\pf,\bth})$ is to observe that each coordinate $\g_r(t)$ is a trigonometric polynomial of degree $k-1$. Therefore it has precisely $k$ non-trivial Fourier coefficients. The rows of the resulting $k\times n$ matrix of Fourier coefficients form a basis of $\Span(\curve_{\pf,\bth})$ which does not depend on anything besides $\pf$ and the angles $\th_1,\th_2,\dots,\th_{n}$; see \cref{prop:Fourier}.
\end{remark}

\subsection{Applications}\label{sec:intro:applications}

Our boundary measurement formula can be specialized to the cases of the Ising model and electrical resistor networks. For the Ising model, this was done in~\cite{ising_crit}; here we focus on the case of electrical networks.
Critical electrical networks are defined on isoradial graphs. To produce an isoradial graph, take a rhombus tiling $\Tiling$ of a polygonal region $\Reg$, such as the one in~\figref{fig:intro1}(a,b), color its vertices black and white in a bipartite way, and let $G_\Tiling$ be the graph consisting of all diagonals of rhombi that connect their black vertices; see \figref{fig:intro1}(c). The graph $G_\Tiling$ is isoradial, with white vertices of $\Tiling$ being the centers of the corresponding unit circles. 

We consider $G_\Tiling$ as an electrical resistor network, replacing every edge by a resistor. Given a rhombus $ABCD$ of $\Tiling$ with black vertices $A$ and $C$, the edge $AC$ of $G_\Tiling$ is treated as a resistor whose resistance equals the ratio $\frac{|AC|}{|BD|}$ of the lengths of the rhombus diagonals.

Denote by $\b_1,\b_2,\dots,\b_\N$ the vertices of $G_\Tiling$ that belong to the boundary of $\Reg$, listed in clockwise order. Fix $p\in[\N]$. Let us apply the voltage of $1$ to $\b_p$ and the voltage of $0$ to all other boundary vertices. Then the voltages at all interior vertices, as well as the currents through all edges, can be computed from Ohm's and Kirchhoff's laws. For each $q\in[\N]$, denote by $\la^{\Tiling}_{p,q}$ the current that flows into $\b_q$.\footnote{By linearity of Ohm's and Kirchhoff's laws, knowing $\la^{\Tiling}_{p,q}$ for all $1\leq p,q\leq \N$ allows one to solve the more general problem: for any known voltages that are applied to the boundary vertices, one finds the resulting currents flowing through each boundary vertex.} (Thus $\la^{\Tiling}_{p,q}\geq0$ for $p\neq q$.) The matrix $\La^{\Tiling}=(\la^{\Tiling}_{p,q})$ is known as the \emph{response matrix} of the electrical network associated with $G_\Tiling$.

\begin{figure}\makebox[1.0\textwidth]{

\setlength{\tabcolsep}{4pt}
\begin{tabular}{cccc}\\

\scalebox{0.65}{
\begin{tikzpicture}[scale=1.30 ,baseline=(ZUZU.base)]\coordinate(ZUZU) at (0,0); 
\coordinate (X0) at (-1.090,4.131);
\coordinate (X1) at (-1.797,3.424);
\coordinate (X2) at (-1.414,2.500);
\coordinate (X3) at (-0.707,3.207);
\coordinate (X4) at (-2.090,4.131);
\coordinate (X5) at (-2.797,3.424);
\coordinate (X6) at (-1.797,2.424);
\coordinate (X7) at (-1.414,1.500);
\coordinate (X8) at (-3.075,4.305);
\coordinate (X9) at (-3.782,3.598);
\coordinate (X10) at (-0.707,2.207);
\coordinate (X11) at (-2.606,1.836);
\coordinate (X12) at (-2.223,0.912);
\coordinate (X13) at (-2.606,-0.012);
\coordinate (X14) at (-1.631,-0.234);
\coordinate (X15) at (-1.248,0.690);
\coordinate (X16) at (-2.797,2.424);
\coordinate (X17) at (-3.782,2.598);
\coordinate (X18) at (-3.606,1.836);
\coordinate (X19) at (0.000,1.500);
\coordinate (X20) at (0.000,2.500);
\coordinate (X21) at (-0.439,1.277);
\coordinate (X22) at (-0.822,0.354);
\coordinate (X23) at (-2.989,0.912);
\coordinate (X24) at (0.000,2.000);
\coordinate (X25) at (-0.354,2.854);
\coordinate (X26) at (-0.898,3.669);
\coordinate (X27) at (-1.590,4.131);
\coordinate (X28) at (-2.582,4.218);
\coordinate (X29) at (-3.428,3.951);
\coordinate (X30) at (-3.782,3.098);
\coordinate (X31) at (-3.289,2.511);
\coordinate (X32) at (-3.201,2.130);
\coordinate (X33) at (-3.106,1.836);
\coordinate (X34) at (-2.797,1.374);
\coordinate (X35) at (-2.797,0.450);
\coordinate (X36) at (-2.118,-0.123);
\coordinate (X37) at (-1.226,0.060);
\coordinate (X38) at (-0.631,0.816);
\coordinate (X39) at (-0.927,1.389);
\coordinate (X40) at (-1.061,1.854);
\coordinate (X41) at (-0.354,1.854);
\fill [opacity=0.10,blue] (X0)-- (X1)-- (X2)-- (X3) -- cycle;
\draw [line width=1pt,black,opacity=1.00] (X0)-- (X1);
\draw [line width=1pt,black,opacity=1.00] (X1)-- (X2);
\draw [line width=1pt,black,opacity=1.00] (X2)-- (X3);
\draw [line width=1pt,black,opacity=1.00] (X3)-- (X0);
\fill [opacity=0.10,blue] (X4)-- (X5)-- (X1)-- (X0) -- cycle;
\draw [line width=1pt,black,opacity=1.00] (X4)-- (X5);
\draw [line width=1pt,black,opacity=1.00] (X5)-- (X1);
\draw [line width=1pt,black,opacity=1.00] (X1)-- (X0);
\draw [line width=1pt,black,opacity=1.00] (X0)-- (X4);
\fill [opacity=0.10,blue] (X1)-- (X6)-- (X7)-- (X2) -- cycle;
\draw [line width=1pt,black,opacity=1.00] (X1)-- (X6);
\draw [line width=1pt,black,opacity=1.00] (X6)-- (X7);
\draw [line width=1pt,black,opacity=1.00] (X7)-- (X2);
\draw [line width=1pt,black,opacity=1.00] (X2)-- (X1);
\fill [opacity=0.10,blue] (X8)-- (X9)-- (X5)-- (X4) -- cycle;
\draw [line width=1pt,black,opacity=1.00] (X8)-- (X9);
\draw [line width=1pt,black,opacity=1.00] (X9)-- (X5);
\draw [line width=1pt,black,opacity=1.00] (X5)-- (X4);
\draw [line width=1pt,black,opacity=1.00] (X4)-- (X8);
\fill [opacity=0.10,blue] (X7)-- (X10)-- (X3)-- (X2) -- cycle;
\draw [line width=1pt,black,opacity=1.00] (X7)-- (X10);
\draw [line width=1pt,black,opacity=1.00] (X10)-- (X3);
\draw [line width=1pt,black,opacity=1.00] (X3)-- (X2);
\draw [line width=1pt,black,opacity=1.00] (X2)-- (X7);
\fill [opacity=0.10,blue] (X6)-- (X11)-- (X12)-- (X7) -- cycle;
\draw [line width=1pt,black,opacity=1.00] (X6)-- (X11);
\draw [line width=1pt,black,opacity=1.00] (X11)-- (X12);
\draw [line width=1pt,black,opacity=1.00] (X12)-- (X7);
\draw [line width=1pt,black,opacity=1.00] (X7)-- (X6);
\fill [opacity=0.10,blue] (X13)-- (X14)-- (X15)-- (X12) -- cycle;
\draw [line width=1pt,black,opacity=1.00] (X13)-- (X14);
\draw [line width=1pt,black,opacity=1.00] (X14)-- (X15);
\draw [line width=1pt,black,opacity=1.00] (X15)-- (X12);
\draw [line width=1pt,black,opacity=1.00] (X12)-- (X13);
\fill [opacity=0.10,blue] (X5)-- (X16)-- (X6)-- (X1) -- cycle;
\draw [line width=1pt,black,opacity=1.00] (X5)-- (X16);
\draw [line width=1pt,black,opacity=1.00] (X16)-- (X6);
\draw [line width=1pt,black,opacity=1.00] (X6)-- (X1);
\draw [line width=1pt,black,opacity=1.00] (X1)-- (X5);
\fill [opacity=0.10,blue] (X9)-- (X17)-- (X16)-- (X5) -- cycle;
\draw [line width=1pt,black,opacity=1.00] (X9)-- (X17);
\draw [line width=1pt,black,opacity=1.00] (X17)-- (X16);
\draw [line width=1pt,black,opacity=1.00] (X16)-- (X5);
\draw [line width=1pt,black,opacity=1.00] (X5)-- (X9);
\fill [opacity=0.10,blue] (X16)-- (X18)-- (X11)-- (X6) -- cycle;
\draw [line width=1pt,black,opacity=1.00] (X16)-- (X18);
\draw [line width=1pt,black,opacity=1.00] (X18)-- (X11);
\draw [line width=1pt,black,opacity=1.00] (X11)-- (X6);
\draw [line width=1pt,black,opacity=1.00] (X6)-- (X16);
\fill [opacity=0.10,blue] (X10)-- (X19)-- (X20)-- (X3) -- cycle;
\draw [line width=1pt,black,opacity=1.00] (X10)-- (X19);
\draw [line width=1pt,black,opacity=1.00] (X19)-- (X20);
\draw [line width=1pt,black,opacity=1.00] (X20)-- (X3);
\draw [line width=1pt,black,opacity=1.00] (X3)-- (X10);
\fill [opacity=0.10,blue] (X15)-- (X21)-- (X7)-- (X12) -- cycle;
\draw [line width=1pt,black,opacity=1.00] (X15)-- (X21);
\draw [line width=1pt,black,opacity=1.00] (X21)-- (X7);
\draw [line width=1pt,black,opacity=1.00] (X7)-- (X12);
\draw [line width=1pt,black,opacity=1.00] (X12)-- (X15);
\fill [opacity=0.10,blue] (X14)-- (X22)-- (X21)-- (X15) -- cycle;
\draw [line width=1pt,black,opacity=1.00] (X14)-- (X22);
\draw [line width=1pt,black,opacity=1.00] (X22)-- (X21);
\draw [line width=1pt,black,opacity=1.00] (X21)-- (X15);
\draw [line width=1pt,black,opacity=1.00] (X15)-- (X14);
\fill [opacity=0.10,blue] (X11)-- (X23)-- (X13)-- (X12) -- cycle;
\draw [line width=1pt,black,opacity=1.00] (X11)-- (X23);
\draw [line width=1pt,black,opacity=1.00] (X23)-- (X13);
\draw [line width=1pt,black,opacity=1.00] (X13)-- (X12);
\draw [line width=1pt,black,opacity=1.00] (X12)-- (X11);
\end{tikzpicture}
}
 & \scalebox{0.65}{
\begin{tikzpicture}[scale=1.30 ,baseline=(ZUZU.base)]\coordinate(ZUZU) at (0,0); 
\coordinate (X0) at (-1.000,2.000);
\coordinate (X1) at (-0.134,1.500);
\coordinate (X2) at (0.366,2.366);
\coordinate (X3) at (-0.500,2.866);
\coordinate (X4) at (-1.000,1.000);
\coordinate (X5) at (-0.134,0.500);
\coordinate (X6) at (0.866,1.500);
\coordinate (X7) at (1.366,2.366);
\coordinate (X8) at (-0.134,3.232);
\coordinate (X9) at (-1.000,3.732);
\coordinate (X10) at (0.866,0.500);
\coordinate (X11) at (-1.366,2.366);
\coordinate (X12) at (-1.866,1.500);
\coordinate (X13) at (0.866,3.232);
\coordinate (X14) at (-1.866,0.500);
\coordinate (X15) at (1.366,1.366);
\coordinate (X16) at (-1.866,3.232);
\coordinate (X17) at (-2.366,2.366);
\coordinate (X18) at (-1.000,0.000);
\coordinate (X19) at (-2.366,1.366);
\coordinate (X20) at (0.000,3.732);
\coordinate (X21) at (-0.000,0.000);
\coordinate (X22) at (0.000,0.000);
\coordinate (X23) at (0.433,0.250);
\coordinate (X24) at (1.116,0.933);
\coordinate (X25) at (1.366,1.866);
\coordinate (X26) at (1.116,2.799);
\coordinate (X27) at (0.433,3.482);
\coordinate (X28) at (-0.500,3.732);
\coordinate (X29) at (-1.433,3.482);
\coordinate (X30) at (-2.116,2.799);
\coordinate (X31) at (-2.366,1.866);
\coordinate (X32) at (-2.116,0.933);
\coordinate (X33) at (-1.433,0.250);
\coordinate (X34) at (-0.500,0.000);
\fill [opacity=0.10,blue] (X0)-- (X1)-- (X2)-- (X3) -- cycle;
\draw [line width=1pt,black,opacity=1.00] (X0)-- (X1);
\draw [line width=1pt,black,opacity=1.00] (X1)-- (X2);
\draw [line width=1pt,black,opacity=1.00] (X2)-- (X3);
\draw [line width=1pt,black,opacity=1.00] (X3)-- (X0);
\fill [opacity=0.10,blue] (X4)-- (X5)-- (X1)-- (X0) -- cycle;
\draw [line width=1pt,black,opacity=1.00] (X4)-- (X5);
\draw [line width=1pt,black,opacity=1.00] (X5)-- (X1);
\draw [line width=1pt,black,opacity=1.00] (X1)-- (X0);
\draw [line width=1pt,black,opacity=1.00] (X0)-- (X4);
\fill [opacity=0.10,blue] (X1)-- (X6)-- (X7)-- (X2) -- cycle;
\draw [line width=1pt,black,opacity=1.00] (X1)-- (X6);
\draw [line width=1pt,black,opacity=1.00] (X6)-- (X7);
\draw [line width=1pt,black,opacity=1.00] (X7)-- (X2);
\draw [line width=1pt,black,opacity=1.00] (X2)-- (X1);
\fill [opacity=0.10,blue] (X8)-- (X9)-- (X3)-- (X2) -- cycle;
\draw [line width=1pt,black,opacity=1.00] (X8)-- (X9);
\draw [line width=1pt,black,opacity=1.00] (X9)-- (X3);
\draw [line width=1pt,black,opacity=1.00] (X3)-- (X2);
\draw [line width=1pt,black,opacity=1.00] (X2)-- (X8);
\fill [opacity=0.10,blue] (X5)-- (X10)-- (X6)-- (X1) -- cycle;
\draw [line width=1pt,black,opacity=1.00] (X5)-- (X10);
\draw [line width=1pt,black,opacity=1.00] (X10)-- (X6);
\draw [line width=1pt,black,opacity=1.00] (X6)-- (X1);
\draw [line width=1pt,black,opacity=1.00] (X1)-- (X5);
\fill [opacity=0.10,blue] (X11)-- (X12)-- (X0)-- (X3) -- cycle;
\draw [line width=1pt,black,opacity=1.00] (X11)-- (X12);
\draw [line width=1pt,black,opacity=1.00] (X12)-- (X0);
\draw [line width=1pt,black,opacity=1.00] (X0)-- (X3);
\draw [line width=1pt,black,opacity=1.00] (X3)-- (X11);
\fill [opacity=0.10,blue] (X7)-- (X13)-- (X8)-- (X2) -- cycle;
\draw [line width=1pt,black,opacity=1.00] (X7)-- (X13);
\draw [line width=1pt,black,opacity=1.00] (X13)-- (X8);
\draw [line width=1pt,black,opacity=1.00] (X8)-- (X2);
\draw [line width=1pt,black,opacity=1.00] (X2)-- (X7);
\fill [opacity=0.10,blue] (X12)-- (X14)-- (X4)-- (X0) -- cycle;
\draw [line width=1pt,black,opacity=1.00] (X12)-- (X14);
\draw [line width=1pt,black,opacity=1.00] (X14)-- (X4);
\draw [line width=1pt,black,opacity=1.00] (X4)-- (X0);
\draw [line width=1pt,black,opacity=1.00] (X0)-- (X12);
\fill [opacity=0.10,blue] (X10)-- (X15)-- (X7)-- (X6) -- cycle;
\draw [line width=1pt,black,opacity=1.00] (X10)-- (X15);
\draw [line width=1pt,black,opacity=1.00] (X15)-- (X7);
\draw [line width=1pt,black,opacity=1.00] (X7)-- (X6);
\draw [line width=1pt,black,opacity=1.00] (X6)-- (X10);
\fill [opacity=0.10,blue] (X9)-- (X16)-- (X11)-- (X3) -- cycle;
\draw [line width=1pt,black,opacity=1.00] (X9)-- (X16);
\draw [line width=1pt,black,opacity=1.00] (X16)-- (X11);
\draw [line width=1pt,black,opacity=1.00] (X11)-- (X3);
\draw [line width=1pt,black,opacity=1.00] (X3)-- (X9);
\fill [opacity=0.10,blue] (X16)-- (X17)-- (X12)-- (X11) -- cycle;
\draw [line width=1pt,black,opacity=1.00] (X16)-- (X17);
\draw [line width=1pt,black,opacity=1.00] (X17)-- (X12);
\draw [line width=1pt,black,opacity=1.00] (X12)-- (X11);
\draw [line width=1pt,black,opacity=1.00] (X11)-- (X16);
\fill [opacity=0.10,blue] (X14)-- (X18)-- (X5)-- (X4) -- cycle;
\draw [line width=1pt,black,opacity=1.00] (X14)-- (X18);
\draw [line width=1pt,black,opacity=1.00] (X18)-- (X5);
\draw [line width=1pt,black,opacity=1.00] (X5)-- (X4);
\draw [line width=1pt,black,opacity=1.00] (X4)-- (X14);
\fill [opacity=0.10,blue] (X17)-- (X19)-- (X14)-- (X12) -- cycle;
\draw [line width=1pt,black,opacity=1.00] (X17)-- (X19);
\draw [line width=1pt,black,opacity=1.00] (X19)-- (X14);
\draw [line width=1pt,black,opacity=1.00] (X14)-- (X12);
\draw [line width=1pt,black,opacity=1.00] (X12)-- (X17);
\fill [opacity=0.10,blue] (X13)-- (X20)-- (X9)-- (X8) -- cycle;
\draw [line width=1pt,black,opacity=1.00] (X13)-- (X20);
\draw [line width=1pt,black,opacity=1.00] (X20)-- (X9);
\draw [line width=1pt,black,opacity=1.00] (X9)-- (X8);
\draw [line width=1pt,black,opacity=1.00] (X8)-- (X13);
\fill [opacity=0.10,blue] (X18)-- (X21)-- (X10)-- (X5) -- cycle;
\draw [line width=1pt,black,opacity=1.00] (X18)-- (X21);
\draw [line width=1pt,black,opacity=1.00] (X21)-- (X10);
\draw [line width=1pt,black,opacity=1.00] (X10)-- (X5);
\draw [line width=1pt,black,opacity=1.00] (X5)-- (X18);
\end{tikzpicture}
}
 & \scalebox{0.65}{
\begin{tikzpicture}[scale=1.30 ,baseline=(ZUZU.base)]\coordinate(ZUZU) at (0,0); 
\coordinate (X0) at (-1.000,2.000);
\coordinate (X1) at (-0.134,1.500);
\coordinate (X2) at (0.366,2.366);
\coordinate (X3) at (-0.500,2.866);
\coordinate (X4) at (-1.000,1.000);
\coordinate (X5) at (-0.134,0.500);
\coordinate (X6) at (0.866,1.500);
\coordinate (X7) at (1.366,2.366);
\coordinate (X8) at (-0.134,3.232);
\coordinate (X9) at (-1.000,3.732);
\coordinate (X10) at (0.866,0.500);
\coordinate (X11) at (-1.366,2.366);
\coordinate (X12) at (-1.866,1.500);
\coordinate (X13) at (0.866,3.232);
\coordinate (X14) at (-1.866,0.500);
\coordinate (X15) at (1.366,1.366);
\coordinate (X16) at (-1.866,3.232);
\coordinate (X17) at (-2.366,2.366);
\coordinate (X18) at (-1.000,0.000);
\coordinate (X19) at (-2.366,1.366);
\coordinate (X20) at (0.000,3.732);
\coordinate (X21) at (-0.000,0.000);
\coordinate (X22) at (0.000,0.000);
\coordinate (X23) at (0.433,0.250);
\coordinate (X24) at (1.116,0.933);
\coordinate (X25) at (1.366,1.866);
\coordinate (X26) at (1.116,2.799);
\coordinate (X27) at (0.433,3.482);
\coordinate (X28) at (-0.500,3.732);
\coordinate (X29) at (-1.433,3.482);
\coordinate (X30) at (-2.116,2.799);
\coordinate (X31) at (-2.366,1.866);
\coordinate (X32) at (-2.116,0.933);
\coordinate (X33) at (-1.433,0.250);
\coordinate (X34) at (-0.500,0.000);
\draw [dashed, line width=0.5pt,black] (X0)-- (X1);
\draw [dashed, line width=0.5pt,black] (X1)-- (X2);
\draw [dashed, line width=0.5pt,black] (X2)-- (X3);
\draw [dashed, line width=0.5pt,black] (X3)-- (X0);
\fill [opacity=0.10,blue] (X0)-- (X1)-- (X2)-- (X3) -- cycle;
\node[scale=0.50,fill=black,draw,circle] (A0) at (X1) {};
\node[scale=0.50,fill=black,draw,circle] (C0) at (X3) {};
\node[scale=0.50,fill=white,draw,circle] (B0) at (X2) {};
\node[scale=0.50,fill=white,draw,circle] (D0) at (X0) {};
\draw[line width=1.5pt,black] (A0)--(C0);
\draw [dashed, line width=0.5pt,black] (X4)-- (X5);
\draw [dashed, line width=0.5pt,black] (X5)-- (X1);
\draw [dashed, line width=0.5pt,black] (X0)-- (X4);
\fill [opacity=0.10,blue] (X4)-- (X5)-- (X1)-- (X0) -- cycle;
\node[scale=0.50,fill=black,draw,circle] (A1) at (X4) {};
\node[scale=0.50,fill=black,draw,circle] (C1) at (X1) {};
\node[scale=0.50,fill=white,draw,circle] (B1) at (X5) {};
\node[scale=0.50,fill=white,draw,circle] (D1) at (X0) {};
\draw[line width=1.5pt,black] (A1)--(C1);
\draw [dashed, line width=0.5pt,black] (X1)-- (X6);
\draw [dashed, line width=0.5pt,black] (X6)-- (X7);
\draw [dashed, line width=0.5pt,black] (X7)-- (X2);
\fill [opacity=0.10,blue] (X1)-- (X6)-- (X7)-- (X2) -- cycle;
\node[scale=0.50,fill=black,draw,circle] (A2) at (X1) {};
\node[scale=0.50,fill=black,draw,circle] (C2) at (X7) {};
\node[scale=0.50,fill=white,draw,circle] (B2) at (X6) {};
\node[scale=0.50,fill=white,draw,circle] (D2) at (X2) {};
\draw[line width=1.5pt,black] (A2)--(C2);
\draw [dashed, line width=0.5pt,black] (X8)-- (X9);
\draw [dashed, line width=0.5pt,black] (X9)-- (X3);
\draw [dashed, line width=0.5pt,black] (X2)-- (X8);
\fill [opacity=0.10,blue] (X8)-- (X9)-- (X3)-- (X2) -- cycle;
\node[scale=0.50,fill=black,draw,circle] (A3) at (X8) {};
\node[scale=0.50,fill=black,draw,circle] (C3) at (X3) {};
\node[scale=0.50,fill=white,draw,circle] (B3) at (X9) {};
\node[scale=0.50,fill=white,draw,circle] (D3) at (X2) {};
\draw[line width=1.5pt,black] (A3)--(C3);
\draw [dashed, line width=0.5pt,black] (X5)-- (X10);
\draw [dashed, line width=0.5pt,black] (X10)-- (X6);
\fill [opacity=0.10,blue] (X5)-- (X10)-- (X6)-- (X1) -- cycle;
\node[scale=0.50,fill=black,draw,circle] (A4) at (X10) {};
\node[scale=0.50,fill=black,draw,circle] (C4) at (X1) {};
\node[scale=0.50,fill=white,draw,circle] (B4) at (X6) {};
\node[scale=0.50,fill=white,draw,circle] (D4) at (X5) {};
\draw[line width=1.5pt,black] (A4)--(C4);
\draw [dashed, line width=0.5pt,black] (X11)-- (X12);
\draw [dashed, line width=0.5pt,black] (X12)-- (X0);
\draw [dashed, line width=0.5pt,black] (X3)-- (X11);
\fill [opacity=0.10,blue] (X11)-- (X12)-- (X0)-- (X3) -- cycle;
\node[scale=0.50,fill=black,draw,circle] (A5) at (X12) {};
\node[scale=0.50,fill=black,draw,circle] (C5) at (X3) {};
\node[scale=0.50,fill=white,draw,circle] (B5) at (X0) {};
\node[scale=0.50,fill=white,draw,circle] (D5) at (X11) {};
\draw[line width=1.5pt,black] (A5)--(C5);
\draw [dashed, line width=0.5pt,black] (X7)-- (X13);
\draw [dashed, line width=0.5pt,black] (X13)-- (X8);
\fill [opacity=0.10,blue] (X7)-- (X13)-- (X8)-- (X2) -- cycle;
\node[scale=0.50,fill=black,draw,circle] (A6) at (X7) {};
\node[scale=0.50,fill=black,draw,circle] (C6) at (X8) {};
\node[scale=0.50,fill=white,draw,circle] (B6) at (X13) {};
\node[scale=0.50,fill=white,draw,circle] (D6) at (X2) {};
\draw[line width=1.5pt,black] (A6)--(C6);
\draw [dashed, line width=0.5pt,black] (X12)-- (X14);
\draw [dashed, line width=0.5pt,black] (X14)-- (X4);
\fill [opacity=0.10,blue] (X12)-- (X14)-- (X4)-- (X0) -- cycle;
\node[scale=0.50,fill=black,draw,circle] (A7) at (X12) {};
\node[scale=0.50,fill=black,draw,circle] (C7) at (X4) {};
\node[scale=0.50,fill=white,draw,circle] (B7) at (X14) {};
\node[scale=0.50,fill=white,draw,circle] (D7) at (X0) {};
\draw[line width=1.5pt,black] (A7)--(C7);
\draw [dashed, line width=0.5pt,black] (X10)-- (X15);
\draw [dashed, line width=0.5pt,black] (X15)-- (X7);
\fill [opacity=0.10,blue] (X10)-- (X15)-- (X7)-- (X6) -- cycle;
\node[scale=0.50,fill=black,draw,circle] (A8) at (X10) {};
\node[scale=0.50,fill=black,draw,circle] (C8) at (X7) {};
\node[scale=0.50,fill=white,draw,circle] (B8) at (X15) {};
\node[scale=0.50,fill=white,draw,circle] (D8) at (X6) {};
\draw[line width=1.5pt,black] (A8)--(C8);
\draw [dashed, line width=0.5pt,black] (X9)-- (X16);
\draw [dashed, line width=0.5pt,black] (X16)-- (X11);
\fill [opacity=0.10,blue] (X9)-- (X16)-- (X11)-- (X3) -- cycle;
\node[scale=0.50,fill=black,draw,circle] (A9) at (X16) {};
\node[scale=0.50,fill=black,draw,circle] (C9) at (X3) {};
\node[scale=0.50,fill=white,draw,circle] (B9) at (X11) {};
\node[scale=0.50,fill=white,draw,circle] (D9) at (X9) {};
\draw[line width=1.5pt,black] (A9)--(C9);
\draw [dashed, line width=0.5pt,black] (X16)-- (X17);
\draw [dashed, line width=0.5pt,black] (X17)-- (X12);
\fill [opacity=0.10,blue] (X16)-- (X17)-- (X12)-- (X11) -- cycle;
\node[scale=0.50,fill=black,draw,circle] (A10) at (X16) {};
\node[scale=0.50,fill=black,draw,circle] (C10) at (X12) {};
\node[scale=0.50,fill=white,draw,circle] (B10) at (X17) {};
\node[scale=0.50,fill=white,draw,circle] (D10) at (X11) {};
\draw[line width=1.5pt,black] (A10)--(C10);
\draw [dashed, line width=0.5pt,black] (X14)-- (X18);
\draw [dashed, line width=0.5pt,black] (X18)-- (X5);
\fill [opacity=0.10,blue] (X14)-- (X18)-- (X5)-- (X4) -- cycle;
\node[scale=0.50,fill=black,draw,circle] (A11) at (X18) {};
\node[scale=0.50,fill=black,draw,circle] (C11) at (X4) {};
\node[scale=0.50,fill=white,draw,circle] (B11) at (X5) {};
\node[scale=0.50,fill=white,draw,circle] (D11) at (X14) {};
\draw[line width=1.5pt,black] (A11)--(C11);
\draw [dashed, line width=0.5pt,black] (X17)-- (X19);
\draw [dashed, line width=0.5pt,black] (X19)-- (X14);
\fill [opacity=0.10,blue] (X17)-- (X19)-- (X14)-- (X12) -- cycle;
\node[scale=0.50,fill=black,draw,circle] (A12) at (X19) {};
\node[scale=0.50,fill=black,draw,circle] (C12) at (X12) {};
\node[scale=0.50,fill=white,draw,circle] (B12) at (X14) {};
\node[scale=0.50,fill=white,draw,circle] (D12) at (X17) {};
\draw[line width=1.5pt,black] (A12)--(C12);
\draw [dashed, line width=0.5pt,black] (X13)-- (X20);
\draw [dashed, line width=0.5pt,black] (X20)-- (X9);
\fill [opacity=0.10,blue] (X13)-- (X20)-- (X9)-- (X8) -- cycle;
\node[scale=0.50,fill=black,draw,circle] (A13) at (X20) {};
\node[scale=0.50,fill=black,draw,circle] (C13) at (X8) {};
\node[scale=0.50,fill=white,draw,circle] (B13) at (X9) {};
\node[scale=0.50,fill=white,draw,circle] (D13) at (X13) {};
\draw[line width=1.5pt,black] (A13)--(C13);
\draw [dashed, line width=0.5pt,black] (X18)-- (X21);
\draw [dashed, line width=0.5pt,black] (X21)-- (X10);
\fill [opacity=0.10,blue] (X18)-- (X21)-- (X10)-- (X5) -- cycle;
\node[scale=0.50,fill=black,draw,circle] (A14) at (X18) {};
\node[scale=0.50,fill=black,draw,circle] (C14) at (X10) {};
\node[scale=0.50,fill=white,draw,circle] (B14) at (X21) {};
\node[scale=0.50,fill=white,draw,circle] (D14) at (X5) {};
\draw[line width=1.5pt,black] (A14)--(C14);
\node[inner sep=3pt,scale=1.20000000000000,anchor=134] (B1) at (X10) {$\b_{6}$};
\node[inner sep=3pt,scale=1.20000000000000,anchor=194] (B2) at (X7) {$\b_{5}$};
\node[inner sep=3pt,scale=1.20000000000000,anchor=254] (B3) at (X20) {$\b_{4}$};
\node[inner sep=3pt,scale=1.20000000000000,anchor=314] (B4) at (X16) {$\b_{3}$};
\node[inner sep=3pt,scale=1.20000000000000,anchor=374] (B5) at (X19) {$\b_{2}$};
\node[inner sep=3pt,scale=1.20000000000000,anchor=435] (B6) at (X18) {$\b_{1}$};
\end{tikzpicture}
}
 & \setlength{\tabcolsep}{0pt}\begin{tikzpicture}[baseline=(ZUZU.base)]\coordinate(ZUZU) at (0,0);\node(A) at (0,1.5){\begin{tabular}{ccc}

&&\\
\scalebox{0.80}{
\scalebox{0.90}{
\begin{tikzpicture}[scale=1.20 ,baseline=(ZUZU.base)]\coordinate(ZUZU) at (0,0); 
\coordinate (X0) at (0.000,-0.700);
\coordinate (X1) at (0.500,0.166);
\coordinate (X2) at (0.277,1.141);
\coordinate (X3) at (-0.223,0.275);
\coordinate (X4) at (1.000,-0.700);
\coordinate (X5) at (1.500,0.166);
\coordinate (X6) at (1.277,1.141);
\coordinate (X7) at (0.500,-0.700);
\coordinate (X8) at (1.250,-0.267);
\coordinate (X9) at (1.389,0.653);
\coordinate (X10) at (0.777,1.141);
\coordinate (X11) at (0.027,0.708);
\coordinate (X12) at (-0.000,-0.700);
\coordinate (X13) at (-0.111,-0.213);
\draw [dashed, line width=0.5pt,black] (X0)-- (X1);
\draw [dashed, line width=0.5pt,black] (X1)-- (X2);
\draw [dashed, line width=0.5pt,black] (X2)-- (X3);
\draw [dashed, line width=0.5pt,black] (X3)-- (X0);
\fill [opacity=0.10,blue] (X0)-- (X1)-- (X2)-- (X3) -- cycle;
\node[scale=0.40,fill=black,draw,circle] (A0) at (X1) {};
\node[scale=0.40,fill=black,draw,circle] (C0) at (X3) {};
\node[scale=0.40,fill=white,draw,circle] (B0) at (X2) {};
\node[scale=0.40,fill=white,draw,circle] (D0) at (X0) {};
\draw[line width=1.4pt,black] (A0)--(C0);
\draw [dashed, line width=0.5pt,black] (X4)-- (X5);
\draw [dashed, line width=0.5pt,black] (X5)-- (X1);
\draw [dashed, line width=0.5pt,black] (X0)-- (X4);
\fill [opacity=0.10,blue] (X4)-- (X5)-- (X1)-- (X0) -- cycle;
\node[scale=0.40,fill=black,draw,circle] (A1) at (X4) {};
\node[scale=0.40,fill=black,draw,circle] (C1) at (X1) {};
\node[scale=0.40,fill=white,draw,circle] (B1) at (X5) {};
\node[scale=0.40,fill=white,draw,circle] (D1) at (X0) {};
\draw[line width=1.4pt,black] (A1)--(C1);
\draw [dashed, line width=0.5pt,black] (X5)-- (X6);
\draw [dashed, line width=0.5pt,black] (X6)-- (X2);
\fill [opacity=0.10,blue] (X5)-- (X6)-- (X2)-- (X1) -- cycle;
\node[scale=0.40,fill=black,draw,circle] (A2) at (X6) {};
\node[scale=0.40,fill=black,draw,circle] (C2) at (X1) {};
\node[scale=0.40,fill=white,draw,circle] (B2) at (X2) {};
\node[scale=0.40,fill=white,draw,circle] (D2) at (X5) {};
\draw[line width=1.4pt,black] (A2)--(C2);
\end{tikzpicture}
}

}
  & $\longleftrightarrow$ &
\scalebox{0.80}{
\scalebox{0.90}{
\begin{tikzpicture}[scale=1.20 ,baseline=(ZUZU.base)]\coordinate(ZUZU) at (0,0); 
\coordinate (X0) at (1.000,-0.700);
\coordinate (X1) at (0.777,0.275);
\coordinate (X2) at (-0.223,0.275);
\coordinate (X3) at (0.000,-0.700);
\coordinate (X4) at (1.277,1.141);
\coordinate (X5) at (0.277,1.141);
\coordinate (X6) at (1.500,0.166);
\coordinate (X7) at (0.500,-0.700);
\coordinate (X8) at (1.250,-0.267);
\coordinate (X9) at (1.389,0.653);
\coordinate (X10) at (0.777,1.141);
\coordinate (X11) at (0.027,0.708);
\coordinate (X12) at (-0.000,-0.700);
\coordinate (X13) at (-0.111,-0.213);
\draw [dashed, line width=0.5pt,black] (X0)-- (X1);
\draw [dashed, line width=0.5pt,black] (X1)-- (X2);
\draw [dashed, line width=0.5pt,black] (X2)-- (X3);
\draw [dashed, line width=0.5pt,black] (X3)-- (X0);
\fill [opacity=0.10,blue] (X0)-- (X1)-- (X2)-- (X3) -- cycle;
\node[scale=0.40,fill=black,draw,circle] (A0) at (X0) {};
\node[scale=0.40,fill=black,draw,circle] (C0) at (X2) {};
\node[scale=0.40,fill=white,draw,circle] (B0) at (X1) {};
\node[scale=0.40,fill=white,draw,circle] (D0) at (X3) {};
\draw[line width=1.4pt,black] (A0)--(C0);
\draw [dashed, line width=0.5pt,black] (X1)-- (X4);
\draw [dashed, line width=0.5pt,black] (X4)-- (X5);
\draw [dashed, line width=0.5pt,black] (X5)-- (X2);
\fill [opacity=0.10,blue] (X1)-- (X4)-- (X5)-- (X2) -- cycle;
\node[scale=0.40,fill=black,draw,circle] (A1) at (X4) {};
\node[scale=0.40,fill=black,draw,circle] (C1) at (X2) {};
\node[scale=0.40,fill=white,draw,circle] (B1) at (X5) {};
\node[scale=0.40,fill=white,draw,circle] (D1) at (X1) {};
\draw[line width=1.4pt,black] (A1)--(C1);
\draw [dashed, line width=0.5pt,black] (X0)-- (X6);
\draw [dashed, line width=0.5pt,black] (X6)-- (X4);
\fill [opacity=0.10,blue] (X0)-- (X6)-- (X4)-- (X1) -- cycle;
\node[scale=0.40,fill=black,draw,circle] (A2) at (X0) {};
\node[scale=0.40,fill=black,draw,circle] (C2) at (X4) {};
\node[scale=0.40,fill=white,draw,circle] (B2) at (X6) {};
\node[scale=0.40,fill=white,draw,circle] (D2) at (X1) {};
\draw[line width=1.4pt,black] (A2)--(C2);
\end{tikzpicture}
}

}
\\
&&\\

\end{tabular}};\end{tikzpicture}
 \\

\scalebox{1.00}{\begin{tabular}{c}(a) arbitrary \\ polygon $\Reg$\end{tabular}} & \scalebox{1.00}{\begin{tabular}{c}(b) regular \\ polygon $\Reg_N$\end{tabular}} & \scalebox{1.00}{(c) isoradial graph $G_{\Tiling}$} & \scalebox{1.00}{(d) star-triangle move} 

\end{tabular}}

\caption{\label{fig:intro1} 
(a) A rhombus tiling of an arbitrary polygon $\Reg$; (b) a rhombus tiling of a regular polygon $\Reg_\N$ for $\N=6$; (c) the associated isoradial graph $G_\Tiling$ consists of black vertices and black solid edges; (d) a flip of a rhombus tiling resulting in a star-triangle move on $G_\Tiling$. Figure reproduced from~\cite{ising_crit}.}
\end{figure}
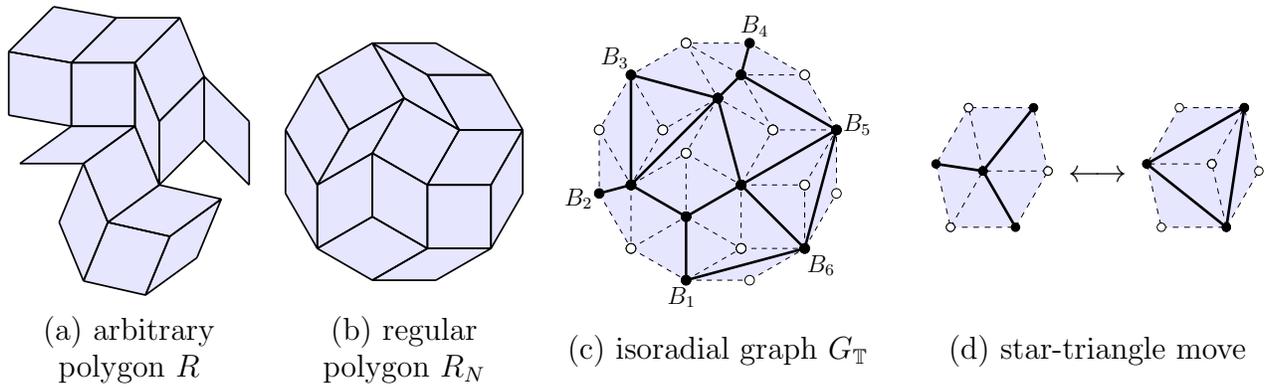

It is known~\cite{KenyonAlg} that any two rhombus tilings of the same region can be related by a sequence of \emph{flips} as in \figref{fig:intro1}(d). Applying a flip to a rhombus tiling results in applying a \emph{star-triangle move} to the electrical network $G_\Tiling$. A well-known property of the electrical response matrix $\La^\Tiling$ is that it is preserved by such moves. Therefore $\La^\Tiling$ depends only on the region $\Reg$ itself, and not on the particular choice of a rhombus tiling $\Tiling$. It is thus natural to denote $\La^\Reg:=\La^\Tiling$. 
A consequence of our boundary measurement formula is a formula for $\La^\Reg$ that depends manifestly only on the region~$\Reg$. 

Lam~\cite{Lam} has constructed an embedding $\pel$ of the space of $\N\times \N$ electrical response matrices into $\Grtnn(\N+1,2\N)$.\footnote{More precisely, Lam's embedding lands in $\Grtnn(\N-1,2\N)$. To get an element of $\Grtnn(N+1,2\N)$, one needs to apply the duality discussed in \cref{sec:duality}.} The image of the map $\pel$ is contained inside the positroid cell labeled by a bounded affine permutation $\fel\in\Bound(N+1,2N)$. One can also choose a (essentially unique) $\fel$-admissible tuple $\bth_\Reg:=(\th_1,\th_2,\dots,\th_n)$ such that the directions of the sides of $\Reg$ are given by $\exp(-2i\th_1),\exp(-2i\th_2),\dots,\exp(-2i\th_n)$ in clockwise order. The following result reflects the well-known connection~\cite[Section~6]{Kenyon} between the critical dimer model and critical electrical networks.
\begin{theorem}\label{thm:intro:appl}
For any region $\Reg$, we have
\begin{equation}\label{eq:intro:appl}
  \pel(\La^\Reg)=\Meas(\fel,\bth_\Reg) \quad\text{inside $\Grtnn(N+1,2N)$.}
\end{equation}
\end{theorem}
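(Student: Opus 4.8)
The plan is to prove \eqref{eq:intro:appl} by reducing it to the square-move invariance of the critical dimer boundary measurement, passing through the classical correspondence between critical electrical networks and the critical dimer model that underlies Lam's embedding $\pel$ in the first place. Fix a rhombus tiling $\Tiling$ of $\Reg$ and the associated isoradial graph $G_\Tiling$, whose edge $AC$ carries conductance $c_{AC}=|BD|/|AC|$. First I would recall the planar bipartite graph $\Gbip$ obtained from $G_\Tiling$ by the standard electrical-to-dimer gadget construction underlying Lam's embedding (replacing each edge of the electrical network by a local bipartite piece and treating interior vertices appropriately), together with the weight function $\wt$ on $\Gbip$ induced by the conductances $c_e$. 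The key input from~\cite{Lam} is that the dimer boundary measurement $\Meas(\Gbip,\wt)$ recovers $\pel(\La^\Tiling)$, once one applies the duality of \cref{sec:duality} that identifies the target $\Grtnn(\N-1,2\N)$ of Lam's map with $\Grtnn(\N+1,2\N)$.

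The second, and \emph{main}, step is to evaluate the gadget weights in the isoradial case and match them to $\wt_{\bth_\Reg}$. Each edge $AC$ of $G_\Tiling$ sits in a rhombus whose two unit sides point in two of the directions $\exp(-2i\th_r)$, so its critical conductance is a ratio of $\sin$'s of differences of the corresponding $\th$'s; tracing this through the gadget shows that every non-boundary edge $e$ of $\Gbip$ labelled by $\{p,q\}$ acquires weight proportional to $\sin(\th_q-\th_p)$, and that the boundary edges acquire the weight $\sin(\th_q-\th_p)$ as well, so that (up to gauge-equivalence, which does not change $\Meas$) $\wt$ is exactly Kenyon's critical dimer weight on the isoradial $\Gbip$, i.e.\ the dual critical weight function of \cref{sec:duality} attached to the angle tuple $\bth_\Reg$. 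In parallel one checks that $\Gbip$ is reduced and that its strand permutation corresponds to the bounded affine permutation $\fel$; here the point is that a flip of $\Tiling$ is a star-triangle move on $G_\Tiling$, hence a square move on $\Gbip$, and that both $\fel$ and $\bth_\Reg$ depend only on $\Reg$, not on $\Tiling$.

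Putting the two steps together gives the chain
\[
  \pel(\La^\Reg)\;=\;\pel(\La^\Tiling)\;=\;\Meas(\Gbip,\wt)\;=\;\Meas(\fel,\bth_\Reg),
\]
where the last equality combines the previous step with square-move invariance of the critical dimer model (so that the particular bipartite graph $\Gbip$ is irrelevant, and the right-hand side is the well-defined object $\Meas(\fel,\bth_\Reg)$), while \cref{thm:intro:bound_meas} in its dual form supplies the explicit span formula for $\La^\Reg$ if an explicit expression is wanted. The main obstacle lies entirely in the second step: faithfully tracking the electrical gadget construction and its weights so as to match the plabic-tiling weights, correctly threading the duality so that one lands in $\Grtnn(\N+1,2\N)$ rather than $\Grtnn(\N-1,2\N)$, and verifying the identifications of the strand permutation with $\fel$ and of the rhombus angles with $\bth_\Reg$. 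In effect this amounts to reconstructing the well-known connection of~\cite[Section~6]{Kenyon} between critical dimers and critical electrical networks in a form compatible with the positroid combinatorics developed in the rest of the paper.
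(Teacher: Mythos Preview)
Your proposal is correct and follows essentially the same route as the paper: both arguments pass through the generalized Temperley trick to build a reduced bipartite graph $\Gel\in\Gred(\fel)$ from the pseudoline arrangement of the rhombus tiling, and then verify (using the $\tau$-isotropic property of $\bth_\Reg$) that Lam's electrical edge weights agree with the critical dimer weights $\wt_{\bth_\Reg}$. One minor difference is that the paper works directly in $\Grtnn(N+1,2N)$ with the non-dual weights $\wt_{\bth_\Reg}$ (boundary edges of weight~$1$), rather than matching to the dual weights and then threading $\altp$ as you suggest.
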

\noindent For generic regions, the right hand side of~\eqref{eq:intro:appl} is described by \cref{thm:intro:bound_meas}. The matrix $\La^\Reg$ can be easily recovered from its image under $\pel$; see \cref{sec:applications} for details.

\begin{remark}
An analogous result 
\begin{equation*}%
  \pis(\M^\Reg)=\Meas(\fis,\bth_\Reg)\quad\text{inside $\Grtnn(N,2N)$}
\end{equation*}
 holds for the critical Ising model; see \cref{thm:Meas_appl} and~\cite[Section~7]{KLRR}. Curiously, in both cases, the tuple $\bth_\Reg$ is the same. In addition to being $\fis$-admissible (equivalently, $\fel$-admissible), the tuple $\bth_\Reg$ is required to satisfy an additional \emph{isotropic condition}~\eqref{eq:bth_isotr} which is also identical in the Ising and electrical cases. 
\end{remark}

\subsection{Cyclically symmetric case}\label{sec:intro:cyc_symm}
Let $\bthr=(\th_1,\th_2,\dots,\th_n)$ be given by $\th_r:=\frac{r\pi}{n}$ for all $r\in[n]$. In this case, $\Meas(\fkn,\bthr)$ is easily seen to coincide with the unique cyclically symmetric point\footnote{The point $\XOkn$ is the unique element of $\Grtnn(k,n)$ satisfying $\shift(\XOkn)=\XOkn$, where $\shift:\Grtnn(k,n)\to\Grtnn(k,n)$ is the cyclic shift automorphism discussed in \cref{sec:cyclic-shift}.} $\XOkn\in\Grtnn(k,n)$, studied in~\cite{GKL,Karp}. This result is already of independent interest: previously, the Pl\"ucker coordinates of $\XOkn$ were known, but the corresponding explicit weighted planar bipartite graphs have not been constructed.

We describe the consequences of this observation for electrical networks. Having $\bth=\bthr$ corresponds to the case where the region $\Reg$ is a regular $2\N$-gon, denoted $\Reg_\N$.

\begin{theorem}\label{thm:reg_Elec}
For $1\leq p,q\leq \N$ and $d:=|p-q|$, we have
\begin{equation}\label{eq:reg_Elec}
\la^{\Reg_\N}_{p,q}=\frac{\sin(\pi/\N)}{\N\cdot \sin((2d-1)\pi/2\N)\cdot \sin((2d+1)\pi/2\N)}.
\end{equation}
\end{theorem}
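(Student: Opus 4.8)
The plan is to reduce the statement to an explicit computation inside $\Grtnn(\N+1,2\N)$ via the boundary measurement formula, and then to use cyclic symmetry to cut down the trigonometry.

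\emph{Reduction to the Grassmannian.} The region $\Reg_\N$ is a regular $2\N$-gon, so here $n=2\N$, $k=\N+1$, and (by \cref{sec:intro:cyc_symm}) the associated tuple $\bth_{\Reg_\N}$ is the cyclically symmetric tuple $\bthr$ with $\th_r=\tfrac{r\pi}{2\N}$ for $r\in[2\N]$ — indeed the directions $\exp(-2i\th_r)$ are exactly the $2\N$ sides of the regular $2\N$-gon in clockwise order. First, I would invoke \cref{thm:intro:appl} with $\Reg=\Reg_\N$ to get
\[\pel\bigl(\La^{\Reg_\N}\bigr)=\Meas(\fel,\bthr)\quad\text{inside }\Grtnn(\N+1,2\N).\]
Since the angles $\th_r$ are pairwise non-congruent modulo $\pi$, the tuple $\bthr$ is generic, so \cref{thm:intro:bound_meas} applies and presents $\Meas(\fel,\bthr)$ as the span of the explicit curve $\curve_{\fel,\bthr}$ of~\eqref{eq:curve}. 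It then remains to combine this with the explicit recipe from \cref{sec:applications} recovering the entry $\la^{\Reg_\N}_{p,q}$ from the Pl\"ucker coordinates of $\pel(\La^{\Reg_\N})$.

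\emph{Cyclic symmetry.} Next, I would observe that shifting $\th_r\mapsto\th_{r+1}$ (with $\th_{2\N+1}:=\th_1+\pi$) carries $\bthr$ to a cyclic rotation of itself, so by the cyclic covariance of $\Meas$ (see \cref{sec:cyclic-shift}) the point $\Meas(\fel,\bthr)$ is fixed by the cyclic shift automorphism of $\Grtnn(\N+1,2\N)$. Transported through $\pel$, this makes $\La^{\Reg_\N}$ invariant under the rotation $\b_p\mapsto\b_{p+1}$ of the $2\N$-gon, so $\la^{\Reg_\N}_{p,q}$ depends only on $d=|p-q|$ and is symmetric under $d\mapsto\N-d$ (as one also checks directly on the right-hand side of~\eqref{eq:reg_Elec} using $\sin(\pi-x)=\sin x$). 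Hence it is enough to evaluate $\la^{\Reg_\N}_{1,1+d}$ for one representative of each class.

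\emph{Evaluation and the main obstacle.} Finally, because all $\th_p$ in $\bthr$ are equally spaced, each coordinate $\g_r(t)=\eps_r\prod_{p\in\J_r}\sin(t-\th_p)$ of $\curve_{\fel,\bthr}$ is a product of sines at arithmetic-progression arguments, and the $(\N+1)\times 2\N$ matrix representing $\Span(\curve_{\fel,\bthr})$ — obtained by sampling the curve at $\N+1$ points (\cref{prop:basis}) or by taking its Fourier coefficients (\cref{prop:Fourier}) — has entries that are integer powers of $\exp(i\pi/2\N)$. Its maximal minors then reduce to Vandermonde/Cauchy-type determinants, which evaluate in closed form; alternatively one can feed in the known Pl\"ucker coordinates of the cyclically symmetric point from~\cite{GKL,Karp}. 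Plugging these minors into the recovery formula of \cref{sec:applications} and collapsing the resulting products of sines by product-to-sum identities should give~\eqref{eq:reg_Elec}. The main obstacle will be the bookkeeping in this last step: correctly identifying $\fel$ together with its Grassmann necklace sets $\J_r$ and the signs $\eps_r$ for the $2\N$-gon, and then pushing the root-of-unity determinant evaluation and the trigonometric collapse through without sign errors. Good sanity checks are that the formula yields $\la^{\Reg_\N}_{p,p}=-\sin(\pi/\N)/\!\bigl(\N\sin^2(\pi/2\N)\bigr)<0$, that every row of $\bigl(\la^{\Reg_\N}_{p,q}\bigr)$ sums to $0$, and that $\N=2$ reproduces the single-resistor response matrix $\bigl(\begin{smallmatrix}-1&1\\1&-1\end{smallmatrix}\bigr)$.
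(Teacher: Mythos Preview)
Your plan is correct and follows the same overall arc as the paper: identify $\pel(\La^{\Reg_\N})$ with the cyclically symmetric point of $\Grtnn(\N+1,2\N)$ via \cref{thm:intro:appl} and \cref{prop:Meas_reg}, use cyclic symmetry to reduce to a single value of $d$, and then evaluate explicit minors of a root-of-unity matrix.

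The paper streamlines exactly the step you flag as the main obstacle. Rather than computing $\J_r$ and $\eps_r$ for $\fel$ and passing through the curve $\curve_{\fel,\bthr}$, the paper first applies $\altp$ to land in $\Grtnn(\N-1,2\N)$, where the dual of Lam's formula reads $\Delta_{\nev\setminus\{2p-2,2p\}\cup\{2q-1\}}(X)=\la^{\Reg}_{p,q}$ (cf.~\eqref{eq:Lam_emb_dfn}), and where $\XO{\N-1}{2\N}$ has the standard $(\N-1)\times 2\N$ Vandermonde presentation $A$ with nodes $z_p=\zeta^{-\N+2p}$. The point is that $AK$ (the even-column submatrix) is essentially a DFT matrix, so its inverse is known in closed form; one row of $(AK)^{-1}A$ then gives $\la^{\Reg_\N}_{p,1}$ directly as a sum of two geometric progressions, which collapses to~\eqref{eq:reg_Elec} without any product-to-sum gymnastics. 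This bypasses all of the $\fel$-specific bookkeeping you anticipated; your alternative of ``feeding in the known Pl\"ucker coordinates of the cyclically symmetric point'' is precisely what the paper does, and the $\altp$-dualization to $\Gr(\N-1,2\N)$ is what makes that route clean.
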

\begin{example}
Consider the star electrical network as in \figref{fig:intro1}(d) inside a regular hexagon $\Reg_3$. Then the resistance of each edge equals $\frac1{\sqrt3}$. Applying the voltage of $1$ to $\b_1$ and the voltage of $0$ to $\b_2$ and $\b_3$, we calculate that the resulting voltage at the unique interior vertex is $\frac13$, and thus the currents through $\b_2$ and $\b_3$ are both equal to $\frac1{\sqrt3}$. This agrees with~\eqref{eq:reg_Elec} for $\N=3$ and $d=1,2$. For $d=0$, we also obtain the correct value $-\frac2{\sqrt3}$ for the current through $\b_1$, the negative sign representing the fact that the current flows \emph{into} the network.
\end{example}

\begin{remark}
Despite the simplicity of \cref{thm:reg_Elec} (and its Ising model analog~\cite[Theorem~1.1]{ising_crit}), both results are apparently new. In the Ising model case, this leads to new asymptotic consequences (including a convergence result to a conformally invariant limit~\cite{ising_crit}). 
\end{remark}

\subsection{Shift by $1$}\label{sec:shift-1}
 For $f\in\Bkn$, its \emph{shift} $\dsh f\in\Bound(k-1,n)$ is defined by $\dsh f(p):=f(p-1)$ for all $p\in\Z$ (this operation is well defined when $f$ is loopless). Taking $p$ and $p-1$ modulo $n$, we obtain a shift map $\fb\to\dsh \fb$ on permutations.

The first appearance of this combinatorial shift map for bounded affine permutations occurred in the construction of the \emph{BCFW triangulation}~\cite{BCFW} of the \emph{amplituhedron}~\cite{AHT}. More precisely, that construction involved a ``shift by $2$,'' corresponding to passing between the momentum space and the momentum-twistor space. A linear-algebraic map from a subset of $\Ptp_f$ to a subset of $\Ptp_{\dsh f}$ can be found on~\cite[Section~8.3]{abcgpt}; see also~\cite[Section~5.2]{LPW}. The combinatorial shift map $f\mapsto \dsh f$ played a major role also in the study of the \emph{parity duality}~\cite{GL} and \emph{$T$-duality}~\cite{LPW} operations for amplituhedra.

The second appearance of the shift map arises when one compares the results of~\cite{GP} for the Ising model with the results of~\cite{Lam} for electrical networks. Specifically, \cite[Question~9.2]{GP} and the discussion below it provides evidence for a stratification-preserving homeomorphism between a subset $\Xel\subset \Grtnn(N+1,2N)$ and a subset $\Xis\subset \Grtnn(N,2N)$ sending $\Xel\cap \Ptp_f\xrasim \Xis\cap \Ptp_{\dsh f}$ homeomorphically for all $f$ for which the intersection is nonempty. One easily checks that the above linear-algebraic map from~\cite{abcgpt} does not provide such a homeomorphism. In view of our current approach, it is natural to additionally require such a map to restrict to a homeomorphism between the critical parts of $\Xel$ and $\Xis$.

In \cref{sec:shift}, we give a new construction that provides partial progress towards this goal. Namely, building on our previous results~\cite{chord_sep} connecting planar bipartite graphs to zonotopal tilings and on the results of~\cite{GPW}, we describe a simple map on the level of weighted planar bipartite graphs that gives the desired result for critical varieties. We discuss the relationship of this map with the boundary measurement map and square moves of planar bipartite graphs, and prove some of its surprising properties. The problem of constructing a stratification-preserving homeomorphism $\Xel\xrasim\Xis$ however remains open.

\subsection*{Acknowledgments}
I am indebted to Pasha Pylyavskyy for his numerous contributions at various stages of the development of~\cite{ising_crit}, where the boundary measurement formula was first discovered in the context of the Ising model. The generalization to the Grassmannian level was inspired by the results of~\cite{CLR,KLRR}, presented by Marianna Russkikh at the ``Dimers in Combinatorics and Cluster Algebras'' conference at the University of Michigan. I thank Marianna for bringing these results to my attention, and also thank the organizers of the conference (Sebastian Franco, Gregg Musiker, Richard Kenyon, David Speyer, and Lauren Williams) for making such an interaction possible. Finally, I am grateful to Lauren Williams and to the anonymous referee for their valuable comments on the first version of the text.

\section{Background on the totally nonnegative Grassmannian}\label{sec:background}
The below constructions are well known in total positivity; see~\cite{Pos,LamCDM} for further details.

\subsection{Bounded affine permutations}\label{sec:BAP}
Positroid varieties are labeled by many families of combinatorial objects. We choose to work with bounded affine permutations introduced in~\cite{KLS}.
\begin{definition}\label{dfn:BAP}
A \emph{$(k,n)$-bounded affine permutation} is a bijection $f:\Z\to\Z$ such that
\begin{itemize}
\item $f(j+n)=f(j)+n$ for all $j\in\Z$,
\item $\sum_{j=1}^{n} (f(j)-j)=kn$, and
\item $j\leq f(j)\leq j+n$ for all $j\in\Z$.
\end{itemize}
\end{definition}
\noindent We let $\Bkn$ denote the (finite) set of $(k,n)$-bounded affine permutations. For $f\in\Bkn$, we let $\fb\in S_n$ be the permutation defined by the condition that $\fb(p)\equiv f(p)\pmod n$.  We let $\fkn\in\Bkn$ be the ``top cell'' bounded affine permutation given by $\fkn(p):=p+k$ for all $p\in\Z$. 
\begin{notation}\label{notn:fkn}
Whenever we have a family $X_f$ of objects labeled by $f\in\Bkn$, we denote $X_\fkn$ by $X_{k,n}$.  
\end{notation}

We say that $f\in\Bkn$ is \emph{loopless} if it satisfies $f(p)>p$ for all $p\in\Z$. Similarly, $f\in\Bkn$ is called \emph{coloopless} if it satisfies $f(p)<p+n$ for all $p\in\Z$. The procedure in \cref{rmk:intro:S_n_loopless} describes a bijection between the symmetric group $S_n$ and the set of loopless $(k,n)$-bounded affine permutations for $1\leq k\leq n$.

The \emph{length} $\ell(f)$ of $f\in\Bkn$ is the number of pairs $s,t\in\Z$ such that $s\in[n]$, $s<t$, and $f(s)>f(t)$. Given such a pair and assuming $f$ is loopless, the reductions of $p:=f(s)$ and $q:=f(t)$ modulo $n$ are said to form an \emph{alignment}; see \figref{fig:align}(b).

\begin{figure}
  \includegraphics{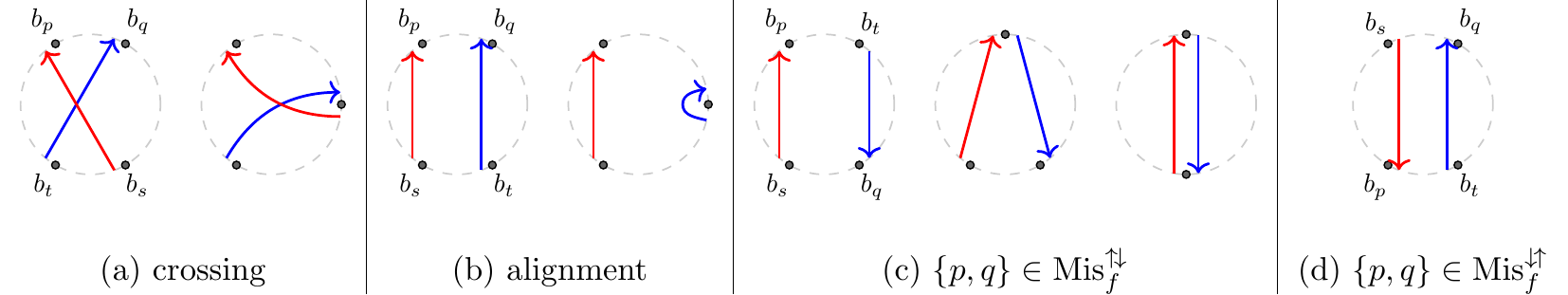}
  \caption{\label{fig:align} Crossings, alignments, and two types of misalignments for loopless bounded affine permutations.}
\end{figure}

\subsection{Planar bipartite graphs}\label{sec:plan-bipart-graphs}
Let $G$ be a planar bipartite graph embedded in a disk as in \cref{sec:intro:plan-bipart-graphs}. First, let us drop the assumption that the boundary vertices of $G$ are colored black. For an almost perfect matching $\Acal$ of $G$, let $I_\Acal\in{[n]\choose k}$ be the set of black boundary vertices used by $\Acal$ together with the set of white boundary vertices \emph{not} used by $\Acal$.

Recall that the boundary vertices of $G$ are labeled by $b_1,b_2,\dots,b_n$. We extend this labeling to all $p\in\Z$ by setting $b_p:=b_{\bar p}$ where $p\in[n]$ is the reduction of $p\in\Z$ modulo $n$.

An \emph{interior leaf} is an interior vertex of degree $1$. We always assume that $G$ admits an almost perfect matching, that every connected component of $G$ contains a boundary vertex, and that each interior leaf of $G$ is adjacent to the boundary. It follows~\cite{Pos} that if $G$ is reduced and $\perm_G(p)=p$ then either
\begin{itemize}
\item $b_p$ is white and is adjacent to a black interior leaf, or
\item $b_p$ is black and is adjacent to a white interior leaf.
\end{itemize}
In the former case, we set $f_G(p):=p$ and say that $p$ is a \emph{loop}, and in the latter case, we set $f_G(p):=p+n$ and say that $p$ is a \emph{coloop}. All other values of $f_G$ are uniquely determined by the values of $\perm_G$ since we require $f_G(p)\equiv \perm_G(p)\pmod n$ for all $p\in[n]$.  From now on, we refer to $f_G$ (as opposed to $\perm_G$) as \emph{the strand permutation} of $G$. Thus the reduced property may be restated as follows: a graph $G$ satisfying the above assumptions is \emph{reduced} if and only if it has $k(n-k)+1-\ell(f_G)$ faces. We refer to reduced planar bipartite graphs simply as \emph{reduced graphs} and denote by $\Gred(f)$ the set of reduced graphs with strand permutation $f\in\Bkn$. 

 We say that $G$ has \emph{black boundary} if all of its boundary vertices are black. The notion of having \emph{white boundary} is defined analogously. Unless stated otherwise, we assume that $G$ has black boundary, and the only other case we consider is when $G$ has white boundary.

For an edge $e\in E(G)$, we say that $e$ is \emph{labeled} by $\{p,q\}$ if the strands passing through $e$ terminate at $b_p$ and $b_q$. It is known~\cite{Pos} that if $p,q$ form an alignment (\figref{fig:align}(b)) then for any $G\in\Gred(f)$, no edge in $G$ is labeled by $\{p,q\}$.

We also label the faces of $G$ by $k$-element subsets of $[n]$. For a face $F$ of $G$, let $\lambda(F)\in{[n]\choose k}$ be the set of all $p\in[n]$ such that $f$ is to the left of the strand ending at $b_p$. This convention is known as \emph{target-labeling} of the faces.

\begin{figure}
  \includegraphics{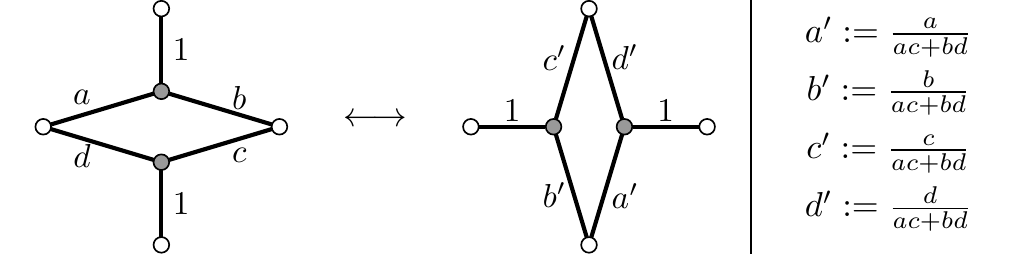}
  \caption{\label{fig:sq-mv} A square move.}
\end{figure}

It was shown in~\cite{Pos} that any two graphs $G,G'\in\Gred(f)$ may be related by a sequence of \emph{square moves} (\cref{fig:sq-mv}) and \emph{contraction-uncontraction moves} (\cref{fig:sq-mv-1}). Specifically, given a square face $F$ of $G$, one first uncontracts some edges so that all vertices of $F$ become trivalent, adding degree $2$ vertices as midpoints of uncontracted edges to preserve the bipartite property.\footnote{When (un)contracting a degree $2$ interior vertex, we always assume that both edges incident to it have weight $1$; this is always achievable by applying gauge transformations.} Next, one applies \emph{gauge transformations} at the black vertices (see \cref{sec:positroid-varieties} below) to fix the weights of the vertical edges in \figref{fig:sq-mv}(left) to $1$. Finally, one performs the local transformation as in \cref{fig:sq-mv}. These moves change the edge weights while preserving the boundary measurements (up to a common scalar).

\subsection{Bridge removal}\label{sec:bridge_rem}
Let $f\in\Bkn$ and $r\in[n]$. Following~\cite[Section~7.4]{LamCDM}, we say that $f$ \emph{has a bridge at $r$} if $f$ satisfies $r<r+1\leq f(r)<f(r+1)\leq r+n$. In this case, there exists a graph $G\in\Gred(f)$ such that the neighborhood of the points $b_r,b_{r+1}$ contains a \emph{bridge configuration} shown in \figref{fig:bridge}(left). Removing the bridge edge yields a configuration in \figref{fig:bridge}(right), and the corresponding graph is also reduced and has strand permutation denoted $s_rf\in\Bkn$ which sends $r\mapsto f(r+1)$, $r+1\mapsto f(r)$, and $q\mapsto f(q)$ for all $q\in\Z$ not congruent to $r$ or $r+1$ modulo $n$. 
Any $f\in\Bkn$ without loops and coloops has a bridge at some $r\in[n]$. Thus, starting with any $f\in\Bkn$ and removing bridges, loops, and coloops, we can always reach a permutation in either $\Bound(0,1)$ or $\Bound(1,1)$. 

\subsection{Positroid varieties}\label{sec:positroid-varieties}
Recall that the Grassmannian $\Gr(k,n)$ is identified with the space of full rank complex $k\times n$ matrices modulo row operations. Given a $k\times n$ matrix $A$, we let $\RowSpan(A)\in\Gr(k,n)$ denote its row span and $A_1,A_2,\dots, A_n$ be its columns. We extend this to a sequence $(A_q)_{q\in\Z}$ by requiring
\begin{equation}\label{eq:columns}
  A_{q+n}=(-1)^{k-1}A_q \quad\text{for all $q\in\Z$.}
\end{equation}
The sign twist is related to the cyclic shift automorphism of $\Grtnn(k,n)$ discussed in \cref{sec:cyclic-shift}.  For a full rank $k\times n$ matrix $A$, we let $f_A:\Z\to\Z$ be given by
\begin{equation}\label{eq:f_dfn}
  f_A(p)=\min\{q\geq p\mid A_p\in \Span \left(A_{p+1},A_{p+2},\dots,A_q\right)\} \quad\text{for $p\in\Z$.}
\end{equation}

\begin{figure}
  \includegraphics{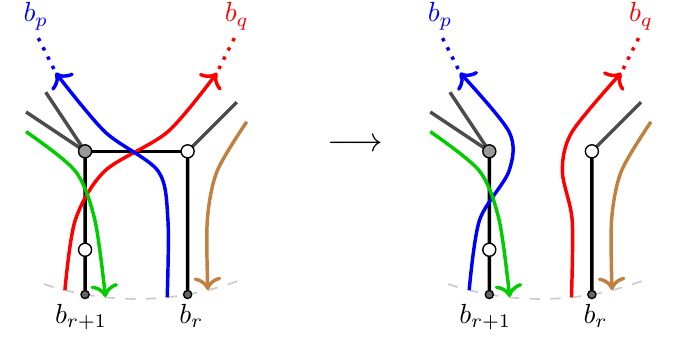}
  \caption{\label{fig:bridge} Removing a bridge (\cref{sec:bridge_rem}).}
\end{figure}

For example, if $A_p$ is a zero column (i.e., a \emph{loop}) then $f_A(p)=p$, and if $A_p$ is not in the span of other columns (i.e., a \emph{coloop}) then $f_A(p)=p+n$. It is known~\cite{KLS} that $f_A$ is a $(k,n)$-bounded affine permutation which depends only on the row span of $A$. The \emph{positroid stratification} of $\Gr(k,n)$ is given by
\begin{equation*}
  \Gr(k,n)=\bigsqcup_{f\in \Bkn} \Pio_{f}, \quad\text{where}\quad \Pio_{f}:=\{\RowSpan(A)\in\Gr(k,n)\mid f_A=f\}.
\end{equation*}
We let $\Ptp_f:=\Pio_f\cap \Grtnn(k,n)$ denote the corresponding \emph{positroid cell}. We also have the \emph{positroid variety} $\Pi_f$ which is the Zariski closure of $\Ptp_f$ (equivalently, of $\Pio_f$). In fact, $\Pio_f$ is an explicit open subvariety of $\Pi_f$. Namely, for each $q\in\Z$, let 
\begin{equation}\label{eq:It_q}
  \It_q:=\{f(p)\mid \text{$p\in\Z$ is such that $p<q\leq f(p)$}\}.
\end{equation}
For $q\in[n]$, let $I_q\in{[n]\choose k}$ be obtained from $\It_q$ by reducing all elements modulo $n$. The sequence $\Ical_f:=(I_1,I_2,\dots,I_n)$ is called the \emph{Grassmann necklace} of $f$. Alternatively, for loopless $f$, we have $I_q=J_q\sqcup\{q\}$ for all $q\in[n]$, where $J_q$ was defined in~\eqref{eq:intro:J_r}. We have
\begin{equation}\label{eq:Pio_open}
  \Pio_f:=\{X\in \Pi_f\mid \Delta_{I_q}(X)\neq 0\text{ for all $q\in[n]$}\}.
\end{equation}
Grassmann necklaces also allow one to describe $\Pi_f$ as an explicit subvariety of $\Gr(k,n)$. Namely, each $f\in\Bkn$ gives rise to a \emph{positroid} $\Mcal_f\subset{[n]\choose k}$ defined as follows. For each $q\in[n]$, introduce a total order $\preceq_q$ on $[n]$ given by 
\begin{equation*}%
  q\preceq_q q+1\preceq_q\dots\preceq_q q-1,
\end{equation*}
where the indices are taken modulo $n$. For two $k$-element sets $I=\{i_1\prec_q i_2\prec_q\dots\prec_q i_k\}$ and $J=\{j_1\prec_q j_2\prec_q\dots\prec_q j_k\}$, we write $I\preceq_q J$ if $i_r\preceq_q j_r$ for all $r\in[k]$. Then the positroid $\Mcal_f$ consists of all sets $J\in{[n]\choose k}$ such that $I_q\preceq_q J$ for all $q\in[n]$. (Thus a positroid is an intersection of $n$ cyclically shifted Schubert matroids.) The variety $\Pi_f$ is described by
\begin{equation}\label{eq:Pi_minors_descr}
  \Pi_f=\{X\in\Gr(k,n)\mid \Delta_J(X)=0\text{ for all $J\notin \Mcal_f$}\}.
\end{equation}
Finally, we have
\begin{equation}\label{eq:Ptp_minors_descr}
  \Ptp_f=\{X\in\Gr(k,n)\mid \Delta_J(X)>0\text{ for $J\in \Mcal_f$ and $\Delta_J(X)=0$ otherwise}\}.
\end{equation}
The dimension of $\Ptp_f$ (as well as $\Pio_f$, and $\Pi_f$) is given by $k(n-k)-\ell(f)$, where $\ell(f)$ is the length of $f$ introduced in \cref{sec:BAP}.

Positroid cells $\Ptp_f$ and Grassmann necklaces were first studied by Postnikov~\cite{Pos} while positroid varieties $\Pi_f$ and their open subvarieties $\Pio_f$ were introduced by Knutson--Lam--Speyer~\cite{KLS}.

Let $E=E(G)$ be the edge set of a reduced graph $G$. The map $\Measop_G:\Rtp^E\to \Grtnn(k,n)$ restricts to a homeomorphism $\Measop_G:\Rtp^E/\Gau\xrasim \Ptp_{f_G}$, where $\Rtp^E/\Gau$ denotes the space of positive edge weights of $G$ considered modulo \emph{gauge transformations}, that is, rescalings of the weights of all edges incident to a given interior vertex. For each (interior or boundary) face $F$ of $G$, let $e_1,e_2,\dots,e_{2m}$ be the edges on the boundary of $F$ in clockwise order. The number of edges is even since we are assuming that $G$ has either black boundary or white boundary. For any weight function $\wt\in\Rtp^E$, we may consider an alternating product
\begin{equation}\label{eq:wt_alt_prod}
  \frac{\wt(e_1)\wt(e_3)\cdots \wt(e_{2m-1})}{\wt(e_2)\wt(e_4)\cdots \wt(e_{2m})}.
\end{equation}
It is clearly invariant under gauge transformations, and in fact may be recovered from $\Meas(G,\wt)$ using the \emph{left twist map} (see \cite[Corollary~5.11]{MuSp}) of Muller--Speyer discussed in \cref{sec:twist}.

\section{Cyclic symmetry and duality}
\label{sec:cyclic-shift-duality}
In this section, we discuss how critical varieties are affected by some natural operations on the totally nonnegative Grassmannian, namely, cyclically shifting the columns and taking orthogonal complements.

\subsection{Cyclic symmetry}\label{sec:cyclic-shift}
The totally nonnegative Grassmannian $\Grtnn(k,n)$ admits a non-trivial shift homeomorphism $\shift:\Grtnn(k,n)\to\Grtnn(k,n)$. It sends (the row span of) a matrix $A$ with columns $A_1,A_2,\dots, A_n$ to (the row span of) the matrix $\shift(A)$ with columns $A_2,\dots A_n,(-1)^{k-1} A_1$. The sign $(-1)^{k-1}$ ensures that the nonnegativity of maximal minors is preserved. The map $\shift$ restricts to a homeomorphism $\shift:\Ptp_f\xrasim \Ptp_{\shf}$, where $\shcomb:\Z\to\Z$ sends $p\mapsto p+1$ for all $p\in\Z$. Thus $\shf\in\Bkn$ is defined by $(\shf)(p)=f(p+1)-1$ for $p\in\Z$. 

Note that the definition of the critical cell $\Ctp_f$ in \cref{sec:intro-critical-varieties} does not appear to respect this cyclic symmetry, since we choose the edge weights to be $\sin(\th_q-\th_p)$ for $1\leq p<q\leq n$. Nevertheless, we have the following result.

\begin{proposition}\label{prop:shift_Ctp}
For a loopless $f\in\Bkn$, the map $\shift$ restricts to a homeomorphism
\begin{equation*}%
  \shift: \Ctp_f\xrasim \Ctp_{\shf}.
\end{equation*}
\end{proposition}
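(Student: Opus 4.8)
The plan is to realize $\shift$ concretely, by combining a cyclic relabeling of the boundary of a single reduced graph with a ``shift-and-flip'' reparametrization of the angle vector. Fix $G\in\Gred(f)$ with boundary vertices $b_1,\dots,b_n$ in clockwise order, and let $G'$ be obtained from $G$ by relabeling $b_p\mapsto b_{p-1}$ with indices modulo $n$, so that the clockwise order on the boundary of $G'$ is $b_2,b_3,\dots,b_n,b_1$. Checking strand permutations gives $f_{G'}=\shf$, and since $f$ (hence $\shf$) is loopless and relabeling preserves the number of faces, $G'\in\Gred(\shf)$. I would pair this with the map
\[
  \bth=(\th_1,\dots,\th_n)\ \longmapsto\ \bth'=(\th_2,\th_3,\dots,\th_n,\th_1+\pi),
\]
which is a homeomorphism $\R^n\to\R^n$ with inverse $(\th_1',\dots,\th_n')\mapsto(\th_n'-\pi,\th_1',\dots,\th_{n-1}')$.

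The core of the argument is a weight comparison. An interior edge $e$ of $G$ labeled by $\{p,q\}$ with $1\le p<q\le n$ corresponds to the edge $e'$ of $G'$ labeled by $\{p-1,q-1\}$ modulo $n$; when $p\ge 2$ one has $\wt_{\bth'}(e')=\sin(\th_q-\th_p)=\wt_\bth(e)$ directly, and when $p=1$ the identity $\sin(\th_1+\pi-\th_q)=\sin(\th_q-\th_1)$ gives the same conclusion, while boundary edges carry weight $1$ in both graphs. Hence almost perfect matchings of $G$ and $G'$ correspond weight-preservingly, with the used boundary subset $I$ replaced by $I-1$, so $\Delta_J(G',\wt_{\bth'})=\Delta_{J+1}(G,\wt_\bth)$ for every $J$ (indices modulo $n$). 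On the other hand, from the definition of $\shift$ one computes $\Delta_J(\shift(X))=\Delta_{J+1}(X)$ for every $X\in\Gr(k,n)$, the sign twist $(-1)^{k-1}$ exactly cancelling the sign incurred by moving the wrapped-around column into first position when $n\in J$. Since $\Meas(f,\bth)=\Meas(G,\wt_\bth)$ and $\Meas(\shf,\bth')=\Meas(G',\wt_{\bth'})$ by the square-move invariance already established, combining the two identities yields
\[
  \Meas(\shf,\bth')=\shift\bigl(\Meas(f,\bth)\bigr).
\]

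It remains to check that $\bth\mapsto\bth'$ restricts to a bijection from $f$-admissible tuples onto $\shf$-admissible tuples. The reduced strand diagram of $\shf$ is precisely the relabeling $b_p\mapsto b_{p-1}$ of the reduced strand diagram of $f$, and ``forming a crossing'' is invariant under such a relabeling, so $p,q$ form an $f$-crossing if and only if $p-1,q-1$ (modulo $n$) form a $\shf$-crossing. Translating the inequalities~\eqref{eq:adm_condition} across this correspondence---using once more $\sin(\th_1+\pi-\th_q)=\sin(\th_q-\th_1)$, which encodes the equivalence of $\th_1<\th_q<\th_1+\pi$ with $\th_q<\th_1+\pi<\th_q+\pi$---shows $\bth$ is $f$-admissible exactly when $\bth'$ is $\shf$-admissible. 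Combined with the previous paragraph, this gives $\shift(\Ctp_f)=\Ctp_{\shf}$; and since $\shift$ is a homeomorphism of $\Grtnn(k,n)$, its restriction to $\Ctp_f$ is a homeomorphism onto $\Ctp_{\shf}$.

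The main difficulty is bookkeeping rather than conceptual: one must coordinate three conventions at once --- the boundary relabeling $b_p\mapsto b_{p-1}$, the $+\pi$ correction forced on the wrapped-around angle $\th_1$, and the sign twist $(-1)^{k-1}$ built into $\shift$ --- and verify that this \emph{one} correction simultaneously validates \emph{both} the edge-weight identity and the admissibility correspondence. (This is no accident: it is the angle-level shadow of the fact that $\shift$ is well defined on the \emph{nonnegative} Grassmannian.) A secondary point is that one should carry out the above via graphs and matchings rather than via the boundary measurement formula of \cref{thm:intro:bound_meas}, since $\Ctp_f$ also contains non-generic $f$-admissible tuples, to which that formula does not apply.
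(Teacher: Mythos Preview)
Your proof is correct and follows essentially the same route as the paper's. The paper packages the argument in ``affine notation'' by extending $\bth$ to a sequence $\btht:\Z\to\R$ with $\tht_{p+n}=\tht_p+\pi$ and defining the shift as $(\shbtht)_p:=\tht_{p+1}$; restricted to $[n]$ this is exactly your map $\bth\mapsto(\th_2,\dots,\th_n,\th_1+\pi)$. The paper then relabels the boundary of $G\in\Gred(f)$ to get $G'\in\Gred(\shf)$ and checks, just as you do, that $\wt_{\bth}(e)=\wt'_{\bth'}(e)$ for every edge, with the only nontrivial case being edges labeled $\{1,q\}$, handled via $\th'_n=\th_1+\pi$. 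Your version is slightly more explicit in tracking how $\shift$ acts on Pl\"ucker coordinates ($\Delta_J(\shift(X))=\Delta_{J+1}(X)$, the sign twist cancelling the column reordering) to pass from the edge-weight identity to $\shift(\Meas(f,\bth))=\Meas(\shf,\bth')$; the paper leaves this step implicit. The admissibility bijection is also the same: the paper phrases it via affine $f$-crossings, you via the relabeled reduced strand diagram, but the content is identical.
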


While this result is not hard to see directly, we prefer to use this opportunity to introduce \emph{affine notation} that reflects the cyclic symmetry of critical cells. First, we always extend a tuple $\bth=(\th_1,\th_2,\dots,\th_n)\in\R^n$ to an infinite sequence $\btht:\Z\to\R$ uniquely determined by the conditions $\tht_p=\th_p$ for $p\in[n]$ and  %
\begin{equation}\label{eq:bth_extend}
  \tht_{p+n}=\tht_p+\pi \quad\text{for all $p\in\Z$.}
\end{equation}
Since  $\bth$ and $\btht$ determine each other, we use them interchangeably and write e.g. $\Meas(f,\btht)$ for $\Meas(f,\bth)$.

Let us describe $f$-admissibility in the affine language. For $p,q\in\Z$, we say that $(p,q)$ form an \emph{affine $f$-crossing} if we have $s<t<p<q\leq s+n$, where $s:=f^{-1}(p)$ and $t:=f^{-1}(p)$.

It is easy to check that if $1\leq p<q\leq n$ form an $f$-crossing then either $(p,q)$ or $(q,p+n)$ form an affine $f$-crossing. Conversely, if $(p,q)$ form an affine $f$-crossing then their reductions  $\bar p, \bar q\in[n]$ modulo $n$ form an $f$-crossing. It follows that a tuple $\bth=(\th_1,\th_2,\dots,\th_n)$ is $f$-admissible if and only if the corresponding sequence $\btht:\Z\to\R$ satisfies
\begin{equation}\label{eq:affine_f_crossing}
  \tht_p<\tht_q<\tht_{p+n} 
\end{equation}
whenever $(p,q)$ form an affine $f$-crossing. In this case, we say that $\btht$ is \emph{$f$-admissible}.

For a sequence $\btht:\Z\to\R$, let $\shbtht:\Z\to\R$ be given by $(\shbtht)_p:=\tht_{p+1}$. \Cref{prop:shift_Ctp} follows from the following observation.
\begin{lemma}
For a loopless $f\in\Bkn$, a sequence $\btht:\Z\to\R$ is $f$-admissible if and only if $\shbtht$ is $(\shf)$-admissible. In this case, we have
\begin{equation}\label{eq:Meas_shift}
  \Meas(f,\btht)=\Meas(\shf,\shbtht).
\end{equation}
\end{lemma}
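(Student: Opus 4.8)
The plan is to prove the two assertions of the lemma separately — first the equivalence of admissibility, then the identity~\eqref{eq:Meas_shift} — both being a matter of chasing the single reindexing $p\mapsto p+1$ through the relevant definitions.

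For the admissibility equivalence I would work entirely in the affine language of \cref{sec:cyclic-shift}. Since $(\shf)(p)=f(p+1)-1$, we have $(\shf)^{-1}(m)=f^{-1}(m+1)-1$ for all $m\in\Z$, so if $s=f^{-1}(p)$ and $t=f^{-1}(q)$ then $(\shf)^{-1}(p-1)=s-1$ and $(\shf)^{-1}(q-1)=t-1$. Substituting into the defining inequalities $s<t<p<q\le s+n$ shows immediately that $(p,q)$ is an affine $f$-crossing if and only if $(p-1,q-1)$ is an affine $\shf$-crossing, so $(p,q)\mapsto(p-1,q-1)$ is a bijection between the two sets of affine crossings. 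Since $(\shbtht)_m=\tht_{m+1}$, the inequalities $(\shbtht)_{p-1}<(\shbtht)_{q-1}<(\shbtht)_{(p-1)+n}$ for $\shbtht$ are literally the inequalities $\tht_p<\tht_q<\tht_{p+n}$ for $\btht$. Hence $\btht$ satisfies~\eqref{eq:affine_f_crossing} over all affine $f$-crossings if and only if $\shbtht$ does over all affine $\shf$-crossings, which is the first assertion.

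For~\eqref{eq:Meas_shift} I would pass through a planar bipartite graph. Fix any $G\in\Gred(f)$ with black boundary and boundary vertices $b_1,\dots,b_n$ in clockwise order, and let $G'$ be the same embedded graph with its boundary relabeled by $b'_p:=b_{p+1}$ (indices mod $n$); relabeling preserves the clockwise cyclic order, so $G'$ is again a reduced black-boundary graph, and since the strand of $G$ from $b_{p+1}$ ends at $b_{f(p+1)}=b'_{f(p+1)-1}=b'_{(\shf)(p)}$ (loops and coloops cause no trouble, as $f$ and hence $\shf$ are loopless), we get $G'\in\Gred(\shf)$. The crucial point is that the weightings agree edgewise: if an interior edge $e$ is labeled $\{p,q\}$ with $1\le p<q\le n$ in $G$, then in $G'$ it is labeled $\{p-1,q-1\}$ (mod $n$), and using $\tht_{n+1}=\tht_1+\pi$ and $\sin(x+\pi)=-\sin(x)$ one checks $\wt_{\shbtht}(e)=\sin(\tht_q-\tht_p)=\wt_\bth(e)$ — immediate for $p\ge2$, while in the wraparound case $p=1$ the equality $\tht_{n+1}=\tht_1+\pi$ is exactly what produces $\sin((\shbtht)_n-(\shbtht)_{q-1})=\sin(\tht_1+\pi-\tht_q)=\sin(\tht_q-\tht_1)$; boundary edges have weight $1$ in both. (All $\wt_\bth(e)>0$ here by \cref{prop:edges_conn_cpts}, so $\Meas(G,\wt_\bth)\in\Grtnn(k,n)$ is genuinely defined, and it is independent of $G$ by square-move invariance of $\Meas$.) Consequently an almost perfect matching $\Acal$ of $G=G'$ has the same weight in either graph, and its boundary set $I\in\binom{[n]}{k}$ in $G$ becomes $I-1$ (mod $n$) in $G'$; summing over matchings gives $\Delta_J(G',\wt_{\shbtht})=\Delta_{J+1}(G,\wt_\bth)$ for every $J\in\binom{[n]}{k}$, indices taken mod $n$. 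This says exactly that the Pl\"ucker tuples $\Meas(\shf,\shbtht)$ and $\Meas(f,\btht)$ differ by the cyclic index shift $J\mapsto J+1$, which — together with the column sign-twist $A_{q+n}=(-1)^{k-1}A_q$ of \cref{sec:positroid-varieties} — is precisely the action of the shift automorphism $\shift$ on $\Gr(k,n)$, and hence is the asserted identity~\eqref{eq:Meas_shift} (equivalently, $\shift(\Meas(f,\btht))=\Meas(\shf,\shbtht)$). Combined with the first part and the obvious surjectivity of $\btht\mapsto\shbtht$ on admissible sequences, this yields \cref{prop:shift_Ctp}.

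The one genuinely substantive check is the wraparound case $p=1$ of the edge-weight identity, where the half-period term in $\tht_{n+1}=\tht_1+\pi$ is what makes $\wt_{\shbtht}(e)=\wt_\bth(e)$ hold; everything else is bookkeeping of the three compatible shifts involved — the boundary relabeling $b_p\mapsto b_{p+1}$, the extension $\tht_{p+n}=\tht_p+\pi$, and the sign-twist $(-1)^{k-1}$ on columns. As an independent confirmation, rerunning the same relabeling on the curve $\curve_{f,\btht}$ of \cref{dfn:curve} — using that the $J$-sets for $\shf$ satisfy $J'_r=J_{r+1}-1$ together with the corresponding comparison of the signs $\eps_r$ — recovers~\eqref{eq:Meas_shift} for generic $\btht$ directly from \cref{thm:intro:bound_meas}, and the identical argument applies verbatim to the complex weights $\wt_\bt$ of~\eqref{eq:wt_T}.
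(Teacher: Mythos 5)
Your proof is correct and follows essentially the same route as the paper's: the admissibility equivalence is read off from the affine-crossing characterization~\eqref{eq:affine_f_crossing}, and~\eqref{eq:Meas_shift} is obtained by cyclically relabeling the boundary of a fixed $G\in\Gred(f)$ and checking the edge weights agree, with the only nontrivial case being the wraparound $p=1$ saved by $\tht_{n+1}=\tht_1+\pi$. Your extra bookkeeping (the bijection of affine crossings, the index shift on matching boundaries, and the $(-1)^{k-1}$ sign making the identity literally the action of $\shift$, which is what \cref{prop:shift_Ctp} needs) is all consistent with, and if anything slightly more careful than, the paper's argument.
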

\begin{proof}
The $f$-admissibility part is clear from~\eqref{eq:affine_f_crossing}. To prove~\eqref{eq:Meas_shift}, consider a graph $G\in\Gred(f)$. Relabeling its boundary vertices as $(b_n,b_1,\dots,b_{n-1})$, we obtain a graph $G'\in\Gred(\shf)$. Let $e\in E(G)=E(G')$ be an edge not adjacent to the boundary, and suppose that it is labeled in $G$ by $\{p,q\}$ with $1\leq p<q\leq n$. The weights $\wt_{\btht}(e)$ and $\wt'_{\shbtht}(e)$ coincide unless $p=1$. If $p=1$ then $\wt_{\btht}(e)=\sin(\th_q-\th_1)$ while $\wt'_{\shbtht}(e)=\sin(\th'_{n}-\th'_{q-1})$, where $\btht':=\shbtht$. It remains to note that $\th'_n=\th_{n+1}=\th_1+\pi$ and $\th'_{q-1}=\th_q$, thus $\wt'_{\shbtht}(e)=\sin(\th_q-\th_1)=\wt_{\btht}(e)$ in this case as well.
\end{proof}

\subsection{Duality}\label{sec:duality}
For $V\in\Gr(k,n)$, denote by $V^\perp\in\Gr(n-k,n)$ its orthogonal complement. We let $\alt(V)\in\Gr(k,n)$ be obtained from $V$ by changing the sign of every second column of the matrix representing $V$. We set 
\begin{equation*}%
  \altp(V):=\alt(V^\perp)=(\alt(V))^\perp.
\end{equation*}
We discuss several well-known properties of the map $\altp$; see e.g.~\cite[Lemma~1.11]{Karp_var} and references therein. The map $\altp$ restricts to an involutive homeomorphism $\Grtnn(k,n)\xrasim \Grtnn(n-k,n)$. For $f\in\Bkn$, let $\fd\in\Bound(n-k,n)$ be given by
\begin{equation}\label{eq:fd}
  \fd(p)=f^{-1}(p)+n \quad\text{for all $p\in\Z$.}
\end{equation}
The map $f\mapsto \fd$ is an involution. The map $\altp$ restricts to a homeomorphism $\Ptp_f\xrasim \Ptp_{\fd}$. It satisfies
\begin{equation}\label{eq:altp_minors}
  \Delta_I(X)=\Delta_{[n]\setminus I}(\altp(X)) \quad\text{for all $I\in{[n]\choose k}$}.
\end{equation}

Recall from \cref{sec:intro-critical-varieties} that any loopless $f\in\Bkn$ gives rise to a critical cell $\Ctp_f$. We are interested in the effect of the map $\altp$ on critical cells. Note that in general $\fd\in\Bound(n-k,n)$ need not be loopless, but it is coloopless. What we will show below is that the map $\altp$ sends critical cells to \emph{dual critical cells}.

Recall that we have placed points $\m p, b_p, \p p$ on the circle for each $p\in[n]$. 
\begin{definition}
Let $f\in\Bkn$ be coloopless. The \emph{dual reduced strand diagram} of $f$ is obtained by drawing a straight arrow $\m s\to \p p$ whenever  $\fb(s)=p$. We say that $p\neq q$ form a \emph{dual $f$-crossing} if the arrows $\m s\to \p p$ and $\m t\to \p q$ cross. We say that $\bth=(\th_1,\th_2,\dots,\th_n)$ is \emph{dual $f$-admissible} if whenever $1\leq p<q\leq n$ form a dual $f$-crossing,~\eqref{eq:adm_condition} is satisfied. 
\end{definition}
\noindent In general, the reduced strand diagram of $f$ and the dual reduced strand diagram of the same $f$ behave quite differently; see \cref{fig:dual} for an example.

\begin{figure}
  \includegraphics{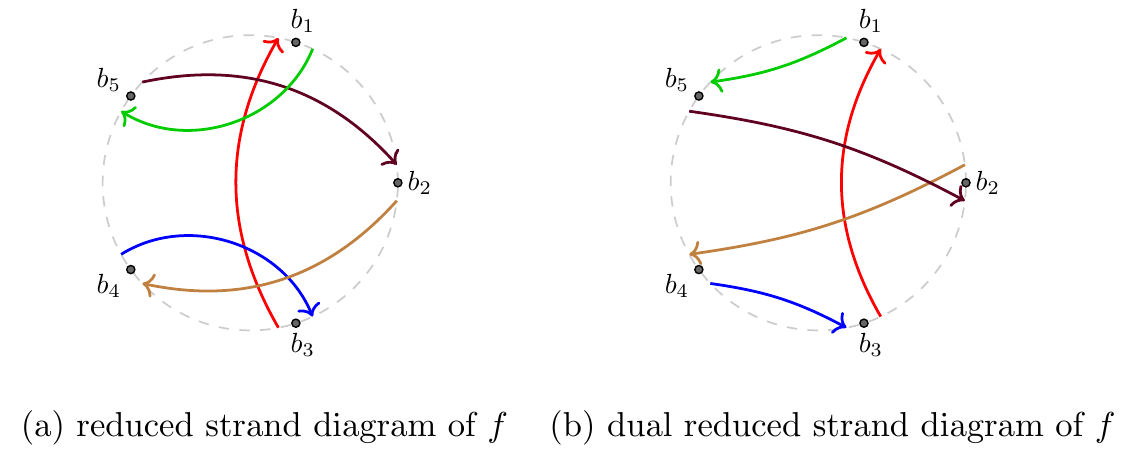}
  \caption{\label{fig:dual} A (dual) reduced strand diagram.}
\end{figure}

Let $f\in\Bkn$ be coloopless and let $G\in\Gred(f)$ be a graph with black boundary. For the purposes of this section, if $f(p)=p$ for some $p$ then by convention we treat $b_p$ as a black boundary vertex of degree zero, or, equivalently, as a white boundary vertex adjacent to a black interior leaf. 

 For an edge $e\in E(G)$ labeled by $\{p,q\}$, set
\begin{equation}\label{eq:wt_bth_dual}
  \wtd_\bth(e):=\sin(\th_q-\th_p).
\end{equation}
This is different from~\eqref{eq:wt_sin} in that the boundary edges no longer have weight $1$. Let 
\begin{equation*}%
  \Measd(f,\bth):=\Meas(G,\wtd_\bth).
\end{equation*}
Finally, define the \emph{dual critical cell}
\begin{equation*}%
    \Ctpd_f:=\{\Measd(f,\bth)\mid \text{$\bth=(\th_1,\th_2,\dots,\th_n)$ is a dual $f$-admissible tuple}\}.
\end{equation*}
\begin{proposition}\label{prop:duality}
Let $f\in\Bkn$ and $\btht:\Z\to\R$.
\begin{theoremlist}
\item\label{item:duality:adm} $\btht$ is $f$-admissible $\Longleftrightarrow$ $\btht\circ f$ is dual $\fd$-admissible.
\item\label{item:duality:Meas} $\Meas(f,\btht)=\altp(\Measd(\fd,\btht\circ f))$.
\item\label{item:duality:altp} The map $\altp$ yields an involutive homeomorphism
\begin{equation*}%
  \Ctp_f\cong \Ctpd_{\fd}.
\end{equation*}
\end{theoremlist}
\end{proposition}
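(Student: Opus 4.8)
The plan is to prove (i) and (ii) and then deduce (iii) formally. Part~(i) is purely combinatorial, and I would carry it out in the affine notation of \cref{sec:cyclic-shift}. From~\eqref{eq:fd} one reads off that $\overline{\fd}=\perm^{-1}$ and that $\fd^{-1}(c)=f(c)-n$, so the dual reduced strand diagram of $\fd$ has, for each $c\in[n]$, the arrow $\m{\perm(c)}\to\p c$, which is precisely the $f$-strand of \cref{dfn:intro:strand_diag} with target $\perm(c)$ traversed backwards; hence $\{c,d\}$ is a dual $\fd$-crossing if and only if $\{\perm(c),\perm(d)\}$ is an $f$-crossing. Just as affine $f$-crossings encode $f$-admissibility through~\eqref{eq:affine_f_crossing}, the same reasoning shows that $\btht$ is dual $\fd$-admissible if and only if $\tht_c<\tht_d<\tht_{c+n}$ holds for every pair $(c,d)$ with $\fd^{-1}(c)<\fd^{-1}(d)\le c<d<\fd^{-1}(c)+n$. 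Substituting $\fd^{-1}=f-n$ rewrites this condition as $c<d<f(c)<f(d)\le c+n$, and $(c,d)\mapsto(f(c),f(d))$ is a bijection from such pairs onto the affine $f$-crossings. Since $(\btht\circ f)_c=\tht_{f(c)}$ and $(\btht\circ f)_{c+n}=\tht_{f(c)+n}$, the inequality $\tht_{f(c)}<\tht_{f(d)}<\tht_{f(c)+n}$ is simultaneously dual $\fd$-admissibility of $\btht\circ f$ at $(c,d)$ and $f$-admissibility of $\btht$ at $(f(c),f(d))$, which proves~(i).

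For part~(ii), fix $G\in\Gred(f)$ with black boundary, so $\Meas(f,\btht)=\Meas(G,\wt_\btht)$, and let $H$ be the graph obtained from $G$ by reversing the colour of every vertex. Reversing colours reverses every strand, so $H$ is a reduced planar bipartite graph with white boundary, $\perm_H=\perm^{-1}=\overline{\fd}$, and $f_H=\fd$ (the loop/coloop conventions of \cref{sec:plan-bipart-graphs} match up, since coloops of $f$ become loops of $\fd$). The crucial point is that $G$ and $H$ have the same edge set and the same almost perfect matchings, and each $\Acal$ uses the same boundary vertices in both graphs, but these indices form the index set $I_\Acal\in\binom{[n]}{k}$ in the black-boundary graph $G$ and its complement $[n]\setminus I_\Acal\in\binom{[n]}{n-k}$ in the white-boundary graph $H$. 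Therefore $\Delta_I(H,\wt)=\Delta_{[n]\setminus I}(G,\wt)$ for all $I$ and every weight function $\wt$, and~\eqref{eq:altp_minors} gives $\Meas(H,\wt)=\altp(\Meas(G,\wt))$. Next, $\Measd(\fd,\btht\circ f)$ is computed by $H$ as well: attaching a trivial weight-$1$ white leaf at each boundary vertex of the defining black-boundary graph $G''\in\Gred(\fd)$ converts the all-edges-$\sin$ weight $\wtd_{\btht\circ f}$ into the standard critical recipe — interior edges $\sin$, boundary edges $1$ — on a white-boundary reduced graph for $\fd$, and invariance of the critical weights under square moves and contractions (these never involve boundary edges, so the invariance proof applies verbatim to white-boundary graphs) identifies it with $\Meas(H,\wt^{H}_{\btht\circ f})$, the standard critical weight built from the labels \emph{of $H$} and the angles $\btht\circ f$. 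Together with $\altp^2=\id$, part~(ii) reduces to the single-graph identity $\Meas(G,\wt^{H}_{\btht\circ f})=\Meas(G,\wt_\btht)$.

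I expect this identity to be the main obstacle. A label of $H$ is the $\perm^{-1}$-image of the corresponding label of $G$, and since $f(\perm^{-1}(a))\equiv a\pmod n$ the passage from $\btht$ to $\btht\circ f$ re-inserts exactly the $\pi$-shifts $\tht_{f(\perm^{-1}(a))}=\th_a+\pi m_a$; hence $\wt^{H}_{\btht\circ f}$ and $\wt_\btht$ differ on each interior edge only by a sign governed by the edge's two strand labels. I would then check that this sign pattern is absorbed by a gauge transformation (and a global rescaling) and so leaves $\Meas$ unchanged — most transparently after passing to the $\bt$-variable weights~\eqref{eq:wt_T}, where the antisymmetry $\br[x,y]=-\br[y,x]$ combined with the affine relation $t_{p+n}=-t_p$ turns all such signs into gauge equivalences, in the spirit of the rescaling remark following~\eqref{eq:wt_T}. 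One also needs to record that all white-boundary reduced graphs for a fixed coloopless permutation are connected by square moves and contractions, which again follows from the black-boundary case by colour reversal.

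Finally, part~(iii) follows formally: applying $\altp$ to~(ii) gives $\altp(\Meas(f,\btht))=\Measd(\fd,\btht\circ f)$ for all $\btht$, and by~(i) the affine-linear self-bijection $\btht\mapsto\btht\circ f$ of $\R^n$ carries the $f$-admissible tuples onto the dual $\fd$-admissible tuples, so $\altp$ maps $\Ctp_f$ bijectively onto $\Ctpd_{\fd}$. As $\altp$ is the restriction of a homeomorphism of $\Gr(k,n)$ with $\altp^2=\id$, this restriction is an involutive homeomorphism $\Ctp_f\cong\Ctpd_{\fd}$.
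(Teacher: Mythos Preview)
Your treatment of~(i) and~(iii) is correct and essentially identical to the paper's: the same affine bijection between affine $f$-crossings and dual affine $\fd$-crossings, and the same formal deduction of~(iii) from~(i)--(ii).

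For~(ii), your setup also matches the paper: pass to the colour-reversed graph $H=\Gd$, use $\Meas(H,\wt)=\altp(\Meas(G,\wt))$ via~\eqref{eq:altp_minors}, and reduce to a single-graph weight comparison. The divergence is in how that comparison is completed. The paper does not argue via a gauge absorption of signs; instead it checks directly that for every \emph{interior} edge $e$ of $G$ labelled $\{p,q\}$ (with $\perm(s)=p$, $\perm(t)=q$), one has the exact equality $\wtd_{\btht\circ f}(e)=\sin(\th_q-\th_p)=\wt_{\btht}(e)$. The case analysis uses one fact you do not invoke: in a reduced graph, an edge labelled $\{p,q\}$ can exist only when $p,q$ do \emph{not} form an alignment (see \cref{sec:plan-bipart-graphs}). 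This is precisely what rules out the sign-flipping cases, so no nontrivial sign pattern ever appears on interior edges. On boundary edges the weights differ, but there the paper's construction of $\Gd'$ (insert a degree-$2$ white vertex on each boundary edge of $\Gd$ and recolour the boundary black) makes the two boundary edges sharing that white vertex carry the same weight, which is removed by a gauge transformation at that vertex.

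Your proposed route---expect sign discrepancies on interior edges and then argue they are gauge-trivial via $\br[x,y]=-\br[y,x]$ and $t_{p+n}=-t_p$---is not carried out, and the hint you give does not amount to a proof: showing that a $\{\pm1\}$ edge pattern is a coboundary requires checking the product around every face, which you have not done. In fact, once the non-alignment restriction is used, the pattern is identically $+1$, so the gauge step is vacuous. Your detour through an auxiliary black-boundary $G''\in\Gred(\fd)$ with attached ``white leaves'' and an appeal to move invariance for white-boundary graphs is also more circuitous than necessary; the paper's $\Gd'$ is already a black-boundary graph in $\Gred(\fd)$ built directly from $G$, so $\Measd(\fd,\btht\circ f)$ can be computed on it without further moves.

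In short: the gap is the missing non-alignment observation, which turns your anticipated ``main obstacle'' into a short case check yielding exact equality of interior weights rather than a gauge-equivalence that still needs justification.
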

\begin{remark}
Our constructions are invariant with respect to adding the same constant to all values of $\btht$. Modulo such transformations, the map $(f,\btht)\mapsto (\fd,\btht\circ f)$ is an involution.
\end{remark}
\begin{proof}
Suppose that $\btht$ is $f$-admissible and let $\btht':=\btht\circ f$. Let us say that $p<q\in\Z$ form a \emph{dual affine $\fd$-crossing} if we have $s<t\leq p<q<s+n$, where $\fd(s)=p$ and $\fd(t)=q$. As in \cref{sec:cyclic-shift}, we see that $\btht'$ is dual $\fd$-admissible if and only if for all $p<q$ forming a dual affine $\fd$-crossing, we have $\tht'_p<\tht'_q<\tht'_{p+n}$. We claim that this is equivalent to $\tht_s<\tht_t<\tht_{s+n}$. Indeed, by definition, we have $p=\fd(s)=f^{-1}(s)+n$ and $q=\fd(t)=f^{-1}(t)+n$, thus $f(p)=s+n$ and $f(q)=t+n$. The inequalities $s<t\leq p<q<s+n$ may be rewritten as $p<q<s+n<t+n\leq p+n$, and since $f(p)=s+n$ and $f(q)=t+n$, we see that $s<t$ form an affine $f$-crossing. Since $\btht$ is $f$-admissible, we find $\tht_s<\tht_t<\tht_{s+n}$, so $\btht'$ is dual $\fd$-admissible, proving the forward direction of~\itemref{item:duality:adm}. The converse direction is handled similarly.

To show~\itemref{item:duality:Meas}, choose $G\in\Gred(f)$. Let $\Gd$ be obtained by changing the colors of all vertices of $G$. Thus $\Gd$ has white boundary and strand permutation $\fd$. Let $\Gd'$ be obtained from $\Gd$ by putting a white degree $2$ vertex on each boundary edge of $\Gd$ and then changing the color of all boundary vertices to black. 

By~\eqref{eq:altp_minors}, it follows that $\altp\circ \Measop_G=\Measop_{\Gd}$ as maps $\Rtp^E\to \Grtnn(n-k,n)$. Here we identify the sets of edges of $G$ and $\Gd$ and denote them by $E$. Let $e\in E$ be an edge whose weight in $G$ is $\wt_\bth(e)=\sin(\th_q-\th_p)$. Suppose that $\fb(s)=p$ and $\fb(t)=q$ for some $s,t,p,q\in[n]$ such that $p<q$. Then the weight $\wtd_{\bth'}(e)$ in $\Gd'$ is given by $\sin(\th'_s-\th'_t)$ if $s>t$ and by $\sin(\th'_t-\th'_s)$ if $s<t$, where $\btht'=\btht\circ f$ as above. Note also that $\th'_s=\th_p$ if $s<p$ and $\th'_s=\th_p+\pi$ if $s>p$, and similarly for $\th'_q$. As explained in \cref{sec:plan-bipart-graphs}, since $e$ is labeled by $\{p,q\}$, the strands $s\to p$ and $t\to q$ cannot form an alignment. Using this condition, one checks directly that we have $\wtd_{\bth'}(e)=\sin(\th_q-\th_p)$ in all cases. Thus we have $\wt_\bth(e)=\wtd_{\bth'}(e)$ for each interior edge $e\in E$. If $e$ is a boundary edge of $G$ then $\wt_\bth(e)=1$ and $e$ corresponds to two edges $e',e''$ in $\Gd'$ sharing a white degree $2$ vertex and satisfying $\wtd_{\bth'}(e')=\wtd_{\bth'}(e'')$. Thus applying a gauge transformation at these white degree $2$ vertices, we find  $\altp\circ \Measop_G(\wt_\bth)=\Measop_{\Gd}(\wtd_{\bth'})$. 
 This completes the proof of~\itemref{item:duality:Meas}, and~\itemref{item:duality:altp} follows from~\itemref{item:duality:Meas} as a direct corollary.
\end{proof}

\section{Connected components and strand diagrams}\label{sec:conn-comp-strand}
Recall that we have defined in \cref{sec:intro-critical-varieties} a critical cell $\Ctp_f$ for any loopless $f\in\Bkn$. We mentioned that the edge weights in $\wt_\bth$ are positive when $\bth$ is $f$-admissible and  discussed the relationship between the dimension of $\Ctp_f$ and connected components of the reduced strand diagram of $f$. In this section, we justify these claims, studying the combinatorics of reduced strand diagrams along the way.

\begin{figure}
  \includegraphics{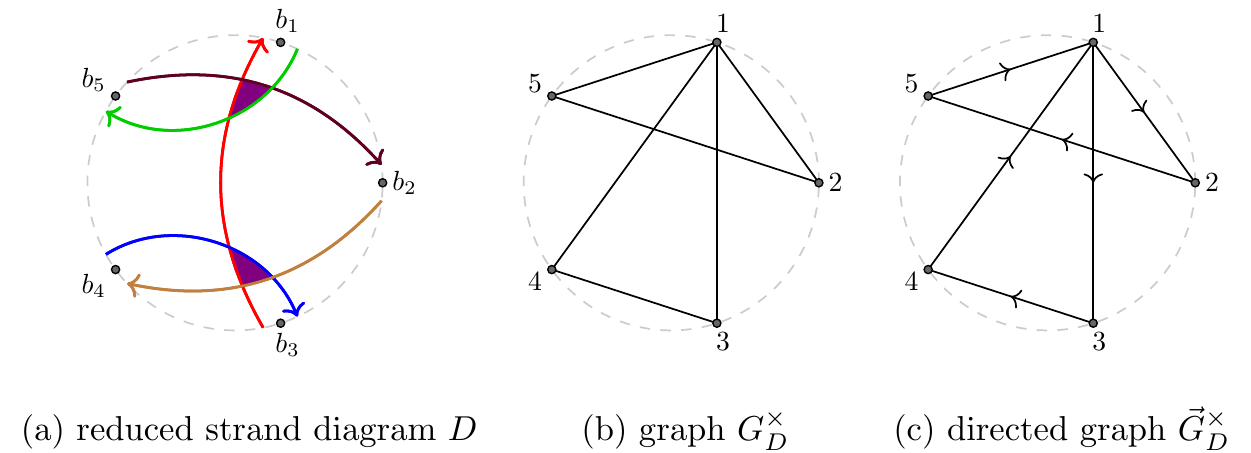}
  \caption{\label{fig:GX} The graphs $\GX_D$ and $\GXV_D$ defined in~\eqref{eq:E_GX_f} and~\eqref{eq:E_GXV}. The shaded regions on the left represent convex regions from \cref{dfn:convex}.}
\end{figure}

\subsection{Connected components}\label{sec:connected-components}
We start by stating several results concerning connected components of the reduced strand diagram of $f$. Their proofs turn out to be quite involved, and are deferred to later sections.

Fix a loopless $f\in\Bkn$. Let $\GX_f$ be the undirected graph with vertex set $[n]$ and edge set 
\begin{equation}\label{eq:E_GX_f}
  E(\GX_f):=\{\{p,q\}\mid p,q\text{ form an $f$-crossing}\}.
\end{equation}
See \figref{fig:GX}(b) for an example. Let $\conn_f$ be the number of connected components of $\GX_f$. (Thus $\conn_f$ is the number of connected components of the reduced strand diagram of $f$, viewed as a topological union of strands in a disk.) We set $\dimf:=n-\conn_f$.

\begin{definition}
We say that a reduced graph $G$ is \emph{contracted} if it has no degree $2$ vertices that are not adjacent to the boundary.
\end{definition}
The following result will be proved in \cref{sec:proof-of_edge_conn_cpts}.
\begin{proposition}\label{prop:edges_conn_cpts}
Suppose a non-boundary edge $e$ of a graph $G\in\Gred(f)$ is labeled by $\{p,q\}$ with $1\leq p<q\leq n$. Then for any $f$-admissible tuple $\bth$, we have
\begin{equation}\label{eq:edges_conn_cpts}
  \th_p<\th_q<\th_p+\pi.
\end{equation}
In particular, $p$ and $q$ belong to the same connected component of $\GX_f$ and we have $\wt_\bth(e)>0$.
\end{proposition}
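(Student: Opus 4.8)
The plan is to localize at the two endpoints of $e$, reduce the statement to a claim about the index sets carried by the cliques of $G$, and then propagate the admissibility inequalities around such a clique.

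Since $e$ is non-boundary, both of its endpoints are interior vertices. Recall the \emph{clique property} of reduced plabic graphs: the faces of $G$ surrounding an interior vertex $v$ have target-labels that are either all of the form $S\cup\{i\}$ for a common $(k-1)$-subset $S$, or all of the form $T\setminus\{j\}$ for a common $(k+1)$-superset $T$, according to the color of $v$; denote by $I=\{i_1,\dots,i_d\}$ ($d=\deg v$) the corresponding set of moving indices, listed in the cyclic order in which the faces occur around $v$. The edges of $G$ at $v$ are then labeled by the cyclically consecutive pairs $\{i_1,i_2\},\dots,\{i_d,i_1\}$. Applying this at the black endpoint of $e$ shows that $\{p,q\}$ is a consecutive pair of such a cyclic sequence, so $p,q$ lie in a common clique index set $I$; these index sets are precisely the \emph{convex regions} of \cref{dfn:convex}. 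Hence it suffices to prove, for every convex region $I$: (i) all elements of $I$ lie in one connected component of $\GX_f$; and (ii) for every $\pf$-admissible $\bth$ and all $a<b$ in $I$, one has $\th_a<\th_b<\th_a+\pi$. Granting these, (ii) with $\{a,b\}=\{p,q\}$ gives the displayed inequality, (i) gives the ``same component'' assertion, and $\wt_\bth(e)=\sin(\th_q-\th_p)>0$ follows from $\th_q-\th_p\in(0,\pi)$.

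For (i) I would use that no consecutive pair of a convex region is an alignment of $f$ — alignments label no edge of any reduced graph \cite{Pos} — together with the combinatorics of $\GX_f$ from \eqref{eq:E_GX_f}: convexity of $I$ keeps it inside a single component of $\GX_f$, even though some of its consecutive pairs may be misalignments rather than crossings. Claim (ii) is the heart of the matter. I would argue that $\pf$-admissibility forces the points $v_r:=\exp(2i\th_r)$, $r\in I$, to occur on the unit circle in the cyclic order inherited from $I\subseteq[n]$ and, crucially, without ever sweeping out a full half-turn in the wrong direction: one chains the admissibility inequalities along a path of $f$-crossings within $I$ and invokes the convexity constraint of \cref{dfn:convex} to rule out ``overshoot,'' so that the inequalities telescope to $\th_a<\th_b<\th_a+\pi$ for \emph{every} pair $a<b$ in $I$, whether or not it forms a crossing.

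The main obstacle is precisely the ``no overshoot'' step in (ii): knowing only that $p$ and $q$ lie in a common component of $\GX_f$ is insufficient, since a non-crossing pair inside one component can fail the half-turn inequality for some $\pf$-admissible $\bth$. Making this winding estimate precise is exactly what the combinatorial theory of convex regions (and of the graphs in \cref{fig:GX}) developed in the following sections is designed for, which is why the full proof is deferred to \cref{sec:proof-of_edge_conn_cpts}. (As a check on the logic, (i) can also be recovered from (ii): if $p$ and $q$ lay in distinct components of $\GX_f$, one could add an arbitrary constant to all angles indexed by the component of $q$ while preserving $\pf$-admissibility, contradicting the inequality.)
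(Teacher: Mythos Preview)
Your outline has a genuine gap, and it is not the ``no overshoot'' step you flag.

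The claim that the clique index set $I$ at an interior vertex of $G$ is ``precisely a convex region of \cref{dfn:convex}'' is not justified and is in fact false as stated. Convex regions in \cref{dfn:convex} are defined for a \emph{reduced} strand diagram $D$ and depend only on $f$; the clique index sets at vertices of $G$ depend on the particular graph $G\in\Gred(f)$, and different $G$'s produce different cliques. More importantly, the cyclically consecutive pairs $\{i_j,i_{j+1}\}$ around a vertex of $G$ are edge labels, so they are never alignments, but they can be \emph{misalignments} rather than $f$-crossings. Thus the admissibility inequalities~\eqref{eq:adm_condition} do not apply to such pairs, and there is no chain of inequalities to telescope around the clique. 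In a genuine convex region of $D_f$ (an increasing cycle in $\GXV_f$), by contrast, \emph{every} consecutive pair is a positive $f$-crossing, and that is exactly why the inequalities $\th_{a_1}<\th_{a_2}<\cdots<\th_{a_m}<\th_{a_1}+\pi$ follow immediately from admissibility. So your reduction to (ii) dissolves: for the clique $I$ you produce, (ii) need not follow from admissibility at all.

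The paper's proof takes a completely different route. It first observes that the statement is invariant under square moves (if~\eqref{eq:edges_conn_cpts} holds for all non-boundary edges of one $G\in\Gred(f)$, it holds for any other), so it suffices to prove it for a single graph. It then proceeds by induction on $f$ via bridge removal: one finds $r$ with $r<r+1\leq f(r)<f(r+1)\leq r+n$, and uses \cref{prop:convex} (every edge of $\GXV_D$ lies on an increasing cycle) to produce an increasing cycle through the edges $p\to q\to r{+}1$, where $p=f(r)$, $q=f(r+1)$. This yields $\tht_{r+1}<\tht_p<\tht_q<\tht_{r+1}+\pi$, which is exactly what is needed to check that an $f$-admissible $\bth$ is also $s_rf$-admissible, so the inductive hypothesis applies to the graph with the bridge removed. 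The ``same component'' and $\wt_\bth(e)>0$ statements then follow as you note. The point is that \cref{prop:convex} is applied in the reduced strand diagram (where consecutive pairs on the cycle are genuine crossings), not locally at a vertex of $G$; the link between $G$ and the inequalities is made through the bridge-removal induction, not through cliques.
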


Let $C\subset[n]$ be a connected component of $\GX_f$. By \cref{prop:edges_conn_cpts}, adding a constant to $\th_p$ for all $p\in C$ preserves $\Meas(f,\bth)$.  
 Choose some representatives $p_1,p_2,\dots,p_{\conn_f}\in[n]$, one from each connected component of $\GX_f$. Let
\begin{equation}\label{eq:THtp_dfn}
  \THtp_f:=\{\bth=(\th_1,\th_2,\dots,\th_n)\in\R^n\mid \text{$\bth$ is $f$-admissible and $\th_{p_1}=\th_{p_2}=\dots=\th_{p_{\conn_f}}=0$}\}.
\end{equation}
Thus the map $\Measf:\THtp_f\to \Ctp_f$ is surjective. Note that $\THtp_f$ is easily seen to be homeomorphic to $\Rtp^{\dimf}$ since it may be identified with the interior of a $\dimf$-dimensional polytope. 
\begin{conjecture}[The injectivity conjecture]\label{conj:inj}
The map $\Measf:\THtp_f\to \Ctp_f$ is a homeomorphism.
\end{conjecture}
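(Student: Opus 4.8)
The plan is to prove that $\Measf$ is injective and, at the same time, that its inverse is continuous, by writing $\bth$ explicitly as a continuous function of the point $\Meas(f,\bth)\in\Ptp_f$; since $\Measf\colon\THtp_f\to\Ctp_f$ is a continuous surjection and $\THtp_f\cong\Rtp^{\dimf}$, this yields the homeomorphism. First I would fix a reduced graph $G\in\Gred(f)$ with black boundary and, by applying uncontraction moves, arrange that every vertex of $G$ has degree at most $3$ (the degree-$2$ vertices that this introduces carry no strand crossings and play no role below). Since $\Meas(f,\bth)=\Measop_G(\wt_\bth)$, since $\wt_\bth$ has strictly positive entries by \cref{prop:edges_conn_cpts}, and since $\Measop_G\colon\Rtp^{E(G)}/\Gau\xrasim\Ptp_f$ is a homeomorphism (see \cref{sec:positroid-varieties}), the gauge-equivalence class $[\wt_\bth]$ — hence every gauge-invariant function of the weights $\wt_\bth(e)$ — depends continuously on $\Meas(f,\bth)$.

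The next step is a local reconstruction of angle differences. Let $v$ be an interior trivalent white vertex; the three strands through $v$ terminate at $b_a,b_b,b_c$, and relabelling so that $a<b<c$ in $[n]$, the three edges at $v$ carry the labels $\{a,b\},\{b,c\},\{a,c\}$. By \cref{prop:edges_conn_cpts} we have $\th_a<\th_b<\th_c<\th_a+\pi$, so, setting $\delta_1:=\th_b-\th_a$ and $\delta_2:=\th_c-\th_b$, the three weights at $v$ are proportional to $\sin\delta_1$, $\sin\delta_2$, $\sin(\delta_1+\delta_2)$ (a weight equals $1$ instead exactly when the corresponding edge is a boundary edge), with $\delta_1,\delta_2>0$ and $\delta_1+\delta_2<\pi$. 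The gauge class recovers the ratio of these three numbers; since $(\delta_1,\delta_2)\mapsto\bigl(\sin\delta_1:\sin\delta_2:\sin(\delta_1+\delta_2)\bigr)$ is a diffeomorphism from $\{\delta_1,\delta_2>0,\ \delta_1+\delta_2<\pi\}$ onto its image — the shape of a triangle is determined by the ratios of its sides — this recovers $\th_b-\th_a$ and $\th_c-\th_b$ continuously.

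To globalize, call $p,q\in[n]$ \emph{linked} if they occur together as strand endpoints at some interior white vertex of $G$. On the one hand, by \cref{prop:edges_conn_cpts} linked indices lie in a common connected component of $\GX_f$; on the other hand, any two strands forming an $f$-crossing cross at a trivalent vertex of $G$, hence share the incident edge, so that edge is labelled by the corresponding pair and its white endpoint (a boundary vertex being black, this endpoint is an interior white vertex) witnesses that the pair is linked. Thus the components of the linking relation are precisely those of $\GX_f$. Traversing each connected component of $G$ and composing the local reconstructions then recovers $\th_p-\th_q$ for all $p,q$ in a common component of $\GX_f$; together with the normalization $\th_{p_1}=\dots=\th_{p_{\conn_f}}=0$ built into $\THtp_f$ this determines $\bth$, and does so by a continuous formula, giving injectivity and hence the homeomorphism. (A possibly more conceptual variant replaces white-vertex triples by the alternating products~\eqref{eq:wt_alt_prod} around the faces of $G$, which for the critical weights are cross-ratios of the points $v_r=\exp(2i\th_r)$ on the unit circle, and reconstructs $\bth$ from those.)

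The step I expect to be the real obstacle — and the reason the statement is still only a conjecture — is the behaviour at white vertices, or faces, adjacent to the boundary. There one of the weights at $v$ is pinned to $1$ by the boundary-edge convention rather than recording an angle, the triangle degenerates, and the sines recovered from the gauge class leave a genuine two-fold ambiguity for the local pair $(\delta_1,\delta_2)$; resolving it requires showing that the consistency relations around the cycles of $G$ always single out one branch globally, which I do not know how to establish in general. For $f=\fkn$, however, the explicit top-cell graph — equivalently the Fourier-coefficient normal form furnished by \cref{thm:intro:bound_meas} (see \cref{prop:Fourier}) — makes the reconstruction unambiguous, which is exactly how \cref{thm:intro:inj_fkn} is proved.
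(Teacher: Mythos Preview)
The statement is a conjecture that the paper leaves open (only the special case $f=f_{k,n}$ is proved, in \cref{sec:proofs_top_cell}), and your proposed argument has a gap that occurs well before the boundary issue you flag.

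The central claim in your second paragraph is incorrect: the ratios of the three edge weights at an interior trivalent white vertex $v$ are \emph{not} gauge-invariant. A gauge transformation at $v$ itself preserves these ratios, but a gauge transformation at any interior black neighbour $u$ of $v$ rescales exactly one of the three edges incident to $v$ (together with the other edges at $u$) and therefore changes the ratio. Since the black neighbours of an interior white vertex are typically interior, the step ``the gauge class recovers the ratio of these three numbers'' fails. A dimension count makes the problem transparent: your reconstruction, as written, would apply to an arbitrary point of $\Rtp^{E(G)}/\Gau\cong\Ptp_f$ and output a point in a space of dimension $d_f=n-c_f$, which is impossible since $\dim\Ptp_f=k(n-k)-\ell(f)$ is in general much larger. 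So the obstruction is not the boundary behaviour you single out in your last paragraph; the argument already breaks in the interior.

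What \emph{are} gauge-invariant are the alternating products around faces (your parenthetical ``more conceptual variant''), and this is precisely the route the paper takes for $f=f_{k,n}$: it realises these products as cross-ratios $\crat(a,b;c,d)$ of the points $v_r=t_r^2$ via the twist map, supplements them with the bridge ratios $\br[t_p,t_{p-k}]/\br[t_p,t_{p-1}]$ read off from Pl\"ucker coordinates, and then invokes the inscribed-polygon lemma (\cref{lemma:inscribed_polygon}) to reconstruct $\bth$. No white-vertex triples enter, and the Fourier-coefficient normal form you cite is not how the top-cell case is handled. For general loopless $f$, extracting enough cross-ratio-type invariants to pin down $\bth$ is exactly what remains open.
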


The special case of \cref{conj:inj} for the top cell is proved in \cref{sec:proofs_top_cell}.
\begin{theorem}\label{thm:inj_fkn} 
The injectivity conjecture holds for $f=\fkn$ for $1\leq k\leq n$. In particular, %
\begin{equation*}%
  \dim_\C(\Critkn)=n-1 \quad\text{and}\quad \Ctpkn\cong \R_{>0}^{n-1} \quad\text{for $2\leq k\leq n-1$.}
\end{equation*}
\end{theorem}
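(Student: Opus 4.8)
The plan is to prove the injectivity conjecture for $f=\fkn$ by exhibiting an explicit inverse to the map $\Measf\colon\THtp_{\fkn}\to\Ctpkn$. By \cref{thm:intro:bound_meas}, for generic $\bth$ the point $\Meas(\fkn,\bth)\in\Grtnn(k,n)$ is the row span of the curve $\curve_{\fkn,\bth}(t)=(\g_1(t),\dots,\g_n(t))$, where, since for $f=\fkn$ one has $\J_r=\{r,r+1,\dots,r+k-2\}$ (indices mod $n$) and all signs $\eps_r=1$, the coordinates are
\begin{equation*}
  \g_r(t)=\prod_{j=0}^{k-2}\sin(t-\th_{r+j}).
\end{equation*}
First I would reduce to the generic case: the $\fkn$-crossings are exactly the pairs $\{p,q\}$ with $p\neq q$ (this needs to be checked from the definition of $\fkn$-crossing, but it is immediate since every pair of strands of $\pkn$ crosses when $1<k<n$), so $\GX_{\fkn}$ is the complete graph, $\conn_{\fkn}=1$, $\dimf=n-1$, and $\THtp_{\fkn}$ consists of all tuples with $\th_1<\th_2<\dots<\th_n<\th_1+\pi$ and (say) $\th_1=0$; in particular every such tuple is automatically generic. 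The cases $k=1$ and $k=n$ are trivial ($\curve$ is constant up to scalar, so $\Ctpkn$ is a point).

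The key step is recovering the angles $\th_2,\dots,\th_n$ from the subspace $V=\Meas(\fkn,\bth)$. The idea is that the numbers $v_r:=\exp(2i\th_r)$ are intrinsically attached to $V$: I would show that, viewing $\g_r(t)$ as a trigonometric polynomial of degree $k-1$, its complexification $\g_r$ vanishes (as a function of $z=e^{2it}$) exactly at $z=v_r,v_{r+1},\dots,v_{r+k-2}$, so consecutive ratios of coordinate functions encode consecutive $v$'s. Concretely, I would pass to the Fourier/Laurent description from \cref{prop:Fourier}: write each $\g_r$ in the basis $e^{i(k-1)t},e^{i(k-3)t},\dots,e^{-i(k-1)t}$, obtaining a $k\times n$ matrix $B(\bth)$ whose rows span $V$; this matrix is, up to an invertible constant transformation, a (generalized) Vandermonde-type matrix built from $v_1,\dots,v_n$. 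The column-wise structure shows that the cyclic sequence $(v_1,\dots,v_n)$ is determined by $V$ up to the simultaneous scaling coming from the freedom $\th_r\mapsto\th_r+c$, which is precisely the freedom killed by the normalization $\th_1=0$ in $\THtp_{\fkn}$; the ordering constraint $\th_1<\dots<\th_n<\th_1+\pi$ then pins down which root is which. This gives a continuous inverse $V\mapsto\bth$, hence a homeomorphism, and the dimension count $\dim_\C\Critkn=n-1$ follows since $\Ctpkn$ is the image of the $(n-1)$-dimensional cell $\THtp_{\fkn}\cong\Rtp^{n-1}$ and is Zariski-dense in $\Critkn$.

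I expect the main obstacle to be the argument that the unordered (or rather cyclically ordered) multiset $\{v_1,\dots,v_n\}$ is genuinely recoverable from the point $V\in\Gr(k,n)$ and not just from the particular matrix $B(\bth)$ — i.e., that no accidental linear relation among the columns of $B(\bth)$ destroys the correspondence, and that the map $\bth\mapsto V$ is injective rather than merely generically finite. The cleanest way around this is probably to identify, for each $r$, the linear functional on $V$ (a Plücker-type ratio, e.g.\ a ratio of two maximal minors indexed by $\fkn$-Grassmann-necklace sets) whose value is an explicit rational function of $v_r,v_{r+1}$ only; inverting a chain of such relations reconstructs all $v_r$ from $V$ continuously. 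Verifying that such functionals exist and computing them explicitly — essentially a determinant identity for the Vandermonde-like matrix $B(\bth)$, in the spirit of \figref{fig:plabic}(d) and Ptolemy's theorem — is the technical heart of the proof; the rest is bookkeeping with the normalization and the admissibility inequalities.
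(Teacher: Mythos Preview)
Your overall strategy --- exhibit an explicit continuous inverse to $\Measop_{\fkn}$ by recovering the $v_r=\exp(2i\th_r)$ from Pl\"ucker data --- matches the paper's, and your reductions (trivial $k=1,n$; $\GX_{\fkn}$ connected so $\dimf=n-1$; every $\bth\in\THtp_{\fkn}$ automatically generic) are correct in substance, though a few details are off: not every pair $\{p,q\}$ is an $\fkn$-crossing (for $f_{2,4}$ the pairs $\{1,3\},\{2,4\}$ are not), $J_r=\{r+1,\dots,r+k-1\}$ rather than $\{r,\dots,r+k-2\}$, and the signs $\eps_r$ are not all $1$. None of these affects the plan.

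The real gap is at the step you yourself flag. The heuristic ``the zeros of $\g_r$ as a function of $t$ are the $v_j$'' is not directly usable, because the curve $\curve_{\fkn,\bth}\colon\R\to V$ is not intrinsic to the subspace $V$: under a change of basis of $V$ (i.e.\ left multiplication of the Fourier matrix by $\GL_k$) the $r$-th column changes arbitrarily, so the ``polynomial whose coefficients are that column'' has no invariant meaning. What is intrinsic are Pl\"ucker ratios, and here the paper supplies the concrete tools you are missing. First, by the results of Muller--Speyer (\cref{sec:twist}) combined with~\cite{OPS}, for any $1\leq a<b<c<d\leq n$ there is a graph $G\in\Gred(\fkn)$ with a square face bounded by edges labeled $\{a,b\},\{b,c\},\{c,d\},\{a,d\}$, and the \emph{cross-ratio}
\[
(\v_a,\v_b;\v_c,\v_d)=\frac{(\v_c-\v_a)(\v_d-\v_b)}{(\v_c-\v_b)(\v_d-\v_a)}
\]
is a ratio of Pl\"ucker coordinates of the left twist $\cev\tau(\Meas(\fkn,\bth))$ (namely of the four face labels adjacent to that square). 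Second, bridge removal at each $r$ (\cref{sec:bridge_rem}, \cite[Lemma~7.6]{LamCDM}) gives $\br[t_p,t_{p-k}]/\br[t_p,t_{p-1}]$ as a ratio of two Pl\"ucker coordinates of $\Meas(\fkn,\bth)$ itself. Feeding $\{1,2,k,k+1\}$ and $\{1,2,k+1,k+2\}$ into the cross-ratio and $p=k+1,k+2$ into the bridge ratio pins down the four ratios of consecutive ``side lengths'' among $\th_1,\th_2,\th_k,\th_{k+1}$; then a separate uniqueness lemma (an inscribed convex polygon is determined up to rotation by the ratios of its side lengths) recovers $(\th_1,\th_2,\th_k,\th_{k+1})$, and further cross-ratios with $\{1,q,k,k+1\}$ recover each remaining $\th_q$. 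Note this argument needs four distinct indices $1,2,k+1,k+2$, so it only works for $2\leq k\leq n-2$; the case $k=n-1$ must be treated separately (there $\Meas(\fkn,\bth)$ records exactly the side-length ratios, so the inscribed-polygon lemma alone finishes it). Your proposed ``ratio of Grassmann-necklace minors'' is not quite the right object: what one actually uses are twisted face-label minors and bridge ratios, not necklace minors.
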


\subsection{Reduced strand diagrams}\label{sec:RSD}
As in \cref{sec:intro-critical-varieties}, let us consider a disk with $2n$ boundary points $\m1, \p1,\m2,\p2,\dots,\m n,\p n$ ordered clockwise. 

\begin{definition}\label{dfn:strand_diag}
A \emph{strand diagram} $D$ is a collection of $n$ smooth oriented paths (\emph{strands}) $\str_1,\str_2,\dots,\str_n$ in a disk such that 
\begin{enumerate}
\item no three strands intersect at one point and no strand intersects itself,
\item all intersections are transversal and lie in the interior of the disk,
\item each strand $\str_p$ starts at $\p s$ (for some $s\in[n]$) and ends at $\m p$.
\end{enumerate}
\end{definition}
\noindent Thus the directions of the strands alternate around the boundary of the disk. 

As a special case, this definition contains Postnikov's \emph{alternating strand diagrams}~\cite{Pos}, which are essentially the diagrams that arise by drawing the strands associated to a given reduced graph $G$. Alternating strand diagrams usually contain pairs of strands that intersect multiple times. We will focus on the opposite special case.
\begin{definition}\label{dfn:RSD}
A \emph{reduced strand diagram} is a strand diagram in which any two strands intersect at most once.
\end{definition}
\noindent  For example, \figref{fig:move}(b) contains an alternating strand diagram on the left and a reduced strand diagram on the right. \Cref{dfn:RSD} includes \cref{dfn:intro:strand_diag} as a special case: given $f\in\Bkn$, we denote by $D_f$ its reduced strand diagram where all  strands are straight. 

Let $D$ be a reduced strand diagram. We introduce two graphs, $\GX_D$ and $\GXV_D$ with vertex set $[n]$. As before, $\GX_D$ is an undirected graph containing an edge $\{p,q\}$ whenever $\str_p$ and $\str_q$ form a crossing. Note that $\GX_D$ admits a natural orientation. Let us say that two strands $\str_p$ and $\str_q$ form a \emph{positive crossing}  if the points $\p s, \p t, \m p,\m q$ are cyclically ordered clockwise, where $\str_p$ connects $\p s\to \m p$ and $\str_q$ connects $\p t \to \m q$; see \figref{fig:align}(a). We then let $\GXV_D$ be the directed graph containing an edge $(p\to q)$ whenever $\str_p$ and $\str_q$ form a positive crossing. See \figref{fig:GX}(c).

\begin{definition}\label{dfn:convex_GXV}
An \emph{increasing cycle} is a directed cycle $(a_1\to a_2\to\dots\to a_m\to a_1)$ in $\GXV_D$ such that $1\leq a_1<a_2<\dots<a_m\leq n$.
\end{definition}
In the special case where each strand is straight, if the strands $\str_{a_1},\str_{a_2},\dots,\str_{a_m}$ bound a convex region $R$ in the disk such that the boundary of $R$ is oriented either clockwise or counterclockwise then the corresponding edges of $\GXV_D$ form an increasing cycle (after cyclically shifting the indices). We will define convex regions for arbitrary reduced strand diagrams below in \cref{dfn:convex}.  
 A fundamental tool that we will use to study critical cells is the following result which states that every crossing of $D$ is a vertex of such a convex region. 

\begin{figure}
  \includegraphics{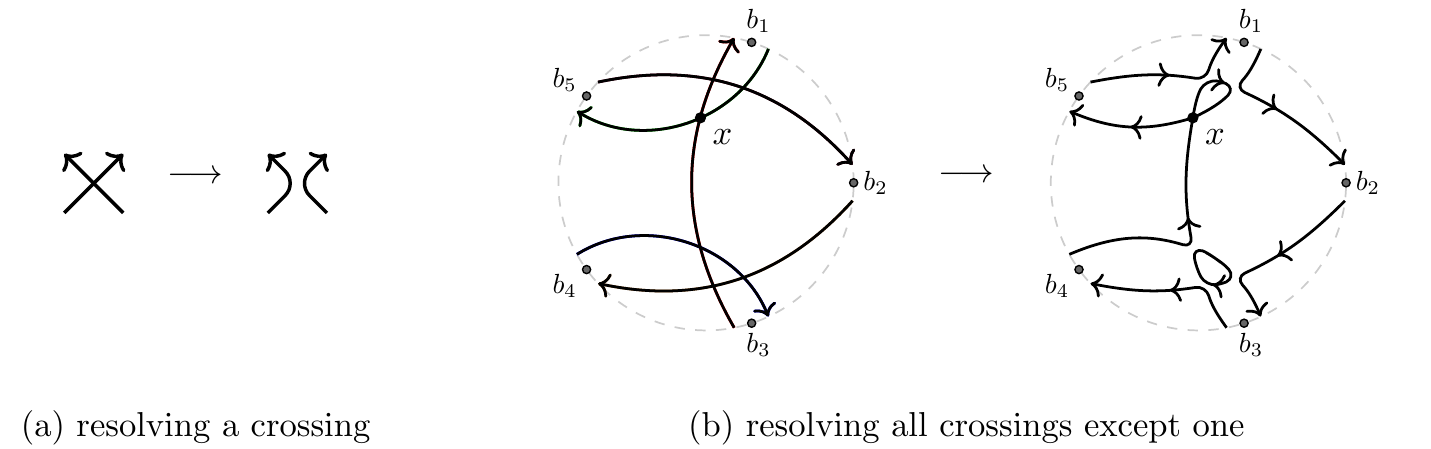}
  \caption{\label{fig:resolve} Resolving crossings in reduced strand diagrams. For the picture on the right, the point $x$ belongs to two strands denoted $\str'_p$ and $\str'_q$ in the proof of \cref{prop:convex}.}
\end{figure}

\begin{proposition}\label{prop:convex}
Every edge of $\GXV_D$ belongs to an increasing cycle.
\end{proposition}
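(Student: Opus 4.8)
The plan is to prove \cref{prop:convex} by locating, near the given crossing, an innermost region cut out by the strands and reading its boundary as an increasing cycle; the main tool is the orientation‑preserving resolution of crossings depicted in \figref{fig:resolve}. Fix an edge $p\to q$ of $\GXV_D$, arising from a positive crossing $x=\str_p\cap\str_q$ with $\str_p\colon\p s\to\m p$ and $\str_q\colon\p t\to\m q$, so that $\p s,\p t,\m p,\m q$ occur clockwise on the circle. First I would resolve $x$ in the orientation‑preserving way, replacing the pair $\{\str_p,\str_q\}$ by the simple arcs $\str'_p\colon\p s\to\m q$ (follow $\str_p$ up to $x$, then $\str_q$) and $\str'_q\colon\p t\to\m p$ (follow $\str_q$ up to $x$, then $\str_p$). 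Using the clockwise order of the four endpoints one checks that $\str'_p$ and $\str'_q$ are disjoint and that the resolution strictly decreases the total number of crossings; however, $\str'_p$ (or $\str'_q$) may cross some other strand twice, so the resolved diagram need not be reduced. The object I want to extract is the region $R_0$ cut off by $\str'_p$ on the side whose circular boundary arc runs from $\m q$ clockwise to $\p s$ --- equivalently, the quadrant of $\str_p\cup\str_q$ whose corner at $x$ turns $\str_p$ into $\str_q$ compatibly with their orientations. This is one of the two candidate ``convex'' regions at $x$ in the sense of \cref{dfn:convex}.

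Next I would shrink $R_0$ to an innermost region $R$ still having $x$ on its boundary. Any strand meeting the interior of $R_0$ must cross the $\str_p$-arc or the $\str_q$-arc of $\partial R_0$, hence contributes an edge of $\GX_D$ incident to $p$ or $q$; by repeatedly cutting a corner of the region along such a strand one reaches, after finitely many steps, a region $R$ whose entire boundary consists of arcs of strands $\str_{a_1},\dots,\str_{a_m}$ (with $\str_p$ and $\str_q$ among them), the circular part of the boundary having been consumed. Traversing $\partial R$ compatibly with the strand orientations, every corner is a positive crossing, so the traversal is a closed directed walk $a_1\to a_2\to\cdots\to a_m\to a_1$ in $\GXV_D$ that uses the edge $p\to q$; since two strands of a reduced strand diagram cross at most once, no vertex is repeated and this is an honest directed cycle through $p\to q$.

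It then remains to see that this cycle is \emph{increasing}, which I expect to be the main obstacle. The plan is to track the clockwise positions, along $\partial R$, of the endpoints $\p{\cdot}$ and $\m{\cdot}$ of the arcs forming $\partial R$: positivity of each corner crossing pins down, for consecutive boundary arcs of $\str_{a_i}$ and $\str_{a_{i+1}}$, the cyclic order of the points $\p{s_i},\p{s_{i+1}},\m{a_i},\m{a_{i+1}}$, and chasing these constraints once around the simple closed curve $\partial R$ should force $a_1<a_2<\cdots<a_m$ after the correct base vertex is chosen --- exactly the condition in \cref{dfn:convex_GXV}.

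Besides the monotonicity bookkeeping, the other delicate point is making the shrinking step rigorous: one must argue that an innermost region with no circular boundary arc actually exists and is convex in the sense of \cref{dfn:convex}, which forces one to handle strands that enter $R_0$ through its circular boundary arc, or that have an endpoint lying on it (for instance some $\m r$ between $\m q$ and $\p s$). The base of the induction underlying ``finitely many steps'' is the crossing‑free case, where the statement is vacuous, so termination is automatic once the shrinking step is set up correctly.
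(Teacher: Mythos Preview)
Your plan shares the paper's endgame---find a convex region at the crossing $x$ and read off its boundary as an increasing cycle---but the existence step is where you and the paper diverge, and your version has a genuine gap.

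You resolve \emph{only} the crossing $x$ and try to shrink the lune $R_0$ bounded by the $\str_p$-arc, the $\str_q$-arc, and a circular arc.  The claim ``any strand meeting the interior of $R_0$ must cross the $\str_p$-arc or the $\str_q$-arc'' is simply false: a strand can have both of its endpoints (say $\p r$ and $\m{r'}$) on the circular portion of $\partial R_0$ and lie entirely inside $R_0$ without touching $\str_p$ or $\str_q$.  More seriously, nothing in your shrinking procedure explains why the circular arc is ever ``consumed''; you could be left with a region whose boundary still contains an arc of the circle, and then there is no closed cycle in $\Gtop_D$ to read off.  You also commit in advance to the $\str'_p$-side of $x$, but for an arbitrary positive crossing the convex region can sit on either side (equivalently, be oriented clockwise or counterclockwise), and you give no reason your chosen side must work.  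You flag these issues yourself in the last paragraph, but they are not peripheral bookkeeping---they are the heart of the argument.

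The paper handles existence differently: it resolves \emph{all} crossings except $x$, getting a diagram $D_x$ with a single crossing, and then observes that if both outgoing strands from $x$ in $D_x$ reached the boundary, the arc between their endpoints would contain more sources than targets (a parity obstruction), so at least one of them closes up into a loop through $x$.  This produces a cycle $C'$ in $\Gtop_D$ through $x$ with no circular boundary at all.  Only then does the paper minimize area among cycles turning right at $x$, and shows by a local replacement argument (if $C$ turns left at some $u$, reroute along one of the two straight strands through $u$ to cut off a strictly smaller region still containing $x$) that the minimizer is clockwise convex.  The parity step is exactly the missing idea in your proposal: it is what guarantees a closed region exists before any shrinking begins, and it is also what determines which of the two sides of $x$ to work on.
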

\noindent For instance, the diagram in \figref{fig:GX}(a) contains six crossings, and they form two convex regions (shaded triangles). This corresponds to having increasing cycles $(1\to2\to5\to1)$ and $(1\to3\to4\to1)$ in $\GXV_D$ shown in \figref{fig:GX}(c).
\begin{proof}

Let $e=(p\to q)$ be an edge of $\GXV_D$, thus $\str_p$ and $\str_q$ form a positive crossing. Let $\cpt$ be the intersection point of $\str_p$ and $\str_q$. Let $D_\cpt$ be obtained from $D$ by ``resolving'' all crossings except for $\cpt$; see \figref{fig:resolve}(b) for an example. Here \emph{resolving} a crossing $\cpt'$ is a local transformation that replaces a neighborhood of a crossing point $\cpt'$ as shown in \figref{fig:resolve}(a).  Thus $D_\cpt$ contains a single crossing point $\cpt$. Some of the strands of $D_\cpt$ are closed curves in the interior of the disk, however, it is still true that for each $r\in[n]$, one strand of $D_\cpt$ starts at $\p r$ and one strand of $D_\cpt$ ends at $\m r$. Consider the two strands $\str'_p$ and $\str'_q$ of $D_\cpt$ emanating from $\cpt$. Assume that they both terminate at the boundary of the disk, say, at points $\m{p'}$ and $\m{q'}$ for some $p',q'\in[n]$. Then the arc between $\m{p'}$ and $\m{q'}$ contains more starting strands than ending  strands. None of such strands can intersect  either $\str'_p$ or $\str'_q$. We arrive at a contradiction. Thus at least one of the strands $\str'_p$ or $\str'_q$ does not terminate at the boundary, and therefore it terminates at $\cpt$. Thus it forms a closed directed path $\cyc'$ starting and ending at $\cpt$; see \figref{fig:resolve}(b).

Let us introduce another directed graph, the \emph{topological graph} $\Gtop_D$ of $D$. The vertices of $\Gtop_D$ are the crossing points and the boundary vertices of the strands of $D$. Thus each strand of $D$ passes through the vertices $u_0,u_1,\dots,u_r$ of $\Gtop_D$, where $u_0$ and $u_r$ lie on the boundary of the disk. The edge set of $\Gtop_D$ consists of these directed line segments $(u_0\to u_1)$, \dots, $(u_{r-1}\to u_r)$ for all strands of $D$. Thus each interior vertex of $\Gtop_D$ has two incoming and two outgoing edges.

\begin{figure}
  \includegraphics{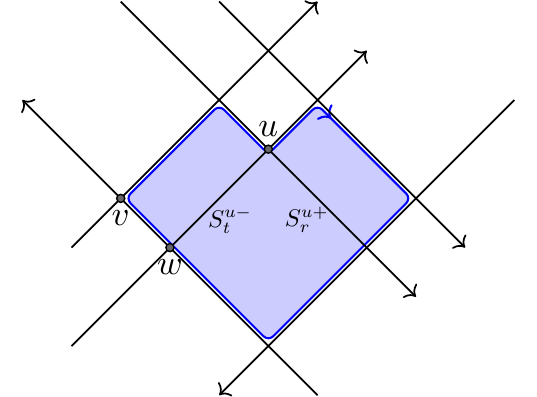}
  \caption{\label{fig:cycle-turn} A (clockwise) cycle $\cyc$ which turns right at $v$, goes straight at $w$, and turns left at $u$.}
\end{figure}

\begin{definition}\label{dfn:convex}
Consider a simple directed cycle $\cyc$ in $\Gtop_D$ passing through a vertex $u$. Then $\cyc$ either \emph{turns right}, \emph{turns left}, or \emph{goes straight} at $u$, depending on which incoming and which outgoing edge of $u$ it uses; see \cref{fig:cycle-turn}. We say that $\cyc$ is \emph{clockwise convex} if it either turns right or goes straight at each of its vertices. Similarly, $\cyc$ is \emph{counterclockwise convex} if it either turns left or goes straight at each of its vertices. Each (counter)clockwise convex cycle is a Jordan curve, and we say that it bounds a \emph{(counter)clockwise convex region}.
\end{definition}

Recall that we have constructed a directed cycle $\cyc'$ in $\Gtop_D$ passing through $\cpt$. It is straightforward to check that this cycle is simple, i.e., passes through each vertex at most once. By construction, $\cyc'$ does not go straight at $\cpt$. Without loss of generality, let us assume that it turns right at $\cpt$. Therefore $\cyc'$ is oriented clockwise around the boundary of a (not necessarily convex) region $R'$. Our goal is to find a clockwise convex region $R\subset R'$ that contains $\cpt$. Let $\cyc$ be a cycle that turns right at $\cpt$ and bounds a region $R\subset R'$ of minimal possible area. Then we claim that $\cyc$ is clockwise convex. Indeed, suppose otherwise that it turns left at some vertex $u$ of $\Gtop_D$; see e.g. \cref{fig:cycle-turn}. Consider the strand $\str_r$ passing through the unique outgoing edge of $u$ that is not used by $\cyc$. Let us consider the part $\str_r^{u+}$ of $\str_r$ between $u$ and the endpoint of $\str_r$. The path $\str_r^{u+}$ must intersect $\cyc$ at some other vertex. Let $u'\neq u$ be the first vertex on $\str_r^{u+}$ that belongs to $\cyc$. We see that $\cyc$ either turns left or goes straight at $u'$. In particular, neither $u$ nor $u'$ is equal to $\cpt$, since $\cyc$ turns right at $\cpt$. Let $\cyc^\parr r$ be obtained by replacing the arc of $\cyc$ connecting $u$ to $u'$ with the corresponding part of $\str_r$. 

Consider also the strand $\str_t$ passing through the unique incoming edge of $u$ not used by $\cyc$. We let $\str_t^{u-}$ denote the part of $\str_t$ before $u$, and let $u''\neq u$ be the last vertex belonging to both $\str_t^{u-}$ and $\cyc$. Replacing the arc of $\cyc$ connecting $u''$ to $u$ with the corresponding part of $\str_t^{u-}$, we get another cycle $\cyc^\parr t$. It is clear that $x$ must belong to either $\cyc^\parr r$ or $\cyc^\parr t$ (or both), since the strands $\str_r$ and $\str_t$ intersect only at $u$. Thus we have found a cycle satisfying the above conditions that bounds a region of area smaller than $R$. This is a contradiction, and thus $\cyc$ is clockwise convex. It is then easy to check that a clockwise (or counterclockwise) convex cycle in $\Gtop_D$ yields an increasing cycle in $\GXV_f$ in the sense of \cref{dfn:convex_GXV}.
\end{proof}

\subsection{Proof of Proposition~\ref{prop:edges_conn_cpts}}\label{sec:proof-of_edge_conn_cpts}
Let $e$ be a non-boundary edge of $G$ labeled by $\{p,q\}$ with $1\leq p<q\leq n$. First, observe that if \eqref{eq:edges_conn_cpts} holds for any $f$-admissible tuple $\bth$ then $p$ and $q$ belong to the same connected component of $\GX_f$. Indeed, suppose otherwise that they belong to different connected components. The notion of $f$-admissibility is invariant under adding a constant to all $\th_{q'}$ for $q'$ in the same connected component as $q$. We can choose the constant so that $\th_p$ becomes equal to $\th_q$, contradicting~\eqref{eq:edges_conn_cpts}. It is also clear that~\eqref{eq:edges_conn_cpts} implies $\wt_\bth(e)>0$. It thus remains to prove~\eqref{eq:edges_conn_cpts} for all $f$-admissible tuples $\bth$. 

Recall that $G$ is assumed to be contracted. More generally, each (not necessarily contracted) graph $G'$ may be transformed into a contracted graph $G$ by removing interior degree $2$ vertices. Clearly, showing~\eqref{eq:edges_conn_cpts} for some graph $G'$ whose contracted version is $G$ implies that~\eqref{eq:edges_conn_cpts} also holds for $G$. In addition, observe that if two graphs $G$ and $G'$ are connected by a square move then~\eqref{eq:edges_conn_cpts} holds for $G$ if and only if it holds for $G'$. To see that, notice that whenever one performs a square move (\cref{fig:sq-mv}) on a contracted reduced graph $G$, this graph $G$ contains four edges labeled by $\{a,b\}$, $\{b,c\}$, $\{c,d\}$, and $\{a,d\}$ for $1\leq a<b<c<d\leq n$. Assuming~\eqref{eq:edges_conn_cpts} holds for $G$, we must have $\th_a<\th_b<\th_c<\th_d<\th_a+\pi$ for all $f$-admissible $\bth$. It is then easy to see that after performing the square move,~\eqref{eq:edges_conn_cpts} still holds for all non-boundary edges of $G'$. Since all (contracted) graphs $G\in\Gred(f)$ are related by square moves, it suffices to prove the statement for just one of them.

We shall proceed by induction on $f$ using the bridge removal procedure from \cref{sec:bridge_rem}. For the base case, observe that when $k=1$ and $f$ is loopless, all edges of $G$ are adjacent to the boundary and therefore have weight $1$. Let us now assume that $f\in\Bkn$ is loopless and $k>1$. Choose $r\in[n]$ such that $f$ has a bridge at $r$. Thus we have $r<r+1\leq f(r)<f(r+1)\leq r+n$. Let $G\in\Gred(f)$ be such that $G$ has a bridge configuration at $b_r,b_{r+1}$ as in \figref{fig:bridge}(left). 

Let us first consider the case $r+1=f(r)$. Then the interior black vertex in \figref{fig:bridge}(left) has degree $2$ and therefore $G$ is not contracted. Contracting the two edges incident to that vertex, we see that both $b_r$ and $b_{r+1}$ become connected to the same interior white vertex. Therefore $r+1$ does not appear in the label of any non-boundary edge of $G$. We may therefore remove the vertex $b_{r+1}$ from $G$ and deduce the result by induction.

\begin{figure}
  \includegraphics{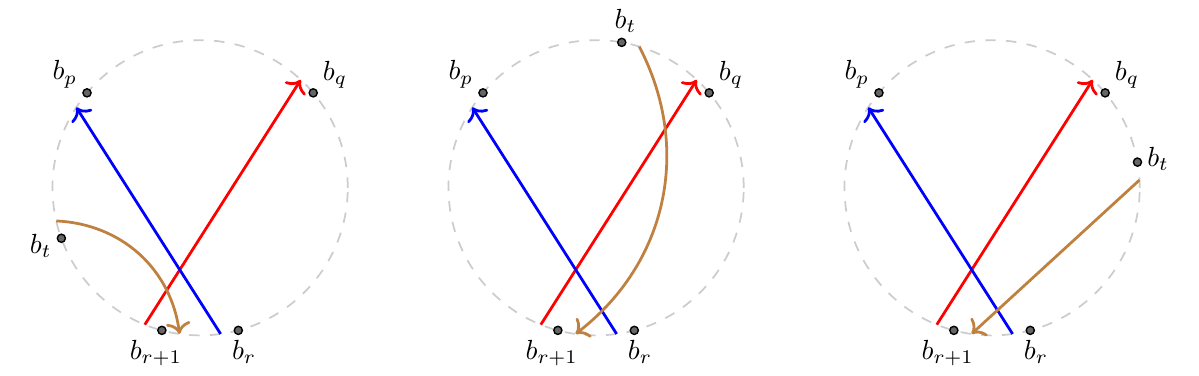}
  \caption{\label{fig:strand-bridge} The strand $S_{r+1}$ has to cross either $S_p$ or $S_q$.}
\end{figure}

Assume now that $r+1<f(r)$, thus $r<r+1 <f(r)<f(r+1)\leq r+n$. By definition, $p:=f(r)$ and $q:=f(r+1)$ form an affine $f$-crossing (cf. \cref{sec:cyclic-shift}), and thus by~\eqref{eq:affine_f_crossing}, we have $\tht_p<\tht_q<\tht_{p+n}$ for any $f$-admissible sequence $\btht$. Let $t:=f^{-1}(r+1)$. Since $p$ and $q$ form an affine $f$-crossing, the strands $\str_q=(\p{(r+1)}\to\m q)$ and $\str_p=(\p r\to\m p)$ form an $f$-crossing. The strand $\str_{r+1}=(\p t\to\m{(r+1)})$ therefore must cross either one or both of these strands; cf. \cref{fig:strand-bridge}.  Assume for example that it crosses $\str_q$ as in \figref{fig:strand-bridge}(left).  Choose a reduced strand diagram $D$ of $f$ such that both crossings are closer to $\p{(r+1)}$ than all other crossings. Denote the crossing point of the strands $\str_q$ and $\str_{r+1}$ by $\cpt$ and let $\cyc$ be a convex cycle from the proof of \cref{prop:convex} that passes through $\cpt$. We see that $\cyc$ must turn left at $\cpt$ and therefore it is a counterclockwise convex cycle. Moreover, denoting by $u$ the crossing point of $\str_q$ and $\str_p$, the construction in the proof of \cref{prop:convex} implies that $\cyc$ must turn left at $u$ as well. We have therefore found an increasing cycle passing through the edges $p\to q\to r+1$ in $\GXV_D$, where the indices again are taken modulo $n$. Label the vertices of this cycle by $(a_1\to a_2\to \cdots \to a_m\to a_1)$ as in \cref{dfn:convex_GXV}. Then we see that $\th_{a_1}<\th_{a_2}<\cdots<\th_{a_m}<\th_{a_1}+\pi$ for any $f$-admissible $\bth$. Since $r+1<p<q<r+1+n$, it follows that $\tht_{r+1}<\tht_p<\tht_q<\tht_{r+1}+\pi$.

Let $G'$ be the graph obtained from $G$ by removing the bridge at $p$. Thus $G'$ is reduced and has strand permutation $s_rf$. Even though $G'$ may not be contracted, the only edges that may need to get contracted are labeled by $\{p,r+1\}$. Let $\bth$ be an $f$-admissible tuple. Since $\tht_{r+1}<\tht_p<\tht_q<\tht_{r+1}+\pi$, it follows that $\bth$ is also $s_rf$-admissible. By the induction hypothesis, we may assume that~\eqref{eq:edges_conn_cpts} holds for all non-boundary edges of (the contracted version of) $G'$. Again using $\tht_{r+1}<\tht_p<\tht_q<\tht_{r+1}+\pi$, we see that~\eqref{eq:edges_conn_cpts} holds for all non-boundary edges of $G$. This completes the induction step. \qed

\subsection{Factorization}\label{sec:factorization}
Let $f\in\Bkn$ be loopless and consider the connected components $[n]=\Conc^\parr1\sqcup \Conc^\parr2\sqcup\dots\sqcup \Conc^\parr{\conn_f}$ of $\GX_f$. Our goal is to define ``restrictions'' of $f$ to each connected component and argue that $\Measop_f$ ``factors'' as an independent product of the boundary measurement maps for the restrictions. This is not completely straightforward: for $p\in[n]$, it may happen that the vertices $\m p$ and $\p p$ belong to different connected components of the strand diagram $D_f$, thus the vertex $b_p$ of the corresponding reduced graph $G$ appears to belong to both components simultaneously.

\begin{figure}
  \includegraphics{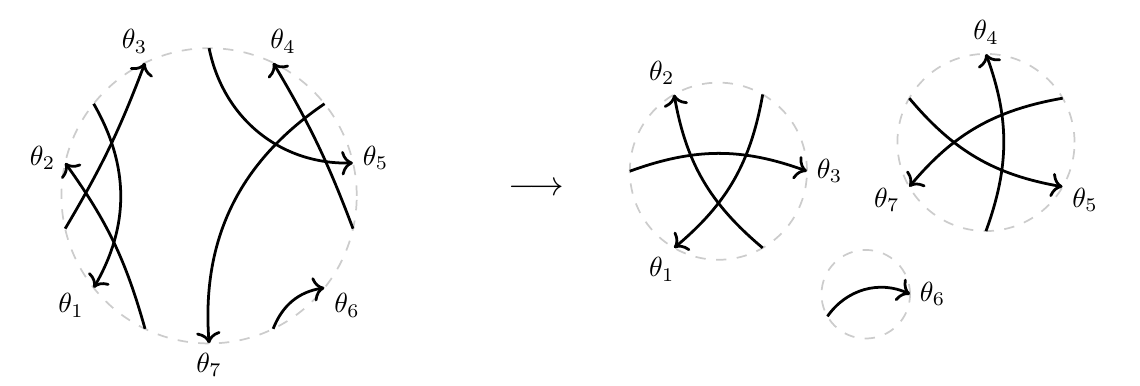}
  \caption{\label{fig:strand-factor} Splitting a strand diagram into connected components.}
\end{figure}

Consider a connected component $\Conc\subset [n]$ of $\GX_f$ and consider the strands $\{\str_p\mid p\in \Conc\}$ of $D_f$ that belong to $\Conc$. Let $D_f |_\Conc$ be obtained from $D_f$ by erasing all other strands. Then the directions of the strands of $D_f|_\Conc$ still alternate around the boundary of the disk. Thus after relabeling the strands and boundary vertices by integers in $[n']$ where $n':=|\Conc|$, $D_f |_\Conc$ becomes a strand diagram in the sense of \cref{dfn:strand_diag}. We denote by $f|_\Conc\in\Bound(k',n')$ the corresponding loopless bounded affine permutation. Finally, given any $f$-admissible tuple $\bth$, the restriction $\bth|_\Conc$ is defined in an obvious way: for $p\in \Conc$, if the strand $\str_p$ in $D_f$ is labeled as $\str_{p'}$ in $D_f|_\Conc$ for some $p'\in[n']$ then we set $(\bth|_\Conc)_{p'}:=\th_p$. In other words, $\th_p$ is viewed as a real parameter attached to the \emph{endpoint} of the strand $\str_p$. See \cref{fig:strand-factor} for an example. 

For $r\in[\conn_f]$, we denote by $f^\parr r:=f|_{\Conc^\parr r}$ the restriction of $f$ to the connected component $\Conc^\parr r$ of $\GX_f$. We are ready to state our factorization result.

\begin{proposition}\label{prop:factor}
We have a homeomorphism
\begin{equation*}%
  \Ctp_f\xrasim \Ctp_{f^\parr1}\times\Ctp_{f^\parr2}\times\cdots \times \Ctp_{f^\parr{\conn_f}}.
\end{equation*}
\end{proposition}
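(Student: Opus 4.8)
The plan is to reduce the statement to the boundary measurement formula of \cref{thm:intro:bound_meas} together with a factorization of the curve $\curve_{f,\bth}$ into a product over the connected components of $\GX_f$. First I would match up the parameter spaces. For a loopless $f$, a tuple $\bth\in\R^n$ is $f$-admissible if and only if each restriction $\bth|_{\Conc^{\parr r}}$ is $f^{\parr r}$-admissible: every $f$-crossing is a crossing of two strands of $D_f$ that therefore lie in a common component $\Conc^{\parr r}$, and erasing the remaining strands does not change which of the surviving strands cross, so $\GX_{f^{\parr r}}$ is exactly the subgraph of $\GX_f$ induced on $\Conc^{\parr r}$. Writing $n_r:=|\Conc^{\parr r}|$, the relabeling bijection $[n]=\bigsqcup_r\Conc^{\parr r}\to\bigsqcup_r[n_r]$ carries $\THtp_f$ onto $\prod_r\THtp_{f^{\parr r}}$, and this map and its inverse are manifestly continuous (note $\dimf=n-\conn_f=\sum_r(n_r-1)$). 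Since $\Measf\colon\THtp_f\to\Ctp_f$ and $\Measop_{f^{\parr r}}\colon\THtp_{f^{\parr r}}\to\Ctp_{f^{\parr r}}$ are continuous surjections, it then suffices to show that $\Meas(f,\bth)$ depends on $\bth$ only through the tuple $\bigl(\Meas(f^{\parr r},\bth|_{\Conc^{\parr r}})\bigr)_r$ and that the resulting induced map $\Ctp_f\to\prod_r\Ctp_{f^{\parr r}}$ is a homeomorphism.

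For a generic $f$-admissible $\bth$ I would use \cref{thm:intro:bound_meas}: $\Meas(f,\bth)=\Span(\curve_{f,\bth})$ with $\g_r(t)=\eps_r\prod_{p\in\J_r}\sin(t-\th_p)$. The combinatorial heart is the identity $\J_r\cap\Conc^{\parr\rho}=\J^{\parr\rho}_{\iota_\rho(r)}$ for all $r\in[n]$ and $\rho\in[\conn_f]$, where $\iota_\rho\colon[n]\to[n_\rho]$ records the boundary arc of $D_{f^{\parr\rho}}$ into which the point $b_r$ falls and $\J^{\parr\rho}_\bullet$ denotes the Grassmann necklace of $f^{\parr\rho}$; this holds because ``$b_r$ to the left of a strand of $\Conc^{\parr\rho}$'' depends only on the position of $b_r$ relative to the strands of $\Conc^{\parr\rho}$, so $D_f$ merely superimposes the other components onto $D_{f^{\parr\rho}}$ and each face of $D_{f^{\parr\rho}}$ is subdivided into faces of $D_f$ whose labels agree after restriction to $\Conc^{\parr\rho}$. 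Consequently, writing $\curve^{\parr\rho}:=\curve_{f^{\parr\rho},\,\bth|_{\Conc^{\parr\rho}}}$, each coordinate factors as $\g_r(t)=\sigma_r\prod_{\rho}\g^{\parr\rho}_{\iota_\rho(r)}(t)$, where the sign $\sigma_r=\eps_r\prod_\rho\eps^{\parr\rho}_{\iota_\rho(r)}$ (with $\eps^{\parr\rho}$ the sign of \cref{dfn:curve} attached to $f^{\parr\rho}$) is checked to be independent of $r$ directly from the defining formula for $\eps_r$, hence can be absorbed into an overall scalar. Thus $\Span(\curve_{f,\bth})$ is the image of $\bigl(\Span(\curve^{\parr\rho})\bigr)_\rho$ under an explicit gluing map $\Phi$ on Grassmannians determined by the $\iota_\rho$, and the non-generic case is covered by the degenerate boundary measurement formula of \cref{sec:degen-bound-meas} (or follows by continuity of $\Measf$, since generic tuples are dense). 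It then remains to verify that $\Phi$ restricts to a homeomorphism $\prod_r\Ctp_{f^{\parr r}}\xrasim\Ctp_f$: the target has the correct dimension $k$ by \cref{thm:intro:bound_meas}, surjectivity and continuity are built into the construction, and injectivity with continuous inverse follows by recovering the projective curve $[\curve^{\parr\rho}]$, and hence its linear span $\Span(\curve^{\parr\rho})$, from $\Span(\curve_{f,\bth})$: a ratio $\g_r/\g_{r'}$ with $\iota_{\rho''}(r)=\iota_{\rho''}(r')$ for all $\rho''\neq\rho$ equals $\pm\,\g^{\parr\rho}_{\iota_\rho(r)}/\g^{\parr\rho}_{\iota_\rho(r')}$, and enough such pairs are available to reconstruct $[\curve^{\parr\rho}]$ completely.

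The step I expect to be the main obstacle is the one in the previous paragraph: establishing the identity $\J_r\cap\Conc^{\parr\rho}=\J^{\parr\rho}_{\iota_\rho(r)}$ together with the correct index maps $\iota_\rho$ and signs $\sigma_r$ — which requires understanding precisely how the faces of the restricted strand diagrams $D_{f^{\parr\rho}}$ sit inside those of $D_f$, including the case where one component is nested inside a face of another — and then checking that the resulting gluing map $\Phi$ is genuinely a homeomorphism onto $\Ctp_f$ rather than merely a continuous surjection, the delicate point being that boundary slots $b_r$ agreeing across all but one component must suffice to recover each projective factor curve.
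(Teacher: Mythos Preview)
Your curve-factorization approach differs substantially from the paper's, and it has a genuine gap at exactly the point you flag as the main obstacle --- but the failure is more fundamental than you anticipate.

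The factorization $\g_r(t)=\sigma_r\prod_\rho \g^{\parr\rho}_{\iota_\rho(r)}(t)$ (granting the combinatorial identity $\J_r\cap\Conc^{\parr\rho}=\J^{\parr\rho}_{\iota_\rho(r)}$) shows that the \emph{parametrized curve} $\curve_{f,\bth}$ is determined by the parametrized component curves $(\curve^{\parr\rho})_\rho$. It does \emph{not} show that $\Span(\curve_{f,\bth})=\Meas(f,\bth)$ is determined by the \emph{spans} $\Span(\curve^{\parr\rho})=\Meas(f^{\parr\rho},\bth|_{\Conc^{\parr\rho}})$. Concretely: if two tuples $\bth|_{\Conc^{\parr\rho}}\neq\bth'|_{\Conc^{\parr\rho}}$ in $\THtp_{f^{\parr\rho}}$ happened to give the same component measurement (which is exactly what the open injectivity conjecture would forbid), your construction gives two a~priori different product curves, and nothing in your argument forces their spans to agree. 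So the well-definedness of your gluing map $\Phi$ on $\prod_\rho\Ctp_{f^{\parr\rho}}$ implicitly assumes \cref{conj:inj} for each $f^{\parr\rho}$. Your proposed inverse --- recovering $[\curve^{\parr\rho}]$ from $\Span(\curve_{f,\bth})$ via ratios $\g_r/\g_{r'}$ --- does not rescue this: such a ratio is a \emph{function of $t$}, and a linear span does not remember the curve that spanned it, so there is no way to read off these ratios from the Grassmannian point alone.

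The paper avoids curves entirely. It picks a contracted $G\in\Gred(f)$, locates a white interior vertex $v$ at which the graph splits along the non-crossing partition of $\npm$, and by tracking which edge of an almost perfect matching is incident to $v$ derives explicit Pl\"ucker relations \eqref{eq:Delta_LR1}--\eqref{eq:Delta_LR3} expressing each minor of $X=\Meas(f,\bth)$ as a product of minors of the two pieces $X_P,X_Q$. These identities are visibly bidirectional --- one recovers $(X_P,X_Q)$ from $X$ by taking ratios of minors --- so the homeomorphism $\Ctp_f\xrasim\Ctp_g\times\Ctp_h$ is established directly at the level of the critical cells, with no appeal to injectivity, and the full statement follows by induction.
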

\begin{proof}

Let $\npm:=\{\p1,\m1,\p2,\m2,\dots,\p n,\m n\}$. For $\Conc\subset[n]$, let $\Cpm\subset\npm$ denote the set containing $\p s$ and $\m p$ for each $p\in\Conc$, where $s:=\perm^{-1}(p)$. We have a non-crossing partition $\npm=\Conc^\parr1_\pme\sqcup \Conc^\parr2_\pme\sqcup \dots \sqcup \Conc^\parr{\conn_f}_\pme$ into parts of even sizes. 

Consider a contracted graph $G\in\Gred(f)$. If $G$ is disconnected then the statement follows by considering each connected component independently, so let us assume that $G$ is connected. This implies that there exists $p\in[n]$ such that $\p p$ and $\m p$ belong to different parts of the above non-crossing partition. Call these parts $\Apm$ and $\Bpm$ so that $\p p\in \Apm$ and $\m p\in\Bpm$, and let $A,B\subset[n]$ be the corresponding connected components of $\GX_f$.

Recall from \cref{prop:edges_conn_cpts} that every interior edge of $G$ is labeled by $\{p',q'\}$ where $p',q'$ belong to the same connected component of $\GX_f$. Let $v$ be the white interior vertex connected to $b_p$. Then the (boundary) edge $\{v,b_p\}\in E(G)$ is labeled by $\{p,q'\}$ where $q'=f(p)\in A$ while $p\in B$. Label the edges incident to $v$ by $e_1,e_2,\dots,e_d$ in clockwise order, starting with $e_1:=\{v,b_p\}$. The strand labeled $q'$ passes through $e_1$ and $e_2$, thus either $e_2$ is a boundary edge or some strand labeled by $q_2\in A$ (cf. \cref{prop:edges_conn_cpts}) passes through $e_2$ and $e_3$, etc. Since the strand passing through $e_d$ and $e_1$ is labeled by $p\in B$, at some point we must encounter a boundary edge $e_j=\{v,b_q\}$ for some $q\in A$ such that $f(q)\notin A$. See \cref{fig:fact-AB}.

\begin{figure}
  \includegraphics{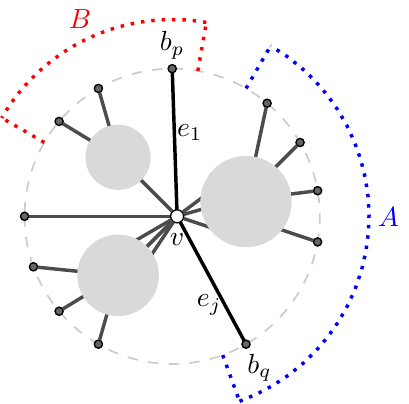}
  \caption{\label{fig:fact-AB} The graph $G$ from the proof of \cref{prop:factor}.}
\end{figure}

Observe that each of the cyclic intervals $P:=\{q+1,\dots,p\}$ and $Q:=\{p+1,\dots,q\}$ (indices taken modulo $n$) is a union of connected components of $\GX_f$. Let $g:=f|_P$ and $h:=f|_Q$ be the corresponding restrictions of $f$. We would like to establish a homeomorphism $\Ctp_f\xrasim \Ctp_g\times \Ctp_h$.

Let $G_P$ be the connected component of $b_p$ in the graph obtained from $G$ by removing the edges $e_{2},\dots,e_j$ defined above. Similarly, let $G_Q$ be the connected component of $b_q$ in the graph obtained from $G$ by removing the edges $e_{j+1},\dots,e_{1}$. Abusing notation, we preserve the original boundary labeling of $G_P$ and $G_Q$ by $(b_{q+1},\dots,b_{p})$ and $(b_{p+1},\dots,b_{q})$, respectively.

Let $\bth\in\R^n$. Clearly $\bth$ is $f$-admissible if and only if the restrictions  $\bth|_P$ and $\bth|_Q$ are $g$-admissible and $h$-admissible, respectively. Given such $\bth$, let $X:=\Meas(f,\bth)$, $X_P:=\Meas(g,\wt_{\bth|_P})$ and $X_Q:=\Meas(h,\wt_{\bth|_Q})$.  Our goal is to understand the relationship between $X$ and $(X_P,X_Q)$.

Let $k_P,n_P,k_Q,n_Q$ be such that $g\in \Bound(k_P,n_P)$ and $h\in\Bound(k_Q,n_Q)$, thus $k_P+k_Q=k+1$. Any almost perfect matching of $G$ contains a unique edge incident to $v$. By considering the possible options for this edge, we arrive at the following formulas. For two sets $L\subset P\setminus\{p\}$ and $R\subset Q\setminus \{q\}$, we have
\begin{align}
\label{eq:Delta_LR1}
  \Delta_{L\cup R\cup p}(X)=\Delta_{L\cup R\cup q}(X)&=\Delta_{L\cup p}(X_P)\cdot \Delta_{R\cup q}(X_Q), &&\text{if $|L|=k_P-1$ and $|R|=k_Q-1$};\\
\label{eq:Delta_LR2}
  \Delta_{L\cup R}(X)&=\Delta_{L}(X_P)\cdot \Delta_{R\cup q}(X_Q), &&\text{if $|L|=k_P$ and $|R|=k_Q-1$};\\
\label{eq:Delta_LR3}
  \Delta_{L\cup R}(X)&=\Delta_{L\cup p}(X_P)\cdot \Delta_{R}(X_Q), &&\text{if $|L|=k_P-1$ and $|R|=k_Q$}.
\end{align}
Here we abbreviate $R\cup p:=R\cup\{p\}$, etc. Each nonzero minor of $X$, $X_P$, and $X_Q$ appears in these formulas. We claim that $X$ and $(X_P,X_Q)$ determine each other uniquely via~\eqref{eq:Delta_LR1}--\eqref{eq:Delta_LR3}. First, clearly knowing the minors of $(X_P,X_Q)$ allows one to reconstruct the minors of $X$. Conversely, suppose that the minors of $X$ are known and we need to recover the minors of, say, $X_P$. Since the minors are defined up to multiplication by a common scalar, we only need to find the ratios $\Delta_M(X_P)/\Delta_N(X_P)$. If $p\notin M,N$ then by~\eqref{eq:Delta_LR2}, we get $\Delta_M(X_P)/\Delta_N(X_P)=\Delta_{M\cup R}(X_P)/\Delta_{N\cup R}(X_P)$ for any set $R$ such that $\Delta_{R\cup q}(X_Q)\neq 0$. (Such a set $R$ exists since otherwise $p$ and $q$ must be loops of $f$.) If $M=L\cup p$ but $p\notin N$ then by~\eqref{eq:Delta_LR1}--\eqref{eq:Delta_LR2}, $\Delta_{L\cup p}(X_P)/\Delta_{N}(X_P)=\Delta_{L\cup R\cup p}(X)/\Delta_{N\cup R}(X)$ for any $R$ such that $\Delta_{R\cup q}(X_Q)\neq 0$. The case $p\in M$ and $p\notin N$ is handled similarly. Finally, in the case $M=L\cup p$ and $N=L'\cup p$, by~\eqref{eq:Delta_LR1} and~\eqref{eq:Delta_LR3} we have $\Delta_{L\cup p}(X_P)/\Delta_{L'\cup p}(X_P)=\Delta_{L\cup R}(X)/\Delta_{L'\cup R}(X)$ for any $R\subset Q$ such that $\Delta_R(X_Q)\neq0$. We have established the desired homeomorphism $\Ctp_f\xrasim \Ctp_g\times \Ctp_h$. The result follows by induction.
\end{proof}

We finish by clarifying the relationship\footnote{We thank Lauren Williams for comments motivating the below results.} between our notion of connectedness for reduced strand diagrams and the standard notion of a \emph{connected positroid}~\cite{OPS,ARW2}. Given $f\in\Bkn$, let $\GH_f$ be the undirected graph with vertex set $[n]$ and edge set  consisting of all pairs $\{p,q\}\in{[n]\choose 2}$ such that the line segments $[b_s, b_p]$ and $[b_t, b_q]$ (where $\perm(s)=p$ and $\perm(t)=q$) have nonempty intersection. Then the \emph{positroid} $\Mcal_f$ is connected if and only if $\GH_f$ is connected; see~\cite[Corollary~7.9]{ARW2}. The connected components of $\GH_f$ form a non-crossing partition of $[n]$ denoted $\Pi(\GH_f)$. Similarly, if $f$ is loopless, we denote by $\Pi(\GX_f)$ the non-crossing partition of $[n]$ into the connected components of $\GX_f$.

\begin{proposition}\label{prop:connected:refine}
Let $f\in\Bkn$ be loopless. 
\begin{theoremlist}
\item\label{connected:refine1} The reduced strand diagram of $f$ is connected if and only if the positroids $\Mcal_f$ and $\Mcal_{\dsh f}$ are both connected.
\item\label{connected:refine2} The non-crossing partition $\Pi(\GX_f)$ is the common refinement of $\Pi(\GH_f)$ and $\Pi(\GH_{\dsh f})$.
\end{theoremlist}
\end{proposition}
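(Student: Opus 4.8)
The plan is to derive \itemref{connected:refine1} from \itemref{connected:refine2}, and to prove \itemref{connected:refine2} by comparing the three graphs $\GX_f$, $\GH_f$, $\GH_{\dsh f}$ directly. For the first implication: the reduced strand diagram of $f$ is connected if and only if $\GX_f$ is connected, i.e.\ $\Pi(\GX_f)$ is the one-block partition; by \itemref{connected:refine2} this holds exactly when $\Pi(\GH_f)$ and $\Pi(\GH_{\dsh f})$ are both one-block partitions, i.e.\ $\GH_f$ and $\GH_{\dsh f}$ are connected, which by \cite[Corollary~7.9]{ARW2} means $\Mcal_f$ and $\Mcal_{\dsh f}$ are both connected. (Here $\dsh f\in\Bound(k-1,n)$ is well defined since $f$ is loopless; it may acquire loops, but $\GH_{\dsh f}$ and $\Mcal_{\dsh f}$ still make sense.)

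For \itemref{connected:refine2}, I would realize all three graphs by straight chords in the disk carrying the $2n$ marked points $\m1,\p1,\dots,\m n,\p n$: writing $s=\perm^{-1}(p)$, the strand $\str_p$ of $D_f$ is the chord $[\p s,\m p]$, the chord of $\GH_f$ attached to $p$ is $[b_s,b_p]$, and the chord of $\GH_{\dsh f}$ attached to $p$ is $[b_{s+1},b_p]$. The first step is the inclusions $E(\GX_f)\subseteq E(\GH_f)\cap E(\GH_{\dsh f})$: if $\str_p$ and $\str_q$ cross then, up to degenerate configurations (which occur only when $\{p,q\}=\{r,\perm(r)\}$, in which case the corresponding $\GH_f$- and $\GH_{\dsh f}$-chords already share a boundary vertex), the four endpoints $\p s,\m p,\p t,\m q$ are cyclically alternating; since each $b_j$ is adjacent to $\m j$ and $\p j$ with no other marked point in between, the same alternation holds for $\{b_s,b_p,b_t,b_q\}$ and for $\{b_{s+1},b_p,b_{t+1},b_q\}$, so both chord pairs intersect. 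This gives that $\Pi(\GX_f)$ refines both $\Pi(\GH_f)$ and $\Pi(\GH_{\dsh f})$, hence refines their common refinement.

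Next I would pin down the extra edges. A transversal crossing of two chords persists under moving each endpoint past no other marked point, so passing from $[\p s,\m p],[\p t,\m q]$ to the $\GH_f$- (resp.\ $\GH_{\dsh f}$-) chords creates new intersections only by making two of them share a boundary vertex, which forces $\{p,q\}=\{r,\perm(r)\}$ (resp.\ $\{r,\dsh\perm(r)\}$) for some $r$; conversely $\{r,\perm(r)\}\in E(\GH_f)$ and $\{r,\dsh\perm(r)\}\in E(\GH_{\dsh f})$ always hold. Thus $E(\GH_f)=E(\GX_f)\cup F$ and $E(\GH_{\dsh f})=E(\GX_f)\cup F'$, where $F=\{\{r,\perm(r)\}:\perm(r)\neq r\}$ and $F'=\{\{r,\dsh\perm(r)\}:\dsh\perm(r)\neq r\}$ are the edge sets of the functional graphs of $\perm$ and $\dsh\perm$ (coloops of $f$ and loops of $\dsh f$ stay isolated in all three graphs, consistently). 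Hence $\Pi(\GH_f)$ is obtained from $\Pi(\GX_f)$ by merging connected components along $F$-edges, and $\Pi(\GH_{\dsh f})$ by merging along $F'$-edges.

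It remains to show these two mergings have trivial common refinement on the components of $\GX_f$, i.e.\ that every block $A\cap B$ of $\Pi(\GH_f)\wedge\Pi(\GH_{\dsh f})$ lies in a single component of $\GX_f$. Here I would use the non-crossing partition of $\{\m1,\p1,\dots,\m n,\p n\}$ into the endpoint-sets of the components of $D_f$ constructed in the proof of \cref{prop:factor}, noting that each part has even size and that $\m p$ (resp.\ $\p p$) lies in the part of the strand $\str_p$ (resp.\ $\str_{\perm(p)}$). An $F$-edge joining two distinct components corresponds exactly to a change of component across a gap of type $(\m p,\p p)$, and an $F'$-edge to a change across a gap of type $(\p p,\m{(p+1)})$; the two gap types alternate around the circle, and which component $\p p$ belongs to is controlled by the bijection $\perm$. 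Combining the alternation of gap types with the parity constraint (each component meets an even number of the $2n$ points) and the non-crossing structure, one shows that a chain of $F$-transitions and a chain of $F'$-transitions can never connect the same pair of components, which finishes the proof. The main obstacle is precisely this last step; an alternative route is to induct, reducing to the case where $\GX_f$ is connected — e.g.\ by splitting off a cyclic interval as in the proof of \cref{prop:factor}, or by removing a bridge as in \cref{sec:proof-of_edge_conn_cpts} — but in either route the same comparison of the two functional-graph mergings against the component structure of $D_f$ is the crux.
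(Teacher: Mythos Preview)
Your reduction of \itemref{connected:refine1} to \itemref{connected:refine2} is fine, and so is the easy direction of \itemref{connected:refine2}: the inclusions $E(\GX_f)\subseteq E(\GH_f)$ and $E(\GX_f)\subseteq E(\GH_{\dsh f})$ hold for exactly the reasons you give, and your description of the extra edges as $F=\{\{r,\perm(r)\}\}$ and $F'=\{\{r,\dsh\perm(r)\}\}$ is correct.

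The gap is in your step~4. You state the needed claim --- that an $F$-chain and an $F'$-chain through the quotient by $\GX_f$-components never connect the same pair --- but do not prove it, and you say so yourself (``the main obstacle is precisely this last step''). The parenthetical about ``alternation of gap types, parity, and the non-crossing structure'' points at the right ingredients but does not constitute an argument: once you pass to chains of component-mergings, you must also rule out that $C$ and $C'$ are joined by an $F$-chain through some intermediates and by an $F'$-chain through \emph{different} intermediates, and nothing you wrote controls this.

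The paper bypasses this completely with a one-step topological argument. If $p,q$ lie in different components of $\GX_f$, there is a chord $\alpha\to\beta$ in the disk, disjoint from all strands, separating $\str_p$ from $\str_q$. Since each strand has one ``$+$'' and one ``$-$'' endpoint and the $\pm$ points alternate, the parity forces both $\alpha$ and $\beta$ to lie in gaps of the \emph{same} type: either both between some $\p{(j-1)}$ and $\m j$, or both between some $\m j$ and $\p j$. In the first case the same chord avoids every segment $[b_s,b_p]$, so $p,q$ lie in different components of $\GH_f$; in the second case, via the identification of the reduced strand diagram of $f$ with the dual diagram of $\dsh f$, the chord avoids every $[b_{s+1},b_p]$, so $p,q$ lie in different components of $\GH_{\dsh f}$. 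This is exactly your parity observation, but applied to a single separating chord rather than to chains of $F/F'$-mergings; that reformulation is what makes the argument close. Your route could likely be completed, but it would amount to rediscovering this separating-chord picture from the merging side.
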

\begin{proof}
Let $p,q\in [n]$ and denote $s:=\perm^{-1}(p)$, $t:=\perm^{-1}(q)$. Observe that $p,q$ belong to different connected components of $\GX_f$ if and only if there exists a chord $\alpha\to \beta$ (for two points $\alpha,\beta$ on the circle not equal to any of $b_j, \p j, \m j$ for $j\in[n]$) which separates $\p p$ and $\p q$ and does not intersect any of the strands in the reduced strand diagram of $\GX_f$. 

As in the proof of \cref{prop:factor}, we denote $\npm:=\{\p1,\m1,\p2,\m2,\dots,\p n,\m n\}$. The chord $\alpha\to\beta$ separates $\npm$ into two subsets of even size. Let $x\in\npm$ be the closest point to $\alpha$ in the clockwise direction. If $x=\m j$ for some $j\in[n]$ then the chord $\alpha\to\beta$ does not intersect any line segment $[b_r, b_{\perm(r)}]$ for $r\in[n]$, and thus $p,q$ belong to different connected components of $\GH_f$. 

Assume now that $x=\p j$ for some $j\in[n]$. Consider the dual reduced strand diagram of $\dsh f$. It coincides with the reduced strand diagram of $f$ up to a simple relabeling of boundary vertices; see \cref{sec:shift_commut_diag} for further details. In particular, it follows that $p,q$ belong to different connected components of $\GH_{\dsh f}$. 

Conversely, it is clear that if $p,q$ belong to different connected components of either $\GH_f$ or $\GH_{\dsh f}$ then they belong to different connected components of $\GX_f$.
\end{proof}

\section{Critical varieties}\label{sec:critical-varieties}
In this section, we study Zariski closures of critical cells as discussed in \cref{sec:intro:critical-varieties}. 
In particular, we show that the boundary measurement map $\Measop_f$ is well defined and gives rise to \emph{open critical varieties} $\Cio_f$ whose definition depends on the \emph{Laurent phenomenon} (\cref{thm:Laurent_ph}). We study the real points of $\Cio_f$ in \cref{sec:real-part-critical}. An important tool we rely on is the \emph{twist map} of~\cite{MuSp}, reviewed in \cref{sec:twist}. Throughout, we assume that $f\in\Bkn$ is loopless.

\subsection{Open critical varieties}\label{sec:open-crit-vari}
To a sequence $\btht:\Z\to\R$ we associate a sequence $\btt:\Z\to\Cast$ of complex numbers defined by
\begin{equation}\label{eq:th_to_t}
\tt_q:=\exp(i\tht_q) \quad\text{for all $q\in\Z$.} 
\end{equation}
If $\btht$ satisfies~\eqref{eq:bth_extend} then we have $\tt_{q+n}=-\tt_q$ for all $q\in\Z$. Similarly to~\eqref{eq:bth_extend}, we identify sequences $\btt:\Z\to\Cast$ satisfying this condition with tuples $\bt\in(\Cast)^n$.

Recall from \cref{dfn:intro:f_adm_C} that $\bt=(t_1,t_2,\dots,t_n)\in(\Cast)^n$ is \emph{$f$-admissible} if $t_p\neq \pm t_q$ for all pairs $p,q$ that from an $f$-crossing.

As in \cref{sec:connected-components}, we  choose representatives $p_1,p_2,\dots,p_{\conn_f}\in[n]$, one from each connected component of $\GX_f$, and set
\begin{equation}\label{dfn:Tspace}
  \Tspace_f:=\{\bt=(t_1,t_2,\dots,t_n)\in (\Cast)^{n}\mid \bt\text{ is $f$-admissible and $t_{p_1}=t_{p_2}=\cdots=t_{p_{\conn_f}}=1$}\}.
\end{equation}

Recall also that to each graph $G\in\Gred(f)$ and each $\bt\in(\Cast)^n$ we assign a weight function $\wt_\bt:E(G)\to\C$ given by~\eqref{eq:wt_T}. As \cref{ex:t1=t3_t2=t4} demonstrates, the complex algebraic analog of \cref{prop:edges_conn_cpts} no longer holds, thus we have to be more careful in defining $\Measop_f$.

We denote $\Meas(G,\wt_\bt):=(\Delta_I(G,\wt_\bt))_{I\in{[n]\choose k}}$. If the entries of $\Meas(G,\wt_\bt)$ are not all zero then we say that $\Measop_G$ \emph{is well defined at $\bt$} and view the result as an element of the complex Grassmannian $\Gr(k,n)$. The following result is proved in \cref{sec:Laurent_proofs}.
\begin{proposition}\label{prop:Meas_exists}
 There exists a unique regular map $\Measop_f:\Tspace_f\to \Pio_f$ satisfying 
\begin{equation*}%
  \Meas(f,\bt)=\Meas(G,\wt_\bt) \quad\text{inside $\Gr(k,n)$}
\end{equation*}
 for all $G\in\Gred(f)$ and all $\bt\in\Tspace_f$ such that $\Measop_G$ is well defined at $\bt$.
\end{proposition}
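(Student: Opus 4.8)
I would build $\Measop_f$ first on a dense open subset of $\Tspace_f$ by gluing the honest maps $\Measop_G$, then extend it regularly across all of $\Tspace_f$ using a canonical form for the dimer minors (the Laurent phenomenon), and finally read off uniqueness from density. Fix $G\in\Gred(f)$. By~\eqref{eq:intro:Delta_I} and~\eqref{eq:wt_T}, each $\Delta_I(G,\wt_\bt)$ is a Laurent polynomial in $\bt$, and the tuple $(\Delta_I(G,\wt_\bt))_I$ is not identically zero on the irreducible variety $\Tspace_f$: specializing $t_r=\exp(i\th_r)$ with $\th_1<\dots<\th_n<\th_1+\pi$ makes all edge weights positive reals by \cref{prop:edges_conn_cpts}, so $\Measop_G$ returns a genuine point of $\Ptp_f\neq\emptyset$. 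Hence $\Measop_G$ is a regular map on the dense open $U_G:=\{\bt\in\Tspace_f\mid \Delta_I(G,\wt_\bt)\neq0\text{ for some }I\}$. The square-move invariance of the critical weights (\cref{sec:intro:critical-dimer-model}) is an identity of rational functions in the edge weights, hence persists for complex $\bt$; so the maps $\Measop_G|_{U_G}$ agree on overlaps and glue to a regular map $\Measop_f^\circ$ on the dense open $U:=\bigcup_{G}U_G\subseteq\Tspace_f$.

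\textbf{Extension.} The substantive step is to extend $\Measop_f^\circ$ to a regular map $\Tspace_f\to\Pio_f$. I would route this through the left twist $\tau$ of Muller--Speyer (\cref{sec:twist}, \cite{MuSp}): $\tau$ restricts to a biregular isomorphism from $\Pio_f$ onto a positroid variety $\Pio_{f'}$, under which $\tau(\Measop_G(\wt))$ is the point of $\Pio_{f'}$ whose face coordinates $\Delta_{\lambda(F)}$ are the alternating products~\eqref{eq:wt_alt_prod} of $\wt$ over the faces $F$ of $G$ (\cite[Corollary~5.11]{MuSp}). Since the face labels of a reduced graph form a Pl\"ucker seed for the cluster structure on $\Pio_{f'}$ (\cite{posit_cluster}), these face coordinates identify a torus chart of $\Pio_{f'}$ with an algebraic torus. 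So everything reduces to the claim that \emph{for the critical weights $\wt_\bt$, every alternating face-product~\eqref{eq:wt_alt_prod} is a unit on $\Tspace_f$} — this is the Laurent phenomenon \cref{thm:Laurent_ph}. Granting it, $\bt\mapsto\bigl(\text{face-products of }\wt_\bt\bigr)_F$ is a morphism from $\Tspace_f$ into that torus; composing with the inverse of the chart identification and then with $\tau^{-1}$ produces a regular map $\Measop_f\colon\Tspace_f\to\Pio_f$ that lands in $\Pio_f$ by construction and agrees with $\Measop_f^\circ$ on $U$, hence with $\Meas(G,\wt_\bt)$ whenever $\Measop_G$ is well defined at $\bt$. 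Equivalently, in Pl\"ucker coordinates, the Laurent phenomenon says that $(\Delta_I(G,\wt_\bt))_I$ has a common Laurent-polynomial factor $D_G$, the quotient $(\widehat{\Delta}_I)_I$ being a $G$-independent tuple of Laurent polynomials with no common zero on $\Tspace_f$ and with all Grassmann-necklace entries $\widehat{\Delta}_{I_q}$ non-vanishing there; then $\Measop_f(\bt):=[\,\widehat{\Delta}_I(\bt)\,]_I$ lies in $\Pi_f$ because $\Pi_f$ is closed and contains the dense subset $\Measop_f^\circ(U)$, and in $\Pio_f$ by~\eqref{eq:Pio_open}.

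\textbf{Uniqueness.} Any regular map $\Measop_f'\colon\Tspace_f\to\Pio_f$ with the stated compatibility agrees with $\Measop_f$ on the dense open $U$, hence everywhere, since $\Pio_f\subseteq\Gr(k,n)$ is separated; the same argument shows the construction is independent of all auxiliary choices.

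\textbf{The main obstacle.} The hard part is the Laurent phenomenon isolated above. Although a single interior edge of $G$ may carry $\wt_\bt(e)=0$ for $f$-admissible $\bt$ — indeed \cref{ex:t1=t3_t2=t4} shows that \emph{every} dimer minor can vanish — the vanishing factors must cancel between numerator and denominator of each alternating face-product (dually, $D_G$ must absorb precisely the ``spurious'' factors, such as the $\brx[24]$ in \figref{fig:plabic}(d)). Establishing this, and at the same time controlling the necklace minors $\widehat{\Delta}_{I_q}$ sharply enough to land in $\Pio_f$ rather than merely $\Pi_f$, will require a careful analysis of which non-crossing labels $\{p,q\}$ can occur on interior edges of a reduced $G$ and of how the Ptolemy/star--triangle relations governing square moves organize their contributions; the combinatorial engine for this is the structure of reduced strand diagrams, and in particular the convex-region statement \cref{prop:convex}.
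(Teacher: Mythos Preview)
Your ``Equivalently'' formulation is exactly what the paper does, and it works: one divides each $\Delta_I(G,\wt_\bt)$ by an explicit common factor $R_G(\bt)$ (the product~\eqref{eq:common_G} of $\br[t_q,t_p]$ over excess strand intersections in $G$), shows the quotients $Y_I$ are $G$-independent Laurent polynomials by exhibiting, for each pair $\{p,q\}$, a cyclically shifted Le-diagram graph in which the strands labeled $p,q$ meet at most once, and then uses the explicit product formula \cref{prop:Gr_neck_formula} --- not \cref{thm:Laurent_ph} --- to see that each necklace entry $Y_{I_r}$ is a product of $\br[t_q,t_p]$ over $f$-\emph{crossings} only, hence a unit on $\Tspace_f$. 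That last step is what places the image in $\Pio_f$ via~\eqref{eq:Pio_open}; it is logically separate from the Laurent phenomenon and you should not bundle it into \cref{thm:Laurent_ph}.

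Your first route, through the twist and a cluster-torus chart, has a genuine gap. Two small corrections: $\cev\tau$ is an automorphism of $\Pio_f$ itself, and what \cite[Theorem~7.1]{MuSp} produces for a face $F$ is the matching product $\prod_{e\in\Mleft(F)}\wt_\bt(e)^{-1}$, not the alternating product~\eqref{eq:wt_alt_prod}. The real problem is that neither quantity is a unit on $\Tspace_f$, so you cannot land in the cluster torus of any fixed $G$. By \cref{prop:twist}, the twisted face minor carries factors $\br[t_q,t_p]$ indexed by \emph{misalignments}, and these are not required to be nonzero on $\Tspace_f$. Concretely, for $f=f_{2,4}$ the pair $\{2,4\}$ is a misalignment, and at the $f$-admissible point $t_1=t_3$, $t_2=t_4$ of \cref{ex:t1=t3_t2=t4} the interior edge weight $\br[t_4,t_2]$ in \figref{fig:plabic}(c) vanishes; correspondingly the mutable Pl\"ucker coordinates $\Delta_{13}$ and $\Delta_{24}$ of $\Meas(f,\bt)$ both vanish, so this point lies in $\Pio_f$ but in neither cluster torus. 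Thus ``compose with the inverse of the chart identification'' is unavailable, and this is not what \cref{thm:Laurent_ph} asserts. The paper sidesteps the issue entirely by never demanding that any non-frozen minor be nonzero --- only the necklace minors, whose factors come from crossings alone.
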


By \cref{dfn:intro:Cio}, the \emph{open critical variety} $\Cio_f\subset \Pio_f$ is the image of the map $\Measop_f$:%
\begin{equation*}%
  \Cio_f:=\Meas(f,\Tspace_f).
\end{equation*}
Even though we use the terms \emph{open} and \emph{variety}, it is an open problem to describe $\Cio_f$ as an open subvariety of $\Crit_f$, even in the case $f=\fkn$.
\begin{problem}\ \label{prob:Cio}
\begin{enumerate}
\item Show that the variety $\CCrit_f$ is irreducible of dimension $\dimf$ and describe $\CCrit_f$ by polynomial equations.
\item Show that $\Cio_f$ is an open subvariety of $\CCrit_f$ and describe its complement by polynomial equations.
\item Describe $\Ctp_f\subset\Ptp_f$ by polynomial equations and inequalities. 
\end{enumerate}
\end{problem}
\noindent See~\eqref{eq:Pio_open}--\eqref{eq:Ptp_minors_descr} for the analogous results for positroid varieties. We caution that in general, $\Ctp_f\subsetneq \Cio_f\cap \Grtnn(k,n)$, unlike in the case of positroid varieties. For example, $\Ctp_{2,3}$ consists of all points of $\Grtp(2,3)$ whose Pl\"ucker coordinates satisfy the triangle inequalities.

\begin{remark}\label{rmk:dual_cio}
Fix a coloopless $f\in\Bkn$. We say that $\bt\in(\Cast)^n$ is \emph{dual $f$-admissible} if $t_p\neq \pm t_q$ whenever $p,q$ form a dual $f$-crossing. Similarly to what we did above, one can introduce the \emph{dual boundary measurement map} $\Measdop:\Tspaced_f\to\Ciod_f$ whose image is the \emph{dual open critical variety} $\Ciod_f$, and whose restrictions to appropriate open subsets of $\Tspaced_f$ coincide with $\Measdop_G$ for  $G\in\Gred(f)$.
\end{remark}
\subsection{The Laurent phenomenon}

\begin{figure}
  \includegraphics{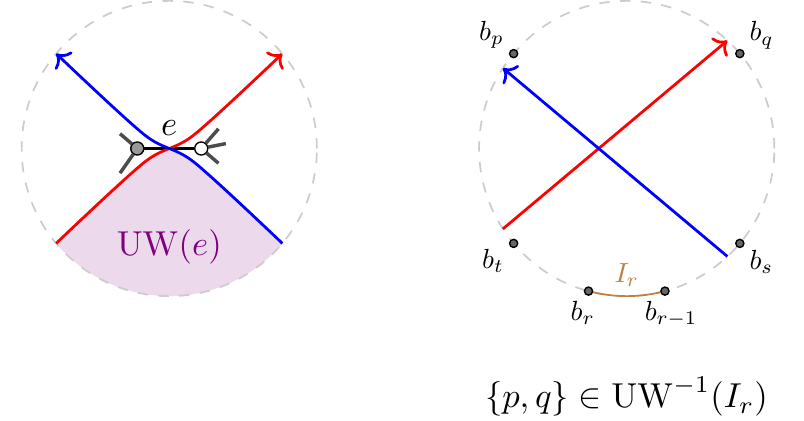}
  \caption{\label{fig:upstream} The upstream wedge.}
\end{figure}

Let $\Ical_f=(I_1,I_2,\dots,I_n)$ be the Grassmann necklace of $f$ defined in \cref{sec:positroid-varieties}. For $r\in[n]$, we label by $I_r$ the arc connecting $\p{(r-1)}$ to $\m r$. Choose $1\leq p<q\leq n$ such that the arrows $\p s\to \m p$ and $\p t\to \m q$ form an $f$-crossing. Following the terminology of~\cite{MuSp}, we say that $I_r$ belongs to the \emph{upstream wedge of $\{p,q\}$} if the arc labeled $I_r$ is contained in the arc connecting $\p s$ to $\p t$ that does not contain $\m p, \m q$; see \figref{fig:upstream}(right). We let $\UW^{-1}(I_r)$ denote the set of all pairs $\{p,q\}$ such that $I_r$ belongs to the upstream wedge of $\{p,q\}$.

Our first goal is to give a product formula for the boundary measurements associated with the Grassmann necklace, which is a simple consequence of the results of Muller--Speyer~\cite{MuSp}. 
\begin{proposition}\label{prop:Gr_neck_formula}
Let $G\in\Gred(f)$ and suppose that $\Measop_G$ is well defined at $\bt\in \Tspace_f$. Then, after a multiplication by a common scalar, we have
\begin{equation}\label{eq:Gr_neck_formula}
  \Delta_{I_r}(G,\wt_\bt)=\prod_{p<q:\ \{p,q\}\in\UW^{-1}(I_r)} \H qp \quad\text{for all $r\in[n]$.}
\end{equation}
\end{proposition}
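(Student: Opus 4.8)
The statement is about boundary measurements indexed by Grassmann necklace elements under the critical weighting $\wt_\bt$, and the claim is that it factors as a monomial in the $\H qp$'s over upstream wedges. The plan is to deduce this from the corresponding formula for \emph{general} weighted reduced graphs, which is essentially contained in Muller--Speyer's analysis of the twist map. Concretely, I would proceed as follows.

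First I would recall the combinatorial fact (from \cite{Pos,MuSp}) that for $G\in\Gred(f)$ and $I_r$ in the Grassmann necklace, the dimer partition function $\Delta_{I_r}(G,\wt)$ is a \emph{monomial} in the edge weights $\wt(e)$ (up to the overall gauge/scalar ambiguity): there is a unique almost perfect matching $\match_r$ of $G$ with $\pmatch=I_r$, because $I_r$ is the $\preceq_r$-minimal element of the positroid $\Mcal_f$. (Alternatively, one can invoke \cite[Corollary~5.11]{MuSp} / the left twist map, which the excerpt has already flagged as the relevant tool: the Grassmann necklace minors of $\Meas(G,\wt)$ are exactly the face-weight monomials dual to the twist.) So $\Delta_{I_r}(G,\wt_\bt)=\prod_{e\in\match_r}\wt_\bt(e)$, and since $\Measop_G$ is well defined at $\bt$ this monomial is nonzero. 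It remains to identify which labels $\{p,q\}$ occur among the edges of $\match_r$, and to check each occurs exactly once.

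The key step is then to show: an edge $e$ labeled $\{p,q\}$ (with $p,q$ forming an $f$-crossing, by \cref{prop:edges_conn_cpts}) lies in $\match_r$ if and only if $I_r$ lies in the upstream wedge of $\{p,q\}$, and moreover at most one edge with each such label appears. For this I would use the standard description of $\match_r$ via strands: matching $\match_r$ is obtained by "following the strands" that terminate outside $I_r$, and an interior edge is used precisely when both strands through it are "active" for this matching. Translating the target-labeling/source-labeling conventions of \cref{sec:plan-bipart-graphs} into the upstream-wedge condition of \figref{fig:upstream} is exactly the content of \cite{MuSp}: the arc $I_r$ (connecting $\p{(r-1)}$ to $\m r$) lies on the "upstream" side of the crossing $\{p,q\}$ iff the edge carrying label $\{p,q\}$ is used by $\match_r$. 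Uniqueness of the occurrence follows because in a reduced graph each pair of strands crosses at most once in the relevant region, so there is at most one interior edge whose two strands terminate at $b_p$ and $b_q$ and which is simultaneously traversed by $\match_r$ — or, more robustly, because the square-move invariance (already established for $\wt_\bt$) lets one reduce to a convenient $G$ (e.g. one built by bridge removal as in \cref{sec:bridge_rem}) where the count is transparent and then transport the conclusion.

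The main obstacle I anticipate is purely bookkeeping: matching the three sign/labeling conventions in play — target-labeling of faces, the source/target convention for strands ($\perm(s)=p$), and the "upstream wedge" orientation in \figref{fig:upstream} — so that "$I_r$ in the upstream wedge of $\{p,q\}$" lines up exactly with "$e$ labeled $\{p,q\}$ is in $\match_r$". I expect the cleanest route is to not reprove this from scratch but to cite Muller--Speyer's twist computation: the left twist of $\Meas(G,\wt)$ has Grassmann necklace coordinate $\Delta_{I_r}$ equal (up to scalar) to the product of face weights on one side of the necklace arc, and face weights are alternating products of edge weights~\eqref{eq:wt_alt_prod}; rewriting that product of $\wt_\bt$-values using $\wt_\bt(e)=\H qp$ collapses to $\prod_{\{p,q\}\in\UW^{-1}(I_r)}\H qp$ after telescoping along the arc. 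Once the dictionary is fixed, \eqref{eq:Gr_neck_formula} drops out, and independence of $G$ is automatic from the square-move invariance of $\wt_\bt$ already used in the excerpt.
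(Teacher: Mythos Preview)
Your fallback route via the Muller--Speyer twist is essentially what the paper does: it first establishes \cref{prop:twist}, expressing $\Delta_I(\cev\tau(\Meas(G,\wt_\bt)))$ as an explicit Laurent monomial in the $\Hun qp$, and then invokes the identity $\Delta_{I_r}(A)=1/\Delta_{I_r}(\cev\tau(A))$ from \cite[Proposition~6.6]{MuSp} to deduce~\eqref{eq:Gr_neck_formula} in two lines.

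Your primary route, however, has a genuine gap. The assertion that ``at most one edge with each such label appears'' in $\match_r$ is false for a general $G\in\Gred(f)$: two strands may cross many times in $G$, and several of those edges can lie in $\match_r$. (Relatedly, \cref{prop:edges_conn_cpts} does \emph{not} say that an edge label $\{p,q\}$ must be an $f$-crossing---edges labeled by misaligned pairs do occur; cf.\ \cref{rmk:intro:edge_wts_>0} and \cref{fig:ypq}.) What makes the formula work is precisely the phrase ``after multiplication by a common scalar'': the excess contributions assemble into the factor $\common_G(\bt)$ of~\eqref{eq:common_G}, which is independent of $r$ and can therefore be absorbed. Tracking this factor is exactly the computation carried out in the proof of \cref{prop:twist}; it is the substantive step, not merely bookkeeping. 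Your suggested workaround---pass by square moves to a $G$ in which every pair of strands crosses at most once---would require exhibiting such a $G$, and for general $f$ there is no reason one exists; indeed the paper's later proof of \cref{thm:Laurent_ph} argues only that for each \emph{individual} pair $\{p,q\}$ one can find some $G$ with $\nc_{p,q}(G)=0$.
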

See \cref{sec:Laurent_proofs} for a proof. Observe that the right hand side of~\eqref{eq:Gr_neck_formula} is a Laurent polynomial in $\bt$ (homogeneous of degree $0$) that does not depend on the choice of $G$. Moreover, all $\Delta_{I_r}(G,\wt_\bt)$ are nonzero precisely when $\bt$ is $f$-admissible.

\begin{example}
Consider the case $f=f_{2,4}$ from \cref{fig:plabic}. The Grassmann necklace is 
\begin{equation*}%
  I_1=\{1,2\},\quad   I_2=\{2,3\},\quad   I_3=\{3,4\},\quad   I_4=\{1,4\}.
\end{equation*}
From the reduced strand diagram on the right in \figref{fig:move}(b), we find $\UW^{-1}(I_1)=\{\{2,3\}\}$,  $\UW^{-1}(I_2)=\{\{3,4\}\}$,    $\UW^{-1}(I_3)=\{\{1,4\}\}$, and    $\UW^{-1}(I_4)=\{\{1,2\}\}$. 
 Thus~\eqref{eq:Gr_neck_formula} agrees with the values computed in \figref{fig:plabic}(d), after canceling out the term $\brx[24]$.
\end{example}

According to \cref{prop:Gr_neck_formula}, the entries of $\Measop_G$ may be rescaled so that~\eqref{eq:Gr_neck_formula} holds. We refer to this rescaling as the \emph{canonical gauge-fix} of $\Measop_G$.

\begin{theorem}[Laurent phenomenon]\label{thm:Laurent_ph}
Let $G\in\Gred(f)$ and suppose that $\Measop_G$ is well defined at $\bt\in \Tspace_f$. Then the entries $\Delta_I(G,\wt_\bt)$ of the canonical gauge-fix of $\Measop_G$ are Laurent polynomials in $\bt$.
\end{theorem}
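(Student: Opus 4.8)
The plan is to reduce the statement, via square moves and induction on the bridge‐removal order of $f$, to a single explicit local computation at a bridge, and to control the form of the resulting Laurent polynomials at each step.

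\medskip

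\textit{Step 1: Reduction to a favorable graph.} Since any two graphs in $\Gred(f)$ are connected by square moves and contraction–uncontraction moves, and both of these transformations are rational with denominators that are products of (canonically gauge‐fixed) boundary measurements, it suffices to prove the theorem for \emph{one} conveniently chosen $G\in\Gred(f)$, provided I also know that the square‐move transition functions preserve the Laurent property. The square move replaces the four face weights $x_a,x_b,x_c,x_d$ around a square face by $1/x_a,\dots$ up to multiplication, and on the level of canonically gauge‐fixed Plücker coordinates these are monomial transformations in the $\Delta$'s times elements of $\mathbb{Z}[\bt^{\pm}]$; so I should first check, using the description of square moves on face labels in \cref{sec:plan-bipart-graphs} together with \cref{prop:Gr_neck_formula}, that the transition is a \emph{Laurent} monomial transformation in the $\bt$‐variables. (This is the same mechanism as the invariance of $\Meas(\pf,\bth)$ under square moves asserted in \cref{sec:intro:critical-dimer-model}.)

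\medskip

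\textit{Step 2: Induction via bridge removal.} With the reduction in hand, I induct on $\ell(f)$ using \cref{sec:bridge_rem}. If $f$ has no bridges (so $k\in\{0,1\}$ after removing loops/coloops) then, as in the proof of \cref{prop:edges_conn_cpts}, every interior edge is a boundary edge or trivial and the dimer partition functions are themselves monomials in the $\bt$‐variables, hence trivially Laurent. For the inductive step, pick $r$ with a bridge at $r$ and a graph $G\in\Gred(f)$ carrying the bridge configuration of \figref{fig:bridge}(left); let $G'\in\Gred(s_r f)$ be obtained by deleting the bridge edge. The bridge edge is a boundary edge (weight $1$) and the neighboring interior edges have weights in $\{\H{f(r+1)}{r}, \H{f(r)}{r+1}, 1\}$ by~\eqref{eq:wt_T}. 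Decomposing almost perfect matchings of $G$ by whether or not they use the bridge edge gives
\[
\Delta_I(G,\wt_\bt)=\wt_\bt(\text{bridge edge})\cdot \Delta_{I}(G',\wt_\bt) \;+\;(\text{correction}),
\]
and more precisely the columns of the boundary measurement matrix transform by an elementary “bridge’’ matrix $\begin{pmatrix}1 & \pm\H{f(r+1)}{r}\\ 0 & 1\end{pmatrix}$ acting on the two columns indexed $r,r+1$ (this is Postnikov's bridge/BCFW recursion). Hence each $\Delta_I(G,\wt_\bt)$ is an $\mathbb{Z}[\bt^{\pm}]$‐linear combination of two Plücker coordinates of $\Meas(s_rf,\bt)$, and by the induction hypothesis the latter are Laurent in $\bt$. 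The only subtlety is to pass from the \emph{canonical} gauge‐fix of $\Measop_G$ to that of $\Measop_{G'}$: by \cref{prop:Gr_neck_formula} the two canonical gauge‐fixes differ by an explicit monomial in $\bt$ determined by the upstream wedges $\UW^{-1}(I_r)$, which changes in a controlled way when the bridge is removed (one wedge loses or gains the pair $\{f(r),f(r+1)\}$). So the rescaling factor relating the two canonical gauge‐fixes is a single Laurent monomial, and Laurentness is preserved.

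\medskip

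\textit{Step 3: Handling the special bridge case and wrap‐up.} There is the degenerate subcase $r+1=f(r)$ (treated already in \cref{sec:proof-of_edge_conn_cpts}), where one must first contract a degree‐$2$ vertex; since contraction with unit edge weights does not change any $\Delta_I$, this case is immediate. Finally I reassemble: choosing $G$ adapted to a full bridge‐removal sequence, Steps 1–2 express the canonically gauge‐fixed $\Delta_I(G,\wt_\bt)$ as an iterated $\mathbb{Z}[\bt^{\pm}]$‐linear combination of the base‐case monomials, hence a Laurent polynomial in $\bt$.

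\medskip

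\textit{Main obstacle.} I expect the genuinely delicate point to be the gauge‐fix bookkeeping in Step 2: the statement is about a \emph{specific} normalization (the canonical gauge‐fix defined through \cref{prop:Gr_neck_formula}), and a careless choice would only give rationality, not Laurentness, since gauge transformations can introduce denominators that are non‐monomial. Getting the exact monomial relating the canonical gauge‐fixes of $\Measop_G$ and $\Measop_{G'}$ — equivalently, tracking precisely how the upstream wedges of the Grassmann necklace arcs change under $f\mapsto s_rf$ — is where the real work lies; everything else is the standard bridge recursion plus the square‐move invariance already used earlier in the paper.
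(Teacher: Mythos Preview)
Your bridge-induction approach is different from the paper's and has a genuine gap in Step~2. The bridge does \emph{not} act by a matrix with entries in $\mathbb{Z}[\bt^{\pm}]$: in the proof of \cref{thm:bound_meas_C} one finds $\Meas(s_rf,\bt)=\Meas(f,\bt)\cdot g$ with $g|_{\{r,r+1\}} = \left(\begin{smallmatrix}1 & -\H ba/\H a{r+1}\\ 0 & \H b{r+1}/\H a{r+1}\end{smallmatrix}\right)$ for $a=f(r)$, $b=f(r+1)$, so $g^{-1}$ carries $\H b{r+1}$ in its denominator. Since $\{b,r+1\}\in\Misud_f$ is not an $f$-crossing, this factor is not a unit on $\Tspace_f$. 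Your unipotent matrix with entry $\pm\H{f(r+1)}{r}$ would arise from adding a bridge edge of that weight to $(G',\wt_\bt)$, but the weights on $G'$ are computed using the $s_rf$-labels, which differ from the $f$-labels near the bridge; this relabeling is exactly what produces the diagonal factor. One can check that the ratio between the two canonical gauge-fixes is in fact~$1$ --- so your ``main obstacle'' dissolves --- but that does not help: the denominator sits in $g^{-1}$, not in the gauge-fix ratio. The induction therefore only gives $Y_I^{(f)}\in \mathbb{Z}[\bt^{\pm},\,1/\H b{r+1}]$, and you still owe an argument that this last factor cancels.

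The paper avoids induction entirely. Writing the canonical gauge-fix as $Y_I = Z_{I,G}/R_G(\bt)$ with $R_G(\bt)=\prod_{p<q}\H qp^{\nc_{p,q}(G)}$ (see~\eqref{eq:common_G}), one notes that $Y_I$ is $G$-independent (by \cref{lemma:complex_sq_move} and \cref{prop:Gr_neck_formula}), and then, for each fixed pair $p<q$, exhibits a specific $G\in\Gred(f)$ --- a suitable cyclic shift of the Le-diagram graph --- with $\nc_{p,q}(G)=0$. Since the denominator of $Y_I$ divides $R_G$ for \emph{every} $G$, it contains no $\H qp$, hence is a monomial in~$\bt$. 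Your Step~1 is the $G$-independence half of this; what finishes the argument is varying $G$ to kill each potential factor, not recursing on~$f$.
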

\noindent See \cref{sec:Laurent_proofs} for a proof. By~\eqref{eq:Pio_open}, we see that the map $\Measop_G$ may be extended to a map
\begin{equation*}%
  \Measop_f:\Tspace_f\to \Pio_f.
\end{equation*}
Moreover, $\Meas(G,\wt_\bt)\notin \Pio_f$ when $\bt\in\Cast$ is not $f$-admissible. This again confirms that reduced strand diagrams give the ``correct'' notion of $f$-admissibility, even though a priori it may appear that a more restrictive notion is required (cf. \cref{ex:t1=t3_t2=t4}).

\begin{example}\label{ex:non_inj}
Unlike in the case of open positroid varieties, the map $\Measop_f:\Tspace_f\to\Cio_f$ is in general \emph{not} injective. For example, consider two tuples 
\begin{equation*}%
  \bt:=(1,\exp(i\pi/4),\exp(i\pi/2),\exp(3i\pi/4)), \quad%
\bt':=(1,\exp(3i\pi/4),\exp(-i\pi/2),\exp(i\pi/4)).
\end{equation*}
Then for $f=f_{3,4}$ (cf. \figref{fig:crit-ex}(c)), we find that $\bt,\bt'\in\Tspace_f$ and $\Meas(f,\bt)=\Meas(f,\bt')$, since both $\Meas(f,\bt)$ and $\Meas(f,\bt')$ give rise to the cyclically symmetric point $\XO 34\in \Grtnn(3,4)$ (all of whose Pl\"ucker coordinates are equal). Note that if $\bth$ and $\bth'$ are related respectively to $\bt$ and $\bt'$ via~\eqref{eq:th_to_t} then $\bth\in \THtp_f$ but $\bth'\notin \THtp_f$, so this example does not contradict \cref{thm:inj_fkn}.
\end{example}

Limited computational evidence suggests that even a stronger form of the Laurent phenomenon may hold for critical varieties. It was shown in~\cite{posit_cluster} that the coordinate ring $\C[\Pio_f]$ admits a cluster algebra structure, and thus we have a family of regular functions on $\Pio_f$ called \emph{cluster variables}.
\begin{conjecture}[Strong Laurent phenomenon]\label{conj:strong_Laurent}
All cluster variables in $\C[\Pio_f]$, when restricted to $\Cio_f$, become Laurent polynomials in $\bt$, assuming the canonical gauge-fix~\eqref{eq:Gr_neck_formula}.
\end{conjecture}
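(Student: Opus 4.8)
The plan is to convert \cref{conj:strong_Laurent} into a statement about poles along an explicit divisor, and then prove it by induction via the bridge-removal procedure of \cref{sec:bridge_rem}, paralleling the proof of \cref{prop:edges_conn_cpts}. By \cref{prop:Meas_exists} the map $\Measop_f\colon\Tspace_f\to\Pio_f$ is regular, and $\Tspace_f$ is the principal open subset $\{g\neq 0\}$ of the algebraic torus $\{\bt\in(\Cast)^n\mid t_{p_1}=\dots=t_{p_{\conn_f}}=1\}$, where $g:=\prod(t_p^2-t_q^2)$ is the product over all $f$-crossings $\{p,q\}$; in particular $\Tspace_f$ is affine and its coordinate ring is a localization of a Laurent polynomial ring at $g$. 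Hence for any cluster variable $x\in\C[\Pio_f]$ the pullback $\Measop_f^{*}x$ is a regular function on $\Tspace_f$, so of the form $P(\bt)/g(\bt)^{m}$ with $P$ a Laurent polynomial and $m\geq 0$; \cref{thm:Laurent_ph} settles the case where $x$ is a Pl\"ucker coordinate (then $m=0$), and it is only the non-Pl\"ucker cluster variables -- which appear already for the top cell of $\Gr(3,8)$ -- that carry content. Thus \cref{conj:strong_Laurent} is equivalent to: $\Measop_f^{*}x$ has no pole along $D_{p,q}:=\{t_p=t_q\}$ for every $f$-crossing $\{p,q\}$ (the locus $t_p=-t_q$ being handled the same way), i.e.\ one must control the limit of $x\circ\Measop_f$ as $\bt$ tends to a generic point of $D_{p,q}$.

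To identify this limit, first reduce -- as in the proof of \cref{prop:edges_conn_cpts}, using that the crossing of $\str_p,\str_q$ lies on an increasing cycle (\cref{prop:convex}) together with \cref{sec:bridge_rem} -- to the case where, after a cyclic relabelling, $\{p,q\}$ comes from a bridge of $f$: $r<r+1\leq f(r)=p<f(r+1)=q\leq r+n$ (the degenerate case $f(r)=r+1$ lowers $n$ and is disposed of as there). Choose $G\in\Gred(f)$ with a bridge configuration at $b_r,b_{r+1}$ and let $G'\in\Gred(s_rf)$ be the bridge-removed graph. The key geometric claim to establish is the limit formula
\[
\lim_{t_p\to t_q}\Meas(f,\bt)=\Meas(s_rf,\bt')\qquad\text{inside }\Pi_f ,
\]
with $\bt'$ the induced tuple for $s_rf$: the critical variety of $f$ degenerates precisely onto that of the bridge-removed permutation. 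I would prove this either by tracking the bridge edge weight inside $G$, so that $\Measop_G$ manifestly limits onto $\Measop_{G'}$ with the vanishing Grassmann-necklace minors accounted for by \cref{prop:Gr_neck_formula}, or -- more robustly -- from the explicit curve formula (the complex analogue of \cref{thm:intro:bound_meas}; cf.\ \cref{thm:bound_meas_C}), noting that as $t_p\to t_q$ the span of the coordinate curves $\g_r$ acquires exactly the linear relation defining $s_rf$. One must be careful to use an adapted model here: for a poorly chosen $G$ \emph{all} dimer partition functions may vanish in this limit, as in \cref{ex:t1=t3_t2=t4}, which is why the bridge-adapted $G$, or the $G$-independent curve formula, is essential.

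Now run the induction, the base case $f=f_{1,n}$ (and, dually, $f=f_{n,n}$) being trivial since then $\Cio_f$ is a single point. The cell $\Pio_{s_rf}$ is a codimension-one positroid stratum of $\overline{\Pio_f}=\Pi_f$, and bridge removal of plabic graphs is the combinatorial shadow of the standard comparison of the cluster structures on $\C[\Pio_f]$ and $\C[\Pio_{s_rf}]$ (via the Muller--Speyer twist and the cluster structure of~\cite{posit_cluster}): a cluster variable $x$ of $\C[\Pio_f]$ restricts along $\Pio_{s_rf}$ to a Laurent monomial in a cluster variable of $\C[\Pio_{s_rf}]$ times frozen Grassmann-necklace variables. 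Along $\Cio_f$ near $D_{p,q}$ the latter extend to the limits of the products $\prod\H qp$ of \cref{prop:Gr_neck_formula}, which are Laurent polynomials in $\bt$ regular on $D_{p,q}$; and by the inductive hypothesis applied to $s_rf$ the relevant cluster variable of $\C[\Pio_{s_rf}]$ restricts on $\Cio_{s_rf}$ to a Laurent polynomial in $\bt'$. Combined with the limit formula, $x\circ\Measop_f$ then has a finite limit along $D_{p,q}$, hence no pole, which closes the induction.

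The main obstacle is the cluster-theoretic input in the last step: that an \emph{arbitrary} cluster variable of $\C[\Pio_f]$ restricts to a boundary positroid cell in the controlled monomial form used above. For Pl\"ucker coordinates, and more generally for cluster variables appearing in some plabic-graph seed, this is classical -- and is precisely the case already handled by \cref{thm:Laurent_ph}; the genuine difficulty is the non-Pl\"ucker cluster variables. A possible way to bypass boundary strata is to prove instead an intrinsic ``critical exchange'' statement: by induction on mutation distance from a plabic-graph seed $\Sigma_G$, show that every exchange relation $\Delta\,\Delta'=\Delta_{I_1}\Delta_{I_3}+\Delta_{I_2}\Delta_{I_4}$, read under $\Delta_{\lambda(F)}\mapsto\Delta_{\lambda(F)}(G,\wt_\bt)$, has right-hand side divisible \emph{as a Laurent polynomial in $\bt$} by $\Delta_{\lambda(F)}(G,\wt_\bt)$, so that iterating never introduces a new non-monomial denominator. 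This divisibility -- a genuine cancellation phenomenon for critical dimer partition functions, presumably the ``non-trivial cancellation'' alluded to after \cref{dfn:intro:Cio} -- together with its validity along mutation sequences not realized by square moves, is where I expect the real work to lie.
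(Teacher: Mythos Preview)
The statement you attempt to prove is presented in the paper as an open \emph{conjecture}; the paper offers no proof, only ``limited computational evidence.'' There is therefore no paper proof to compare against, and what follows is an assessment of your proposal on its own merits.

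Your architecture --- reformulate as absence of poles along the divisors $D_{p,q}$ and induct via bridge removal --- is reasonable, and the limit identification $\lim_{t_p\to t_q}\Meas(f,\bt)\in\Pio_{s_rf}$ for a bridge crossing is correct and can be proved as you indicate. But two genuine gaps remain.

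First, the reduction ``to the case where, after a cyclic relabelling, $\{p,q\}$ comes from a bridge of $f$'' is not available: a crossing $\{p,q\}$ is a bridge crossing precisely when $\perm^{-1}(p)$ and $\perm^{-1}(q)$ are cyclically adjacent, which already fails for $f=f_{3,5}$ and the crossing $\{1,3\}$. The proof of \cref{prop:edges_conn_cpts} that you invoke does not perform such a reduction; it picks \emph{some} bridge, handles the resulting bridge crossing directly, and pushes all remaining crossings into $s_rf$ by induction. If that is what you intend here, then for the non-bridge crossings the inductive hypothesis concerns cluster variables of $\C[\Pio_{s_rf}]$, not of $\C[\Pio_f]$, and passing between the two is exactly the unproved restriction property below.

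Second, the assertion that an arbitrary cluster variable of $\C[\Pio_f]$ restricts along $\Pio_{s_rf}$ as a Laurent monomial in a single cluster variable of $\C[\Pio_{s_rf}]$ times frozens is, to my knowledge, not established and is itself a hard statement. On $\Pio_f$ the frozen Grassmann-necklace minors are invertible (cf.~\eqref{eq:Pio_open}), so cluster variables may and do acquire poles along $\{\Delta_{I_r}=0\}\supset\Pio_{s_rf}$; controlling the order of those poles for \emph{every} cluster variable is essentially what the conjecture demands. Your alternative --- divisibility of exchange-relation right-hand sides along arbitrary mutation sequences --- does not sidestep this: it restates the same cancellation one mutation at a time, and the step is non-automatic precisely for the mutations leaving the plabic locus (the first non-Pl\"ucker variables in $\Gr(3,8)$). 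The honest summary is that your outline isolates the difficulty correctly but does not resolve it; the conjecture remains open.
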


\subsection{The real part of a critical variety}\label{sec:real-part-critical}
Recall that the set $\Gr_\R(k,n)$ of real points of the complex Grassmannian consists of all $X\in \Gr(k,n)$ such that the ratio of any two nonzero Pl\"ucker coordinates belongs to $\R$. The problem of determining the set $\Cio_f(\R)$ of real points of $\Cio_f$ turns out to be quite non-trivial, and we solve it only partially even in the case of the top cell. Recall from \cref{sec:shift-1} that $\dsh f\in\Bound(k-1,n)$ is defined by $\dsh f(p):=f(p-1)$ for all $p\in\Z$.
 We let $i\R\subset\C$ be the set of purely imaginary complex numbers. Denote
\begin{align*}
    \TspaceR_f=\{\bt\in\Tspace_f\mid |t_p|=1&\text{ for all $p\in[n]$}\}\\
  &\cup\{\bt\in\Tspace_f\mid t_p\in\R\cup i\R \text{ and ${t_p}/{t_{\dsh\perm(p)}}\in\R$ for all $p\in[n]$}\}.
\end{align*}
Thus if $\bt\in\TspaceR_f$ then the points $\v_p:=t_p^2$, $p\in[n]$, all belong to the same \emph{generalized circle}, that is, either a circle or a line. (Moreover, the circle is required to have its center at $0$ while the line is required to pass through $0$.)

\begin{lemma}\label{lemma:GX_conn}
Assume that $\GX_f$ is connected. Then for $\bt\in\Tspace_f$, we have 
\begin{equation}\label{eq:real_=>}
\bt\in\TspaceR_f \quad\Longrightarrow\quad   \Meas(f,\bt)\in \CioR_f.
\end{equation}
\end{lemma}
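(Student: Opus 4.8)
The plan is to show directly that $\Meas(f,\bt)$ lies in $\CioR_f$ by exhibiting a real matrix (up to a common complex scalar) whose row span equals $\Meas(f,\bt)$. The starting point is the boundary measurement formula: for a generic $\bt$, by the complex analog of \cref{thm:intro:bound_meas} (cf.\ \cref{thm:bound_meas_C}), $\Meas(f,\bt)$ is the span of the curve obtained from~\eqref{eq:curve} by replacing $\sin(t-\th_p)$ with $\HT p$, and the Fourier-coefficient construction (\cref{prop:Fourier}) produces a canonical $k\times n$ matrix $A(\bt)$ representing it. So the main task is to understand when the Plücker coordinates of $A(\bt)$ — which are Laurent polynomials in $\bt$ by \cref{thm:Laurent_ph} — have pairwise real ratios, for $\bt$ in either of the two families defining $\TspaceR_f$.

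First I would treat the case $|t_p|=1$ for all $p$, i.e.\ $\bt$ arises from a real angle tuple $\btht$ via~\eqref{eq:th_to_t}. Here $\bt$ need not be $f$-admissible in the stronger sense required for $\Meas(f,\btht)\in\Ctp_f$, but the point is only that $\Meas(f,\bt)$ is \emph{defined} (which it is, since $\bt\in\Tspace_f$) and \emph{real}. Using the substitution $t_p=\exp(i\th_p)$, one has $\HT p = \frac{t_p}{t} - \frac{t}{t_p} = 2i\sin(\th-\th_p)$ after the further substitution $t=\exp(i\th)$; each coordinate $\g_r$ then becomes, up to a power of $2i$ and the sign $\eps_r$, a product of sines, hence a real trigonometric polynomial in $\th$ of degree $k-1$. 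Its Fourier coefficients are therefore (up to a global power of $i$) real, so $A(\bt)$ is a real matrix up to a common scalar, giving $\Meas(f,\bt)\in\Gr_\R(k,n)$; combined with $\Meas(f,\bt)\in\Pio_f$ from \cref{prop:Meas_exists} this yields $\Meas(f,\bt)\in\CioR_f$. For the second family, where $t_p\in\R\cup i\R$ and the consecutive ratios $t_p/t_{\dsh\perm(p)}$ are real, the idea is the same but the reality argument runs through the sets $\J_r$: the condition on ratios is precisely what forces each product $\prod_{p\in\J_r}\HT p$ (with $t$ specialized appropriately, e.g.\ $t\in\R$ or $t\in i\R$) to lie in a single line $\R$ or $i\R$ in $\C$, with the line depending on $r$ only through $|\J_r|=k-1$ and the parity data already encoded in $\eps_r$ and in $\dsh\perm$. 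Here the connectedness hypothesis on $\GX_f$ is used to propagate the reality constraint coherently across all of $[n]$ — this is exactly why the lemma is stated under that hypothesis, and why \cref{prop:factor} is needed to reduce to it in general.

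The main obstacle I anticipate is the second family: one must show that the two seemingly different ``real loci'' $|t_p|=1$ and $t_p\in\R\cup i\R$ are the two components of the genuine real locus of $\Cio_f$, i.e.\ that membership of $\Meas(f,\bt)$ in $\Gr_\R(k,n)$ really does force $\bt$ (up to the gauge freedom and up to $t\mapsto \pm t$ symmetries) into one of these two families — but for the stated implication \eqref{eq:real_=>} we only need the easy direction, so the real work is the bookkeeping of signs and powers of $i$ across the Plücker coordinates. Concretely, I would fix two necklace indices $r, r'$, write $\Delta_{I_r}/\Delta_{I_{r'}}$ via \cref{prop:Gr_neck_formula} as an explicit ratio of products $\H qp$, and check that under each of the two reality conditions this ratio is real; then the Laurent phenomenon guarantees that \emph{all} Plücker ratios are Laurent polynomials in these generators, hence real as well, so $\Meas(f,\bt)\in\Gr_\R(k,n)$. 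A minor subtlety is that the canonical gauge-fix introduces a global scalar that may be a power of $i$ rather than a real number; since Grassmannian points are defined projectively this is harmless, but it should be stated carefully so that ``the ratio of any two nonzero Plücker coordinates is real'' is verified rather than ``the matrix is real.'' Finally, for non-generic $\bt\in\TspaceR_f$ one passes to the limit: $\Meas(f,-)$ is a regular (in particular continuous) map on $\Tspace_f$ by \cref{prop:Meas_exists}, $\TspaceR_f$ is the closure in $\Tspace_f$ of its generic part, and $\CioR_f = \Cio_f \cap \Gr_\R(k,n)$ is closed in $\Cio_f$, so the generic case implies the general one.
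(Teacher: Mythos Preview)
Your Fourier-coefficient route is a genuine alternative to the paper's argument and works for the first family ($|t_p|=1$). The paper does not pass through the boundary measurement formula at all: it stays at the level of edge weights. For $|t_p|=1$ it notes that $\wt_\bt$ is gauge-equivalent to $\wt_\bth$ (rescale by $\tfrac{1}{2i}$ at every interior black vertex). For $t_p\in\R\cup i\R$ it observes that each edge weight $\H qp$ lies in $\R$ or in $i\R$ according to whether $\eps(p)=\eps(q)$, and then checks that the alternating product~\eqref{eq:wt_alt_prod} around every face is real: interior faces because each strand contributes an even number of edges, boundary faces because the hypothesis $t_p/t_{\dsh\perm(p)}\in\R$ rewrites as $\eps(f(p))=\eps(p+1)$. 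Real face products give gauge-equivalence to a real weight. Connectedness of $\GX_f$ is used in the paper only to guarantee that each boundary face of $G$ touches exactly two (consecutive) boundary vertices.

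Your treatment of the second family is where the proposal has gaps. The Fourier argument can be completed, but the sentence ``the line depending on $r$ only through $|J_r|=k-1$ and the parity data'' is not the right mechanism: what you need is that $m_r:=\#\{p\in J_r: t_p\in i\R\}$ has constant parity in $r$. This follows from the Grassmann-necklace recursion $J_{r+1}=(J_r\setminus\{r+1\})\cup\{\perm(r)\}$ (or $J_{r+1}=J_r$ when $\perm(r)=r+1$), together with $\eps(r+1)=\eps(\perm(r))$, which is exactly the hypothesis $t_{r+1}/t_{\dsh\perm(r+1)}\in\R$. You do not state this step. More seriously, the ``concrete'' paragraph is wrong: the Laurent phenomenon asserts that the canonically gauge-fixed $\Delta_I$ are Laurent polynomials in the $t_p$, not in the necklace minors or in the brackets $\H qp$. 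Knowing $\Delta_{I_r}/\Delta_{I_{r'}}\in\R$ therefore does not force all Pl\"ucker ratios to be real --- a Laurent polynomial in variables from $\R\cup i\R$ can land anywhere in $\C$. Drop that paragraph and finish the Fourier argument with the parity computation above (then pass to the limit for non-generic $\bt$ as you indicate), and your proof is complete.
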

\begin{proof}
Assume first that $\bt$ is generic and let $G\in\Gred(f)$. 
 If $|t_p|=1$ for all $p\in[n]$ then $\wt_\bt$ is gauge-equivalent to $\wt_\bth$, where $\bth=(\th_1,\th_2,\dots,\th_n)\in\R^n$ is any tuple related to $\bt$ by~\eqref{eq:th_to_t}. Indeed, we have $\sin(\th_q-\th_p)=\frac1{2i}\H qp$, and thus $\wt_\bth$ is obtained from $\wt_\bt$ by rescaling all edges incident to each interior black vertex of $G$ by $\frac1{2i}$. If $t_p\in\R$ for all $p\in[n]$ then the edge weights $\wt_\bt(e)$ are already real numbers. In either case, we see that $\Meas(G,\wt_\bt)\in\Gr_\R(k,n)$. 

Let us now consider the case where $t_p\in\R\cup i\R$ for all $p\in[n]$. Since the entries of $\bt$ are nonzero, we have a map $\eps:[n]\to\{1,i\}$ such that $\eps(p)=1$ if $t_p\in\R$ and $\eps(p)=i$ if $t_p\in i\R$. For each edge $e\in E(G)$ labeled by $\{p,q\}$, we have $\wt_\bt(e)\in \R$ if $\eps(p)=\eps(q)$ and $\wt_\bt(e)\in i\R$ otherwise. 
 Consider an interior face $F$ of $G$. Each strand of $G$ passes through an even number of edges of $F$, and therefore $\wt_\bt(e)\in i\R$ for an even number of edges of $F$. Thus the alternating product~\eqref{eq:wt_alt_prod} is real for each interior face $F$. Suppose now that $F$ is a boundary face of $G$. Since $\GX_f$ was assumed to be connected, let $b_p$ and $b_{p+1}$ be the two boundary vertices of $G$ belonging to $F$. Again, we see that each strand of $G$ passes through an even number of edges of $F$ except for the two strands labeled by $f(p)$ and $p+1$. Therefore the alternating product~\eqref{eq:wt_alt_prod} is real if and only if $\eps(f(p))=\eps(p+1)$ for all $p\in[n]$, where the indices are taken modulo $n$. This is equivalent to having $\eps(\dsh f(p))=\eps(p)$ for all $p\in[n]$. Thus all alternating products~\eqref{eq:wt_alt_prod} are real, which implies that $\wt_\bt$ is gauge-equivalent to a real edge weight function.

We have shown the result for generic $\bt\in\TspaceR_f$. The general case follows by continuity.
\end{proof}

\noindent The converse to~\eqref{eq:real_=>} is false in general, even for the top cell. For instance, if $f=f_{2,4}$ then $\Meas(f,\bt)\in\CioR_f$  when $\bt\in\Tspace_f$ satisfies $t_1=t_3$ and $t_2=t_4$; see \cref{ex:t1=t3_t2=t4}. However, we expect the converse to hold in the \emph{generic} case.

\begin{conjecture}
Let $f\in\Bkn$ be loopless and assume that $\GX_f$ is connected. Then for all generic $\bt\in\Tspace_f$, we have 
\begin{equation*}%
\bt\in\TspaceR_f \quad\Longleftrightarrow\quad   \Meas(f,\bt)\in \CioR_f.
\end{equation*}
\end{conjecture}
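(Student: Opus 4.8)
The forward implication $\bt\in\TspaceR_f\Rightarrow\Meas(f,\bt)\in\CioR_f$ is already \cref{lemma:GX_conn}, so the content is the converse, and only for generic $\bt\in\Tspace_f$. The plan is to argue by contraposition: assume $\bt\in\Tspace_f$ is generic and $\Meas(f,\bt)\in\CioR_f$, and deduce that the points $\v_p:=t_p^2$ lie on a generalized circle centered at (resp. passing through) $0$. The key observation is that \cref{prop:Gr_neck_formula} together with the Laurent phenomenon (\cref{thm:Laurent_ph}) lets us recover, for any $G\in\Gred(f)$, all the alternating face products~\eqref{eq:wt_alt_prod} of $\wt_\bt$ from $\Meas(f,\bt)$ via the Muller--Speyer twist map (\cref{sec:twist}), up to the canonical gauge-fix. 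Concretely, for an interior face $F$ of $G$ with boundary edges labeled by pairs $\{p_1,q_1\},\dots$, the alternating product~\eqref{eq:wt_alt_prod} is a fixed Laurent monomial in the $\H {q_j}{p_j}$; reality of $\Meas(f,\bt)$ forces every such monomial to be real. The first step, then, is to make this precise: show that $\Meas(f,\bt)\in\Gr_\R(k,n)$ if and only if, for \emph{some} (equivalently any, by square-move invariance) $G\in\Gred(f)$, the alternating product~\eqref{eq:wt_alt_prod} is real for every interior and boundary face of $G$. One direction is in the proof of \cref{lemma:GX_conn}; the other uses that the twist map is defined over $\R$ and that the face products determine $\Meas(G,\wt_\bt)$ up to gauge.

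The second step is to translate ``all face alternating products are real'' into a statement about the $\v_p$. Write $\v_p = r_p e^{i\phi_p}$. A single cross-ratio-type quantity $\H qp = \frac{t_q}{t_p} - \frac{t_p}{t_q}$ satisfies $\H qp \in \R$ iff $|t_p|=|t_q|$ or $t_p/t_q\in\R\cup i\R$ (the locus where $\v_p,\v_q$ are both on a circle about $0$ or both on a line through $0$, up to the $i\R$ subtlety tracked by the map $\eps$ in \cref{lemma:GX_conn}). For a general face, the alternating product is a product/quotient of several $\H{q_j}{p_j}$'s, and I want to argue that \emph{genericity} of $\bt$ forces each individual factor to be real — or, more robustly, that the full system of reality constraints coming from all faces of $G$ (ranging over $G\in\Gred(f)$, hence over all reduced strand diagrams) cuts out exactly $\TspaceR_f$ inside the generic locus. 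Here I would use the factorization \cref{prop:factor} to reduce to $\GX_f$ connected, and then induct using bridge removal (\cref{sec:bridge_rem}), exactly paralleling the proof of \cref{prop:edges_conn_cpts}: adding or removing a bridge at $r$ changes the graph near $b_r,b_{r+1}$ in a controlled way, and the new boundary-face reality constraint is precisely $\eps(\dsh f(r))=\eps(r)$, which is the extra condition defining the second piece of $\TspaceR_f$.

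The third step handles the ``generic'' hypothesis carefully. Without genericity the converse genuinely fails (the $f=f_{2,4}$, $t_1=t_3$, $t_2=t_4$ example), because there $\wt_\bt$ is gauge-equivalent to a real weight even though $\v_1,\v_2,\v_3,\v_4$ need not be concircular about $0$ — the alternating products happen to be real for a degenerate reason. Genericity ($t_p\neq\pm t_q$ for all $p\neq q$) should rule this out: it guarantees that the map $\bt\mapsto(\text{alternating products of }G)$ has full differential transverse to the real locus except along $\TspaceR_f$, so that the real fibre is no larger than $\TspaceR_f$. I would prove this last point by a dimension count — $\Tspace_f$ has complex dimension $\dimf$, $\TspaceR_f$ has real dimension $\dimf$ (it is a union of two pieces, the ``circle'' piece and the ``line'' piece, each a real form), and the real locus of a generic fibre of a $\dimf$-dimensional family of real functions is $\dimf$-dimensional — combined with the fact, from \cref{thm:inj_fkn} and more generally the injectivity conjecture, that $\Measop_f$ is generically finite onto its image, so reality upstairs and downstairs match up to this finite ambiguity.

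\textbf{Main obstacle.} The hard part will be the third step: controlling the non-generic degenerations. Showing that \emph{every} alternating face product being real is equivalent to all the $\v_p$ lying on one generalized circle through/about $0$ requires ruling out ``accidental'' reality coming from cancellations among the $\H{q_j}{p_j}$ factors within a face (and across faces), and it is precisely such cancellations that produce the counterexamples in the non-generic case. The cleanest route is probably not to argue face-by-face but to use the full strand-diagram structure: by \cref{prop:convex}, every crossing sits in an increasing cycle bounding a convex region, and the alternating product around such a convex region is a ``clean'' monomial $\prod_j \H{a_{j+1}}{a_j}$ with no cancellation, so its reality pins down the relative position of the $\v_{a_j}$ on the unit circle or the real line directly; then connectivity of $\GX_f$ propagates this to all of $[n]$. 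Making ``no cancellation for convex regions'' rigorous, and checking it survives the bridge-removal induction, is where the real work lies, and it is also where I would expect to need the genericity hypothesis in an essential way.
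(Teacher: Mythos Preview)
The statement you are trying to prove is a \emph{conjecture}: the paper does not prove it in general, and explicitly leaves it open. The paper only establishes the special case $f=f_{k,n}$ for $2\le k\le n-2$ (\cref{thm:real_fkn}), proved in \cref{sec:proofs_top_cell}. So your proposal must be judged as an attempted proof of an open problem, and it has real gaps.

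Your steps 1 and 2 are reasonable and close in spirit to what the paper does for the top cell: there one indeed extracts, via the twist map, certain real invariants (cross-ratios $\crat(a,b;c,d)$ and bridge ratios $\H p{p-k}/\H p{p-1}$) from $\Meas(f,\bt)$ and uses their reality to constrain the $\v_p$. But note that the paper does \emph{not} argue face-by-face or via a general ``all alternating products real $\Rightarrow$ concircular'' statement; instead it applies two concrete lemmas (\cref{lemma:triang:R,lemma:triang:iR} for triples and \cref{lemma:quad} for quadruples) to carefully chosen four-element subsets, and these lemmas are already nontrivial case analyses in two complex variables. For general $f$ you do not have access to arbitrary quadruples $\{a,b,c,d\}$ as square faces, so this line of argument does not generalize straightforwardly.

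The serious gap is your step 3. You propose a dimension count that (a) assumes $\Measop_f$ is generically finite-to-one, which is precisely the injectivity conjecture (\cref{conj:inj}) and is itself open for general $f$; and (b) asserts that the real locus of a generic fibre has the ``expected'' dimension, which is exactly the kind of statement that fails when there are accidental cancellations --- the very phenomenon you flag as the main obstacle. Your proposed fix via convex regions (\cref{prop:convex}) does not resolve this: the alternating product around a convex region is a product of many $\H{a_{j+1}}{a_j}$'s, and reality of that product does \emph{not} force each factor to be real, nor does it directly force concircularity of the $\v_{a_j}$. (Compare: the paper's \cref{lemma:quad} for quadruples already requires a delicate argument, and even then yields three disjoint cases rather than one.) The bridge-removal induction you sketch also does not obviously terminate in a usable base case for arbitrary $f$, since the reality constraints you pick up along the way interact in ways that the top-cell proof avoids by having all quadruples available.

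In short: the paper's proof for the top cell is more concrete and less ambitious than your outline, relying on explicit computations with specific cross-ratios and the lemmas in \cref{sec:proofs_top_cell}; your proposal for the general case rests on an unproven injectivity statement and a transversality/dimension argument that is precisely where the difficulty lies.
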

\noindent In \cref{sec:proofs_top_cell}, we prove this in the following special case (cf. \cref{notn:fkn}).
\begin{theorem}\label{thm:real_fkn}
Let $2\leq k\leq n-2$ and $f=f_{k,n}$. Then for any generic $\bt\in(\Cast)^n$, we have
\begin{equation}\label{eq:real_fkn_thm}
\bt\in\TspaceRkn \quad\Longleftrightarrow\quad   \Measop_{k,n}(\bt)\in \CioknR.
\end{equation}
\end{theorem}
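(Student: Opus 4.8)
The plan is to reduce \cref{thm:real_fkn} to a concrete computation in the coordinates furnished by \cref{thm:intro:bound_meas} and its complex analog. First I would use \cref{thm:intro:inj_fkn} (injectivity for the top cell) to identify $\Measop_{k,n}$ with the map $\bt\mapsto \Span_\C(\curvec_{\fkn,\bt})$, where each coordinate $\gamma_r(t)=\eps_r\prod_{p\in\J_r}\HT p$ is an explicit trigonometric (Laurent) polynomial of degree $k-1$. For $\fkn$ the Grassmann necklace is $\J_r=\{r+1,\dots,r+k-1\}$ (indices mod $n$, with affine signs $\eps_r$), so the $\gamma_r$ are cyclic shifts of one another. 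The plane $\Span_\C(\curvec)$ is real (i.e.\ lies in $\Gr_\R(k,n)$) if and only if it is fixed by complex conjugation, which on Pl\"ucker coordinates amounts to the ratio of any two nonzero maximal minors of the Fourier-coefficient matrix being real. The forward direction $\bt\in\TspaceRkn\Rightarrow\Meas\in\CioknR$ is already \cref{lemma:GX_conn} (noting $\GX_{\fkn}$ is connected for $2\le k\le n-2$), so the content is the converse.

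For the converse I would argue contrapositively: assume $\bt$ is generic and $\bt\notin\TspaceRkn$, and show $\Span_\C(\curvec_{\fkn,\bt})$ is not a real subspace. Pass to the points $\v_p=t_p^2\in\Cast$; the condition $\bt\in\TspaceRkn$ says exactly that $\{\v_1,\dots,\v_n\}$ lies on a generalized circle centered at (resp.\ through) $0$, i.e.\ either all $|\v_p|$ equal or all $\v_p$ lie on a line through the origin after a common real rescaling — equivalently, the $\v_p/\v_q$ satisfy a single real algebraic constraint. I would compute the complex conjugate of the plane: conjugating $\curvec_{\fkn,\bt}$ replaces each $t_p$ by $\bar t_p$, hence each $\v_p$ by $\bar\v_p=|\v_p|^2/\v_p$. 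Using the Fourier-coefficient basis (\cref{prop:Fourier}) the matrix of $\Meas(f,\bt)$ has entries that are elementary-symmetric-type expressions in $\{\v_p:p\in\J_r\}$; I would show that the conjugate plane coincides with the original one only when the multiset $\{\v_p\}$ is carried to itself (up to a common scalar) by the inversion $z\mapsto c/z$ for some $c\in\Cast$, and that genericity forces $c=|\v_p|^2$ to be independent of $p$ or forces all $\v_p$ collinear through the origin — precisely the two branches defining $\TspaceRkn$. Concretely this should come down to a Vandermonde-type nonvanishing: the map from the $n$ numbers $\v_p$ (modulo the $\conn_f=1$ worth of scaling) to the point in $\Gr(k,n)$ is generically finite, and one tracks which fibers are conjugation-invariant.

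A cleaner route, which I would pursue in parallel, is to exploit the cyclic shift: by \cref{prop:shift_Ctp} and the lemma following it, $\shift$ intertwines $\Meas(\fkn,\btht)$ with $\Meas(\fkn,\shbtht)$, and $\Gr_\R$ is $\shift$-stable, so the locus $\{\Meas\in\CioknR\}$ is invariant under the cyclic $\Z/n$-action on $\bt$. One can then diagonalize: write $\bt$ in terms of its "discrete Fourier modes" and observe that being sent into $\Gr_\R$ pins down which modes may be nonzero. This is essentially the observation in \cref{sec:intro:cyc_symm} that $\bthr$ gives the cyclically symmetric point $\XOkn\in\Grtnn\subset\Gr_\R$, combined with the explicit basis of Fourier coefficients; the reality locus should be a union of the $\shift$-orbit of a low-dimensional stratum, matching $\TspaceRkn$ up to the genericity caveat already present in \cref{ex:non_inj} and \cref{ex:t1=t3_t2=t4}.

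The main obstacle I expect is the converse direction's genericity bookkeeping: $\TspaceRkn$ is a union of \emph{two} components (the $|t_p|=1$ locus and the $t_p\in\R\cup i\R$ locus), and one must show these are exactly the conjugation-fixed fibers of $\Measop_{k,n}$, ruling out "accidental" real planes coming from non-generic coincidences — the same phenomenon that makes the naive converse of \cref{lemma:GX_conn} false. Handling this rigorously will likely require the full strength of the injectivity result \cref{thm:intro:inj_fkn} (so that $\Meas(f,\bt)=\Meas(f,\bt')$ forces a controlled relation between $\bt$ and $\bt'$ — up to sign changes $t_p\mapsto\pm t_p$ and the cyclic symmetry), together with a careful analysis of the Galois/conjugation action on the finite fiber. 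I would structure the write-up so that all the representation-theoretic/Fourier input is isolated in one lemma about the matrix of Fourier coefficients of $\curvec_{\fkn,\bt}$, after which the equivalence~\eqref{eq:real_fkn_thm} becomes a statement purely about which configurations $\{\v_p\}\subset\Cast$ are inversion-symmetric, and \cref{lemma:GX_conn} supplies one inclusion for free.
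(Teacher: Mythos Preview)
Your forward direction (via \cref{lemma:GX_conn}) is fine and matches the paper. The converse direction, however, has a genuine gap.

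Your plan is to deduce from $\Meas(f,\bt)\in\Gr_\R(k,n)$ that $\Meas(f,\bt)=\Meas(f,\bar\bt)$, and then to invoke some control over the fibers of $\Measop_f:\Tspace_f\to\Cio_f$ to conclude that $\bt$ and $\bar\bt$ are related by a simple operation. But the complex map $\Measop_f$ is \emph{not} injective (see \cref{ex:non_inj}), and the paper provides no description of its fibers. \Cref{thm:intro:inj_fkn} is injectivity only on the positive locus $\THtp_f$, not on $\Tspace_f$; it tells you nothing about which complex $\bt'$ satisfy $\Meas(f,\bt')=\Meas(f,\bt)$. Your phrase ``careful analysis of the Galois/conjugation action on the finite fiber'' is exactly the missing ingredient, and nothing in the paper (nor in your sketch) supplies it. The alternative ``diagonalize via discrete Fourier modes'' route is too vague to evaluate, and the cyclic symmetry by itself does not pin down the reality locus.

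The paper's argument bypasses the fiber problem entirely. Instead of comparing $\bt$ to $\bar\bt$, it extracts from $\Meas(f,\bt)$ certain \emph{specific} rational functions of $\bt$ that are automatically real whenever $\Meas(f,\bt)$ is real: the cross-ratios $\crat(a,b;c,d)=\frac{\H ca\H db}{\H cb\H da}$ (obtained as ratios of twisted Pl\"ucker coordinates via the Muller--Speyer twist, \cref{prop:twist}) and the bridge ratios $\H{p}{p-k}/\H{p}{p-1}$ (obtained from \cite[Lemma~7.6]{LamCDM}). Once these are known to be real, two short elementary lemmas about triples and quadruples of complex numbers (the ones stated as Lemmas in \cref{sec:proofs_top_cell}) force the $\v_p=t_p^2$ to lie on a circle centered at the origin or a line through the origin, with the extra constraint $t_p/t_{p+k-1}\in\R$ in the line case. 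This is a local, coordinate-by-coordinate argument that never needs to know the global structure of $\Measop_f$.
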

\noindent When $k=1$ or $k=n$, $\fkn$ is not connected (and the problem is trivial). When $k=n-1$, $\fkn$ is connected but our methods do not extend to this case.

\subsection{The twist map}\label{sec:twist}
We review the twist map introduced by Muller--Speyer~\cite{MuSp} generalizing the earlier results of Marsh--Scott~\cite{MaSc}. Our goal is to express the \emph{twisted minors} of $\Measop_f(\bt)$ in terms of $\bt$; see \cref{prop:twist}.

We will use the \emph{left twist} automorphism $\cev \tau:\Pio_f\xrasim \Pio_f$. Suppose that an element $A\in\Pio_f$ is the row span of a matrix with columns $A_1,A_2,\dots, A_n$. Extend this to a sequence $(A_q)_{q\in\Z}$ via~\eqref{eq:columns}. Then $\cev\tau(A)\in\Pio_f$ has columns $(\ctau(A)_q)_{q\in\Z}$ defined by
\begin{equation}\label{eq:left_twist_dfn}
  \<  \ctau(A)_q, A_q\>=1 \quad\text{and}\quad \<\ctau(A)_q,A_p\>=0 \quad\text{for all $p,q\in\Z$ such that $p<q<f(p)$.}
\end{equation}
Here $\<\cdot,\cdot\>$ denotes the standard inner product on $\C^k$. The set $\It'_q:=\{p\in\Z\mid p\leq q<f(p)\}$ is an element of the \emph{reverse Grassmann necklace} of $f$. Similarly to the set $\It_q$ defined in~\eqref{eq:It_q}, it has size $k$, and the corresponding columns $(A_p)_{p\in\It'_q}$ form a basis of $\C^k$. Since $f\in\Bkn$ is assumed to be loopless, we have $q\in \It'_q$ and the column $A_q$ is nonzero. Thus~\eqref{eq:left_twist_dfn} yields a well-defined vector $\ctau(A)_q$.

Recall that we have introduced a directed graph $\GXV_f$ in \cref{sec:RSD} (see \figref{fig:GX}(c)) with edge set
\begin{equation}\label{eq:E_GXV}
  E(\GXV_f)=\{(p,q)\in[n]\times[n]\mid \text{$p\neq q$ form a positive $f$-crossing}\}.
\end{equation}
Following~\cite{Pos}, we consider \emph{misalignments} of $f$. Let $p,q,s,t\in[n]$, be such that $p\neq q$, $s=\perm^{-1}(p)$, and $t=\perm^{-1}(q)$. We write $\{p,q\}\in \Misud_f$ if the points $\p s,\m p,\p t,\m q$ are ordered clockwise. Similarly, we write $\{p,q\}\in \Misdu_f$ if the points $\p s,\m p,\p t,\m q$ are ordered counterclockwise. Our definition is slightly different from that of~\cite{Pos} in that in our case the $2$-element sets $\{s,t\}$ and $\{p,q\}$ are not necessarily disjoint. See \figref{fig:align}(c,d).

For convenience, denote
\begin{equation*}%
  \Hun pq=\Hun qp:=\H qp \quad\text{for $1\leq p<q\leq n$.}
\end{equation*}
Note that we have $\H pq=-\H qp$ but $\Hun pq=\Hun qp$.

Recall from \cref{sec:plan-bipart-graphs} that the faces of a reduced graph $G$ are labeled by $k$-element sets. We are ready to give a formula for the corresponding Pl\"ucker coordinates of $\ctau(\Meas(f,\bt))$.
\begin{proposition}\label{prop:twist}
Let $G\in\Gred(f)$ and assume that $\Measop_G$ is well defined at $\bt\in\Tspace_f$. 
Then after a multiplication by a common scalar, we have
\let\BG\Big
\begin{equation}\label{eq:Delta_I_twist}
   \Delta_I(\cev\tau(\Meas(G,\wt_\bt)))= 
\BG(\prod\limits_{\substack{\{p,q\}\in\Misud_f:\\ p,q\in I}}\Hun qp\BG) \cdot 
\BG(\prod\limits_{\substack{\{p,q\}\in\Misdu_f:\\ p,q\notin I}}\Hun qp\BG) \cdot 
\BG( \prod\limits_{\substack{(p,q)\in E(\GXV_f):\\ p\in I,q\notin I}}\Hun qp\BG)^{-1}
\end{equation}
 for all face labels $I$ of $G$.
\end{proposition}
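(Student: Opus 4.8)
The plan is to deduce the formula from the Muller--Speyer description of the twist as a monomial map, and then to identify the resulting monomial combinatorially. It suffices to treat generic $f$-admissible $\bt$: the locus of $\bt\in\Tspace_f$ at which $\Measop_G$ is well defined is Zariski open, both sides of~\eqref{eq:Delta_I_twist} vary algebraically in $\bt$ on it, and generic $f$-admissible tuples are dense there; the remaining cases follow by a limiting argument. For such $\bt$, every non-boundary edge weight $\wt_\bt(e)=\H qp$ is nonzero, its label $\{p,q\}$ being an $f$-crossing by \cref{prop:edges_conn_cpts}, so by \cref{prop:Gr_neck_formula} all Grassmann necklace minors of $\Meas(G,\wt_\bt)$ are nonzero, $\Meas(G,\wt_\bt)\in\Pio_f$, and $\cev\tau$ is defined at it.

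By the Muller--Speyer twist theorem~\cite[Corollary~5.11]{MuSp}, $\cev\tau\circ\Measop_G$, expressed in the Pl\"ucker coordinates indexed by the face labels of $G$, is monomial in the edge weights: up to a common scalar, $\Delta_{\lambda(F)}(\cev\tau(\Meas(G,\wt)))=\prod_{e\in E(G)}\wt(e)^{m_F(e)}$ for an explicit integer vector $(m_F(e))_e$ read off from $G$. Substituting $\wt=\wt_\bt$, using $\wt_\bt(e)=1$ on boundary edges, and grouping the remaining edges by their labels gives, up to a common scalar,
\begin{equation}\label{eq:twist_exp_form}
  \Delta_{\lambda(F)}(\cev\tau(\Meas(G,\wt_\bt)))=\prod_{1\le p<q\le n}\bigl(\Hun qp\bigr)^{c^G_F(p,q)},
\end{equation}
where $c^G_F(p,q)=\sum_{e}m_F(e)$ runs over the edges $e$ of $G$ labeled $\{p,q\}$ and the sign ambiguity $\H qp=-\H pq$ is absorbed into the scalar. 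Since after rescaling the left-hand side is a genuine Pl\"ucker coordinate of the $G$-independent point $\cev\tau(\Meas(f,\bt))$, unique factorization of the Laurent polynomials $t_q^2-t_p^2$ shows that $c^G_F(p,q)$ is independent of $G$ up to adding a quantity depending only on $\{p,q\}$; pinning down the scalar via the boundary faces (using \cref{prop:Gr_neck_formula} together with the standard expression of the reverse-Grassmann-necklace minors of $\cev\tau(X)$ through the necklace minors of $X$), the proposition reduces to the single identity
\begin{equation}\label{eq:iverson_identity}
  c^G_F(p,q)=\bigl[\{p,q\}\in\Misud_f\bigr]\bigl[p,q\in I\bigr]+\bigl[\{p,q\}\in\Misdu_f\bigr]\bigl[p,q\notin I\bigr]-\bigl[(p,q)\in E(\GXV_f)\bigr]\bigl[p\in I,\,q\notin I\bigr]
\end{equation}
for $I=\lambda(F)$, where $[\,\cdot\,]$ is the Iverson bracket and $p,q$ are interchanged in the last bracket when $(q,p)$ rather than $(p,q)$ lies in $E(\GXV_f)$.

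To prove~\eqref{eq:iverson_identity} I would induct on $f$ via bridge removal, exactly as in the proof of \cref{prop:edges_conn_cpts}, also using that a square move changes neither side (the right-hand side of~\eqref{eq:Delta_I_twist} does not involve $G$, and a square move preserves $\Meas(G,\wt_\bt)$, hence $\cev\tau(\Meas(G,\wt_\bt))$). In the base cases---where $G$ has no interior (non-boundary-adjacent) edges, e.g.\ $k=1$---all edge weights are $1$, $\cev\tau$ acts diagonally, and both sides of~\eqref{eq:Delta_I_twist} are constant (there are no crossings and no down--up misalignments). For the inductive step, choose $r$ with $f$ having a bridge at $r$ and $G\in\Gred(f)$ carrying the bridge configuration at $b_r,b_{r+1}$, and let $G'=G\setminus e_0\in\Gred(s_rf)$ with $e_0$ the bridge edge, labeled by the positive $f$-crossing pair $\{p,q\}$, $p=\overline{f(r)}$, $q=\overline{f(r+1)}$. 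On the algebra side, reinstating $e_0$ is the column operation $A_r\mapsto A_r\pm\wt_\bt(e_0)\,A_{r+1}$ on representing matrices; one checks---via Muller--Speyer's compatibility of the twist with bridges, or directly from~\eqref{eq:left_twist_dfn}---that this multiplies $\Delta_I(\cev\tau(\Meas(G,\wt_\bt)))$ by $\bigl(\Hun qp\bigr)^{\pm1}$ exactly when $I$ separates $p$ from $q$ in the relevant direction, and otherwise changes it only by a global scalar. On the combinatorial side, passing from $s_rf$ to $f$ turns $\{p,q\}$ from an up--down or down--up misalignment into a positive crossing (or conversely) and relabels the pairs involving the indices $r,r+1$ in a way dictated by the bridge, leaving all other pairs' status unchanged; matching these two changes, relative to a fixed face $F$, yields~\eqref{eq:iverson_identity} for $f$ from the inductive hypothesis for $s_rf$.

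The main obstacle is precisely this last matching: the bridge move can alter the labels of several edges incident to $b_r,b_{r+1}$ and convert exactly one crossing into a misalignment, so the bookkeeping of how $\Misud_f$, $\Misdu_f$ and $E(\GXV_f)$ move relative to a fixed face requires a careful case analysis organized by whether $r$ and $r+1$ lie in $\lambda(F)$. An alternative route avoiding the induction is to compute the Muller--Speyer exponents $m_F(e)$ directly from the strand-diagram geometry and to establish the ``bunching'' statement that all edges of $G$ carrying a common label $\{p,q\}$ together contribute to~\eqref{eq:twist_exp_form} as a single edge would; this reduces~\eqref{eq:iverson_identity} to the local picture near the unique crossing of $\str_p$ and $\str_q$ when $\{p,q\}$ is a crossing, and near their bigon when $\{p,q\}$ is a misalignment. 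Either way, the geometric analysis of a pair of strands between their crossing or along their bigon is the crux.
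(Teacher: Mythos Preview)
Your approach is substantially more indirect than the paper's, and the inductive step you outline is not carried out. The paper avoids the induction entirely by invoking a sharper Muller--Speyer result than the one you cite. You use~\cite[Corollary~5.11]{MuSp}, which only tells you that the twisted minor is \emph{some} Laurent monomial in edge weights; the paper uses~\cite[Theorem~7.1]{MuSp}, which identifies that monomial explicitly: $\Delta_I(\cev\tau(\Meas(G,\wt)))=\prod_{e\in\Mleft(F)}\wt(e)^{-1}$, where $\Mleft(F)$ is the almost perfect matching consisting of all edges whose upstream wedge contains the face $F$ labeled $I$. From there the proof is a direct count: group the edges of $\Mleft(F)$ by their strand labels $\{p,q\}$, observe that the number $x_{p,q}(G)$ of edges labeled $\{p,q\}$ has parity determined by whether $p,q$ form an $f$-crossing, and check which of those edges lie in $\Mleft(F)$ depending on whether $p,q\in I$. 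The excess (the $G$-dependent part) factors out as a common scalar $\common_G(\bt)=\prod_{p<q}(\Hun qp)^{y_{p,q}(G)}$ with $y_{p,q}(G)=\lfloor x_{p,q}(G)/2\rfloor$, yielding~\eqref{eq:Delta_I_twist} at once. No bridge induction, no square-move invariance argument, and no case analysis on $r,r+1\in\lambda(F)$ is needed.

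A couple of smaller issues. Your claim that the label $\{p,q\}$ of a non-boundary edge is necessarily an $f$-crossing, citing \cref{prop:edges_conn_cpts}, is incorrect: that proposition says $p,q$ lie in the same connected component and that $\wt_\bth(e)>0$, but the label can be a misalignment (cf.\ \cref{rmk:intro:edge_wts_>0}). This does not affect the nonvanishing of edge weights for generic $\bt$, but the justification you give is wrong. More substantively, your inductive step is only sketched: the statement that reinstating a bridge ``multiplies $\Delta_I(\cev\tau(\Meas(G,\wt_\bt)))$ by $(\Hun qp)^{\pm1}$ exactly when $I$ separates $p$ from $q$'' is asserted without proof, and matching it to the combinatorial change in $\Misud_f,\Misdu_f,E(\GXV_f)$ is precisely the content you still owe. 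The ``alternative route'' you mention---computing exponents via strand geometry and bunching edges by label---is in fact what the paper does, but it is immediate once you have the upstream-wedge description of $\Mleft(F)$, which is the missing ingredient in your argument.
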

\noindent (Again, observe that the right hand side depends only on $f$ and $I$ but not on $G$.)
\begin{proof}
According to~\cite[Theorem~7.1]{MuSp}, $\Delta_I\circ\ctau\circ\Meas(G,\wt_\bt)$ is a monomial in the edge weights $\wt_\bt$. Specifically, it is the product of $\wt_\bt(e)^{-1}$ over all edges $e$ such that the face of $G$ labeled by $I$ belongs to the \emph{upstream wedge} of $e$; see~\cite[Figure~4]{MuSp} and~\figref{fig:upstream}(left). The set of such edges forms an almost perfect matching denoted $\Mleft(F)$, where $F$ is the face of $G$ labeled by $I$. Consider two strands terminating at $b_p$ and $b_q$. In general, they may intersect several times, giving rise to several interior edges $e$ labeled by $\{p,q\}$. The corresponding edge weights are all equal to $\wt_\bt(e)=\Hun qp$. Let $\ncrs_{p,q}(G)$ denote the number of edges labeled by $\{p,q\}$. Observe that $\ncrs_{p,q}(G)$ is odd if $p,q$ form an $f$-crossing and is even otherwise. Let  
\begin{equation}\label{eq:nc_pq_dfn}
  \nc_{p,q}(G):=
  \begin{cases}
    \frac12(\ncrs_{p,q}(G)-1), &\text{if $p,q$ form an $f$-crossing,}\\
    \frac12\ncrs_{p,q}(G), &\text{otherwise;}\\
  \end{cases}
\end{equation}
see \cref{fig:ypq}. Consider the product
\begin{equation}\label{eq:common_G}
  \common_G(\bt):=\prod_{1\leq p<q\leq n} (\H qp)^{\nc_{p,q}(G)}.
\end{equation}
It is then straightforward to see that the product of $\wt_\bt(e)^{-1}$ over all $e\in \Mleft(f)$ is equal to the right hand side of~\eqref{eq:Delta_I_twist} divided by $\common_G(\bt)$. 
\end{proof}

\begin{figure}
  \includegraphics{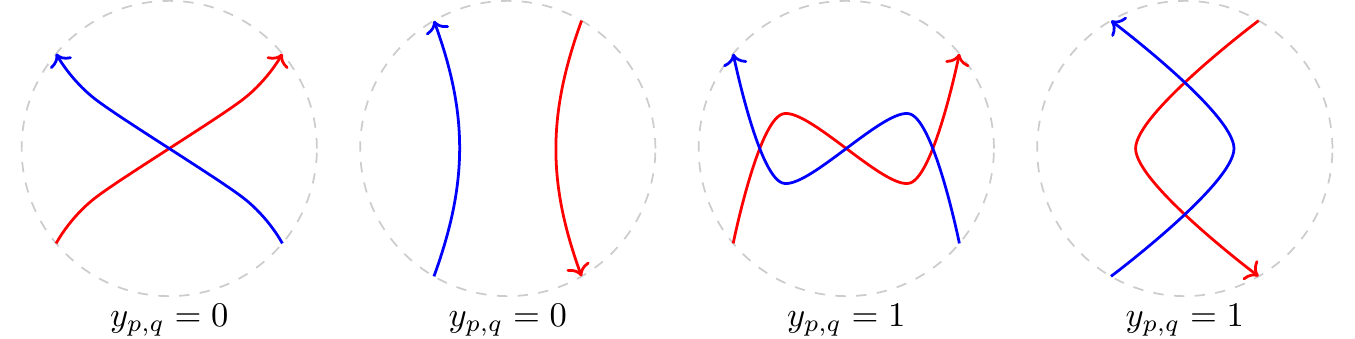}
  \caption{\label{fig:ypq} The integer $\nc_{p,q}(G)$ defined in~\eqref{eq:nc_pq_dfn}.}
\end{figure}

\subsection{Proofs}\label{sec:Laurent_proofs}
 First, we use the twist map to deduce the formula for Grassmann necklace minors.
\begin{proof}[Proof of \cref{prop:Gr_neck_formula}]
As explained in the proof of~\cite[Proposition~6.6]{MuSp}, we have 
\begin{equation*}%
  \Delta_{I_r}(A)=\frac1{\Delta_{I_r}(\ctau(A))} \quad\text{for all $r\in[n]$.}
\end{equation*}
The result follows from \cref{prop:twist}.
\end{proof}

Next, we focus on the Laurent phenomenon (\cref{thm:Laurent_ph}). For that, we will need several straightforward lemmas.

\begin{lemma}\label{lemma:bth_to_bt}
Let $\bth$ be an $f$-admissible tuple, and suppose that $\bth$ and $\bt$ are related by~\eqref{eq:th_to_t}. Then for any $G\in\Gred(f)$, $\bt$ is $f$-admissible, $\Measop_G$ is well defined at $\bt$, and $\Meas(G,\wt_\bth)=\Meas(G,\wt_\bt)$ inside $\Gr(k,n)$.
\end{lemma}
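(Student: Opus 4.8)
\textbf{Proof plan for \cref{lemma:bth_to_bt}.}

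The key point is that the weight functions $\wt_\bth$ and $\wt_\bt$, defined by~\eqref{eq:wt_sin} and~\eqref{eq:wt_T} respectively, differ only by a global rescaling on the interior edges of $G$. Indeed, as noted in the footnote after~\eqref{eq:wt_T}, for any non-boundary edge $e$ labeled by $\{p,q\}$ with $1\le p<q\le n$, writing $t_p=\exp(i\th_p)$ and $t_q=\exp(i\th_q)$ gives
\[
\H qp=\frac{t_q}{t_p}-\frac{t_p}{t_q}=e^{i(\th_q-\th_p)}-e^{-i(\th_q-\th_p)}=2i\sin(\th_q-\th_p),
\]
so $\wt_\bt(e)=2i\cdot\wt_\bth(e)$, while on boundary edges both weights equal $1$. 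Thus $\wt_\bt=\rho\cdot\wt_\bth$ where $\rho$ is the edge-rescaling that multiplies every interior edge by the constant $2i$. The first thing I would do is record this identity explicitly.

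Next I would argue that this rescaling is a \emph{gauge transformation up to a global scalar}, so that it does not affect the point of $\Gr(k,n)$ obtained from the dimer partition functions. Here I have two options. The cleanest is to observe that $G$ being bipartite, every almost perfect matching $\match$ of $G$ with $\pmatch=I$ uses a fixed number of interior edges: since $\match$ covers every interior vertex exactly once, the number of edges of $\match$ incident to interior vertices is determined (it equals half the number of interior vertices plus a correction for boundary-incident edges, but in any case it depends only on $G$, not on $\match$ or $I$ — this is exactly the standard fact that makes $|\pmatch|=k$ constant). More precisely, for a bipartite $G$ with $V_{\mathrm{int}}$ interior vertices, each interior edge covers two interior vertices and each boundary edge covers one, so if $\match$ uses $a$ boundary edges then it uses $(V_{\mathrm{int}}-a)/2$ interior edges; but $a=k$ is constant over all almost perfect matchings with a fixed color convention (here all boundary vertices are black, so $a=|\pmatch|$ when... — one should be slightly careful and instead note that the number of interior edges used, call it $N_G$, is constant because it equals $(V_{\mathrm{int}}-|\pmatch|)/2$ and $|\pmatch|=k$). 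Hence $\wt_\bt(\match)=(2i)^{N_G}\wt_\bth(\match)$ for every almost perfect matching, so $\Delta_I(G,\wt_\bt)=(2i)^{N_G}\Delta_I(G,\wt_\bth)$ for all $I\in\binom{[n]}{k}$, and the tuples $(\Delta_I(G,\wt_\bt))_I$ and $(\Delta_I(G,\wt_\bth))_I$ agree up to the common nonzero scalar $(2i)^{N_G}$. Alternatively, one can invoke the general principle that rescaling all edges at a single interior vertex is a gauge transformation preserving $\Meas$, and apply it at every interior white vertex (say) to move the factor around; but the counting argument is more direct and self-contained.

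With this in hand the rest is immediate. Since $\bth$ is $f$-admissible, \cref{prop:edges_conn_cpts} gives $\wt_\bth(e)>0$ for every interior edge of $G$, so $\Delta_I(G,\wt_\bth)=\Meas(G,\wt_\bth)$ lies in $\Grtnn(k,n)\subset\Gr(k,n)$ and in particular the tuple $(\Delta_I(G,\wt_\bth))_I$ is not identically zero; hence neither is $(\Delta_I(G,\wt_\bt))_I=(2i)^{N_G}(\Delta_I(G,\wt_\bth))_I$, which is precisely the statement that $\Measop_G$ is well defined at $\bt$, and the two give the same point $\Meas(G,\wt_\bth)=\Meas(G,\wt_\bt)$ of $\Gr(k,n)$. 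Finally, $\bt$ is $f$-admissible because $t_p=\pm t_q$ would force $\th_p\equiv\th_q\pmod\pi$, i.e.\ $\sin(\th_q-\th_p)=0$, contradicting $\th_p<\th_q<\th_p+\pi$ which holds for any $f$-crossing pair $p,q$ by $f$-admissibility of $\bth$ (equivalently, \eqref{eq:adm_condition}). I expect no real obstacle here; the only mildly delicate point is making the ``constant number of interior edges'' bookkeeping precise, and that is a routine consequence of the same parity/counting argument that proves $|\pmatch|=k$ is well defined.
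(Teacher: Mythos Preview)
Your proof is correct and follows essentially the same approach as the paper: both observe that $\wt_\bt(e)=2i\cdot\wt_\bth(e)$ on interior edges and equals $1$ on boundary edges, and then argue that this uniform rescaling does not change the point in $\Gr(k,n)$. The paper phrases this as gauge equivalence obtained by rescaling at each interior \emph{black} vertex by $\tfrac{1}{2i}$ (your aside suggesting interior white vertices would inadvertently rescale boundary edges as well, since each boundary edge meets an interior white vertex); your counting argument that every almost perfect matching uses exactly $(V_{\mathrm{int}}-k)/2$ interior edges is just the direct verification of why that gauge transformation preserves $\Measop_G$.
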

\begin{proof}
The $f$-admissibility claim is obvious. Next, observe that $\wt_\bth$ and $\wt_\bt$ are gauge-equivalent; cf. the proof of \cref{lemma:GX_conn}. This implies the remaining statements.
\end{proof}

It is known that when all edge weights $\wt_\bth(e)$ are positive reals, $\Meas(G,\wt_\bth)$ gives rise to a point in $\Ptp_f$. In particular, $\Delta_I(G,\wt_\bth)$ is zero for $I\notin \Mcal_f$ and positive for $I\in\Mcal_f$; see~\eqref{eq:Ptp_minors_descr}. In what follows, we treat $\bt=(t_1,t_2,\dots,t_n)$ as a collection of algebraically independent variables. To avoid confusion, we denote $Z_{I,G}(\wt_\bt):=\Delta_I(G,\wt_\bt)$ and treat  $\bZ_G:=(Z_{I,G}(\wt_\bt))_{I\in{[n]\choose k}}$ as a collection of rational functions in $\bt$ defined by~\eqref{eq:intro:Delta_I}. We do \emph{not} consider the entries of $\bZ_G$ modulo rescaling since we need to explicitly talk about the entries being Laurent polynomials.
\begin{corollary}\label{cor:Z_I_G_nonzero}
For a graph $G\in\Gred(f)$, $Z_{I,G}(\wt_\bt)$ is zero for $I\notin\Mcal_f$. For $I\in\Mcal_f$, $Z_{I,G}(\wt_\bt)$ is a nonzero Laurent polynomial in $\bt$.
\end{corollary}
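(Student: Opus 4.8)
### Proof plan for \cref{cor:Z_I_G_nonzero}

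The plan is to obtain both claims by reduction to the \emph{positive} case, where the description~\eqref{eq:Ptp_minors_descr} of $\Ptp_f$ via $\Mcal_f$ is already known. First I would handle the vanishing statement: if $I\notin\Mcal_f$, then for every $f$-admissible real tuple $\bth$ we have $\Delta_I(G,\wt_\bth)=0$ by~\eqref{eq:Ptp_minors_descr}, since $\wt_\bth$ has all positive edge weights (\cref{prop:edges_conn_cpts}) and hence $\Meas(G,\wt_\bth)\in\Ptp_f$. By \cref{lemma:bth_to_bt}, the corresponding $\bt$ obtained via~\eqref{eq:th_to_t} satisfies $Z_{I,G}(\wt_\bt)=\Delta_I(G,\wt_\bt)=0$ (as an element of $\C$, after the rescaling that identifies $\wt_\bth$ with $\wt_\bt$ up to gauge; but note $Z_{I,G}$ is a genuine rational function, and since it is a polynomial in the $\wt_\bt(e)$ which are themselves Laurent monomials in $\bt$, each $Z_{I,G}(\wt_\bt)$ is \emph{a priori} a Laurent polynomial in $\bt$). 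The set of such $\bt$ — namely $\{(\exp(i\th_1),\dots,\exp(i\th_n))\mid \bth \text{ is } f\text{-admissible}\}$ — is a subset of $(\Cast)^n$ with nonempty interior in the real-analytic sense: the $f$-admissible tuples $\bth$ form an open subset of $\R^n$ (it is cut out by the strict inequalities~\eqref{eq:adm_condition}), so its image is a real-analytic submanifold of real dimension $n$ inside the complex $n$-torus $|t_1|=\dots=|t_n|=1$. A Laurent polynomial in $n$ complex variables that vanishes on a set containing such an $n$-real-dimensional totally-real submanifold must vanish identically; this is the standard identity-theorem argument (a Laurent polynomial restricted to the torus is a trigonometric polynomial in $\bth$, and a trigonometric polynomial vanishing on a nonempty open subset of $\R^n$ is zero). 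Hence $Z_{I,G}(\wt_\bt)\equiv 0$ as a Laurent polynomial.

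For $I\in\Mcal_f$, the same reasoning shows $Z_{I,G}(\wt_\bt)$ is \emph{not} identically zero: taking any $f$-admissible $\bth$ and the associated $\bt$, \cref{lemma:bth_to_bt} together with~\eqref{eq:Ptp_minors_descr} gives that the tuple $(Z_{I,G}(\wt_\bt))_{I\in{[n]\choose k}}$ is, up to a common nonzero scalar, the Pl\"ucker vector of a point of $\Ptp_f\subset\Grtnn(k,n)$, all of whose $\Mcal_f$-indexed coordinates are strictly positive. Since at least one evaluation of $Z_{I,G}(\wt_\bt)$ is nonzero, the Laurent polynomial $Z_{I,G}(\wt_\bt)$ is nonzero. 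It only remains to record that $Z_{I,G}(\wt_\bt)$ is genuinely a \emph{Laurent polynomial} (not merely a rational function) in $\bt$: this is immediate from the defining formula~\eqref{eq:intro:Delta_I}, since $Z_{I,G}(\wt_\bt)=\sum_{\match:\,\pmatch=I}\prod_{e\in\match}\wt_\bt(e)$ is a polynomial in the edge weights, each of which is either $1$ or $\H qp=\frac{t_p}{t_q}-\frac{t_q}{t_p}\in\Z[t_1^{\pm1},\dots,t_n^{\pm1}]$.

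The only subtle point — and the one I would be most careful about — is the transition from "$Z_{I,G}$ vanishes on a large set of complex points" to "$Z_{I,G}$ vanishes identically as a Laurent polynomial." The set of $\bt$ coming from $f$-admissible $\bth$ is contained in the compact real torus, which has real dimension $n$ and is a maximal totally real submanifold of $(\Cast)^n\cong(\C^\ast)^n$; a nonzero Laurent polynomial cannot vanish on any set of positive measure in this torus, because its restriction to the torus is a finite trigonometric sum whose zero set has measure zero. Since the $f$-admissible $\bth$ form a nonempty open set, their image has positive measure in the torus, and the conclusion follows. One should also remark that the "common scalar" ambiguity in $\Meas(G,\wt_\bt)$ does not affect the argument: each $Z_{I,G}(\wt_\bt)$ is a well-defined Laurent polynomial before any rescaling, and vanishing/nonvanishing of a Laurent polynomial is unaffected by rescaling. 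This completes the proof.
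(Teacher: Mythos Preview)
Your proof is correct, and for the nonvanishing and Laurent-polynomial claims it matches the paper exactly. For the vanishing claim when $I\notin\Mcal_f$, however, the paper takes a much shorter route that you overlooked: $Z_{I,G}(\wt_\bt)$ is not an arbitrary Laurent polynomial but a sum over almost perfect matchings of $G$ with boundary $I$, each summand being a product of edge weights. If for some $f$-admissible $\bth$ all edge weights are strictly positive (\cref{prop:edges_conn_cpts}) and yet $\Delta_I(G,\wt_\bth)=0$, then each summand is positive and there are no summands at all---that is, $G$ has no almost perfect matchings with boundary $I$. Hence $Z_{I,G}(\wt_\bt)$ is the empty sum, identically zero as a formal expression. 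Your identity-theorem argument on the real torus is perfectly valid, but it treats $Z_{I,G}$ as a black-box Laurent polynomial and ignores its combinatorial definition; the paper's argument is one line and yields the stronger combinatorial statement that no matchings exist.
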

\begin{proof}
The fact that $Z_{I,G}(\wt_\bt)$ is zero for $I\notin\Mcal_f$ follows since the edge weights $\wt_\bth$ are all positive, so $\Delta_I(G,\wt_\bth)=0$ implies that $G$ has no almost perfect matchings with boundary $I$. If $I\in\Mcal_f$ then $\Delta_I(G,\wt_\bth)>0$, and by \cref{lemma:bth_to_bt}, it is a specialization of $Z_{I,G}(\wt_\bt)$, hence $Z_{I,G}(\wt_\bt)\neq0$ as a rational function in $\bt$. Finally, $Z_{I,G}(\wt_\bt)$ is a Laurent polynomial in $\bt$ since it is a polynomial in the edge weights $\wt_\bt(e)$, each of which is a Laurent polynomial in $\bt$.
\end{proof}

Next we prove the complex algebraic analog of the square move invariance of $\Measop_f$.
\begin{lemma}\label{lemma:complex_sq_move}
Suppose that reduced graphs $G$ and $G'$ are related by a square move. 
Then $\bZ_G(\wt_\bt)$ and $\bZ_{G'}(\wt'_{\bt})$ agree up to multiplication by a common factor, where $\wt_\bt$ and $\wt'_\bt$ are defined by~\eqref{eq:wt_T} on the edges of $G$ and $G'$, respectively.
\end{lemma}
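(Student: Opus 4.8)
The plan is to reduce the statement to a purely local computation at the square face, mirroring the classical proof that ordinary boundary measurements are invariant under square moves, but now keeping track of the fact that the edge weights are prescribed functions of $\bt$ rather than free parameters. First I would set up the local picture: by \cref{prop:edges_conn_cpts} (applied in its combinatorial form, i.e.\ the statement about which labels $\{p,q\}$ occur) a contracted reduced graph $G$ containing a square face $F$ has the four edges around $F$ labeled by $\{a,b\},\{b,c\},\{c,d\},\{a,d\}$ for some $1\le a<b<c<d\le n$, and after the square move the graph $G'$ has the four edges around the new square face labeled by $\{a,c\}$ (twice) and $\{b,d\}$ (twice), exactly as in \figref{fig:move}(a) and \figref{fig:plabic}(c). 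Outside a neighborhood of $F$ the two graphs are literally identical and the labels of all edges agree, so $\wt_\bt$ and $\wt'_\bt$ coincide away from $F$; the only thing to check is that the local transformation at $F$ sends the weight assignment $\wt_\bt$ to $\wt'_\bt$ up to the standard overall rescaling.

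The key computation is then the following. Recall the square move rule from \cref{fig:sq-mv}: if the four boundary edges of the trivalent square have weights $\alpha,\beta,\gamma,\delta$ (in cyclic order, with the convention that the vertical edges have been gauge-fixed to $1$), the new edge weights are the standard rational expressions in $\alpha,\beta,\gamma,\delta$ (the ``urban renewal'' formulas), and $\Meas$ is preserved up to the common scalar $\alpha\gamma+\beta\delta$. So the plan is: first apply gauge transformations at the black vertices of $F$ in $G$ to bring the configuration into the standard form of \figref{fig:sq-mv}(left) — this is always possible and does not change $\bZ_G(\wt_\bt)$ up to a common factor — then substitute the explicit values $\alpha=\H ba$, $\beta=\H cb$, $\gamma=\H dc$, $\delta=\H da$ (the interior edges of $F$), and verify by direct manipulation of the bracket $\br[x,y]=\tfrac xy-\tfrac yx$ that the urban-renewal output weights equal, up to a common scalar and further gauge, the prescribed values $\H ca$, $\H db$ on the edges of $F'$ in $G'$. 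This is the identity that underlies \figref{fig:plabic}(d): one checks that the new $2$-minor $\brx[ac]\cdot\brx[bd]$ factors as $\brx[ad]\cdot\brx[bc]+\brx[ab]\cdot\brx[cd]$, which is precisely the Ptolemy/Plücker-type three-term relation for the bracket $\br[\cdot,\cdot]$ (equivalently $\sin(\theta_c-\theta_a)\sin(\theta_d-\theta_b)=\sin(\theta_d-\theta_a)\sin(\theta_c-\theta_b)+\sin(\theta_b-\theta_a)\sin(\theta_d-\theta_c)$, the addition formula for $\sin$). Once the local weight assignments are matched, the classical invariance of $\Meas$ under the urban renewal move (applied with $\bt$ treated as formal variables, so that $\bZ_G$ and $\bZ_{G'}$ are rational functions) gives $\bZ_G(\wt_\bt)$ and $\bZ_{G'}(\wt'_\bt)$ proportional, with the proportionality factor being the Laurent monomial/polynomial $\alpha\gamma+\beta\delta=\H da\cdot\H dc + \H cb\cdot\H ba$ times the gauge factors collected along the way. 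Since both sides are rational functions, it suffices to check the proportionality on a Zariski-dense set; by \cref{lemma:bth_to_bt} we may specialize to $\bt$ coming from an $f$-admissible real tuple $\bth$, where all weights are genuine positive reals and the classical square move invariance literally applies.

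The step I expect to be the main obstacle is bookkeeping rather than conceptual: making sure the ``common factor'' is genuinely common to \emph{all} Plücker coordinates and does not secretly depend on $I$, and handling the degenerate boundary cases where the square face $F$ is adjacent to the boundary of the disk or where some of the four vertices of $F$ are not actually trivalent before (un)contraction. For the boundary-adjacent case one has to be slightly careful because the boundary edges carry weight $1$ in the definition \eqref{eq:wt_T} rather than a bracket, so the gauge-fixing and the identification of the post-move weights need the additional observation — already used in the proof of \cref{prop:duality} and \cref{lemma:GX_conn} — that putting a degree-$2$ vertex on a boundary edge and gauge-transforming does not affect $\bZ_G$. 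The (un)contraction issue is dispatched exactly as in \cref{sec:plan-bipart-graphs}: one first uncontracts to make all four vertices of $F$ trivalent, inserting degree-$2$ vertices with both incident weights $1$ (achievable by gauge), performs the local move, then recontracts; by \cref{prop:edges_conn_cpts} the labels of the uncontracted edges are forced, so the weights under $\wt_\bt$ are determined and the argument goes through unchanged. Modulo these routine verifications, the proof is the combination of the classical urban-renewal invariance with the trigonometric three-term identity, applied with formal edge weights and checked by specialization.
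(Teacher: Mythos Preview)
Your approach is correct and is essentially the same as the paper's: the entire proof there is the one-line observation that the Ptolemy-type identity $\H ba\cdot\H dc+\H da\cdot\H cb=\H ca\cdot\H db$ (equation~\eqref{eq:Pluck_t}) makes the square-move transformation compatible with the edge weights $\wt_\bt$, which is exactly the three-term relation you identify. Your additional bookkeeping (gauge-fixing, Zariski-density specialization, boundary cases) is fine but not needed, and note a small slip in your local picture---after the move the four square-face edges are still labeled $\{a,b\},\{b,c\},\{c,d\},\{a,d\}$ (only the two connector edges swap from $\{b,d\}$ to $\{a,c\}$)---though this does not affect the argument.
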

\begin{proof}
The result follows from the algebraic identity
\begin{equation}\label{eq:Pluck_t}
  \H 12 \cdot \H 34 +\H 14 \cdot \H 23=\H 13 \cdot \H 24 \quad\text{for all $t_1,t_2,t_3,t_4\in\Cast$.}\qedhere
\end{equation}
\end{proof}

\begin{proof}[Proof of \cref{prop:Meas_exists,thm:Laurent_ph}]
Let $G\in\Gred(f)$. By~\cite[Proposition~5.13]{MuSp}, for each $r\in[n]$, $G$ contains a unique almost perfect matching with boundary $I_r$, and this almost perfect matching coincides with $\Mleft(F_r)$ from the proof of \cref{prop:twist}. Here $F_r$ is the boundary face of $G$ labeled by $I_r$. In particular, $Z_{I,G}(\wt_\bt)$ is the product of weights of edges in $\Mleft(F_r)$. For $I\in{[n]\choose k}$, define
\begin{equation}\label{eq:Y_I_dfn}
  Y_I(\bt):=\frac{Z_{I,G}(\wt_\bt)}{\common_G(\bt)},
\end{equation}
where $\common_G(\bt)$ is defined in~\eqref{eq:common_G}. We see that $Y_{I_r}(\bt)$ is equal to the product on the right hand side of~\eqref{eq:Gr_neck_formula}, thus the tuple $(Y_I(\bt))_{I\in{[n]\choose k}}$ is canonically gauge-fixed. By \cref{cor:Z_I_G_nonzero}, $Y_I(\bt)$ is nonzero (as a rational function in $\bt$) precisely when $I\in\Mcal_f$. By \cref{lemma:complex_sq_move}, $Y_I(\bt)$ depends only on $f$ and $I$ and not on $G$.

Fix $I\in\Mcal_f$. We need to show that $Y_I(\bt)$ is a Laurent polynomial in $\bt$. The denominator $\common_G(\bt)$ on the right hand side of~\eqref{eq:Y_I_dfn} is a product of linear factors since $\H qp=\frac1{t_pt_q}(t_q-t_p)(t_q+t_p)$. Thus up to a monomial, the denominator of $Y_I(\bt)$ is the product of some linear factors that divides $\common_G(\bt)$ for any $G$.

\begin{figure}
  \includegraphics[width=1.0\textwidth]{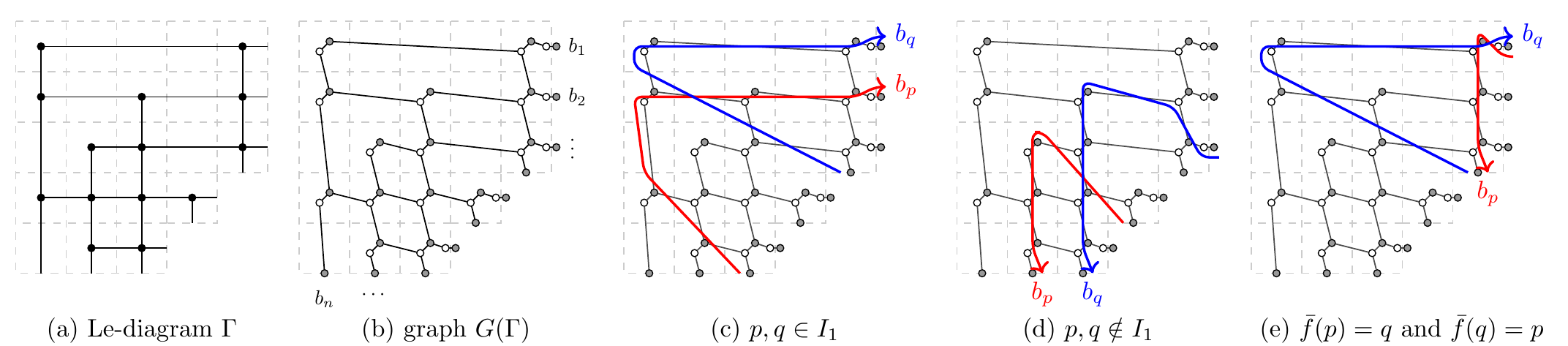}
  \caption{\label{fig:Le} A Le-diagram $\Gamma$ and the associated planar bipartite graph $G(\Gamma)$ from the proof of \cref{prop:Meas_exists,thm:Laurent_ph}.}
\end{figure}

 Fix $1\leq p<q\leq n$. 
 By~\eqref{eq:common_G}, it suffices to show that there exists a graph $G\in\Gred(f)$ such that the integer $\nc_{p,q}(G)$ defined in~\eqref{eq:nc_pq_dfn} is zero. The problem is trivial when either $p$ or $q$ is a coloop. Next, assume that we have either $\perm(p)\neq q$ or $\perm(q)\neq p$. Then one can check using the description~\eqref{eq:intro:J_r} of the Grassmann necklace of $f$ that there exists an index $r\in[n]$ such that either $p,q\in I_r$ or $p,q\notin I_r$. After cyclically shifting (as in \cref{sec:cyclic-shift}), we may assume that $r=1$. 
Let $\Gamma$ be the \emph{Le-diagram} of $f$ (see \figref{fig:Le}(a)); we refer to~\cite[Section~20]{Pos} for background on Le-diagrams. Let $G(\Gamma)$ be the corresponding planar bipartite graph (shown in \figref{fig:Le}(b)) and let $G$ be the contracted version of $G(\Gamma)$. It is clear that in both cases $p,q\in I_1$ and $p,q\notin I_1$, the strands terminating at $b_p$ and $b_q$ intersect at most once; see \figref{fig:Le}(c,d). Finally, consider the case $\perm(p)= q,\perm(q)= p$. After applying the cyclic symmetry, we may assume that $p=1$, and then taking $G$ to be the Le-diagram graph again, we see that the strands terminating at $b_p$ and $b_q$ do not intersect; see \figref{fig:Le}(e). (Recall that $G$ is contracted unlike $G(\Gamma)$ so the double crossing in \figref{fig:Le}(e) appears in $G(\Gamma)$ but not in $G$.) This implies that $\nc_{p,q}(G)=0$, and thus the denominator of $Y_I(\bt)$ is not divisible by $\H qp$ for any $1\leq p<q\leq n$, so $Y_I(\bt)$ is a Laurent polynomial in $\bt$.

Now we can finally introduce the map $\Measop_f:\Tspace_f\to\Pio_f$ in \cref{prop:Meas_exists} whose image is the open critical variety $\Cio_f$. 
 Namely, we set $\Meas(f,\bt):=(Y_I(\bt))_{I\in{[n]\choose k}}$; cf.~\eqref{eq:Y_I_dfn}. We have already shown above that this map satisfies all of the required properties.
\end{proof}

\section{The boundary measurement formula}
In this section, we prove the formula stated in \cref{thm:intro:bound_meas} and extend it to non-generic tuples $\bth\in\THtp_f$, as well as to complex tuples $\bt\in\Tspace_f$. The proofs are adapted from the analogous arguments developed in~\cite{ising_crit}.
 Throughout, we assume that $f\in\Bkn$ is loopless.

\subsection{Nondegenerate boundary measurements} 
We start by recasting the formula~\eqref{eq:curve} for $\curve_{f,\bth}(t)$ using the affine notation introduced in \cref{sec:cyclic-shift}. Recall from~\eqref{eq:It_q} that to each $r\in\Z$ we assign a $k$-element subset $\It_r\subset \Z$ whose reduction modulo $n$ gives an element $I_r$ of the Grassmann necklace of $f$. Since $f$ is loopless, $\It_r$ contains $r$ and we set $\Jt_r:=\It_r\setminus \{r\}$. For $r\in[n]$, the reduction of $\Jt_r$ modulo $n$ is the set $J_{r}$ introduced in~\eqref{eq:intro:J_r}. The affine version of~\eqref{eq:curve} then reads
\begin{equation}\label{eq:curve_aff}
    \gt_r(t)= \prod_{p\in \Jt_r} \sin(t-\tht_p) \qquad\text{for $r\in\Z$.}
\end{equation}
Here $(\gt_r(t))_{r\in\Z}$ is the unique sequence of functions of $t$ satisfying $\gt_r(t)=\g_r(t)$ for $r\in[n]$ and $\gt_{r+n}(t)=(-1)^{k-1}\gt_r(t)$ for all $r\in\Z$. The latter condition follows from the analogous condition~\eqref{eq:bth_extend} on $\btht$. In particular, the sign $(-1)^{k-1}$ is compatible with the cyclic symmetry of $\Grtnn(k,n)$ in \cref{sec:cyclic-shift}. As we see, the sign $\eps_r$ appears in~\eqref{eq:curve} but disappears in~\eqref{eq:curve_aff}.

Recall from \figref{fig:align}(d) that for $1\leq p\neq q\leq n$, we write $\{p,q\}\in \Misdu_f$ if the points $\p s,\m p,\p t,\m q$ are ordered counterclockwise, where $s:=\perm^{-1}(p)$ and $t:=\perm^{-1}(q)$.
\begin{definition}
  We say that $\bth \in\THtp_f$ is \emph{$f$-nondegenerate} if  we have $\th_p\not\equiv \th_q$ modulo $\pi$ for all $\{p,q\}\in\Misdu_f$. Similarly, $\bt\in\Tspace_f$ is \emph{$f$-nondegenerate} if $t_p\neq\pm t_q$ for all $\{p,q\}\in\Misdu_f$.
\end{definition}
\noindent Recall that \cref{thm:intro:bound_meas} was stated for the case when $\bth$ is \emph{generic} (and $f$-admissible); every such tuple is also $f$-nondegenerate.

Next, let us generalize the curve $\curve_{f,\bth}(t)$ to the complex algebraic setting. Write
\begin{equation}\label{eq:ggt_dfn}
    \ggt_r(t):= \frac1{(2i)^{k-1}} \prod_{p\in \Jt_r} \br[t,\tt_p] \quad\text{for $r\in\Z$}.
\end{equation}
We again have $\ggt_{r+n}(t)=(-1)^{k-1}\ggt_r(t)$ for $r\in\Z$; cf. \cref{sec:open-crit-vari}. For $r\in[n]$, let $\gg_r(t):=\ggt_r(t)$ and introduce a map $\curvec_{f,\bt}:\Cast\to \C^n$ given by $\curvec_{f,\bt}(t)=(\gg_1(t),\dots,\gg_n(t))$. Thus $\curvec_{f,\bt}(t)$ specializes to $\curve_{f,\bth}(t)$ when $\bt$ and $\bth$ are related by~\eqref{eq:th_to_t}.

\begin{theorem}\label{thm:bound_meas_C}
  Suppose that $\bt\in\Tspace_f$ is $f$-nondegenerate. Then $\Span_\C(\curvec_{f,\bt})$ has dimension $k$ and we have 
\begin{equation*}%
  \Meas(f,\bt)=\Span_\C(\curvec_{f,\bt}) \quad\text{inside $\Gr(k,n)$.}
\end{equation*}
\end{theorem}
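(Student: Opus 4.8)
The strategy is to reduce \Cref{thm:bound_meas_C} to the already-established formula for the Grassmann necklace minors (\Cref{prop:Gr_neck_formula}) together with the twist map formula (\Cref{prop:twist}), rather than attempting a direct dimer-model computation. First I would check that $\Span_\C(\curvec_{f,\bt})$ has dimension exactly $k$. Each coordinate $\gg_r(t)$ is a product of $k-1$ linear factors in the variables $e^{it}, e^{-it}$ after expanding $\br[t,\tt_p]=\frac{\tt_p}{e^{it}}\cdot\frac{1}{\tt_p}(e^{it}\tt_p^{-1}-\tt_p e^{-it})$... more cleanly: setting $z=e^{it}$, each $\gg_r(t)$ is, up to a monomial in $z$, a polynomial of degree $\le 2(k-1)$ in $z$ with only $k$ possibly-nonzero coefficients (the Fourier-coefficient observation from the remark after \Cref{thm:intro:bound_meas}, valid verbatim in the complex setting). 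Hence $\Span_\C(\curvec_{f,\bt})$ is spanned by the $k$ rows of the matrix of Fourier coefficients, so $\dim\le k$. For the lower bound $\dim\ge k$, I would evaluate at $k$ generic points $t^\parr1,\dots,t^\parr k$ and show the resulting $k\times k$ Vandermonde-type determinant is nonzero; alternatively, I would defer this to the next step, where computing a nonzero maximal minor of the span simultaneously establishes $\dim=k$ and the identification with $\Meas(f,\bt)$.

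The main step is to show that the Pl\"ucker coordinates of $\Span_\C(\curvec_{f,\bt})$ agree, up to a common scalar, with those of $\Meas(f,\bt)$. I would prove this by matching \emph{twisted} minors. Apply the left twist $\cev\tau$ to $\Span_\C(\curvec_{f,\bt})$ and compute $\Delta_I(\cev\tau(\Span_\C(\curvec_{f,\bt})))$ for each $I$ that is a face label; by \Cref{prop:twist} this should match the explicit Laurent-monomial expression~\eqref{eq:Delta_I_twist}. Concretely: $\Span_\C(\curvec_{f,\bt})$ is the row span of the $k\times n$ Fourier-coefficient matrix $M$, whose columns are indexed by $r\in[n]$; column $r$ encodes the trigonometric polynomial $\prod_{p\in\Jt_r}\br[t,\tt_p]$. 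The twist of a span is governed by the reverse Grassmann necklace: $\ctau$ is pinned by the conditions~\eqref{eq:left_twist_dfn}, i.e. the twisted column at position $q$ is dual to the columns $(A_p)_{p\in\It'_q}$. Because $\Jt_r$ and $\It'_q$ are explicit (consecutive-in-the-strand-order) index sets determined by $f$, the pairings $\<\ctau(A)_q, A_p\>$ become, after substituting $t=$ a suitable value or using residue/partial-fraction identities, products of factors $\br[\tt_q,\tt_p]$. The bookkeeping of which factors survive is exactly the $f$-crossing / misalignment combinatorics recorded in $\Misud_f$, $\Misdu_f$, and $E(\GXV_f)$. I expect the proof to proceed by: (i) writing down $M$ explicitly; (ii) using the trigonometric-polynomial interpolation identity (values at $k$ points on the curve, as in \Cref{prop:basis}) to express each $\Delta_I$ of the span as $\det$ of a matrix of the form $(\gg_{r}(t^\parr j))$ for $r\in I$; (iii) factoring that determinant via a Vandermonde/Cauchy-type identity into a product over pairs; (iv) comparing with~\eqref{eq:Delta_I_twist} and~\eqref{eq:Gr_neck_formula}. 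The $f$-nondegeneracy hypothesis is precisely what guarantees none of the surviving factors $\br[\tt_p,\tt_q]$ vanishes, so all the Grassmann-necklace minors are nonzero and the point genuinely lies in $\Pio_f$.

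The cleanest route is probably to verify~\eqref{eq:*} first on the dense subset where $|\tt_p|=1$, i.e. where $\bt$ comes from a genuine $\pf$-admissible angle tuple $\bth$ via~\eqref{eq:th_to_t}: there one can use a reduced graph $G$ at which $\Measop_G$ is well defined (all edge weights $\wt_\bth(e)=\sin(\th_q-\th_p)$ are positive by \Cref{prop:edges_conn_cpts}), so $\Meas(f,\bt)=\Meas(G,\wt_\bt)=\Meas(G,\wt_\bth)$ by \Cref{lemma:bth_to_bt}, and then establish the curve formula in the real-analytic setting exactly as in~\cite{ising_crit}. Both sides of~\eqref{eq:*} are rational (in fact, Laurent-polynomial after canonical gauge-fixing, by \Cref{thm:Laurent_ph}) in $\bt$ on $\Tspace_f$, and $f$-nondegeneracy is a Zariski-open condition, so agreement on the real-analytic locus forces agreement on all $f$-nondegenerate $\bt$ by analytic continuation / Zariski density.

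\textbf{Main obstacle.} The hard part will be the combinatorial identification in step (iii)--(iv): showing that the Vandermonde-type factorization of $\det(\gg_r(t^\parr j))_{r\in I}$ organizes exactly into the three products over $\Misud_f$, $\Misdu_f$, and $E(\GXV_f)$ appearing in~\eqref{eq:Delta_I_twist}, with the correct exponent $\pm1$ on each factor. This requires carefully tracking, for a pair $\{p,q\}$ and a face label $I$, whether $p,q$ are separated by $I$ and how the strand-order positions of $\m p,\p s,\m q,\p t$ interact — i.e. reproving the bookkeeping of~\cite{ising_crit} in a way uniform over all positroid cells rather than just the Ising one. I would handle it by the same bridge-induction used for \Cref{prop:edges_conn_cpts}: the statement is trivial for $k=1$, and a bridge removal $f\rightsquigarrow s_rf$ changes the curve $\curvec_{f,\bt}$ by an explicit linear transformation (multiplying one coordinate and adding a multiple to another), under which both sides of~\eqref{eq:*} transform compatibly; then invoke that all reduced graphs are connected by square moves, under which~\eqref{eq:Pluck_t} guarantees invariance.
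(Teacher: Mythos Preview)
Your bridge-induction sketch at the very end is essentially the paper's proof, and it is the \emph{entire} argument --- the twisted-minor route and the analytic-continuation detour are both unnecessary. The paper works directly over $\C$: it first strips coloops (if $f(r)=r+n$ then $\curvec_{f,\bt}(t_r)$ is supported only in coordinate $r$, and one reduces to $f'\in\Bound(k-1,n-1)$), then picks a bridge at $r$ and handles separately the degenerate case $f(r)=r+1$ (where $\ggt_r=\ggt_{r+1}$ and the $r$-th and $(r+1)$-th columns of $\Meas(f,\bt)$ also coincide, so one drops to $\Bound(k,n-1)$). In the main case $r<r+1<a:=f(r)<b:=f(r+1)\le r+n$, the bridge-removal column operation from \cite[Lemma~7.6]{LamCDM} gives $\Meas(s_rf,\bt)=\Meas(f,\bt)\cdot g$ with
\[
g\big|_{\{r,r+1\}\times\{r,r+1\}}=\begin{pmatrix}1 & -\H ba/\H a{r+1}\\ 0 & \H b{r+1}/\H a{r+1}\end{pmatrix},
\]
and a one-line check shows $\curvec_{s_rf,\bt}(t)=\curvec_{f,\bt}(t)\cdot g$: only coordinate $r+1$ changes, since $J'_{r+1}=(J_{r+1}\setminus\{a\})\cup\{b\}$, and the identity $(\HT{r+1},\HT a)\cdot g\big|_{\{r,r+1\}}=(\HT{r+1},\HT b)$ follows from~\eqref{eq:Pluck_t}. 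No square-move argument is needed here.

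Your ``cleanest route'' (b) has a real gap: \cite{ising_crit} proves the curve formula only for the specific Ising permutation, not for arbitrary loopless $f$, so ``establish the curve formula in the real-analytic setting exactly as in~\cite{ising_crit}'' is circular --- the present theorem is precisely that extension. Your route (a) via matching the factorization of $\det(\gg_r(t^\parr j))_{r\in I}$ against~\eqref{eq:Delta_I_twist} could in principle be made to work, but the combinatorial bookkeeping you flag as the main obstacle is exactly what the bridge induction avoids; once the theorem is proved, \eqref{eq:Delta_I_twist} is a consequence rather than an input. If you keep the bridge-induction skeleton, make sure to include the coloop reduction and the $f(r)=r+1$ case, and drop the closing appeal to square moves.
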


\begin{corollary}
Suppose that $\bth\in\THtp_f$ is $f$-nondegenerate. Then $\Span_\R(\curve_{f,\bth})$ has dimension $k$ and we have 
\begin{equation*}%
  \Meas(f,\bth)=\Span_\R(\curve_{f,\bth}) \quad\text{inside $\Grtnn(k,n)$.}
\end{equation*}
\end{corollary}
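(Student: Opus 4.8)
The plan is to deduce this corollary directly from \cref{thm:bound_meas_C} by specialization, using the relation~\eqref{eq:th_to_t} between $\bth$ and $\bt$. First I would observe that if $\bth\in\THtp_f$ is $f$-nondegenerate, then the associated tuple $\bt=(t_1,\dots,t_n)$ with $t_q:=\exp(i\th_q)$ lies in $\Tspace_f$ (since the normalization $t_{p_1}=\dots=t_{p_{\conn_f}}=1$ is inherited from $\th_{p_1}=\dots=\th_{p_{\conn_f}}=0$, and $f$-admissibility of $\bth$ gives $f$-admissibility of $\bt$) and is moreover $f$-nondegenerate in the sense of the complex definition: indeed $\th_p\not\equiv\th_q\pmod\pi$ translates to $t_p\neq\pm t_q$. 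Then \cref{thm:bound_meas_C} applies to $\bt$, yielding $\Meas(f,\bt)=\Span_\C(\curvec_{f,\bt})$ of dimension $k$ inside $\Gr(k,n)$.

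Next I would reconcile the two curves. By construction $\curvec_{f,\bt}(t)$ specializes to $\curve_{f,\bth}(t)$ when $\bt,\bth$ are related by~\eqref{eq:th_to_t}: the factor $\frac1{(2i)^{k-1}}\prod_{p\in\Jt_r}\br[t,\tt_p]$ in~\eqref{eq:ggt_dfn} becomes $\prod_{p\in\Jt_r}\sin(t-\tht_p)$ once one uses $\br[e^{it},e^{i\th_p}]=2i\sin(t-\th_p)$ (modulo the sign bookkeeping that turns $(2i)^{k-1}$ into the $\eps_r$ of~\eqref{eq:curve}, exactly as noted after~\eqref{eq:curve_aff}). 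The point is that for a real tuple $\bth$, evaluating $\gg_r$ at real arguments $t\in\R$ gives precisely $\g_r(t)\in\R$, so the complex span of $\curvec_{f,\bt}$ restricted to real $t$ is the complexification of the real span $\Span_\R(\curve_{f,\bth})$. Since a $k$-dimensional complex subspace spanned by real vectors has a real basis, $\Span_\R(\curve_{f,\bth})$ is $k$-dimensional over $\R$ and its complexification equals $\Span_\C(\curvec_{f,\bt})$. To see that the real span is already captured by real values of $t$, I would invoke \cref{prop:basis} (any $k$ distinct points on the curve form a basis) or the Fourier-coefficient description (\cref{prop:Fourier}), both of which produce real bases for real $\bth$.

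Finally I would conclude: $\Meas(f,\bth)$ and $\Meas(f,\bt)$ coincide inside $\Gr(k,n)$ by \cref{lemma:bth_to_bt} (the weight functions $\wt_\bth$ and $\wt_\bt$ are gauge-equivalent), and $\Meas(f,\bth)$ lies in $\Grtnn(k,n)$ because the edge weights $\wt_\bth(e)$ are positive reals by \cref{prop:edges_conn_cpts}. Combining this with $\Meas(f,\bt)=\Span_\C(\curvec_{f,\bt})$ and the identification $\Span_\C(\curvec_{f,\bt})=\Span_\R(\curve_{f,\bth})\otimes_\R\C$ gives $\Meas(f,\bth)=\Span_\R(\curve_{f,\bth})$ inside $\Grtnn(k,n)$, with the real span of dimension $k$ as required.

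The main obstacle I anticipate is the careful handling of the passage from the complex span to the real span — specifically, verifying that the $k$ nonzero Pl\"ucker coordinates of $\Span_\C(\curvec_{f,\bt})$ are in fact obtainable from real evaluations $t\in\R$, so that no genuinely complex vectors are needed to span. This is where \cref{prop:basis} or the Fourier description does the real work; everything else is a routine translation between the $\bth$-language and the $\bt$-language that the paper has already set up.
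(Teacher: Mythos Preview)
Your proposal is correct and matches the paper's approach: the paper treats this corollary as an immediate consequence of \cref{thm:bound_meas_C} via the specialization~\eqref{eq:th_to_t}, without giving a separate proof. Your detailed unpacking of the real-vs-complex span is exactly the content that the paper leaves implicit.

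One small comment: your invocation of \cref{prop:basis} is a forward reference (that proposition appears after the corollary in the paper), and its proof in turn cites \cref{thm:bound_meas_C} for the dimension claim. This is not actually circular, because the Vandermonde-type argument underlying \cref{prop:basis} independently shows that $k$ distinct real evaluations $\curve_{f,\bth}(s_1),\dots,\curve_{f,\bth}(s_k)$ are $\C$-linearly independent (hence $\R$-linearly independent), which is all you need. It would be cleaner to phrase your argument directly in terms of that Vandermonde step rather than citing \cref{prop:basis} by name.
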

\noindent As explained above, this result generalizes \cref{thm:intro:bound_meas} from generic to $f$-nondegenerate tuples $\bth\in\THtp_f$.

We also mention a boundary measurement formula for dual critical varieties. Fix a coloopless $f\in\Bkn$ and recall the notation from \cref{rmk:dual_cio}. 
We write $\{p,q\}\in\MisDdu_f$ if the points $\m s,\p p,\m t,\p q$ are ordered counterclockwise for $s:=\perm^{-1}(p)$ and $t:=\perm^{-1}(q)$.  We say that $\bt\in\Tspaced_f$ is \emph{dual $f$-nondegenerate} if $t_p\neq\pm t_q$ whenever $\{p,q\}\in\MisDdu_f$. Let $\curvecd_{f,\bt}(t)=(\ggd_1(t),\ggd_2(t),\dots,\ggd_n(t))$ be given by 
\begin{equation}\label{eq:ggd}
  \ggd_r(t):=\br[\tt_{f(r)},\tt_r]\cdot \gg_r(t) \quad\text{for $r\in[n]$,}
\end{equation}
 where $\gg_r(t)$ is defined by~\eqref{eq:ggt_dfn}. Note that when $r\in[n]$ is a loop, we have $\ggd_r(t)=0$ since $\br[\tt_{f(r)},\tt_r]=0$. The proof of the following result is completely analogous to the proof of \cref{thm:bound_meas_C} given below.

\begin{theorem}\label{thm:bound_meas_dual}
Let $f\in\Bkn$ be coloopless and $\bt\in\Tspaced_f$ be dual $f$-nondegenerate. Then 
\begin{equation*}%
  \Measd(f,\bt)=\Span_\C(\curvecd_{f,\bt}) \quad\text{inside $\Gr(k,n)$.}
\end{equation*}
\end{theorem}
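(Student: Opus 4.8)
The plan is to reduce \cref{thm:bound_meas_dual} to \cref{thm:bound_meas_C} via the duality established in \cref{prop:duality}, exactly as the remark ``The proof of the following result is completely analogous'' suggests, but carefully tracking how the curve transforms. First I would recall from \itemref{item:duality:altp} that $\altp$ induces $\Ctp_f\cong\Ctpd_{\fd}$, and from \itemref{item:duality:Meas} that $\Meas(g,\btht)=\altp(\Measd(\fd,\btht\circ g))$ for loopless $g$; applying this with $g:=\fd$ (so that $\fd[\fd]=g^{\vee\vee}=g$ up to the standard normalization, using that $f$ is coloopless iff $\fd$ is loopless) gives $\Measd(f,\bt)=\altp(\Meas(\fd,\bt\circ f^{-1}))$ after translating the $\btht$-level identity to the $\bt$-level via~\eqref{eq:th_to_t} and~\eqref{eq:altp_minors}. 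The point $\bt':=\bt\circ f^{-1}$ (meaning $t'_p=t_{f^{-1}(p)}$ with the affine sign convention $t_{p+n}=-t_p$) will be $\fd$-admissible by \itemref{item:duality:adm}, and I would check that it is $\fd$-nondegenerate precisely when $\bt$ is dual $f$-nondegenerate, by matching the misalignment condition $\MisDdu_f$ with $\Misdu_{\fd}$ under the correspondence $p\leftrightarrow f^{-1}(p)$ that swaps $\m p$ with $\p{f^{-1}(p)}$.

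Next I would apply \cref{thm:bound_meas_C} to $\fd$ and $\bt'$: this gives $\Meas(\fd,\bt')=\SpanC(\curvec_{\fd,\bt'})$ where $\curvec_{\fd,\bt'}(t)$ has coordinates $\gg^{\fd}_r(t)=\frac1{(2i)^{k'-1}}\prod_{p\in\widetilde{J}^{\fd}_r}\br[t,t'_p]$ with $k'=n-k$ and $\widetilde{J}^{\fd}_r=\widetilde{I}^{\fd}_r\setminus\{r\}$ the Grassmann-necklace data of $\fd$. Applying $\altp$ and using~\eqref{eq:altp_minors} ($\Delta_I(X)=\Delta_{[n]\setminus I}(\altp(X))$), the span $\SpanC(\curvec_{\fd,\bt'})$ in $\Gr(n-k,n)$ maps to its ``complementary'' span in $\Gr(k,n)$; the main computational task is to identify this complementary span with $\SpanC(\curvecd_{f,\bt})$ where $\curvecd$ is given by~\eqref{eq:ggd}. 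Concretely, I would use the formula for Grassmann necklaces: $\widetilde{I}^{\fd}_q=\{f^{-1}(p)+n : p, \ p<q\le \fd(p)\}$ rewritten via~\eqref{eq:fd}, and compare it with $\widetilde{I}_r$ for $f$, tracking how $r\in[n]$ being a loop of $f$ forces $\br[t_{f(r)},t_r]=0$, which is exactly the vanishing $\ggd_r(t)=0$ in~\eqref{eq:ggd}. The extra factor $\br[t_{f(r)},t_r]$ in~\eqref{eq:ggd} should arise naturally: $\altp$ applied to a spanning family of a $k'$-plane produces, for each coordinate $r$, a signed $(k{-}1)\times(k{-}1)$ minor of the $k'\times(n{-}1)$ matrix obtained by deleting column $r$; the trigonometric/Laurent identities (an extended version of~\eqref{eq:Pluck_t}, i.e.\ Ptolemy-type relations among the $\br[t_a,t_b]$) collapse this into a single product $\prod_{p\in\widetilde J_r}\br[t,t_p]$ times the ``correction'' $\br[t_{f(r)},t_r]$.

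The step I expect to be the genuine obstacle is precisely this minor-collapsing computation: verifying that the orthogonal-complement operation $\altp$ on the explicit trigonometric spanning set $\curvec_{\fd,\bt'}$ reproduces~\eqref{eq:ggd} exactly, including the scalar $(2i)^{k-1}$ versus $(2i)^{k'-1}$ bookkeeping and all sign twists coming from~\eqref{eq:columns} and from $\alt$ (every-second-column sign change). This is the kind of calculation that in~\cite{ising_crit} occupied the bulk of the analogous argument; here I would organize it by first proving the result for generic $\bt$ (where all minors are nonzero and one can divide freely, so the Ptolemy identities apply cleanly) and then extending to all dual $f$-nondegenerate $\bt$ by continuity/Zariski density, exactly as in the proof of \cref{thm:bound_meas_C}. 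One subtlety to watch: \cref{thm:bound_meas_C} requires $\bt'\in\Tspace_{\fd}$ with the normalization $t'_{p_i}=1$ on chosen connected-component representatives, whereas $\bt\circ f^{-1}$ may not satisfy this; but since both sides of the desired identity are invariant (up to a common scalar) under the coordinatewise rescalings that~\eqref{eq:THtp_dfn}-style normalization removes — equivalently, invariant under adding constants to $\btht$ on connected components of $\GX_{\fd}$, which is \cref{prop:edges_conn_cpts} applied to $\fd$ — this causes no real difficulty, only notational care.
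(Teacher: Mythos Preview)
Your approach is genuinely different from the paper's, and it runs into a real obstacle.

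The paper does not deduce \cref{thm:bound_meas_dual} from \cref{thm:bound_meas_C} via duality. When the paper says the proof is ``completely analogous,'' it means literally rerunning the bridge-removal induction of the proof of \cref{thm:bound_meas_C} for the dual setting: first strip off loops of $f$ (the dual analog of stripping coloops; this is where the factor $\br[\tt_{f(r)},\tt_r]$ in~\eqref{eq:ggd} vanishes), then remove a bridge and check that $\Measd$ and $\curvecd_{f,\bt}$ transform by the same right multiplication by an explicit $g\in\GL_n(\C)$, as in the displayed $2\times 2$ computation in the proof of \cref{thm:bound_meas_C}. This is a direct, self-contained argument that never invokes $\altp$ or \cref{prop:duality}.

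Your strategy instead requires, as its central step, the identity $\altp\bigl(\SpanC(\curvec_{\fd,\bt'})\bigr)=\SpanC(\curvecd_{f,\bt})$. But look at the remark immediately following \cref{thm:bound_meas_dual}: the paper observes that this orthogonality is a \emph{consequence} of combining \cref{prop:duality}, \cref{thm:bound_meas_C}, and \cref{thm:bound_meas_dual}, and then explicitly says ``We do not have a direct explanation for this phenomenon, even for $f=\fkn$.'' So the ``minor-collapsing computation'' you flagged as the genuine obstacle is not just a bookkeeping chore; it is precisely the statement the authors could not prove directly. Your proof outline is therefore circular as written: you are trying to use the orthogonality to prove \cref{thm:bound_meas_dual}, while the only known route to the orthogonality goes through \cref{thm:bound_meas_dual}.

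If you want to salvage the duality approach, you would need an independent proof that the two curves span orthogonal complements, which would be a new contribution beyond what is in the paper. Otherwise, the direct bridge-removal induction is the intended (and much shorter) route.
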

\begin{example}
Consider the case $f=f_{1,4}$. Even though $\Ctp_{f}$ is a single point, $\Ctpd_f$ is not a single point since the four boundary edges of the corresponding reduced graph $G$ have weights $\H21,\H32,\H43,\H41$. Therefore $\Meas(f,\bt)\in \Gr(1,4)$ is given by the point $\Measd(f,\bt)=(\H21:\H32:\H43:\H41)$.
For each $r\in [n]$, we have $J_r=\emptyset$, thus $\gg_r(t)=1$. But because we have the extra term in~\eqref{eq:ggd}, we find that $\curvecd_{f,\bt}$ depends on $\bt$ (but not on $t$):
\begin{equation*}%
  \curvecd_{f,\bt}(t)=(\H21,\H32,\H43,\br[\tt_5,\t_4]).
\end{equation*}
We see that $\Span_\C(\curvecd_{f,\bt})$ agrees with $\Measd(f,\bt)$ since  $\br[\tt_5,\t_4]=-\H14=\H41$.
\end{example}

\begin{remark}
It follows by combining \cref{prop:duality} with \cref{thm:bound_meas_C,thm:bound_meas_dual} that the curves $\curve_{f,\btht}$ and $\dual\curve_{\dual f,\btht\circ f}$ span orthogonal subspaces of $\R^n$. We do not have a direct explanation for this phenomenon, even for $f=\fkn$.
\end{remark}

\begin{proof}[Proof of \cref{thm:bound_meas_C}]
Recall that $f\in\Bkn$ is assumed to be loopless. Suppose that $f(r)=r+n$ for some coloop $r\in[n]$. Then $r\notin J_r$ but $r\in J_q$ for all $q\in[n]\setminus\{r\}$. Moreover, since $\bt$ is $f$-nondegenerate, we see that $t_r\neq \pm t_p$ for all $p\in J_r$. Thus $\curvec_{f,\bt}(t_r)\in\C^n$ has a single nonzero coordinate in position $r$. Let $f'\in\Bound(k-1,n-1)$ be obtained from $f$ by removing the coloop at $r$, and let $\bt'$ be obtained from $\bt$ by omitting $t_r$. Clearly $f'$ is loopless and $\bt'$ is $f'$-nondegenerate. The entries of $\curvec_{f',\bt'}$ are obtained from the corresponding entries $\gg_q(t)$ of $\curvec_{f,\bt}$ (for $q\neq r$) by dividing by $\frac1{2i}\br[t,t_r]$. It follows that if the statement of \cref{thm:bound_meas_C} is true for $f'$ then it is true for $f$.

We proceed by induction using the bridge removal procedure from \cref{sec:bridge_rem}.  For the base case $k=n=1$, the statement is clear. Let $f\in\Bkn$ be loopless. We have shown above that we may assume that $f$ is also coloopless. As explained in \cref{sec:bridge_rem}, there exists some index $r\in[n]$ such that $f$ has a bridge at $r$, thus $r<r+1\leq f(r)<f(r+1)\leq r+n$. Let $f':=s_rf\in\Bkn$ and consider a graph $G\in\Gred(f)$ that contains a bridge at $b_r,b_{r+1}$ as shown in \figref{fig:bridge}(left). It may happen that $f'$ is not loopless when $f(r)=r+1$, thus we first consider this case.

Assume that $f(r)=r+1$. Then $r+1$ does not appear in $J_q$ for any $q\in[n]$ and we have $\ggt_r(t)=\ggt_{r+1}(t)$ for all $t\in\Cast$. The $r$-th and $(r+1)$-th columns of $\Meas(f,\bt)$ also agree; see \cref{sec:factorization}. Let $f''\in\Bound(k,n-1)$ be obtained from $f'$ by removing the loop at $r$. We see that if the boundary measurement formula holds for $f''$ then it holds for $f$.

Assume now that $f(r)>r+1$. Let $a=f(r)$ and $b=f(r+1)$. Since both $\Meas(f,\bt)$ and $\curvec_{f,\bt}$ are compatible with the cyclic shift from \cref{sec:cyclic-shift}, we may assume that $1\leq r<r+1<a< b\leq n$. Then it is easy to check using~\cite[Lemma~7.6]{LamCDM} and \cref{fig:bridge} that $\Meas(f',\bt)=\Meas(f,\bt)\cdot g$, where $g= x_r(-\H ba/\H b{r+1})\cdot d_{r+1}(\H b{r+1}/\H a{r+1})$ and the matrices $x_j(t),d_{j}(t)\in\GL_n(\C)$ differ from the identity matrix as follows: $x_j(t)$ contains a single nonzero off-diagonal entry equal to $t$ in row $j$ and column $j+1$ while $d_j(t)$ is a diagonal matrix whose $(j,j)$-th entry is equal to $t$ and all other diagonal entries of $d_j(t)$ are equal to $1$. The $2\times 2$ block of $g$ in rows and columns $r,r+1$ is given by 
\begin{equation*}%
g|_{\{r,r+1\}\times \{r,r+1\}}=  \begin{pmatrix}
    1 & -\H ba/\H a{r+1}\\
    0 & \H b{r+1}/\H a{r+1}
  \end{pmatrix}.
\end{equation*}
We claim that $\curvec_{f',\bt}(t)=\curvec_{f,\bt}(t)\cdot g$ for all $t\in\Cast$. Denote the coordinates of $\curvec_{f',\bt}(t)$ by $\ggp_q(t)$ for $q\in[n]$, and let $J'_q$ be the set associated to $q$ by~\eqref{eq:intro:J_r} using the strand diagram of $f'$. First, observe that $\ggp_q(t)=\gg_q(t)$ for $q\neq r+1$. The $(r+1)$-th coordinates differ since $J'_{r+1}=J_{r+1}\setminus \{a\}\sqcup \{b\}$. Thus $\gg_{r+1}(t)=P(t)\cdot \HT a$ while $\ggp_{r+1}(t)=P(t)\cdot \HT b$ for some rational function $P(t)$. Note also that $\gg_r(t)=\ggp_r(t)=P(t)\cdot \HT{r+1}$. A direct computation yields
\begin{equation*}%
  \begin{pmatrix}
\HT{r+1}& \HT a
  \end{pmatrix}\cdot \begin{pmatrix}
    1 & -\H ba/\H a{r+1}\\
    0 & \H b{r+1}/\H a{r+1}
  \end{pmatrix}=\begin{pmatrix}
\HT{r+1} & \HT b
  \end{pmatrix}.
\end{equation*}
This implies the desired identity $\curvec_{f',\bt}(t)=\curvec_{f,\bt}(t)\cdot g$. Thus if the boundary measurement formula holds for $f'$ then it holds for $f$, which finishes the induction step.
\end{proof}

\subsection{Choosing a basis}\label{sec:fourier}
Even though \cref{thm:bound_meas_C} describes $\Meas(f,\bt)$ as an element of $\Gr(k,n)$, it is convenient to specify an explicit $k\times n$ matrix representative for such a space. We discuss two ways of describing such representatives: taking $k$ distinct points on the curve $\curve_{f,\bth}(t)$ and taking the basis of its Fourier coefficients. The proofs translate verbatim from~\cite[Section~3]{ising_crit}.

\begin{proposition}\ \label{prop:basis}
\begin{theoremlist}
\item Assume that $\bth\in\THtp_f$ is $f$-nondegenerate. Then for any $0\leq s_1<s_2<\dots<s_k< \pi$, the vectors $\curve_{f,\bth}(s_1),\curve_{f,\bth}(s_2),\dots,\curve_{f,\bth}(s_k)$ form a basis of $\Span_\R(\curve_{f,\bth})$.
\item Assume that $\bt\in\Tspace_f$ is $f$-nondegenerate. Then for any generic tuple $(s_1,s_2,\dots,s_k)\in(\Cast)^k$, the vectors $\curvec_{f,\bt}(s_1),\curvec_{f,\bt}(s_2),\dots,\curvec_{f,\bt}(s_k)$ form a basis of $\Span_\C(\curvec_{f,\bt})$.
\end{theoremlist}
\end{proposition}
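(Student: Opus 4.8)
The plan is to reduce the statement to a Vandermonde-type non-vanishing computation, using \cref{thm:bound_meas_C} to know that $\SpanC(\curvec_{f,\bt})$ (respectively $\Span_\R(\curve_{f,\bth})$) has dimension exactly $k$. Since each coordinate $\gg_r(t)$ is, up to the constant $\frac1{(2i)^{k-1}}$, the product $\prod_{p\in\Jt_r}\br[t,\tt_p]$ with $|\Jt_r|=k-1$, and $\br[t,\tt_p]=\frac1{t\tt_p}(t-\tt_p)(t+\tt_p)$, after clearing the common factor $t^{-(k-1)}$ (which is harmless since $t\in\Cast$) each $\gg_r(t)$ becomes a polynomial in $t$ of degree $\le 2(k-1)$ that is \emph{even or odd} according to the parity of $k-1$; equivalently, $\gg_r(t)$ lies in the $k$-dimensional space spanned by $\{t^{2j-(k-1)}\}_{0\le j\le k-1}$ (the Fourier/Laurent monomials with the right parity). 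This is exactly the ``trigonometric polynomial of degree $k-1$'' observation already recorded after \cref{thm:intro:bound_meas}.

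First I would make this precise: let $W\subset\C(t)$ be the $k$-dimensional space of Laurent polynomials $\sum_{j=0}^{k-1} c_j\, t^{2j-(k-1)}$, and let $\Phi:W\to\C^n$ send $\psi\mapsto(\psi(s_1),\dots,\psi(s_k))$ — wait, rather, the natural object is the evaluation pairing. The cleaner formulation: consider the $k\times k$ matrix $M$ whose $(a,b)$ entry is $s_a^{\,2b-1-(k-1)}$ ($1\le a,b\le k$), i.e.\ the ``generalized Vandermonde'' matrix in the monomials of $W$ evaluated at $s_1,\dots,s_k$. Since $\curvec_{f,\bt}(t)$ has all its coordinates in $W$ and (by \cref{thm:bound_meas_C}) the span of these coordinate functions is all of $W$ — because $\dim\SpanC(\curvec_{f,\bt})=k=\dim W$ forces the coordinates $\gg_1,\dots,\gg_n$ to span $W$ — the vectors $\curvec_{f,\bt}(s_1),\dots,\curvec_{f,\bt}(s_k)$ are linearly dependent if and only if there is a nonzero $\psi\in W$ with $\psi(s_1)=\dots=\psi(s_k)=0$, i.e.\ iff $\det M=0$. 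Now $\det M = \bigl(\prod_a s_a^{-(k-1)}\bigr)\det(s_a^{\,2(b-1)})_{a,b}$, and $\det(s_a^{\,2(b-1)})$ is the Vandermonde determinant $\prod_{a<a'}(s_{a'}^2-s_a^2)$. So $\det M\ne 0$ precisely when $s_a^2\ne s_{a'}^2$ for all $a\ne a'$, i.e.\ $s_a\ne\pm s_{a'}$.

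For part (ii) this immediately gives the result: a generic tuple $(s_1,\dots,s_k)\in(\Cast)^k$ satisfies $s_a\ne\pm s_{a'}$, so $\det M\ne0$ and the vectors form a basis. For part (i), with $0\le s_1<s_2<\dots<s_k<\pi$, I would instead work with $\curve_{f,\bth}(t)$ whose coordinates are $\eps_r\prod_{p\in\Jt_r}\sin(t-\tht_p)$; each such coordinate is a real trigonometric polynomial of degree $k-1$, lying in the $k$-dimensional real space spanned by $\{\cos(mt),\sin(mt)\}$ with the appropriate parity constraint (the basis being $\{1,\cos2t,\dots\}$ or $\{\cos t,\cos3t,\dots\}\cup\{\sin t,\dots\}$ depending on parity of $k-1$). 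The same argument reduces to showing that a nonzero trigonometric polynomial in this $k$-dimensional space cannot vanish at $k$ distinct points of $[0,\pi)$; this is a classical fact (such a polynomial, pulled back via $t\mapsto e^{2it}$ or $t\mapsto e^{it}$ as appropriate, becomes a Laurent polynomial whose number of zeros on the unit circle is bounded by its ``degree span,'' here $\le k-1$), so $k$ distinct zeros force it to be identically zero. Equivalently this is the real/circular Vandermonde computation, and $s_a\ne s_{a'} \bmod \pi$ on $[0,\pi)$ translates to $s_a^2:=e^{2is_a}$ being distinct, giving the nonzero Vandermonde.

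The main obstacle is bookkeeping the parity of $k-1$ correctly so that ``the coordinate functions span the full $k$-dimensional space $W$'' is genuinely justified by $\dim\SpanC(\curvec_{f,\bt})=k$: one must check that each $\gg_r$ really does lie in $W$ (so that the $k$-dimensional span, being contained in the $k$-dimensional $W$, equals $W$), which is the clearing-of-$t^{-(k-1)}$ and parity argument above, and then that the evaluation-at-$s_a$ map $W\to\C^k$ is the generalized Vandermonde. Once $W$ is identified, everything is a one-line Vandermonde determinant. As the excerpt notes, the proofs ``translate verbatim from~\cite[Section~3]{ising_crit},'' so I would cite that for the detailed parity/degree accounting and present the Vandermonde reduction as above.
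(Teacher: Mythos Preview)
Your proposal is correct and follows essentially the same approach as the paper: both use \cref{thm:bound_meas_C} to know the span is $k$-dimensional and then reduce to a Vandermonde-type nonvanishing in the variables $s_a^2$ (the paper phrases it as ``multiplying on the left by an appropriate Vandermonde-type matrix'' to show spanning, you phrase it dually as linear independence, but the computation is the same). One cosmetic correction: your matrix entry should be $s_a^{\,2(b-1)-(k-1)}$ rather than $s_a^{\,2b-1-(k-1)}$ to match your basis $\{t^{2j-(k-1)}\}_{0\le j\le k-1}$, though this indexing slip does not affect the argument.
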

\begin{proof}
By \cref{thm:bound_meas_C}, the spaces $\Span_\R(\curve_{f,\bth})$ and $\Span_\C(\curvec_{f,\bt})$ are $k$-dimensional. It suffices to show that the given vectors span these subspaces. This can be shown by multiplying on the left by an appropriate Vandermonde-type matrix; see~\cite[Lemma~3.2]{ising_crit} for details.
\end{proof}

To describe the Fourier basis, observe that the coordinates of $\curvec_{f,\bt}(t)$ are Laurent polynomials in $t$:
\begin{equation*}%
  \gg_r(t)=\frac1{(2i)^{k-1}}\sum_{p=1}^k(-1)^{k-p} c_{p,r} t^{2p-k-1}.
\end{equation*}
Here $c_{p,r}$ is the $(r-1)$-th elementary symmetric polynomial in the variables $(t^2_q)_{q\in J_r}$ divided by $\prod_{q\in J_r} t_q$; see~\cite[Equation~(3.2)]{ising_crit}.

Let $F_{f,\bt}=(c_{p,r})_{p\in[k],r\in[n]}$ be the corresponding $k\times n$ \emph{Fourier coefficient matrix}.\footnote{The name is explained by the fact that if $\bth$ and $\bt$ are related by~\eqref{eq:th_to_t} then up to a simple transformation, the rows of $F_{f,\bt}$ yield the Fourier coefficients of $\curve_{f,\bth}(t)$, viewed as a $2\pi$-periodic function of $t$.} The next result also follows by combining the proof of~\cite[Lemma~3.2]{ising_crit} with \cref{thm:bound_meas_C}.
\begin{proposition}\label{prop:Fourier}
Assume that $\bt\in\Tspace_f$ is $f$-nondegenerate. Then the rows of $F_{f,\bt}$ form a $\C$-basis of $\Span_\C(\curvec_{f,\bt})$.\qed
\end{proposition}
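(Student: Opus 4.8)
\textbf{Proof proposal for \cref{prop:Fourier}.}

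The plan is to reduce the statement to \cref{thm:bound_meas_C} together with the second part of \cref{prop:basis}, which together already say that $\Span_\C(\curvec_{f,\bt})$ is $k$-dimensional and that evaluating the curve at $k$ generic points gives a basis. Since $F_{f,\bt}$ records, up to explicit signs and powers of $2i$, the coefficients of each Laurent polynomial $\gg_r(t)$ in the monomials $t^{k-1}, t^{k-3},\dots,t^{1-k}$, every entry $\curvec_{f,\bt}(s)$ of the curve is by construction a $\C$-linear combination of the rows of $F_{f,\bt}$: concretely, $\curvec_{f,\bt}(s)=v(s)\cdot F_{f,\bt}$ where $v(s)=\tfrac1{(2i)^{k-1}}\bigl((-1)^{k-1}s^{1-k},(-1)^{k-2}s^{3-k},\dots,s^{k-1}\bigr)$ is the associated ``Vandermonde row vector.'' Hence $\Span_\C(\curvec_{f,\bt})\subseteq \RowSpan_\C(F_{f,\bt})$.

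First I would note that $F_{f,\bt}$ has at most $k$ rows, so $\dim_\C\RowSpan_\C(F_{f,\bt})\leq k$. Combined with the inclusion above and the fact (\cref{thm:bound_meas_C}) that $\dim_\C\Span_\C(\curvec_{f,\bt})=k$, this forces equality of dimensions and hence $\Span_\C(\curvec_{f,\bt})=\RowSpan_\C(F_{f,\bt})$, with the rows of $F_{f,\bt}$ automatically linearly independent. This is exactly the content of the proposition. Alternatively, and more in the spirit of the cited argument, one can argue directly: pick any generic $s_1,\dots,s_k\in\Cast$, so that by \cref{prop:basis}\itemref{theoremlisti} the vectors $\curvec_{f,\bt}(s_1),\dots,\curvec_{f,\bt}(s_k)$ form a basis of $\Span_\C(\curvec_{f,\bt})$; stacking them gives $M\cdot F_{f,\bt}$ where $M$ is the $k\times k$ matrix with rows $v(s_1),\dots,v(s_k)$, which is a (rescaled, reindexed) Vandermonde matrix in the variables $s_j^2$ and is therefore invertible for generic $s_j$. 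Thus $F_{f,\bt}=M^{-1}\cdot(M F_{f,\bt})$ has the same row span as the chosen basis, proving the claim.

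The only point requiring a little care — and the one I would treat as the ``main obstacle,'' though it is genuinely routine — is verifying that the substitution matrix $M$ (equivalently, the relation $\curvec_{f,\bt}(s)=v(s)F_{f,\bt}$) matches the normalization $\gg_r(t)=\tfrac1{(2i)^{k-1}}\sum_{p=1}^k(-1)^{k-p}c_{p,r}t^{2p-k-1}$ exactly as written, including the signs $(-1)^{k-p}$ and the overall $(2i)^{1-k}$; this is precisely the bookkeeping done in \cite[Lemma~3.2 and Equation~(3.2)]{ising_crit}, so I would simply cite that computation rather than reproduce it. Everything else is immediate from the dimension count in \cref{thm:bound_meas_C}, so the proof is short: ``By \cref{thm:bound_meas_C}, $\Span_\C(\curvec_{f,\bt})$ has dimension $k$; every point of the curve lies in $\RowSpan_\C(F_{f,\bt})$, which has at most $k$ rows; hence the two spaces coincide and the rows of $F_{f,\bt}$ are a basis. $\square$''
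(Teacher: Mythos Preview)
Your proposal is correct and matches the paper's approach exactly: the paper simply says the result follows by combining the proof of \cite[Lemma~3.2]{ising_crit} with \cref{thm:bound_meas_C}, which is precisely the Vandermonde-plus-dimension-count argument you outlined. There is nothing to add.
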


\subsection{Degenerate boundary measurements}\label{sec:degen-bound-meas}

 The boundary measurement formula in \cref{thm:bound_meas_C} holds when $\bt\in\Tspace_f$ is $f$-nondegenerate. If $\bt\in\Tspace_f$ is $f$-degenerate (that is, not $f$-nondegenerate) then it is not hard to see that the span of $\curvec_{f,\bt}(t)$ has dimension strictly less than $k$. Nevertheless, $\Meas(f,\bt)\in\Gr(k,n)$ is still defined for all $\bt\in\Tspace_f$. In order to extend the boundary measurement formula to all of $\Tspace_f$, we adapt the constructions from~\cite[Section~6]{ising_crit}.

 First, we introduce a slight modification of $\curvec_{f,\bt}(t)$. Let
\begin{equation}\label{eq:gggt_dfn}
  \gggt_r(t):= \frac1{(2i)^{k-1}} \prod_{p\in \Jt_r} \left(\frac t{\tt_p}-\tt_p\right) \quad\text{for $r\in\Z$}.
\end{equation}
Thus~\eqref{eq:gggt_dfn} differs from~\eqref{eq:ggt_dfn} in that the terms on the right hand side are of the form $\left(t/{\tt_p}-\tt_p\right)$ rather than $\br[t,\tt_p]=(t/\tt_p-\tt_p/t)$. We let $\ggg_r(t):=\gggt_r(t)$ for $r\in[n]$ and $\GGG_{f,\bt}(t)=(\ggg_1(t),\ggg_2(t),\dots,\ggg_n(t))$. Unlike for $\curvec_{f,\bt}(t)$, the coordinates of $\GGG_{f,\bt}(t)$ are genuine polynomials in $t$.

For $q\in[n]$, let $\supp_f(q):=\{r\in[n]\mid q\notin J_r\}$ and $\v_q:=t_q^2$. 
 Thus $\ggg_r(\v_q)=0$ for $r\notin\supp_f(q)$. For $x\in\C^{n}$ and $S\subset[n]$, let $x|_S\in\C^{n}$ be the vector obtained from $x$ by sending the coordinates $x_r$ to zero for $r\notin S$. 
 For $m\geq0$, denote by $\GGG^\parr m_\ft(\t)$ the $m$-fold derivative of $\GGG_\ft(\t)$. 
Finally, for $r\in[n]$, let $m_r:=\#\{q\in\J_r\mid \v_q=\v_r\}$ be the degree with which $(\t-\v_r)$ divides~$\ggg_r(\t)$. Denote
\begin{equation*}%
  \u r:=\GGG^\parr{m_r}_\ft(\v_r)|_{\supp_f(r)} \quad\text{for $r\in[n]$.}
\end{equation*}
Thus $\u r\in\C^n$ is obtained by (i) differentiating $\GGG_\ft(t)$ $m_r$ times, (ii) substituting $\t=\v_r$, and (iii) sending all coordinates not in $\supp_f(r)$ to $0$.

\begin{theorem}\label{thm:arb_reg}
Let $f\in\Bkn$ be loopless and $\bt\in\Tspace_f$. Then for each $r\in[n]$, the vectors 
\begin{equation}\label{eq:u_j} 
\{\u p\mid p\in I_r\}
\end{equation}
form a basis of $\Meas(f,\bt)$. 
\end{theorem}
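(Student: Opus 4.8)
The plan is to deduce \cref{thm:arb_reg} from \cref{thm:bound_meas_C} by a limiting/degeneration argument, combined with a dimension count showing that the proposed vectors are always linearly independent. First I would fix $r\in[n]$ and observe that, by cyclically shifting (using \cref{sec:cyclic-shift} and the compatibility of both $\Meas(f,\bt)$ and the curves with cyclic symmetry, exactly as in the proof of \cref{thm:bound_meas_C}), it suffices to treat $r=1$, so that $I_r=I_1$ and $\Jt_1=\{2,3,\dots,k\}$ (with appropriate modifications if some indices are coloops, handled by the coloop reduction already used in \cref{thm:bound_meas_C}). The key point is that $\{\u p\mid p\in I_1\}$ has exactly $k=|I_1|$ elements, so it is enough to show (i) these vectors lie in $\Meas(f,\bt)$, and (ii) they are linearly independent.

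For (i), I would argue by continuity. The maps $\bt\mapsto \Meas(f,\bt)$ (which is regular on $\Tspace_f$ by \cref{prop:Meas_exists}, landing in $\Pio_f\subset\Gr(k,n)$) and $\bt\mapsto \u p$ are both algebraic in $\bt$; and on the dense open subset of $f$-nondegenerate $\bt$, \cref{thm:bound_meas_C} together with the explicit description of $\supp_f(p)$ and the multiplicities $m_p$ shows that $\u p\in \SpanC(\curvec_{f,\bt})=\Meas(f,\bt)$. Concretely, when $\bt$ is $f$-nondegenerate one has $m_p=0$ for all $p$, and $\u p=\GGG_{f,\bt}(\v_p)|_{\supp_f(p)}$ is (up to a scalar depending on $\bt$) the value $\curvec_{f,\bt}$ of the curve at $t=\v_p$: indeed $\ggg_r(\v_p)$ and $\gg_r(\v_p)$ differ by a common nonzero factor for $r\in\supp_f(p)$, and both vanish for $r\notin\supp_f(p)$ since then $p\in J_r$, so $\v_p$ is a root of $\ggg_r$. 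Hence $\u p$ is a point on the curve and lies in the span; this is the content of \cref{prop:basis}(ii) once one checks that the $\v_p$ for $p\in I_1$ are generic points. Since $\Meas(f,\bt)$ is a continuous (in fact regular) family of $k$-planes and $\u p$ depends continuously on $\bt$, the membership $\u p\in\Meas(f,\bt)$ persists on all of $\Tspace_f$.

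For (ii), linear independence, I would give a direct combinatorial argument that does \emph{not} pass through $f$-nondegeneracy, since that is exactly where the statement has content. The idea, adapted from \cite[Section~6]{ising_crit}, is to order the indices $p\in I_1$ and show the matrix with rows $\u p$ is triangular-with-nonzero-diagonal after a suitable reordering of columns. Precisely: for $p\in I_1$, the vector $\u p$ is supported on $\supp_f(p)=\{r\mid p\notin J_r\}$, and its coordinate in a carefully chosen position (governed by the Grassmann-necklace combinatorics — the position $r$ ``just past'' where $p$ leaves the necklace) is a nonzero monomial in the $\H qp$'s times a product $\prod(\v_q-\v_p)$ over the relevant $q$. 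The $f$-admissibility and $f$-nondegeneracy-free nature works here because the \emph{derivative} $\GGG^{(m_r)}$ is taken precisely to order $m_r$, which kills exactly the vanishing coming from coincidences $\v_q=\v_r$, leaving a nonzero leading term; this is why the vectors stay independent even in the degenerate case. Establishing this triangularity — i.e.\ identifying the right total order on $I_1$ and the matching pivot columns, and checking the pivot entries are nonzero using $f$-admissibility of $\bt$ — is the main obstacle, and it is essentially a bookkeeping exercise with the Grassmann necklace $(I_1,\dots,I_n)$ and the sets $J_r$, mirroring \cite[Section~6]{ising_crit} but now in the generality of arbitrary loopless $f\in\Bkn$ rather than the Ising-model permutations. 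Once (i) and (ii) are in hand, $\{\u p\mid p\in I_1\}$ is a set of $k$ linearly independent vectors inside the $k$-plane $\Meas(f,\bt)$, hence a basis, completing the proof.
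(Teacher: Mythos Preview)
Your step~(i) has a genuine gap. You claim that $\bt\mapsto\u p$ is algebraic in $\bt$, but it is not: the order of differentiation $m_p=\#\{q\in J_p\mid \v_q=\v_p\}$ is a piecewise-constant, discontinuous function of $\bt$, jumping precisely when $\bt$ leaves the $f$-nondegenerate locus. On that locus $m_p=0$ and $\u p$ is (a rescaling of) the curve value $\curvec_{f,\bt}(t_p)$; but as some $\v_q$ collides with $\v_p$, the definition of $\u p$ switches to a higher derivative, so $\u p$ is \emph{not} the limit of its nondegenerate counterparts. Put differently, the curve values $\curvec_{f,\bt}(t_p)$ for $p\in I_r$ do lie in $\Meas(f,\bt)$ for all $\bt\in\Tspace_f$ by the continuity argument you give, but in the degenerate case they become linearly dependent (their span has dimension $<k$), and it is exactly the passage to the independent system $\{\u p\}$ via derivatives and restriction to $\supp_f(p)$ that continuity does not supply. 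One needs either a careful divided-difference (Hermite-type) limit, or a direct argument such as the bridge-removal induction used for \cref{thm:bound_meas_C}; the paper itself does not reproduce the details and defers to \cite[Proof of Theorem~6.1]{ising_crit}.

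Your step~(ii) is on the right track, and the pivot observation you sketch is correct: the $p$-th coordinate of $\u p$ equals $\frac{m_p!}{(2i)^{k-1}}\prod_{q\in J_p:\,\v_q\neq\v_p}(\v_p-\v_q)/\tt_q\neq0$. But independence alone does not identify $\Span\{\u p\mid p\in I_r\}$ with the specific $k$-plane $\Meas(f,\bt)$, so it cannot compensate for the gap in~(i).
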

\begin{proof}
The proof is obtained by modifying the details of~\cite[Proof of Theorem~6.1]{ising_crit} in a straightforward fashion.
\end{proof}

\section{Applications}\label{sec:applications}
We explain the results on electrical networks from \cref{sec:intro:applications} in more detail. Along the way, we compare them to their Ising model counterparts obtained in~\cite{ising_crit}.

\subsection{Embeddings into \texorpdfstring{$\Grtnn(k,n)$}{the totally nonnegative Grassmannian}}
Let $\Tiling$ be a rhombus tiling of a polygonal region $\Reg$ as in \cref{sec:intro:applications}. Let $G_\Tiling$ be the corresponding isoradial graph with boundary vertices $\b_1,\b_2,\dots,\b_N$. Let $\v_1,\v_2,\dots,\v_{2n}\in\C$ be the unit vectors traversing the sides of $\Reg$ in clockwise order. They are labeled so that $\b_p$ is incident to $\v_{2p-1}$ and $\v_{2p}$ for each $p\in[N]$. This data gives rise to a fixed-point-free involution $\taur:[2N]\to[2N]$ (we refer to such involutions as \emph{pairings}; see e.g. \figref{fig:pis-pel}(a)) defined as follows. Choose $p\in[2N]$ and consider the (unique) rhombus of $\Tiling$ containing the side of $\Reg$ labeled by $\v_p$. Let $\v^\parr1_p$ be the opposite side of this rhombus. Next, $\v^\parr1_p$ is contained in a unique other rhombus of $\Tiling$, so we let $\v^\parr2_p$ denote the opposite side of that rhombus, etc. This way, we create a family of parallel line segments $\v_p,\v^\parr1_p,\v^\parr2_p,\dots$ which terminates at some boundary line segment labeled by $\v_q$ for $q\in[2N]$. We then set $\taur(p):=q$. It is easy to see that we therefore must have $q\neq p$ and $\taur(q)=p$, thus $\taur$ is indeed a fixed-point-free involution. See~\cite[Figure~2(c)]{ising_crit} for an example.

Following~\cite{Lam}, we associate a bounded affine permutation $\fel=\feltr\in\Bound(N+1,2N)$ to $\Reg$. It is the unique loopless bounded affine permutation such that for each $p\in[2N]$, we have $\permel(p)\equiv\taur(p+1)$ modulo $n:=2N$ (where $p+1$ is taken modulo $n$). See \figref{fig:pis-pel}(b).

\begin{figure}
  \includegraphics[width=1.0\textwidth]{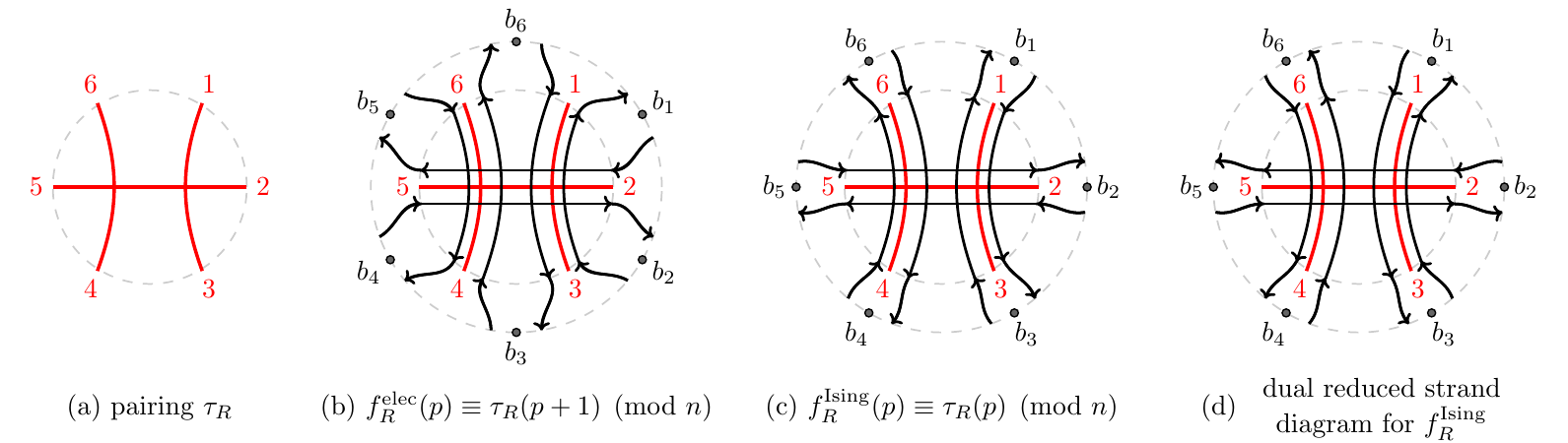}
  \caption{\label{fig:pis-pel} Converting a pairing $\tau_\Reg$ into bounded affine permutations $\fel$ and $\fis$.}
\end{figure}

Recall from \cref{sec:intro:applications} that the graph $G_\Tiling$ is viewed as an electrical network whose $N\times N$ response matrix is denoted $\La^\Reg$. We describe Lam's embedding $\pel$ of the space of $N\times N$ electrical  response matrices into $\Grtnn(N+1,2N)$. Let $\nodd:=\{1,3,\dots,2N-1\}$. Then $\pel(\La^\Reg)$ is the unique element $X\in\Gr(N+1,2N)$ such that 
\begin{equation}\label{eq:Lam_emb_dfn}
  \Delta_{\nodd\cup\{2p\}}(X)=1 \quad\text{and}\quad \Delta_{\nodd\setminus\{2q-1\}\cup\{2p-2,2p\}}(X)=\la^\Reg_{p,q} \quad\text{for all $p\neq q\in[N]$.}
\end{equation}
Here the index $2p-2$ is taken modulo $n$. The element $\pel(\La^\Reg)$ turns out to belong to $\Grtnn(N+1,2N)$. Moreover, it belongs to $\Ptp_{\fel}$.
\begin{remark}
The above embedding into $\Grtnn(N+1,2N)$ is obtained from Lam's embedding into $\Grtnn(N-1,2N)$ by composing it with the map $\altp$ from \cref{sec:duality}. The description~\eqref{eq:Lam_emb_dfn} is deduced by combining~\cite[Proposition~2.4]{Lam} with the description of the embedding in terms of \emph{concordant sets} given in~\cite[Section~5.2]{Lam}. 
\end{remark}

The \emph{Ising model} is a probability distribution on the space $\{\pm1\}^{V(G)}$ of \emph{spin configurations} on the vertices of a weighted graph $G$. Given two vertices $u,v\in V(G)$, one can consider their \emph{spin correlation}, denoted $\<\sigma_u\sigma_v\>$. It is a real number between $-1$ and $1$. The \emph{critical Ising model}~\cite{Bax2,Bax} is obtained in the case where $G=G_\Tiling$ is the graph described above, but it has a different choice of edge weights. Consider the \emph{boundary correlations} $\<\sigma_p\sigma_q\>_\Tiling:=\<\sigma_{\b_p}\sigma_{\b_q}\>_{G_\Tiling}$ forming an $N\times N$ \emph{boundary correlation matrix} $\M^\Tiling:=(\<\sigma_{p}\sigma_{q}\>_\Tiling)_{p,q\in[N]}$. As in the case of electrical networks, the matrix $\M^\Tiling$ is invariant under star-triangle moves and therefore depends only on the region $\Reg$. We denote $\M^\Reg:=\M^\Tiling$.

Let $\fistr=\fis\in\Bound(N,2N)$ be the unique bounded affine permutation such that for each $p\in[2N]$, we have  $\permis(p)\equiv\taur(p)$ modulo $n$; see \figref{fig:pis-pel}(c). In~\cite{GP}, we associated an element $\pis(\M^\Reg)\in\Grtnn(N,2N)$ to any boundary correlation matrix. It belongs to the positroid cell $\Ptp_{\fis}$.

\subsection{Pairings}
Let $\tau:[n]\to[n]$ be a pairing. We say that $\{p,q\}$ is a \emph{$\tau$-pair} if $\tau(p)=q$, and we identify $\tau$ with the corresponding set $\{\{p_1,\tau(p_1)\},\{p_2,\tau(p_2)\},\dots,\{p_N,\tau(p_N)\}$ of $\tau$-pairs. We say that two $\tau$-pairs $\{p,q\}\neq\{p',q'\}$ \emph{form a $\tau$-crossing} if the points $p,p',q,q'$ are cyclically ordered (either clockwise or counterclockwise). We let $\xing(\tau)$ denote the number of $\tau$-crossings. Similarly to~\eqref{eq:E_GX_f}, we introduce an undirected graph $\GX_\tau$ with vertex set $[n]$ and two indices $p,p'\in[n]$ forming an edge whenever the corresponding $\tau$-pairs $\{p,\tau(p)\}$ and $\{p',\tau(p')\}$ form a $\tau$-crossing. 
 We say that $\tau$ is \emph{connected} if $\GX_\tau$ is connected. 

Consider $n$ points $d_1,d_2,\dots,d_n$ on a circle in clockwise order. A \emph{$\tau$-pseudoline arrangement} $\Acal$ is a collection of $N$ embedded unoriented paths in a disk, each connecting $d_p$ to $d_{\tau(p)}$ for some $p\in[n]$, such that no two paths intersect more than once and no three paths intersect at a single point. Each rhombus tiling $\Tiling$ of a polygonal region $\Reg$ is planar dual to a $\taur$-pseudoline arrangement $\Acal_\Tiling$: the pseudoline connecting $d_p$ to $d_{\taur(p)}$ is obtained by connecting the midpoints of the above line segments $\v_p,\v^\parr1_p,\v^\parr2_p,\dots$; see \cite[Figure~2(c)]{ising_crit}.

 We say that a tuple $\bth=(\th_1,\th_2,\dots,\th_n)\in\R^n$ is \emph{$\tau$-isotropic} if 
\begin{equation}\label{eq:bth_isotr}
  \th_{q}=\th_p+\pi/2 \quad\text{for all $\tau$-pairs $\{p,q\}$ with $1\leq p<q\leq n$.}
\end{equation}
Extending $\bth$ to $\btht:\Z\to\R$ via~\eqref{eq:bth_extend}, the above condition becomes 
\begin{equation*}%
  \tht_{q}=\tht_p+\pi/2 \quad\text{for all $p<q\in \Z$ such that $\fis(p)=q$.}
\end{equation*}

We say that a tuple $\bth\in\R^n$ is \emph{$\tau$-admissible} if it is $\tau$-isotropic and satisfies 
\begin{equation*}%
  \th_p<\th_{p'}<\th_q<\th_{q'}
\end{equation*}
for all $1\leq p<p'<q<q'\leq n$ such that the $\tau$-pairs $\{p,q\}$ and $\{p',q'\}$ form a $\tau$-crossing. One easily observes that given a rhombus tiling $\Tiling$ of $\Reg$, one can choose a $\tau$-admissible tuple $\bth$ such that $\v_p=\exp(-2i\th_p)$ for all $p\in[n]$. 

We may now view the region $\Reg$ as a pair $(\taur,\bth)$ where $\bth$ is $\taur$-admissible. Recall that for any rhombus tiling $\Tiling$ of $\Reg$, the response matrix $\La^\Tiling=\La^\Reg$ of the associated electrical network $G_\Tiling$ depends only on $\Reg$. More generally, it is possible to associate an electrical network $G_{(\Acal,\bth)}$ to an arbitrary pair $(\Acal,\bth)$ consisting of a $\tau$-pseudoline arrangement $\Acal$ and a $\tau$-admissible tuple $\bth$; see~\cite[Section~2]{ising_crit}. The resulting response matrix $\La^{(\Acal,\bth)}$ again depends only on $(\tau,\bth)$ and is denoted $\La^{(\tau,\bth)}$. For a pair $(\tau,\bth)$ where $\bth$ is $\tau$-admissible, we denote $\Reg:=(\tau,\bth)$ and refer to $\Reg$ as a \emph{generalized region} (called a \emph{valid region} in~\cite{ising_crit}).

\subsection{Back to critical cells}
Given a pairing $\tau:[n]\to[n]$, we have defined the bounded affine permutations $\felt\in\Bound(N+1,2N)$, $\fist\in\Bound(N,2N)$, $\tau$-pseudoline arrangements $\Acal$, and $\tau$-admissible tuples $\bth$. In this section, we discuss the relationship between these notions and the notions introduced above in the context of critical varieties, such as reduced strand diagrams and $f$-admissible tuples $\bth$.

Let $\Acal$ be a $\tau$-pseudoline arrangement. Recall that the endpoints of pseudolines in $\Acal$ are labeled $d_1,d_2,\dots,d_n$. For each $p\in[n]$, place a point $\md p$ (resp., $\pd p$) slightly before (resp., after) $d_p$ in clockwise order.  Let $\Avec$ be a reduced strand diagram whose (directed) strands connect $\md p\to \pd q$ whenever $\tau(p)=q$.

Relabeling the boundary points of $\Avec$ by $\m p:=\md p$ and $\p p:=\pd p$ for all $p\in[n]$, $\Avec$ becomes a dual reduced strand diagram of $\fist$; see \figref{fig:pis-pel}(d). However, we may also relabel the boundary points in a different way: setting $\m p:=\pd p$ and $\p p:=\md {p+1}$ for all $p\in[n]$ gives rise to a reduced strand diagram of $\felt$ as in \figref{fig:pis-pel}(b). See \cref{sec:shift} for a detailed discussion of this ``shift by $1$'' correspondence. For now, observe that this identification of strand diagrams allows one to relate the notions of $\tau$-admissible, $\felt$-admissible, and $\fist$-admissible tuples $\bth$.

\begin{proposition}
Let $\tau:[n]\to[n]$ be a pairing and $\bth\in\R^n$. The following are equivalent:
\begin{itemize}
\item $\bth$ is $\tau$-admissible;
\item $\bth$ is $\tau$-isotropic and $\felt$-admissible;
\item $\bth$ is $\tau$-isotropic and $\fist$-admissible.\qed
\end{itemize}
\end{proposition}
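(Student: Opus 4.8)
The plan is to unwind the three notions—$\tau$-admissible, $\felt$-admissible, $\fist$-admissible—at the combinatorial level of the strand diagram $\Avec$, using the relabeling correspondences already established in this section. First I would note that all three conditions include the $\tau$-isotropic condition~\eqref{eq:bth_isotr} (the $\tau$-admissible case by definition, and it should simply be imposed in the other two), so the content is to show that, \emph{assuming} $\tau$-isotropy, the remaining inequality condition in each case coincides. The natural intermediary is the single geometric object $\Avec$: its crossings are exactly the $\tau$-crossings of $\tau$, and under the relabeling $\m p:=\pd p$, $\p p:=\md{p+1}$ it is a reduced strand diagram of $\felt$, while under $\m p:=\md p$, $\p p:=\pd p$ it is a dual reduced strand diagram of $\fist$. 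So an $\felt$-crossing (in the sense of \cref{dfn:intro:strand_diag}) and a dual $\fist$-crossing (in the sense of \cref{sec:duality}) are both literally the same as a $\tau$-crossing of $\Avec$, once one tracks which boundary label is attached to which endpoint.

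Second, I would carry out the bookkeeping that turns "same crossing" into "same inequality." For a $\tau$-crossing formed by $\tau$-pairs $\{p,q\}$ and $\{p',q'\}$ with $1\le p<p'<q<q'\le n$, the $\tau$-admissibility requirement is $\th_p<\th_{p'}<\th_q<\th_{q'}$. On the other hand, the $\felt$-admissibility requirement~\eqref{eq:adm_condition} applied to the corresponding $\felt$-crossing says $\th_a<\th_b<\th_a+\pi$ for the appropriate pair of indices $a,b$ read off from the arrows $\p s\to\m a$, $\p t\to\m b$ that cross; similarly for dual $\fist$-admissibility. The point is that, after imposing $\tau$-isotropy ($\th_q=\th_p+\pi/2$, $\th_{q'}=\th_{p'}+\pi/2$), the two-term inequality $\th_a<\th_b<\th_a+\pi$ for the relevant $\{a,b\}$ is \emph{equivalent} to the four-term chain $\th_p<\th_{p'}<\th_q<\th_{q'}$: each of the three strict inequalities in the chain translates, modulo the isotropy relations, into an instance of a two-term condition coming from one of the crossings in the strand diagram (and conversely every two-term condition from a crossing collapses, under isotropy, to one link of such a chain). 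This is essentially the same computation as in~\cite{ising_crit}, where the correspondence between $\fis$-admissibility plus isotropy and the region-admissibility condition was worked out; I would cite that and supply only the short verification of the index translation.

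Third, I would assemble the equivalences cyclically: $\tau$-admissible $\Rightarrow$ ($\tau$-isotropic and $\felt$-admissible) by checking each $\felt$-crossing arises from a $\tau$-crossing and invoking the translation above; ($\tau$-isotropic and $\felt$-admissible) $\Rightarrow$ ($\tau$-isotropic and $\fist$-admissible) by transporting the inequality through the common diagram $\Avec$ and the "shift by $1$" relabeling; and ($\tau$-isotropic and $\fist$-admissible) $\Rightarrow$ $\tau$-admissible by reading the chain $\th_p<\th_{p'}<\th_q<\th_{q'}$ back out of the dual $\fist$-crossings. In fact once one observes that the underlying set of crossings is identical in all three pictures, a cleaner route is to prove the single lemma "under $\tau$-isotropy, the $\tau$-admissibility inequalities, the $\felt$-admissibility inequalities, and the dual $\fist$-admissibility inequalities each cut out the same subset of the isotropic locus," and the three-way equivalence is immediate.

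\textbf{Main obstacle.} The conceptual content is routine; the real work—and the only place errors creep in—is the index arithmetic: correctly matching, for a given $\tau$-crossing, the pair $\{p,q\}$ of pseudoline endpoints with the pair $\{a,b\}$ of strand \emph{targets} under each of the two relabelings ($\m p=\pd p,\p p=\md{p+1}$ versus $\m p=\md p,\p p=\pd p$), keeping track of where the cyclic "$+1$" shift and the "$\bmod n$" wraparound change which of $\th_a,\th_b$ plays which role, and verifying that after substituting the isotropy relations $\th_{\tau(p)}=\th_p+\pi/2$ the resulting two-term inequalities are exactly the links of the four-term chain. I expect to spend most of the write-up pinning down this dictionary carefully (perhaps with a small figure), after which the equivalences fall out formally.
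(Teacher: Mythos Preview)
Your approach is correct and is exactly the reasoning the paper has in mind: the three conditions are compared through the common strand diagram $\Avec$, whose crossings are simultaneously the $\tau$-crossings, the $\felt$-crossings (under one relabeling), and the dual $\fist$-crossings (under the other), and under $\tau$-isotropy the two-term inequalities~\eqref{eq:adm_condition} collapse to the four-term chains in the definition of $\tau$-admissibility. Note that the paper gives no proof at all---the \qed is placed directly in the statement---so the result is regarded as immediate from the relabelings just described; your plan is simply an explicit unwinding of what the paper treats as obvious.

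One small point worth flagging: the proposition is stated for ordinary $\fist$-admissibility, whereas the relabeling identifies $\Avec$ with the \emph{dual} reduced strand diagram of $\fist$. You implicitly pass between the two. This is harmless here because $\fist$ comes from the involution $\tau$, so the (ordinary) reduced strand diagram and the dual reduced strand diagram of $\fist$ have the same set of crossing pairs $\{p,q\}$ (both arrows connect $b_{\tau(p)}$ to $b_p$ up to the $\pm$ perturbation, and the four endpoints involved in any crossing are distinct). It is worth saying this in one sentence when you write up the proof so that the switch from ``dual $\fist$-crossing'' to ``$\fist$-admissible'' is justified.
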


We are ready to prove a generalization of \cref{thm:intro:appl}.
\begin{theorem}\label{thm:Meas_appl}
  Let $R:=(\tau,\bth)$ be a generalized region. Then we have
\begin{equation*}%
  \pel(\La^\Reg)=\Meas(\felt,\bth) \quad\text{and}\quad \pis(\M^\Reg)=\Meas(\fist,\bth).
\end{equation*}
\end{theorem}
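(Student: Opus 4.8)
The strategy is to reduce both equalities to the boundary measurement formula of \cref{thm:bound_meas_C} (or, in the non-generic case, \cref{thm:arb_reg}) together with the explicit characterizations of the embeddings $\pel$ and $\pis$ via their minors. For the electrical case, the key point is that $\pel(\La^\Reg)$ is pinned down by the minor values in~\eqref{eq:Lam_emb_dfn}: the minor $\Delta_{\nodd\cup\{2p\}}$ is normalized to $1$ and $\Delta_{\nodd\setminus\{2q-1\}\cup\{2p-2,2p\}}=\la^\Reg_{p,q}$. On the other side, $\Meas(\felt,\bth)$ is a point of $\Ptp_{\fel}$ whose minors can be computed either from a reduced graph $G\in\Gred(\felt)$ via the critical dimer partition functions $\Delta_I(G,\wt_\bth)$, or — more usefully here — from the curve $\curve_{\felt,\bth}$ of \cref{dfn:curve} when $\bth$ is $\fel$-nondegenerate. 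So the plan is: (i) check that $\nodd\cup\{2p\}$ and $\nodd\setminus\{2q-1\}\cup\{2p-2,2p\}$ lie in the positroid $\Mcal_{\fel}$ and identify which sets $J_r$ (equivalently, Grassmann necklace elements) they arise from; (ii) evaluate the corresponding maximal minors of the matrix whose rows span $\Span(\curve_{\felt,\bth})$ — by \cref{prop:basis} one may take $k$ points on the curve, or use the Fourier matrix $F_{\felt,\bth}$ of \cref{prop:Fourier}; (iii) show that the resulting ratios equal the known formulas for $\la^\Reg_{p,q}$ in terms of the isoradial embedding / the angles $\th_r$. Step (iii) is exactly the content of the well-known relation between Kenyon's critical dimer model and critical electrical networks recalled in~\cite[Section~6]{Kenyon}; since the edge weights~\eqref{eq:wt_sin} on a plabic tiling are precisely Kenyon's critical weights on $G_\Tiling$, the dimer partition functions $\Delta_I(G_\Tiling,\wt_\bth)$ compute the response matrix entries, and then \cref{thm:intro:bound_meas} converts this into the curve formula.

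The cleanest route avoids re-deriving the dimer-to-electrical correspondence from scratch: one observes that $G_\Tiling$ (with the resistances $\frac{|AC|}{|BD|}$) is itself a reduced planar bipartite graph after the standard "medial graph" / "doubling" construction, with strand permutation $\fel=\feltr$, and that under this construction the edge weights $\sin(\th_q-\th_p)$ of~\eqref{eq:wt_sin} are exactly the critical weights making the dimer boundary measurements reproduce $\La^\Reg$ via Lam's formula~\eqref{eq:Lam_emb_dfn}. Concretely, I would invoke~\cite[Proposition~2.4]{Lam} and~\cite[Section~5.2]{Lam}, which express $\pel(\La^\Reg)$ through concordant sets, and match these against the almost perfect matchings of the plabic graph built from $\Tiling$; the identification of the relevant matchings with the minors in~\eqref{eq:Lam_emb_dfn} is combinatorial and parallels~\cite[Section~7]{KLRR}. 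Once $\pel(\La^\Reg)=\Meas(\felt,\bth)$ is established for one rhombus tiling $\Tiling$ of $\Reg$, invariance of both sides under star-triangle moves (the left side by the classical property of $\La^\Reg$, the right side by square-move invariance of $\Meas_{\fel}$, \cref{lemma:complex_sq_move}) shows it holds for $\Reg$ itself, i.e.\ independently of $\Tiling$, and then extends to arbitrary generalized regions $(\tau,\bth)$ by the analogous invariance for $\tau$-pseudoline arrangements.

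For the Ising equality $\pis(\M^\Reg)=\Meas(\fist,\bth)$, the argument is entirely parallel but now cites~\cite{ising_crit} directly: \cite[Theorem~...]{ising_crit} (or~\cite[Section~7]{KLRR}) already establishes that the critical Ising boundary correlations $\M^\Tiling$ on $G_\Tiling$ are computed by the critical dimer model with weights $\sin(\th_q-\th_p)$ on the associated plabic graph with strand permutation $\fis$, and \cref{thm:bound_meas_C} then identifies this with $\Meas(\fist,\bth)$. The two statements use the same tuple $\bth$ because in both constructions $\bth$ records the side directions $\v_p=\exp(-2i\th_p)$ of $\Reg$ via the same pairing $\taur$; the difference between $\fel$ and $\fis$ is only the "shift by $1$" relabeling of the strand diagram boundary, which does not affect the underlying $\tau$-isotropic, $\tau$-admissible tuple (as recorded in the Proposition immediately preceding this theorem).

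\textbf{Main obstacle.} The crux is step~(iii)/the matching of the relevant almost perfect matchings of the plabic graph $G_\Tiling$ with the specific Plücker coordinates appearing in~\eqref{eq:Lam_emb_dfn}: one must verify that the single edge incident to a boundary vertex, together with the parallel-transport structure of the rhombus strips, makes $\Delta_{\nodd\cup\{2p\}}(G_\Tiling,\wt_\bth)$ constant (independent of $p$) and $\Delta_{\nodd\setminus\{2q-1\}\cup\{2p-2,2p\}}(G_\Tiling,\wt_\bth)$ proportional to $\la^\Reg_{p,q}$ with the \emph{same} constant of proportionality. Equivalently, after the canonical gauge-fix~\eqref{eq:Gr_neck_formula} one has to track how the Laurent phenomenon (\cref{thm:Laurent_ph}) normalizes these minors and check the normalization agrees with the one implicit in Lam's embedding. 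This is where I expect the real work to lie; the rest is bookkeeping with reduced strand diagrams and invoking move-invariance.
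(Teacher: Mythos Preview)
Your ``cleanest route'' paragraph contains the right idea, but you then abandon it and make the problem much harder than it is. The paper's proof never touches the boundary measurement formula of \cref{thm:bound_meas_C}, never computes any Pl\"ucker coordinate, and never worries about the normalization issue you flag as the ``main obstacle''. It works entirely at the level of weighted graphs, as follows.

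For the electrical case, the paper applies the \emph{generalized Temperley trick} of~\cite{KPW} (as in~\cite{Lam}) to the $\tau$-pseudoline arrangement $\Acal$, producing an explicit graph $\Gel\in\Gred(\felt)$: black interior vertices at the crossings of $\Acal$, white interior vertices in the faces of $\Acal$, plus $n$ black boundary vertices. Lam's weights $\wtel_\bth$ on this graph are determined by the resistances (ratios of rhombus diagonals), and Lam already proved that $\Meas(\Gel,\wtel_\bth)=\pel(\La^\Reg)$. The only thing left to check is that $\wtel_\bth=\wt_\bth$ as functions on $E(\Gel)$, and this is a direct edge-by-edge verification. The crucial point---which you do not isolate---is the $\tau$-isotropic condition~\eqref{eq:bth_isotr}: a black interior vertex sits at a crossing of $\tau$-pairs $\{p,q\}$ and $\{p',q'\}$ with $\th_q=\th_p+\pi/2$ and $\th_{q'}=\th_{p'}+\pi/2$, which forces $\sin(\th_{q'}-\th_q)=\sin(\th_{p'}-\th_p)$ and $\sin(\th_{q'}-\th_p)=\sin(\th_q-\th_{p'})$. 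These identities are exactly what makes the four incident edge weights of $\wt_\bth$ match Lam's weights. Once $\wt_\bth=\wtel_\bth$, the equality $\Meas(\felt,\bth)=\Meas(\Gel,\wt_\bth)=\Meas(\Gel,\wtel_\bth)=\pel(\La^\Reg)$ is immediate. The Ising case is handled by the same pattern, citing the edge-weight identification from~\cite{Dubedat,GP}.

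So your minor-matching strategy and the associated normalization bookkeeping are unnecessary detours: the equality holds because the two boundary measurement inputs $(\Gel,\wt_\bth)$ and $(\Gel,\wtel_\bth)$ are literally the same weighted graph, not because their Pl\"ucker coordinates happen to agree after a computation.
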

\begin{proof}
As we mentioned in \cref{sec:intro:applications}, both results follow from the well-known fact that the critical dimer model specializes to critical electrical networks and the critical Ising model. In both cases, one transforms the weighted graph $G_\Tiling$ into a weighted reduced planar bipartite graph $(\Gel,\wtel_\bth)$ (resp., $(\Gis,\wtis_\bth)$) with strand permutation $\felt$ (resp., $\fist$) satisfying $\Meas(\Gel,\wtel_\bth)=\pel(\La^\Reg)$ (resp., $\Meas(\Gis,\wtis_\bth)=\pis(\M^\Reg)$).

\begin{figure}
  \includegraphics{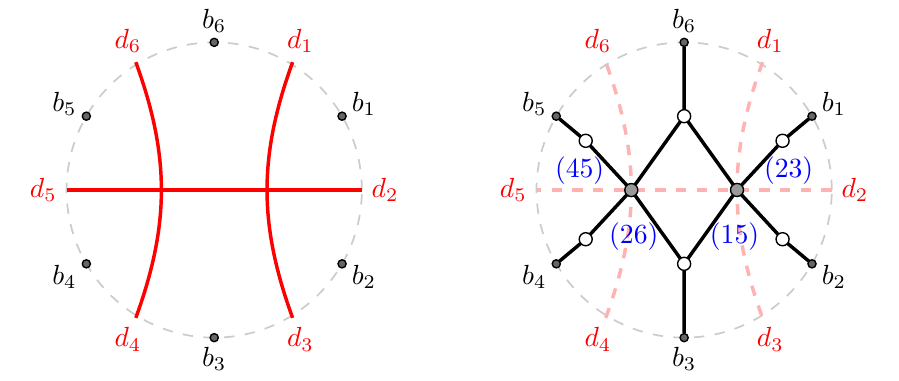}
  \caption{\label{fig:Temperley} Converting a pairing $\tau_\Reg$ into a weighted graph $\Gel\in\Gred(\fel)$.}
\end{figure}

To define $\Gel$, we recall from~\cite{Lam} the \emph{generalized Temperley trick} of~\cite{KPW}. Place $4n$ points 
\begin{equation*}%
\m 1=\pd 1, b_1, \p 1=\md 2, d_2, \m 2=\pd 2, b_2,\dots, b_n, \p n=\md 1
\end{equation*}
on the circle in clockwise order. For each $p\in[n]$, connect $d_p$ to $d_{\tau(p)}$ by a pseudoline as in  \figref{fig:Temperley}(left). We get a $\tau$-pseudoline arrangement $\Acal$. Place a black interior vertex on every crossing of two pseudolines and a white interior vertex in every face of $\Acal$. Here by a \emph{face} we mean a connected component of the complement of $\Acal$ in the disk. These are the black and white interior vertices of the graph $\Gel$, which also has $n$ black boundary vertices $b_1,b_2,\dots,b_n$. Each interior black vertex $v$ of $\Gel$ is an intersection point of two pseudolines and therefore is adjacent to four faces of $\Acal$. Connect $v$ to the corresponding four white interior vertices of $\Gel$. In addition, observe that each boundary vertex $b_p$ is adjacent to a single face of $\Acal$, so we connect $b_p$ by an edge to the corresponding white vertex of $\Gel$. We have described the vertices and the edges of $\Gel$. To describe $\wtel_\bth$, color the faces of $\Acal$ black and white in a bipartite way so that for each $r\in[n]$, the face containing $b_r$ is colored black if and only if $r$ is odd. Then for each interior edge $e$ of $\Gel$, we set $\wtel_\bth(e):=1$ if $e$ is contained in a white face of $\Acal$ and $\wtel_\bth(e):=\sin(\th_q-\th_p)$ if $e$ is labeled by $\{p,q\}$ (as in \cref{sec:intro:critical-dimer-model}) for $1\leq p<q\leq n$ and is contained in a black face of $\Acal$. See \figref{fig:Temperley}(right). Consider a black interior vertex $v$ of $\Gel$ corresponding to an intersection of two pseudolines connecting $\tau$-pairs $\{p,q\}$ and $\{p',q'\}$ with $1\leq p<p'<q<q'\leq n$. Then $v$ has degree $4$ in $\Gel$ and is incident to edges of weights either $(1,\sin(\th_{q'}-\th_{q}),1,\sin(\th_{p'}-\th_{p}))$ or $(1,\sin(\th_{q'}-\th_{p}),1,\sin(\th_{q}-\th_{p'}))$ in clockwise order. Since $\bth$ is $\tau$-admissible, we have $\sin(\th_{q'}-\th_{q})=\sin(\th_{p'}-\th_{p})$ and $\sin(\th_{q'}-\th_{p})=\sin(\th_{q}-\th_{p'})$. Therefore these edge weights coincide with (the dual version of) the edge weights in~\cite[Section~5.1]{Lam}. On the other hand, we clearly have $\wtel_\bth(e)=\wt_\bth(e)$ for all edges $e$ of $\Gel$. We have shown the result for electrical networks.

Similarly, for the Ising model, the edge weights $\wtis_\bth$  of the graph $\Gis$ studied in~\cite{Dubedat,GP} are  easily seen to coincide with $\wt_\bth$. We refer to~\cite{GP,ising_crit} for further details.
\end{proof}

\subsection{Cyclically symmetric case}
Recall from \cref{sec:intro:cyc_symm} that for each $k\leq n$, there exists a unique point $\XOkn\in\Grtnn(k,n)$ that is invariant under the cyclic shift: $\shift(\XOkn)=\XOkn$. In other words, the point $\XOkn$ is characterized by the property that its Pl\"ucker coordinates are all positive and satisfy $\Delta_I(\XOkn)=\Delta_{\sigma I}(\XOkn)$ for all $I\in{[n]\choose k}$. Recall also that we set $\bthr=(\th_1,\th_2,\dots,\th_n)$ to be given by $\th_r:=\frac{r\pi}{n}$ for all $r\in[n]$. 

\begin{proposition}\label{prop:Meas_reg}
For all $k\in[n]$, we have
\begin{equation*}%
  \Measop_{k,n}(\bthr)=\XOkn.
\end{equation*}
\end{proposition}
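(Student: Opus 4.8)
The plan is to use the boundary measurement formula of \cref{thm:bound_meas_C} together with the characterization of $\XOkn$ as the unique cyclically symmetric point of $\Grtnn(k,n)$. First I would note that for $f=\fkn$, the strand permutation is $\pkn\colon p\mapsto p+k$ modulo $n$, and the Grassmann necklace sets are $J_r=\{r+1,r+2,\dots,r+k-1\}$ (indices modulo $n$), so that $\It_r=\{r,r+1,\dots,r+k-1\}$. With $\bthr$ given by $\th_r=r\pi/n$, the tuple $\bthr$ is $\fkn$-admissible and generic (the angles $\tfrac{\pi}{n},\tfrac{2\pi}{n},\dots,\tfrac{n\pi}{n}$ are pairwise non-congruent modulo $\pi$), so \cref{thm:bound_meas_C} applies and gives $\Measop_{k,n}(\bthr)=\Span_\R(\curve_{\fkn,\bthr})$ inside $\Grtnn(k,n)$, where
\begin{equation*}
  \g_r(t)=\eps_r\prod_{p=r+1}^{r+k-1}\sin\!\left(t-\tfrac{p\pi}{n}\right).
\end{equation*}
(One must check the sign $\eps_r$ is constant in $r$ for $\fkn$, or simply absorb it; this is harmless since it only rescales columns by $\pm1$ and the affine formula~\eqref{eq:curve_aff} has no $\eps_r$.)

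The key step is to show that the space $V:=\Span_\R(\curve_{\fkn,\bthr})$ is cyclically invariant: $\shift(V)=V$, where $\shift$ is the cyclic shift sending columns $A_1,\dots,A_n$ to $A_2,\dots,A_n,(-1)^{k-1}A_1$. For this I would use the affine reformulation: let $\btht_{\mathrm{reg}}$ be the extension of $\bthr$ with $\tht_p=p\pi/n$ for all $p\in\Z$, which indeed satisfies~\eqref{eq:bth_extend} since $\tht_{p+n}=\tht_p+\pi$. By the Lemma in \cref{sec:cyclic-shift}, $\Meas(\fkn,\btht_{\mathrm{reg}})=\Meas(\shf,\shbtht_{\mathrm{reg}})$ where $\shcomb$ is the shift by $1$. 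Now observe that $\shf=\fkn$ (the top-cell permutation is shift-invariant: $(\shf)(p)=f(p+1)-1=(p+1+k)-1=p+k$), and $\shbtht_{\mathrm{reg}}$ is obtained from $\btht_{\mathrm{reg}}$ by $(\shbtht_{\mathrm{reg}})_p=\tht_{p+1}=(p+1)\pi/n=\tht_p+\pi/n$; that is, $\shbtht_{\mathrm{reg}}$ differs from $\btht_{\mathrm{reg}}$ by adding the constant $\pi/n$ to every angle. Since $\Meas(\fkn,-)$ is invariant under adding a common constant to all $\th_p$ (this shifts the parameter $t$ in $\curve_{\fkn,\bthr}(t)$ and hence does not change the span), we get $\Meas(\shf,\shbtht_{\mathrm{reg}})=\Meas(\fkn,\btht_{\mathrm{reg}})$. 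Combining: $\shift\bigl(\Measop_{k,n}(\bthr)\bigr)=\Measop_{k,n}(\bthr)$.

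Since $\Measop_{k,n}(\bthr)$ lies in $\Grtnn(k,n)$ (as $\bthr$ is $\fkn$-admissible and all the edge weights $\wt_{\bthr}$ are positive by \cref{prop:edges_conn_cpts}, so the boundary measurements are a genuine nonnegative Grassmannian point), and $\XOkn$ is by definition the \emph{unique} element of $\Grtnn(k,n)$ fixed by $\shift$, we conclude $\Measop_{k,n}(\bthr)=\XOkn$. I expect the main obstacle to be the bookkeeping in the first paragraph: verifying carefully that $\bthr$ is $\fkn$-admissible and generic (so that \cref{thm:bound_meas_C} is applicable) and that the sign $\eps_r$ does not obstruct the argument. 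The cyclic-invariance step itself is clean once phrased in affine notation via the Lemma in \cref{sec:cyclic-shift}. An alternative, entirely formula-free argument would invoke \cref{prop:shift_Ctp} directly: $\shift$ maps $\Ctp_{\fkn}$ to $\Ctp_{\sigma\fkn\sigma^{-1}}=\Ctp_{\fkn}$ and tracks $\bthr\mapsto \bthr+\tfrac{\pi}{n}\mathbf{1}\sim\bthr$, giving the fixed-point property directly; I would present this as the streamlined version and use \cref{thm:bound_meas_C} only to identify the resulting point explicitly when needed for the electrical-network corollary \cref{thm:reg_Elec}.
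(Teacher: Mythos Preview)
Your proposal is correct and takes essentially the same approach as the paper: establish $\shift(\Measop_{k,n}(\bthr))=\Measop_{k,n}(\bthr)$ via the cyclic-shift compatibility (using that $\shcomb^{-1}\fkn\shcomb=\fkn$ and that $\shbtht_{\mathrm{reg}}$ differs from $\btht_{\mathrm{reg}}$ by a constant), then invoke the uniqueness of $\XOkn$ as the fixed point of $\shift$ in $\Grtnn(k,n)$. The paper's proof is just your ``formula-free'' alternative, phrased directly at the graph level (relabel boundary vertices of $G\in\Gred(\fkn)$ to get another graph in $\Gred(\fkn)$) rather than via \cref{prop:shift_Ctp}; your opening detour through the curve $\curve_{\fkn,\bthr}$ and \cref{thm:bound_meas_C} is unnecessary for this statement, as you yourself note.
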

\begin{proof}
Choose a graph $G\in\Gred(\fkn)$, then $\Measop_{k,n}(\bthr)=\Meas(G,\wt_{\bthr})$. Now let $G'$ be obtained from $G$ by cyclically relabeling the boundary vertices. Since $G'\in\Gred(\fkn)$, it follows that $\shift(\Measop_{k,n}(\bthr))=\Measop_{k,n}(\bthr)$. Since $\Measop_{k,n}(\bthr)\in\Grtnn(k,n)$, the result follows.
\end{proof}

The above observation can be applied to deduce new simple formulas for electrical networks and the Ising model.

Recall that the boundary correlations $\<\sigma_p\sigma_q\>_\Tiling$ of the critical Ising model do not depend on the choice of a rhombus tiling $\Tiling$ of a region $\Reg$ and may thus be denoted by $\<\sigma_p\sigma_q\>_\Reg$. Let $\Reg_N$ be a regular $2N$-gon.

\begin{theorem}[{\cite[Theorem~1.1]{ising_crit}}]%
For $1\leq p,q\leq \N$ and $d:=|p-q|$, we have
\begin{equation*}%
  \<\sigma_p\sigma_q\>_{\Reg_\N}=\frac2\N \left(\frac1{\sin \left((2d-1)\pi/2\N\right)}-\frac1{\sin \left((2d-3)\pi/2\N\right)}+\dots\pm \frac1{\sin \left(\pi/2\N\right)} \right)\mp 1.
\end{equation*}
\end{theorem}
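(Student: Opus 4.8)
The plan is to deduce the formula from the cyclically symmetric computation $\Measop_{N,2N}(\bthr)=\XO N{2N}$ in \cref{prop:Meas_reg} together with the embedding identity $\pis(\M^{\Reg_\N})=\Meas(\fis,\bth_{\Reg_\N})$ from \cref{thm:Meas_appl}. First I would check that for the regular $2N$-gon $\Reg_\N$, the pairing $\tau_{\Reg_\N}$ is the ``antipodal'' pairing $p\mapsto p+N$ on $[2N]$, so that $\fis=\fistr$ coincides with the top-cell permutation $f_{N,2N}$, and that the $\tau$-admissible tuple $\bth_{\Reg_\N}$ with $\v_p=\exp(-2i\th_p)$ is exactly $\bthr=(\pi/2N,2\pi/2N,\dots,\pi)$ (up to the global shift and the $\tau$-isotropic normalization, which here reads $\th_{p+N}=\th_p+\pi/2$ and is automatically satisfied). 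Combining these, $\pis(\M^{\Reg_\N})=\Meas(f_{N,2N},\bthr)=\XO N{2N}$, so the boundary correlation matrix $\M^{\Reg_\N}$ is determined by the (known, all-equal) Pl\"ucker coordinates of the cyclically symmetric point $\XO N{2N}$.

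Next I would extract $\<\sigma_p\sigma_q\>_{\Reg_\N}$ from $\pis(\M^{\Reg_\N})=\XO N{2N}$ by inverting the formula defining $\pis$ in terms of Pl\"ucker coordinates (the Ising analogue of~\eqref{eq:Lam_emb_dfn}, recorded in~\cite{GP}). Concretely, the boundary correlations are ratios of certain Pl\"ucker coordinates $\Delta_I(\XO N{2N})$ with $I$ ranging over explicit cyclic intervals and near-intervals; since all $\Delta_I(\XO N{2N})$ are equal, each such ratio collapses to $1$, but the \emph{linear combination} of Pl\"ucker coordinates appearing in the Ising embedding formula for $\<\sigma_p\sigma_q\>$ is an alternating sum, and this is precisely what produces the alternating sum of reciprocals of sines in the statement. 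Alternatively, and perhaps more cleanly, I would use the boundary measurement formula \cref{thm:bound_meas_C} (or its dual version \cref{thm:bound_meas_dual}, since $\fis$ relates to a dual strand diagram): write down the explicit curve $\curvec_{f_{N,2N},\bthr}$, take the Fourier-coefficient basis $F_{f,\bthr}$ from \cref{prop:Fourier} to get an explicit $N\times 2N$ matrix, and then read off the needed minors as products $\prod\sin(\th_q-\th_p)$ over upstream wedges via \cref{prop:Gr_neck_formula} and the twist formula \cref{prop:twist}. Each such product is an explicit product of sines of multiples of $\pi/2N$, and the alternating-sum structure then comes out by expanding the relevant minor of the Fourier matrix (a near-Vandermonde determinant in the $\v_p=\exp(-ip\pi/N)$) by cofactors.

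The main obstacle I expect is the bookkeeping in the last step: translating ``all Pl\"ucker coordinates of $\XO N{2N}$ are equal'' through the Ising embedding $\pis$ into the precise alternating sum $\frac2N\big(\tfrac1{\sin((2d-1)\pi/2N)}-\tfrac1{\sin((2d-3)\pi/2N)}+\cdots\big)\mp1$, keeping track of signs (the $\pm$ and $\mp$ depending on the parity of $d$) and of the $\pm1$ boundary term coming from the $I$ with $p=q$. This is essentially a trigonometric identity: after reducing to an explicit determinant/ratio in the variables $\exp(ij\pi/N)$, one must recognize the telescoping partial-fraction expansion of a ratio of products of sines. I would handle this by first doing the small cases $N=2,3$ by hand to pin down all signs and normalizations, then give the general identity either by a generating-function argument (partial fractions for $\prod_p (z-\v_p)^{-1}$ evaluated at roots of unity) or by induction on $d$ using the addition formula for sine; a clean route is to observe that $\<\sigma_p\sigma_q\>_{\Reg_\N}-\<\sigma_{p}\sigma_{q-1}\>_{\Reg_\N}$ should equal $\pm\frac2N\cdot\frac1{\sin((2d-1)\pi/2N)}$, which is a single cofactor ratio and hence directly computable, reducing the statement to a telescoping sum with the base case $d=0$ giving the $\mp1$ term. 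Since the Ising case is quoted from~\cite[Theorem~1.1]{ising_crit} and only its re-derivation in the present framework is needed, I would in fact present this as a corollary of \cref{prop:Meas_reg} and \cref{thm:Meas_appl}, referring to~\cite{ising_crit} for the trigonometric identity itself.
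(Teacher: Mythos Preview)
The paper does not actually prove this statement: it is quoted verbatim from \cite[Theorem~1.1]{ising_crit} with no argument, and the paper then says ``Below we prove the analog of this result for electrical networks,'' proceeding to prove only the electrical formula~\eqref{eq:reg_Elec_body}. So there is no ``paper's own proof'' to compare against; you already observed this in your final sentence.

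That said, your plan is essentially the Ising-model transcription of the electrical proof the paper does give. The paper's electrical argument runs: (i) $\pel(\La^{\Reg_N})=\XO{N+1}{2N}$ via \cref{prop:Meas_reg} and \cref{thm:Meas_appl}; (ii) pass to $\altp\circ\pel(\La^{\Reg_N})=\XO{N-1}{2N}$ and write it as the row span of an explicit Vandermonde matrix $A$ in the roots $z_p=\zeta^{-N+2p}$; (iii) invert the square submatrix $AK$ (which is essentially a DFT matrix) to get the last row of $(AK)^{-1}A$, and read off the response-matrix entries from~\eqref{eq:Lam_emb_dfn_dual}--\eqref{eq:Lam_pp} by summing the resulting geometric progressions. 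Your proposal matches step (i) exactly; for steps (ii)--(iii) you suggest either the Fourier basis of \cref{prop:Fourier} or a telescoping/induction on $d$, rather than the explicit Vandermonde/DFT inversion. Both routes are viable; the paper's Vandermonde approach has the advantage that the inversion is one line (the matrix is nearly unitary), while your telescoping idea $\<\sigma_p\sigma_q\>-\<\sigma_p\sigma_{q-1}\>=\pm\frac{2}{N}\cdot\frac{1}{\sin((2d-1)\pi/2N)}$ is arguably more conceptual and avoids tracking the full matrix. Either would work, and your plan is correct.
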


Below we prove the analog of this result for electrical networks. 

\begin{theorem}%
For $1\leq p,q\leq \N$ and $d:=|p-q|$, we have
\begin{equation}\label{eq:reg_Elec_body}
\la^{\Reg_\N}_{p,q}=\frac{\sin(\pi/\N)}{\N\cdot \sin((2d-1)\pi/2\N)\cdot \sin((2d+1)\pi/2\N)}.
\end{equation}
\end{theorem}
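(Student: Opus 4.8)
The plan is to reduce the formula to the computation of two Plücker coordinates of the cyclically symmetric point $\XO{N+1}{2N}$, using the applications and the boundary measurement formula established above. The regular $2N$-gon $\Reg_\N$ corresponds to the antipodal pairing $\taur(p)=p+N$ on $[2N]$, so its electrical bounded affine permutation is the loopless $\fel$ with $\fel(p)\equiv p+N+1\pmod{2N}$ and $p<\fel(p)\le p+2N$, i.e.\ $\fel=f_{N+1,2N}$; moreover the associated tuple $\bth_{\Reg_\N}$ is exactly $\bthr$ with $\th_r=r\pi/(2N)$. Hence by \cref{thm:Meas_appl}, $\pel(\La^{\Reg_\N})=\Meas(\fel,\bthr)$, and by \cref{prop:Meas_reg} this equals $\Measop_{N+1,2N}(\bthr)=\XO{N+1}{2N}$. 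Inverting Lam's embedding via \eqref{eq:Lam_emb_dfn} then gives
\[
  \la^{\Reg_\N}_{p,q}=\frac{\Delta_{\,\nodd\setminus\{2q-1\}\cup\{2p-2,2p\}}\bigl(\XO{N+1}{2N}\bigr)}{\Delta_{\,\nodd\cup\{2p\}}\bigl(\XO{N+1}{2N}\bigr)}\qquad(p\ne q),
\]
with indices taken modulo $2N$. Applying the cyclic shift $\shift^2$, under which $\XO{N+1}{2N}$ is invariant and $\nodd$ is fixed, shows that both numerator and denominator are unchanged by $(p,q)\mapsto(p+1,q+1)$; thus the ratio depends only on $d=|p-q|$, and we may take $p=1$.

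Next I would make the Plücker coordinates of $\XO{N+1}{2N}$ explicit. By \cref{thm:intro:bound_meas} we have $\XO{N+1}{2N}=\Span(\curve_{f_{N+1,2N},\bthr})$, and by \cref{prop:Fourier} a concrete matrix representative is the Fourier coefficient matrix. For $f=f_{N+1,2N}$ the Grassmann necklace set is $J_r=\{r+1,\dots,r+N\}\pmod{2N}$, a block of consecutive indices; writing $\zeta=e^{i\pi/2N}$ and $t_r=\zeta^{r}$, the entry $c_{p,r}$ of the Fourier matrix factors as (a Gaussian binomial coefficient in $\zeta^2$ depending only on $p$) $\times\,(\omega_p)^{r}$, where $\omega_p=\zeta^{\,2p-2-N}$ for $p=1,\dots,N+1$ are distinct roots of unity. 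Discarding the $p$-dependent factors (a harmless common rescaling of rows), $\XO{N+1}{2N}$ is represented by the alternant matrix $V=(\omega_p^{\,r})_{p\in[N+1],\,r\in[2N]}$, so every Plücker coordinate $\Delta_I(\XO{N+1}{2N})$ equals, up to the $I$-independent Vandermonde $\prod_{p<p'}(\omega_{p'}-\omega_p)$, a Schur polynomial $s_{\lambda(I)}(\omega_1,\dots,\omega_{N+1})$ in the $\omega_p$.

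For the two index sets appearing above (with $p=1$), the partitions $\lambda(I)$ differ from a staircase by one box, so the Schur polynomials collapse to products of only a couple of elementary symmetric functions of the $\omega_p$; these are read off from the coefficients of $\prod_p(x-\omega_p)$, which are again products of roots of unity. The Vandermonde factor cancels in the ratio, and after rewriting each surviving factor $\omega_{p'}-\omega_p$ (and the remaining linear factors) in the form $2i\,e^{i(\cdot)}\sin(\cdot)$ and telescoping the products of sines of evenly spaced angles, the expression should collapse to $\sin(\pi/\N)\big/\bigl(\N\sin((2d-1)\pi/2\N)\sin((2d+1)\pi/2\N)\bigr)$; the symmetry of this quotient under $d\mapsto N-d$ serves as a consistency check against the rotational symmetry of $\Reg_\N$.

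The conceptual content lies entirely in the first paragraph; the main obstacle — and the step most likely to require genuine care — is the final one: carrying out the cancellation in the ratio of the two alternant minors and simplifying the resulting product of roots of unity into the single clean quotient of three sines. An alternative route avoids the Grassmannian and instead computes $\La^{\Reg_\N}$ directly from Kenyon's explicit discrete Green's function on the isoradial graph $G_\Tiling$ of the regular polygon, but identifying the precise rhombus tiling of $\Reg_\N$ and summing the resulting series is of comparable difficulty, so the route above seems preferable.
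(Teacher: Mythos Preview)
Your setup is correct and matches the paper's: identify $\pel(\La^{\Reg_N})=\XO{N+1}{2N}$ via \cref{thm:Meas_appl} and \cref{prop:Meas_reg}, reduce to a ratio of two Pl\"ucker coordinates of the cyclically symmetric point, and use cyclic invariance to fix $p=1$. Where you and the paper diverge is in the actual evaluation of that ratio. You propose to compute the two alternant minors as Schur polynomials in roots of unity and simplify their quotient. The paper instead passes to the dual $\altp\circ\pel$ landing in $\Grtnn(N-1,2N)$, so the Vandermonde representative $A$ has only $N-1$ rows, and then avoids determinants entirely: it forms $M=(AK)^{-1}A$, where $K$ selects the even columns $2,4,\dots,2N-2$. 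Because $AK$ is essentially a truncated discrete Fourier matrix, its inverse is written down in one line, and the desired ratio $\la^{\Reg_N}_{p,1}$ becomes a single entry $m_{N-1,2N-2p+1}$ of $M$, i.e.\ a sum of two geometric progressions in $\zeta=e^{i\pi/2N}$, which immediately collapses to~\eqref{eq:reg_Elec_body}.

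Your route is valid in principle, but the ``hard step'' you flag is genuinely harder than the paper's maneuver: the two partitions involved are a staircase and a staircase with one box moved, so the Schur polynomials do not factor as cleanly as you hope (the elementary-symmetric claim is optimistic), and the simplification would require a nontrivial identity among products of sines. The paper's matrix-inversion trick trades the ratio of two $(N\pm1)\times(N\pm1)$ alternants for a single row of an inverse DFT applied to a Vandermonde column, which is a real economy.
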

\begin{proof}
We sketch the argument; the details may be found in~\cite[Section~5.2]{ising_crit}. 
Rather than working with dual version of Lam's embedding, we will work with the original embedding $\altp\circ \pel$ of the space of $N\times N$ response matrices into $\Grtnn(N-1,2N)$. It sends a response matrix $\La^\Reg$ to the unique element $X\in\Gr(N-1,2N)$ whose Pl\"ucker coordinates satisfy 
\begin{equation}\label{eq:Lam_emb_dfn_dual}
    \Delta_{\nev\setminus\{2p\}}(X)=1 \quad\text{and}\quad \Delta_{\nev\setminus\{2p-2,2p\}\cup\{2q-1\}}(X)=\la^\Reg_{p,q} \quad\text{for all $p\neq q\in[N]$.}
\end{equation}
This description is obtained from~\eqref{eq:Lam_emb_dfn} by applying~\eqref{eq:altp_minors}. In fact, we also have
\begin{equation}\label{eq:Lam_pp}
  \Delta_{\nev\setminus\{2p-2,2p\}\cup\{2p-1\}}(X)=-\la^\Reg_{p,p} \quad\text{for all $p\in[N]$.}
\end{equation}
Our goal is to determine $\la^{\Reg_N}_{p,q}$, where $\Reg_N$ is a regular $2N$-gon. By \cref{prop:Meas_reg} and \cref{thm:Meas_appl}, we see that $\pel(\La^{\Reg_N})=\XO{N+1}{2N}$ and thus $\altp\circ\pel(\La^{\Reg_N})=\XO{N-1}{2N}$. Let $\zeta:=\exp(i\pi/2N)$ and 
\begin{equation*}%
  z_p:=\zeta^{-N+2p}\quad\text{for $p\in[N-1]$.}
\end{equation*}
Then $X:=\XO{N-1}{2N}$ is the row span of an $(N-1)\times 2N$ Vandermonde matrix 
\begin{equation*}%
 A:= \begin{pmatrix}
    1 & z_1 & z_1^2 & \dots & z_1^{2N-1}\\
    1 & z_2 & z_2^2 & \dots & z_2^{2N-1}\\
    \vdots & \vdots & \vdots & \ddots & \vdots\\
    1 & z_{N-1} & z_{N-1}^2 & \dots & z_{N-1}^{2N-1}
  \end{pmatrix}.
\end{equation*}
Let $K$ be a $2N\times (N-1)$ matrix such that $K_{p,q}=1$ if $p=2q$ and $K_{p,q}=0$ otherwise. Thus $AK$ is the submatrix of $A$ with columns $2,4,\dots,2N-2$, and by~\eqref{eq:Lam_emb_dfn_dual}, we see that $AK$ is invertible. Moreover, $AK$ is very close to a discrete Fourier transform matrix (which is unitary), so the inverse of $AK$ is easy to compute. We will only be interested in the last row of $(AK)^{-1}$, which is given by
\begin{equation}\label{eq:last_row_AKi}
  ((AK)^{-1})_{N-1,p}=(-1)^{N} \frac{z_p(1+z_p^2)}{N} \quad\text{for all $p\in[N-1]$.}
\end{equation}
Let us now consider the matrix $M:=(AK)^{-1}A$ and denote its entries by $M=(m_{p,q})$. The submatrix of $M$ with columns $2,4,\dots,2N-2$ is the identity matrix. It follows from~\eqref{eq:Lam_emb_dfn_dual}--\eqref{eq:Lam_pp} that 
\begin{equation}\label{eq:la_pq}
 \la^{\Reg_N}_{p,1}=(-1)^{p}m_{N-1,2N-2p+1} \quad\text{for all $p\in[N]$.} 
\end{equation}
By~\eqref{eq:last_row_AKi}, we have 
\begin{equation*}%
  m_{N-1,q}=\frac{(-1)^{N}}{N}\sum_{p=1}^{N-1} z_p^{q+1}(1+z_p^2) \quad\text{for all $q\in [2N]$.}
\end{equation*}
Summing the two geometric progressions and applying~\eqref{eq:la_pq}, we obtain the formula~\eqref{eq:reg_Elec_body} for $\la^{\Reg_N}_{p,1}$ for all $p\in[N]$. The formula for arbitrary $\la^{\Reg_N}_{p,q}$ then follows from the compatibility of all constructions with the cyclic symmetry of $\Grtnn(N-1,2N)$.
\end{proof}

\section{Shift by $1$}\label{sec:shift}
Recall from \cref{sec:shift-1} that for a loopless bounded affine permutation $f\in\Boundkn$, we let $\dsh f\in\Bound(k-1,n)$ be defined by $\dsh f(p):=f(p-1)$ for all $p\in\Z$. The goal of this section is to relate $\Ctp_f$ to $\Ctpd_{\dsh f}$ and $\Ptp_f$ to $\Ptp_{\dsh f}$.

\subsection{Shift for reduced graphs}
The following two classes of reduced graphs were introduced (for the case $f=\fkn$) in~\cite[Section~7.7]{GPW}.
\begin{definition}
A reduced graph is called \emph{black-trivalent} if it has black boundary and all of its interior black vertices are trivalent. Similarly, a reduced graph is called \emph{white-trivalent} if it has white boundary and all of its interior white vertices are trivalent.
\end{definition}
\noindent See \cref{fig:bpt-wpt} for an example.

\begin{figure}
  \includegraphics{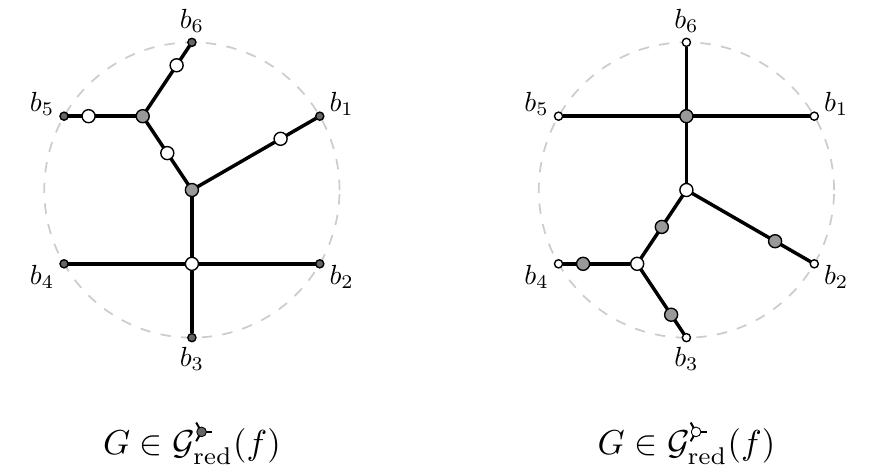}
  \caption{\label{fig:bpt-wpt} A black-trivalent and a white-trivalent graph.}
\end{figure}

\begin{remark}
Unlike~\cite{GPW}, we continue to require all reduced graphs to be bipartite. Thus our white-trivalent graphs are obtained from the \emph{black-partite} graphs of~\cite[Definition~7.14]{GPW} by placing a degree $2$ black vertex in the middle of each edge connecting two trivalent white vertices.
\end{remark}

Denote
\begin{align*}
\BTV(f)&:=\{G\in\Gred(f)\mid \text{$G$ is black-trivalent}\},\\
\WTV(f)&:=\{G\in\Gred(f)\mid \text{$G$ is white-trivalent}\}.
\end{align*}

Recall from \cref{sec:plan-bipart-graphs} that we label the faces of a reduced graph $G$ by $k$-element sets. The following result generalizes~\cite[Proposition~7.15]{GPW}.
\begin{proposition}\label{prop:dsh}
  Let $f\in\Bkn$ be loopless. Then there is a bijection 
  \begin{equation*}%
\BTV(f)\to \WTV(\dsh f),\quad  G\mapsto \dsh G,
  \end{equation*}
characterized by the following property: for any trivalent black vertex of $G$ with adjacent faces labeled by $Sab,Sac,Sbc$, the graph $\dsh G$ contains a trivalent white vertex with adjacent faces labeled $Sa,Sb,Sc$.
\end{proposition}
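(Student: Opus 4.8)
\textbf{Proof plan for Proposition~\ref{prop:dsh}.}
The plan is to reduce to the already-established case $f=\fkn$ from~\cite[Proposition~7.15]{GPW} by a local-to-global argument through square moves and the bridge removal procedure of \cref{sec:bridge_rem}. First I would record that the target-labeling of faces behaves predictably under the shift: for a loopless $f$, the Grassmann necklace of $\dsh f$ is obtained from that of $f$ by the recipe $I_q(\dsh f)=J_{q-1}(f)$ (more precisely, one checks from~\eqref{eq:It_q} that $\It_q(\dsh f)=\{f(p-1)\mid p-1<q\le f(p-1)\}$, i.e.\ the sets shift by one), and consequently the face-label set $\lambda(F)$ of a $k$-element-labeled graph $G$ corresponds under the proposed map to an $(k-1)$-element label obtained by deleting the ``newest'' index. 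The characterizing property in the statement — the local replacement $Sab,Sac,Sbc\rightsquigarrow Sa,Sb,Sc$ at each trivalent black vertex — is exactly the combinatorial heart, so the first real step is to verify that this local rule is globally consistent: whenever two trivalent black vertices share an edge (whose two adjacent faces carry labels of the form $Sab$ and $Sa'b$, say, glued along a common face), the prescribed white-vertex labels on the two sides agree on the shared face. This is a finite check using the fact that adjacent faces of a reduced graph differ by a single swap of indices.

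Second, I would construct $\dsh G$ directly and show it lies in $\WTV(\dsh f)$: take the black-trivalent $G$, and replace each trivalent black vertex by a trivalent white vertex according to the face-label rule, turning faces into vertices and vertices into faces in the ``planar dual at the black vertices'' sense already used in~\cite{GPW}; the degree-$2$ black vertices inserted on edges between trivalent white vertices (as in the Remark following the definition) keep the result bipartite. One then verifies that the strand permutation of the new graph is $\dsh f$ — this follows because each strand of $G$ terminating at $b_p$ gets relabeled to terminate at $b_p$ of $\dsh G$ but with the source shifted by one, matching $\dsh f(p)=f(p-1)$; concretely the zig-zag paths are unchanged as curves and only the boundary labeling shifts. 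That $\dsh G$ is reduced follows from the face count: $G$ has $k(n-k)+1-\ell(f)$ faces and $\dsh G$ has $(k-1)(n-k+1)+1-\ell(\dsh f)$, and since $\ell(\dsh f)=\ell(f)$ (the shift is a relabeling of the affine permutation that preserves length — this needs a one-line check from \cref{sec:BAP}) the arithmetic works out, with faces and vertices trading roles.

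Third, I would prove bijectivity. Surjectivity: given a white-trivalent $H\in\WTV(\dsh f)$, run the inverse local rule $Sa,Sb,Sc\rightsquigarrow Sab,Sac,Sbc$ at each trivalent white vertex (this requires knowing that every face label of $H$ can be ``augmented'' by the appropriate new index, which is where the Grassmann-necklace bookkeeping is used again), producing a black-trivalent graph mapping to $H$. Injectivity is immediate since $\dsh G$ determines $G$ through the inverse local rule, the two being mutually inverse. Alternatively — and this is the cleaner route — I would argue that both $\BTV(f)$ and $\WTV(\dsh f)$ are connected under (black, resp.\ white) square moves, that the map $G\mapsto\dsh G$ intertwines a square move on a trivalent black vertex in $G$ with the corresponding square move on the trivalent white vertex in $\dsh G$ (a direct check from \cref{fig:sq-mv} and the face-label rule), and that the map is the identity-compatible bijection on the $\fkn$ case by~\cite[Proposition~7.15]{GPW}; then one transports along bridge-removal/uncontraction to general loopless $f$ exactly as in \cref{sec:proof-of_edge_conn_cpts}.

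\textbf{Main obstacle.} The delicate point is the global consistency of the local label-replacement rule — i.e.\ that the white-vertex labels prescribed at different black vertices of $G$ fit together into an honest plabic graph with well-defined face labels, and symmetrically for the inverse direction. This is a purely combinatorial compatibility statement about how adjacent faces of a reduced graph differ, but making it airtight (especially near the boundary, where one must track how the shift interacts with the boundary faces and the convention for loops/coloops) is where the real work lies; the rest is bookkeeping with Grassmann necklaces and a transport-of-structure argument along square moves and bridges.
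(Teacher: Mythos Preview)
Your instinct that the crux is the global consistency of the local rule $Sab,Sac,Sbc \rightsquigarrow Sa,Sb,Sc$ is right, but both routes you outline have genuine gaps.

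The direct construction is underspecified. ``Replace each trivalent black vertex by a trivalent white vertex'' does not determine $\dsh G$: you have prescribed the trivalent white vertices of $\dsh G$, but not its black vertices, nor the edges, nor the boundary. (In fact white vertices of $G$ become \emph{faces} of $\dsh G$ and faces of $G$ become black vertices of $\dsh G$; this three-way trade is what must be made precise.) The face-count check is also incomplete: the numbers $k(n-k)+1-\ell(f)$ and $(k-1)(n-k+1)+1-\ell(\dsh f)$ differ by $n-2k+1$, so ``faces and vertices trading roles'' is not an equality of face counts and you would need an honest Euler computation tracking all three kinds of objects.

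The transport route goes the wrong direction: bridge removal $f\mapsto s_r f$ \emph{increases} $\ell$, moving away from $\fkn$; the induction in \cref{sec:proof-of_edge_conn_cpts} has base case $k=1$, not the top cell. To descend from the $\fkn$ case of~\cite{GPW} to general $f$ you would instead need to restrict a graph in $\BTV(\fkn)$ along the Grassmann necklace curve of $f$ and show the shift commutes with that restriction --- and this boundary compatibility is precisely the hard part.

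The paper's approach is entirely different and handles both issues at once. Using~\cite{OPS,chord_sep}, the triangulated plabic tiling $\Sigma_k(G)$ is realized as the $z=k$ slice of a fine zonotopal tiling $\Sigma$ of a $3$-dimensional cyclic zonotope. Taking the slice $\Sigma_{k-1}$ at $z=k-1$, each cube of $\Sigma$ with vertex labels $S,Sa,Sb,Sc,Sab,Sac,Sbc,Sabc$ contributes a black triangle $(Sab,Sac,Sbc)$ to $\Sigma_k$ and a white triangle $(Sa,Sb,Sc)$ to $\Sigma_{k-1}$, so the characterizing rule is built into the cube geometry. Boundary compatibility is handled by exhibiting a $2$-dimensional subcomplex $C\subset\Sigma$ whose $z=k$ and $z=k-1$ slices are the Grassmann necklace curves of $f$ and $\dsh f$ respectively; the portion of $\Sigma_{k-1}$ inside the latter is then dual to the desired $\dsh G$, and maximality of the resulting chord-separated collection (hence reducedness of $\dsh G$) follows by an Euler computation as in~\cite[Lemma~4.2]{chord_sep}. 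The zonotopal framework thus packages both the global consistency and the restriction-to-$f$ step into a single geometric statement about how cubes meet adjacent horizontal slices.
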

\noindent Here we abbreviate $Sab:=S\sqcup\{a,b\}$, etc. See \cref{fig:dsh-24} for an example.

\begin{remark}
The bijection $G\mapsto \dsh G$ was independently considered in~\cite[Section~8.2]{PSBW}, where it is described using the planar dual of $G$.
\end{remark}

\begin{proof}
We assume familiarity with the results of~\cite{OPS,chord_sep}. Let us denote by $\Fcal(G)\subset{[n]\choose k}$ the collection of face labels of $G$. We say that two sets $S,T\subset[n]$ (not necessarily of the same size) are \emph{chord separated} if there do not exist indices $1\leq a<b<c<d\leq n$ such that $a,c\in S\setminus T$ and $b,d\in T\setminus S$ or vice versa. By the results of~\cite{OPS},  the collection $\Fcal(G)$ is chord separated and there exists $\Gkn\in\Gred(\fkn)$ such that $\Fcal(G)\subset\Fcal(\Gkn)$. The planar dual of $G$ is a certain polygonal complex called a \emph{plabic tiling} and denoted $\Sigma_k(G)$. Each face of $\Sigma_k(G)$ is a convex polygon whose vertices are labeled by the elements of $\Fcal(G)$. Whenever $G$ has a degree $2$ white vertex, $\Sigma_k(G)$ contains a degenerate white $2$-gon of width zero as in \cref{fig:dsh-24}. Since $G$ is black-trivalent, each black face of $\Sigma_k(G)$ is triangulated (with diagonals replaced with zero-width $2$-gons). The plabic tiling $\Sigma_k(G)$ is a subcomplex of a larger plabic tiling $\Sigma_k(\Gkn)$. Specifically, $\Sigma_k(G)$ consists of all faces of $\Sigma_k(\Gkn)$ which lie inside a polygonal curve passing through the Grassmann necklace of $f$; see~\cite[Proposition~9.8]{OPS}.\footnote{In order to invoke~\cite[Proposition~9.8]{OPS}, one needs to assume that the Grassmann necklace of $f$ is connected. However, even when it is not connected, the Grassmann necklace curve may be deformed slightly into a simple closed curve surrounding $\Sigma_k(G)$; see the discussion around~\cite[Definition~4.4]{BaWe}.} We call this curve the \emph{Grassmann necklace curve of $f$}.

\begin{figure}
  \includegraphics{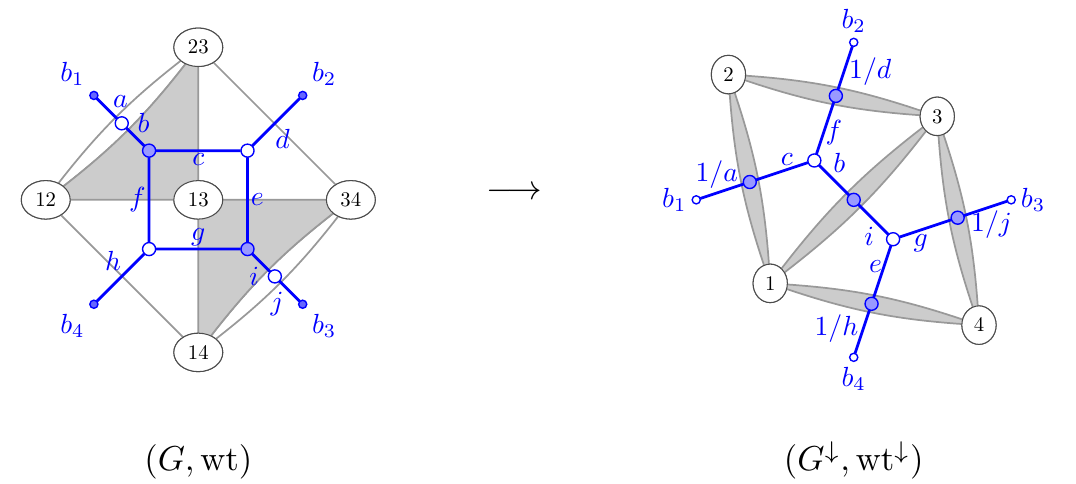}
  \caption{\label{fig:dsh-24} The bijection $(G,\wt)\mapsto (\dsh G,\dsh \wt)$, where $G\in\BTV(f)$ and $\dsh G\in \WTV(\dsh f)$ for $f=f_{2,4}$. See \cref{prop:dsh} and~\eqref{eq:bgau_wgau}. The triangulated plabic tilings $\Sigma_k(G)$ and $\Sigma_{k-1}(\dsh G)$ are shown in grey.}
\end{figure}

We triangulate the remaining black and white faces of $\Sigma_k(G)$, after which it becomes a \emph{triangulated plabic tiling} in the language of~\cite{chord_sep}. By~\cite[Remark~1.5]{chord_sep}, any triangulated plabic tiling appears as a horizontal section $\Sigma_k$ by the plane $z=k$ of some fine zonotopal tiling $\Sigma$ of a cyclic polytope in $\R^3$. Let $\Sigma_{k-1}$ be the horizontal section of $\Sigma$ by the plane $z=k-1$. Then its planar dual is a graph $\Gknd\in\Gred(f_{k-1,n})$.  Moreover, the face labels of $\Gknd$ contain the Grassmann necklace of $\dsh f$, which consists of the sets $(J_2,J_3,\dots,J_n,J_1)$ where $J_p=I_p\setminus p=I_p\cap I_{p+1}$ as above and $(I_1,I_2,\dots,I_n)=\Ical_f$ is the Grassmann necklace of $f$. The reason this is true is that by construction, $\Sigma_k(G)$ contains an edge connecting $I_p$ to $I_{p+1}$ for all $p\in[n]$ (indices taken modulo $n$). When $I_p\neq I_{p+1}$, $\Sigma$ contains a $2$-dimensional face with vertices labeled $J_{p},I_p,I_{p+1},I_p\cup I_{p+1}$. When $J_{p}\neq J_{p+1}$, $\Sigma$ contains a $2$-dimensional face with vertices labeled $J_{p}\cap J_{p+1},J_p,J_{p+1},I_{p+1}$. It follows that $\Sigma$ contains a $2$-dimensional subcomplex $C$ whose intersection with the plane $z=k$ is the Grassmann necklace curve of $f$ and whose intersection with the plane $z=k-1$ is the Grassmann necklace curve of $\dsh f$. Let $\dsh G$ be the planar dual of the subcomplex $\Sigma_{k-1}(\dsh G)$ of $\Sigma_{k-1}$ contained inside the Grassmann necklace curve of $\dsh f$.

Consider a black trivalent vertex of $G$ with adjacent faces labeled by $Sab,Sac,Sbc$. Then $\Sigma_k(G)$ contains a black triangle with vertices labeled $Sab,Sac,Sbc$, thus $\Sigma_{k-1}$ contains a white triangle with vertices labeled $Sa,Sb,Sc$. Moreover, $\Sigma$ contains a cube with vertices labeled $S,Sa,Sb,Sc,Sab,Sac,Sbc,Sabc$. Since the triangle $Sab,Sac,Sbc$ lies inside the Grassmann necklace curve of $\dsh f$, and since the cube cannot intersect $C$ transversally, we see that the triangle $Sa,Sb,Sc$ lies inside the Grassmann necklace curve of $\dsh f$. Thus $\Fcal(\dsh G)$ is a chord separated collection all of whose elements belong to $\Mcal_{\dsh f}$. Using Euler's formula, one can check that it is in fact maximal by size; cf. the proof of~\cite[Lemma~4.2]{chord_sep}. By the results of~\cite{OPS}, $\dsh G$ must belong to $\Gred(\dsh f)$. It remains to note that by construction, the white faces of the plabic tiling $\Sigma_{k-1}(\dsh G)$ are triangulated, and therefore $\dsh G\in\WTV(\dsh f)$.
\end{proof}

\subsection{Shift for positroid cells}\label{sec:shift-positr-cells}
We extend the above bijection $G\mapsto \dsh G$ to a bijection $(G,\wt)\mapsto (\dsh G,\dsh \wt)$ on weighted reduced graphs. Let $G\in\BTV(f)$ and $\dsh G\in\WTV(\dsh f)$ be as above. Denote $E:=E(G)$ and $\dsh E:=E(\dsh G)$. Choose a weight function $\wt:E(G)\to\Rtp$. Consider an interior edge $e\in E$. Exactly one of its endpoints is a black trivalent vertex $v\in V(G)$. Assume that the faces adjacent to $v$ are labeled by $Sab,Sac,Sbc$, with the faces adjacent to $e$ being labeled by $Sab$ and $Sac$. Then the edge $e$ is labeled by $\{b,c\}$. Let $\dsh v\in V(\dsh G)$ be the trivalent white vertex corresponding to $v$ as in \cref{prop:dsh}. The faces adjacent to $\dsh v$ are labeled by $Sa,Sb,Sc$, and we let $\dsh e\in \dsh E$ be the edge adjacent to the faces labeled $Sb$ and $Sc$. In particular, the edge $\dsh e$ is also labeled by $\{b,c\}$. We set $\dsh \wt(\dsh e):=\wt(e)$. It remains to extend this construction to the boundary edges. Let $e_p$ be the boundary edge of $G$ adjacent to $b_p$ for $p\in[n]$. Let $\dsh e_p\in \dsh E$ be the edge adjacent to $b_p$ in $\dsh G$. We set $\dsh \wt(\dsh e_p):=\frac1{\wt(e_p)}$. We have described the bijection $(G,\wt)\mapsto (\dsh G,\dsh \wt)$, and along the way we have also constructed a bijection $E\to \dsh E$ sending $e\mapsto \dsh e$. See \cref{fig:dsh-24} for an example.

Recall from \cref{sec:positroid-varieties} that the boundary measurement map yields a homeomorphism $\Measop_G:\Rtp^E/\Gau\xrasim \Ptp_{f}$. Instead of considering gauge transformations at all vertices of $G$, let us denote by $\Rtp^E/\bGau$ the space of positive edge weight functions on $G$ modulo gauge transformations at trivalent black vertices. Similarly, $\Rtp^{\dsh E}/\wGau$ is defined as the space of positive edge weight functions on $\dsh G$ modulo gauge transformations at trivalent white vertices. For a black trivalent vertex $v$ of $G$, gauge transformations of $(G,\wt)$ at $v$ correspond to gauge transformations of $(\dsh G,\dsh \wt)$ at $\dsh v$. 
 We obtain a natural homeomorphism
\begin{equation}\label{eq:bgau_wgau}
    \Rtp^E/\bGau\xrasim \Rtp^{\dsh E}/\wGau,\quad \wt\mapsto \dsh \wt.
\end{equation}
We will see later (\cref{fig:sq-mv-1}) that any two graphs $G_1,G_2\in\BTV(f)$ are related by certain kinds of \emph{moves}, and that these moves preserve the space $\Rtp^E/\bGau$ and commute with the shift map~\eqref{eq:bgau_wgau}. As a result, we will show (\cref{rmk:monodromy}) that both sides of~\eqref{eq:bgau_wgau} depend only on $f$ and not on the choice of $G$. We thus denote
\begin{equation*}
  \Pmid_f:=\Rtp^E/\bGau \cong \Rtp^{\dsh E}/\wGau,
\end{equation*}
 keeping in mind that until \cref{rmk:monodromy} is shown, this space also depends on the choice of $G$.

Recall that $\Measop_G$ yields a homeomorphism $\Rtp^E/\Gau\xrasim \Ptp_f$, while $\Measop_{\dsh G}$ yields a homeomorphism $\Rtp^{\dsh E}/\Gau\xrasim \Ptp_{\dsh f}$. For the next result, we consider $\Measop_G$ and $\Measop_{\dsh G}$ to be defined on $\Rtp^E/\bGau$ and $\Rtp^{\dsh E}/\wGau$, respectively.
\begin{proposition}\label{prop:MeMe}
For any loopless $f\in\Bkn$ and $G\in\BTV(f)$, we have a homeomorphism
\begin{equation}\label{eq:Meas_times}
  (\Measop_G, \Measop_{\dsh G}): \Pmid_f\xrasim \Ptp_f\times \Ptp_{\dsh f}.
\end{equation}
\end{proposition}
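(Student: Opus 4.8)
\textbf{Proof proposal for \cref{prop:MeMe}.}

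The plan is to show that the map $(\Measop_G,\Measop_{\dsh G})$ is a bijective continuous map with continuous inverse by analyzing the two halves separately and showing how the edge weights of $(G,\wt)$ and $(\dsh G,\dsh\wt)$ can be jointly reconstructed from the pair $(\Measop_G(\wt),\Measop_{\dsh G}(\dsh\wt))$. First I would recall that by \cref{sec:positroid-varieties}, $\Measop_G$ descends to a homeomorphism $\Rtp^E/\Gau\xrasim \Ptp_f$ and $\Measop_{\dsh G}$ to a homeomorphism $\Rtp^{\dsh E}/\Gau\xrasim \Ptp_{\dsh f}$; the point is that on the intermediate quotient $\Pmid_f=\Rtp^E/\bGau\cong \Rtp^{\dsh E}/\wGau$ we have modded out only by black-vertex gauge transformations on the $G$ side (equivalently white-vertex gauge transformations on the $\dsh G$ side). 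So $\Pmid_f$ is a $\Gau_\triwh$-bundle over $\Ptp_f$ via $\Measop_G$ — more precisely, the fiber of $\Measop_G:\Pmid_f\to\Ptp_f$ over a point is a torsor over the group of gauge transformations at the white vertices of $G$, which is $\Rtp^{W^\circ}$ where $W^\circ$ is the set of interior white vertices of $G$ (modulo the global rescaling). Symmetrically, $\Measop_{\dsh G}:\Pmid_f\to\Ptp_{\dsh f}$ has fibers that are torsors over gauge transformations at the interior black vertices of $\dsh G$.

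The key combinatorial input is then a count: I would verify that for $G\in\BTV(f)$, the number of interior white vertices of $G$ equals $\dim\Ptp_{\dsh f}=(k-1)(n-k+1)-\ell(\dsh f)$, and dually that the number of interior black vertices of $\dsh G$ equals $\dim\Ptp_f=k(n-k)-\ell(f)$. Granting this, $\Pmid_f$ has dimension $\dim\Ptp_f+\dim\Ptp_{\dsh f}$ (it fibers over $\Ptp_f$ with fiber of that dimension), matching the target, so it suffices to prove the map is bijective. For injectivity: suppose $\Measop_G(\wt_1)=\Measop_G(\wt_2)$ and $\Measop_{\dsh G}(\dsh\wt_1)=\Measop_{\dsh G}(\dsh\wt_2)$. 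The first equality means $\wt_1,\wt_2$ differ by a white-vertex gauge transformation $g$ of $G$ (they already agree modulo black gauge in $\Pmid_f$). Under the correspondence $e\mapsto\dsh e$, a white-vertex gauge transformation at $v\in V(G)$ corresponds to a black-vertex gauge transformation at $\dsh v\in V(\dsh G)$ plus a compensating change of the boundary-edge weights (because of the inversion $\dsh\wt(\dsh e_p)=1/\wt(e_p)$); concretely, rescaling the weights of all edges at a white face $v$ of $\Sigma_k(G)$ by $\lambda$ translates, on the zonotopal-tiling picture, to rescaling at the corresponding black vertex of $\dsh G$. I would make this translation precise using the face-label bookkeeping of \cref{prop:dsh}: each interior edge $e$ labeled $\{b,c\}$ with adjacent $G$-faces $Sab,Sac$ corresponds to $\dsh e$ labeled $\{b,c\}$ with adjacent $\dsh G$-faces $Sb,Sc$, and the boundary edge $e_p$ maps to $\dsh e_p$ with weight inverted. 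Then $\Measop_{\dsh G}(\dsh\wt_1)=\Measop_{\dsh G}(\dsh\wt_2)$ forces the corresponding black gauge parameter on $\dsh G$ to be trivial (since $\Measop_{\dsh G}$ is injective modulo gauge and we have already quotiented by white gauge), which via the translation forces $g$ to be trivial, giving $\wt_1=\wt_2$ in $\Pmid_f$. For surjectivity, a dimension/properness argument suffices: both sides are manifolds of the same dimension (the domain is homeomorphic to $\Rtp^{|E|-1}$ modulo a torus action of the right rank, the codomain is a product of cells each homeomorphic to an $\Rtp^N$), the map is an injective local homeomorphism, and one checks it is proper — or, more cleanly, one observes $\Measop_G$ alone is already surjective onto $\Ptp_f$ with the full fiber $\Rtp^{W^\circ}$, and for each fixed point of $\Ptp_f$ the restriction of $\Measop_{\dsh G}$ to that fiber is shown (again by the explicit twist-map formula of \cref{prop:twist}, or by the fact that it is an injective map between manifolds of equal dimension $\dim\Ptp_{\dsh f}$) to be a homeomorphism onto $\Ptp_{\dsh f}$.

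Continuity of the map and its inverse follows since both $\Measop_G$ and $\Measop_{\dsh G}$ are restrictions of polynomial maps and the reconstruction of $\wt$ from $\Measop_G(\wt)$ on a fixed gauge slice is rational with nonvanishing denominators (standard for boundary measurement maps, via e.g. the twist of \cref{sec:twist}); continuity of $\dsh\wt\mapsto\wt$ is immediate from the explicit bijection $e\mapsto\dsh e$. The main obstacle I anticipate is making the translation between white-vertex gauge transformations of $(G,\wt)$ and black-vertex gauge transformations of $(\dsh G,\dsh\wt)$ fully rigorous — this is where the inversion of boundary weights in the definition of $\dsh\wt$ interacts nontrivially with gauge, and where one must carefully track which faces of the zonotopal tiling $\Sigma$ a given cube contributes to at levels $z=k$ versus $z=k-1$. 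A clean way around this is to instead prove the proposition by showing both sides of \eqref{eq:Meas_times} are homeomorphic to $\Rtp^{\dim\Ptp_f+\dim\Ptp_{\dsh f}}$ and that the map is a bijection on these coordinates, using the face-weight (equivalently, the $F$-polynomial / cluster-chart) coordinates on $\Ptp_f$ and $\Ptp_{\dsh f}$: in those coordinates the maps $\Measop_G$ and $\Measop_{\dsh G}$ become monomial (by the twist formula \eqref{eq:Delta_I_twist} specialized to real weights), the bijectivity is then a linear-algebra statement about the exponent matrix, and the needed vertex counts follow from Euler's formula for the plabic tilings $\Sigma_k(G)$ and $\Sigma_{k-1}(\dsh G)$ exactly as in the proof of \cref{prop:dsh}.
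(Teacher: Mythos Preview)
Your overall plan---show that the fibers of $\Measop_G:\Pmid_f\to\Ptp_f$ are torsors for the white-vertex gauge group $\Rtp^{\Vwh(G)}/\Rtp$, then show that this same group acts simply transitively on $\Ptp_{\dsh f}$---matches the paper's. But the central step of your argument contains a genuine error. You claim that under $e\mapsto\dsh e$, a gauge transformation at a white vertex $w$ of $G$ corresponds to a gauge transformation at some black vertex of $\dsh G$. This is false: the bijection $e\mapsto\dsh e$ is constructed from the \emph{black} trivalent endpoint of $e$, so it sends the edges around a black vertex $v$ of $G$ to the edges around the corresponding white vertex $\dsh v$ of $\dsh G$ (which is precisely why $\bGau\cong\wGau$). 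For a white vertex $w$ of $G$, the images $\dsh e_1,\dots,\dsh e_r$ of its incident edges are generally \emph{not} all incident to a single vertex of $\dsh G$.

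The paper's resolution is the observation you are missing: white vertices of $G$ correspond bijectively to \emph{faces} of $\dsh G$ (via the zonotopal-tiling picture: if the faces around $w$ carry labels $S_1,\dots,S_m$, then $\bigcap_j S_j$ is a face label of $\dsh G$). The key computation then shows that a gauge transformation at $w$ rescales exactly the \emph{twisted minor} $\Delta_{\lambda(F)}(\cev\tau(X))$ of the corresponding face $F$ of $\dsh G$ and leaves all other twisted face minors unchanged. This is established by an upstream-wedge argument: the upstream wedges of $\dsh e_1,\dots,\dsh e_r$ partition the faces of $\dsh G$ other than $F$, so each $\Mleft(F')$ with $F'\neq F$ meets $\{\dsh e_1,\dots,\dsh e_r\}$ in exactly one edge while $\Mleft(F)$ avoids them all (with an adjustment for boundary edges, where the weight inversion in $\dsh\wt$ enters). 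Thus the white-gauge action becomes the $\Rtp^{\Fcal(\dsh G)}/\Rtp$-action on $\Ptp_{\dsh f}$ by rescaling twisted face minors, which is simply transitive by Muller--Speyer. Bijectivity of $(\Measop_G,\Measop_{\dsh G})$ follows, and the paper finishes with invariance of domain. Your fallback suggestion (face-weight coordinates via the twist formula) is essentially this correct route, but making it work requires exactly the upstream-wedge identity---which is the non-obvious step you have not supplied.
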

\begin{proof}
Consider the group $\Rtp^{\Vwh(G)}$ of gauge transformations at white interior vertices of $G$. Thus two elements $\wt_1,\wt_2\in \Rtp^E/\bGau$ satisfy $\Meas(G,\wt_1)=\Meas(G,\wt_2)$ if and only if they are related by the action of $\Rtp^{\Vwh(G)}$. Rescaling all edge weights by the same constant clearly yields the same element of $\Rtp^E/\bGau$, and the quotient group $\Rtp^{\Vwh(G)}/\Rtp$ acts simply transitively on the preimage of any point under the surjective map $\Measop_G:\Rtp^E/\bGau\to\Ptp_f$. 

Let us now transfer this action via~\eqref{eq:bgau_wgau} into the action of $\Rtp^{\Vwh(G)}$ on $\Rtp^{\dsh E}/\wGau$. 
It follows from the proof of \cref{prop:dsh} that the white vertices of $G$ are in bijection with the faces of $\dsh G$. Namely, for a white vertex $w\in \Vwh(G)$ of $G$, let $S_1,S_2,\dots,S_m$ be the labels of the faces adjacent to $w$, then $S_1\cap S_2\cap \dots \cap S_m$ is a face label $\lambda(F)\in\Fcal(\dsh G)$ of some face $F$ of $\dsh G$. See also the proof of~\cite[Lemma~4.2]{chord_sep}. 
 Suppose that $\wt_1,\wt_2\in\Rtp^E/\bGau$ are related via a gauge transformation at $w$:
\begin{equation*}%
  \wt_2(e):=
  \begin{cases}
    t\wt_1(e), &\text{if $e$ is incident to $w$;}\\
    \wt_1(e),&\text{otherwise}
  \end{cases}
\end{equation*}
for some $t\in\Rtp$. Let $X_1:=\Meas(\dsh G,\dsh \wt_1)$ and $X_2:=\Meas(\dsh G,\dsh \wt_2)$. We claim that the twisted minors (cf. \cref{sec:twist}) of $X_1$ and $X_2$ are related as follows: for each $I\in\Fcal(\dsh G)$, we have
\begin{equation}\label{eq:gauge_twist}
   \Delta_I(\cev\tau(X_2))=
   \begin{cases}
     t\Delta_I(\cev\tau(X_1)), &\text{if $I=\lambda(F)$,}\\
     \Delta_I(\cev\tau(X_1)), &\text{otherwise.}
   \end{cases}
\end{equation}
To prove~\eqref{eq:gauge_twist}, suppose that $w$ is incident to $e_1,e_2,\dots,e_r\in E$ and let $\dsh e_1,\dsh e_2,\dots,\dsh e_r\in \dsh E$ be the corresponding edges of $\dsh G$. First, assume that none of $e_1,e_2,\dots,e_r$ are boundary edges. It is then easy to check that
\begin{equation}\label{eq:Mleft_cap_gauge}
  \Mleft(F)\cap \{\dsh e_1,\dsh e_2,\dots,\dsh e_r\}=\emptyset \quad\text{and}\quad |  \Mleft(F')\cap \{\dsh e_1,\dsh e_2,\dots,\dsh e_r\}|=1
\end{equation}
for each face $F'\neq F$ of $\dsh G$. Indeed, the tails of any two intersecting strands cannot intersect in a reduced graph, thus the upstream wedges of $\dsh e_1,\dsh e_2,\dots,\dsh e_r$ partition the set of faces of $\dsh G$ not equal to $F$; see \cref{fig:dsh-DW}.

\begin{figure}
  \includegraphics{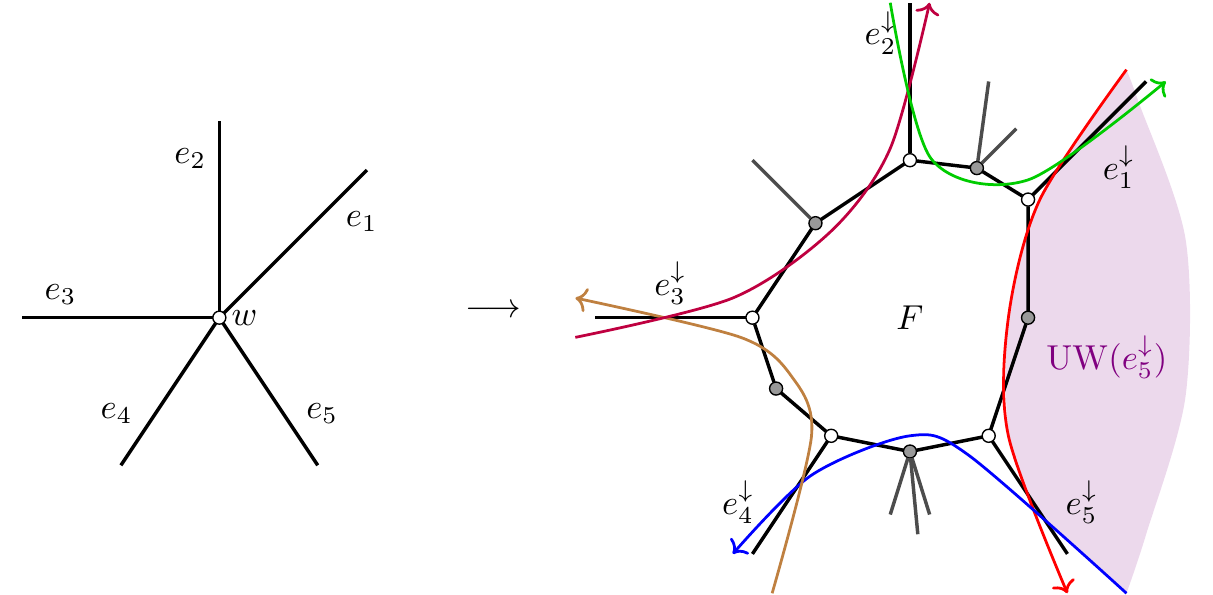}
  \caption{\label{fig:dsh-DW} A white vertex $w$ of $G$ corresponding to a face $F$ of $\dsh G$. The upstream wedges of $\dsh e_1,\dsh e_2,\dsh e_3,\dsh e_4,\dsh e_5$ cover all faces of $\dsh G$ except for $F$.}
\end{figure}

Assume now that we have a partition $\{\dsh e_1,\dsh e_2,\dots,\dsh e_r\}=A\sqcup B$ where $A$ consists of interior edges and $B$ consists of boundary edges. Then~\eqref{eq:Mleft_cap_gauge} no longer holds, but instead we have 
\begin{equation*}%
  (\Mleft(F)\cap A)\triangle B=\emptyset \quad\text{and}\quad |  (\Mleft(F')\cap A)\triangle B|=1
\end{equation*} 
for all $F'\neq F$, where $\triangle$ denotes symmetric difference. (By definition, the \emph{upstream wedge} of a boundary edge $e$ adjacent to $b_p$ consists of all faces to the right of the strand terminating at $b_p$.) 
 Since the weights of boundary edges are inverted in the definition of $\dsh\wt$, and since the twisted minor $\Delta_I(\cev\tau(X_2))$ for $I:=\lambda(F')$ equals the product of the edge weights $\dsh \wt_2(e)$ over $e\in \Mleft(F')$, \eqref{eq:gauge_twist} follows. %

We have shown that the $\Rtp^{\Vwh(G)}$-action on $\Rtp^E/\bGau$ translates into the action of the group $\Rtp^{\Fcal(\dsh G)}$ on $\Ptp_{\dsh f}$ by rescaling the twisted minors associated to the face labels of $\dsh G$. Rescaling all edges by the same constant transforms into rescaling all twisted minors by the same constant, and thus the $\Rtp^{\Vwh(G)}/\Rtp$-action on $\Rtp^E/\bGau$ translates into the $\Rtp^{\Fcal(\dsh G)}/\Rtp$-action on $\Ptp_{\dsh f}$. The latter action is well known~\cite{MuSp} to be simply transitive.

Let us go back to studying the map $(\Measop_G,\Measop_{\dsh G})$ in~\eqref{eq:Meas_times}. First, observe that this map is bijective. Indeed, $\Rtp^{\Fcal(\dsh G)}/\Rtp$ acts simply transitively on the preimages of points under $\Measop_G$ while at the same time it acts simply transitively on the image of $\Measop_{\dsh G}$, since it is identified under~\eqref{eq:bgau_wgau}  with the action of $\Rtp^{\Fcal(\dsh G)}/\Rtp$ on the twisted minors. Second, the map is clearly continuous, and since $\Pmid_f$ and $\Ptp_f\times\Ptp_{\dsh f}$ are both homeomorphic to open balls of the same dimension, the map $(\Measop_G,\Measop_{\dsh G})$ is a homeomorphism by the invariance of domain theorem.
\end{proof}

\begin{example}
Let $f=f_{2,4}$, thus $\dsh f=f_{1,4}$ as in \cref{fig:dsh-24}. The boundary measurements of $X:=\Meas(\dsh G,\dsh \wt)$ computed from \figref{fig:dsh-24}(right) are given by
\begin{equation*}%
  \Delta_1(X)=\frac{ci}{djh},\quad   \Delta_2(X)=\frac{fi}{ajh},\quad   \Delta_3(X)=\frac{bg}{adh},\quad   \Delta_4(X)=\frac{be}{adj}.
\end{equation*}
(Recall that since the boundary vertices of the graph in \figref{fig:dsh-24}(right) are white, the boundary of an almost perfect matching $\Acal$ consists of the boundary vertices \emph{not} used in $\Acal$.) 
By the definition of the twist map, the Pl\"ucker coordinates of $\cev\tau(X)$ are just the inverses of the Pl\"ucker coordinates of $X$: we have $\Delta_1(\cev\tau(X))=\frac{djh}{ci}$, $\Delta_2(\cev\tau(X))=\frac{ajh}{fi}$, etc. The graph $G$ in \figref{fig:dsh-24}(left) has four white vertices, which are in bijection with the four faces of the graph $\dsh G$ in \figref{fig:dsh-24}(right). We see that applying a gauge transformation to all edges incident to a given white vertex of $G$ corresponds to rescaling $\Delta_I(\cev\tau(X))$ by the same constant (where $I\in{[4]\choose1}$ labels the corresponding face) while leaving the other Pl\"ucker coordinates of $\cev\tau(X)$ unchanged. For example, multiplying $a$ and $b$ by $t$ corresponds to dividing $\Delta_2(X)$ by $t$ and preserving $\Delta_1(X),\Delta_3(X),\Delta_4(X)$. This corresponds to multiplying $\Delta_2(\cev\tau(X))$ by $t$. Similarly, multiplying each of $f,g,h$ by $t$ corresponds to multiplying $\Delta_1(\cev\tau(X))$ by $t$. This agrees with~\eqref{eq:gauge_twist}.
\end{example}

\subsection{Moves}\label{sec:shift-moves}

\begin{figure}
  \includegraphics[width=1.0\textwidth]{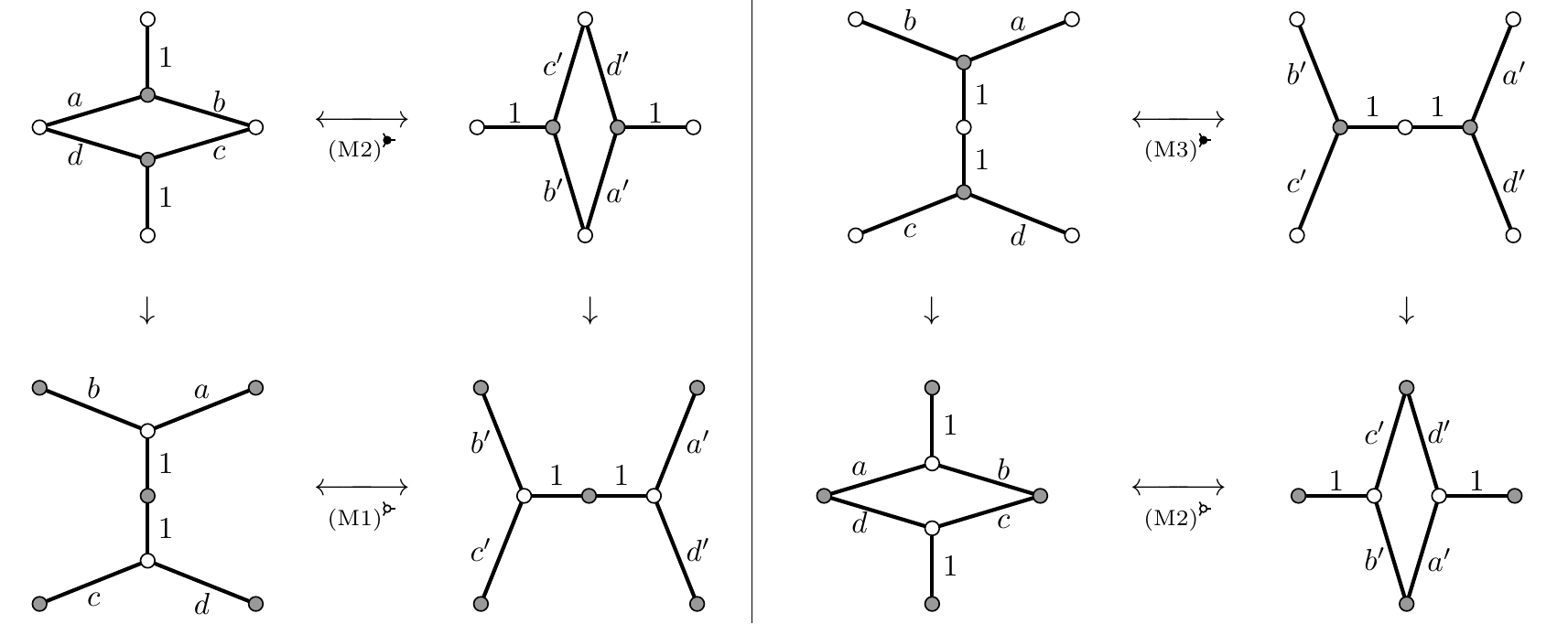}
  \caption{\label{fig:sq-mv-1} The moves for white- and black-trivalent graphs. The vertical maps are given by $(G,\wt)\mapsto(\dsh G,\dsh \wt)$. Here $a':=\frac a{ac+bd}$, etc. as in \cref{fig:sq-mv}.}
\end{figure}

Next, we introduce moves for weighted white- and black-trivalent graphs, as shown in \cref{fig:sq-mv-1}. Here we have used the allowed gauge transformations at black (resp., white) trivalent vertices to make the weights of certain edges to be equal to $1$. It follows from the results of~\cite{Pos} that for any $G_1,G_2\in\BTV(f)$, $G_1$ and $G_2$ are connected by moves\footnote{The names (M1), (M2), (M3) for the moves in \cref{fig:sq-mv-1} are taken from~\cite{chord_sep}: these moves are obtained from a single transformation of $3$-dimensional zonotopal tilings by taking horizontal sections by planes $z=k$ for $k=1,2,3$, respectively; see~\cite[Figure~8]{chord_sep}.} \Mb and \Mcb, and $\dsh G_1$ and $\dsh G_2$ are connected by moves \Maw and \Mw. Moreover, we see from \cref{fig:sq-mv-1} that the shift map~\eqref{eq:bgau_wgau} transforms \Mcb into \Mw and \Mb into \Maw. We therefore obtain the following consequence of \cref{prop:MeMe}.

\begin{corollary}\label{cor:monodromy}
Let $f\in\Bkn$ be loopless, and suppose that $G_1,G_2\in\BTV(f)$ are related by either \Mb or \Mcb. Then we have commutative diagrams
\begin{equation*}%
    \begin{tikzcd}[column sep=5ex,row sep=3ex]
\Rtp^{E(G_1)}/\bGau \arrow[d,"\sim"']\arrow[r,"\sim","{\text{\normalfont\Mbsmall}}"'] &\Rtp^{E(G_2)}/\bGau \arrow[d,"\sim"]\\
\Ptp_f\times\Ptp_{\dsh f} \arrow[r,"\sim","{\id}"'] & \Ptp_f\times\Ptp_{\dsh f}\\
\Rtp^{E(\dsh G_1)}/\wGau \arrow[u,"\sim"]\arrow[r,"\sim","{\text{\normalfont\Mawsmall}}"'] &\Rtp^{E(\dsh G_2)}/\wGau, \arrow[u,"\sim"']
\end{tikzcd} \quad\text{resp.,}\quad  \begin{tikzcd}[column sep=5ex,row sep=3ex]
\Rtp^{E(G_1)}/\bGau \arrow[d,"\sim"']\arrow[r,"\sim","{\text{\normalfont\Mcbsmall}}"'] &\Rtp^{E(G_2)}/\bGau \arrow[d,"\sim"]\\
\Ptp_f\times\Ptp_{\dsh f} \arrow[r,"\sim","{\id}"'] & \Ptp_f\times\Ptp_{\dsh f}\\
\Rtp^{E(\dsh G_1)}/\wGau \arrow[u,"\sim"]\arrow[r,"\sim","{\text{\normalfont\Mwsmall}}"'] &\Rtp^{E(\dsh G_2)}/\wGau \arrow[u,"\sim"'],
\end{tikzcd} 
\end{equation*}
where all vertical maps are given by~\eqref{eq:Meas_times}.%
\end{corollary}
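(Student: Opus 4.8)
The statement to be proved is \cref{cor:monodromy}: for $G_1,G_2\in\BTV(f)$ related by either move \Mb or \Mcb, the displayed diagrams commute, with the top and bottom horizontal maps being the transformations of \cref{fig:sq-mv-1} and the vertical maps the homeomorphisms $(\Measop_G,\Measop_{\dsh G})$ of \cref{prop:MeMe}. The plan is to reduce everything to a purely local check at the site of the move, since all the maps involved are either local (the moves, by construction) or behave well with respect to the boundary measurement map (the vertical maps, by \cref{prop:MeMe}). First I would observe that the middle row of each diagram is the identity on $\Ptp_f\times\Ptp_{\dsh f}$, so commutativity of the whole diagram is equivalent to the assertion that moving $(G_1,\wt)$ to $(G_2,\wt')$ via \Mb (resp.\ \Mcb) preserves \emph{both} $\Meas(G,\wt)\in\Ptp_f$ and, under the shift $(G,\wt)\mapsto(\dsh G,\dsh\wt)$, the element $\Meas(\dsh G,\dsh\wt)\in\Ptp_{\dsh f}$. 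The first half is immediate: the moves \Mb, \Mcb are square moves (possibly composed with contraction/uncontraction of degree-$2$ vertices) in the usual sense of \cref{sec:plan-bipart-graphs}, hence preserve $\Meas(G,\wt)$ up to a common scalar; this is the content of~\cite{Pos}. So the real task is the second half: that the shift map intertwines \Mb with \Maw and \Mcb with \Mw, \emph{as transformations of weighted graphs} (not merely up to boundary measurement equivalence), and that \Maw, \Mw preserve the boundary measurement of the white-trivalent side.

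The key steps, in order, would be: (1) Recall from \cref{prop:dsh} that the bijection $G\mapsto\dsh G$ is characterized by its effect on trivalent vertices and their adjacent face labels ($Sab,Sac,Sbc\leftrightarrow Sa,Sb,Sc$), and that the edge bijection $e\mapsto\dsh e$ and the weight rule $\dsh\wt(\dsh e):=\wt(e)$ (inverted on boundary edges) were set up in \cref{sec:shift-positr-cells}. (2) Verify directly from \cref{fig:sq-mv-1} that applying \Mb to $G_1$, then shifting, yields the same weighted graph as shifting $G_1$ first and then applying \Maw — and similarly \Mcb vs.\ \Mw. This is the local combinatorial heart of the argument; it amounts to checking that the face-label bookkeeping around a single $2\times 2$ or triangle/star site is compatible on the two horizontal sections $z=k$ and $z=k-1$ of the ambient $3$-dimensional zonotopal-tiling transformation, which is exactly the picture in~\cite[Figure~8]{chord_sep}. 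The edge-weight formulas $a'=\frac{a}{ac+bd}$ etc.\ on both sides are literally the same rational expressions by construction of \cref{fig:sq-mv-1}, so once the graph-level identification is confirmed, the weights match automatically. (3) Invoke \cref{prop:MeMe}: since the diagram's vertical arrows are the homeomorphisms $(\Measop_G,\Measop_{\dsh G})$, and since I have shown the top square of the diagram (the move \Mb/\Mcb on the black-trivalent side) and the bottom square (the move \Maw/\Mw on the white-trivalent side) are exchanged by the shift $(G,\wt)\mapsto(\dsh G,\dsh\wt)$, commutativity of the outer rectangle with the identity in the middle follows: both composites send a weight class to the pair $(\Meas(G_2,\wt'),\Meas(\dsh G_2,\dsh\wt'))=(\Meas(G_1,\wt),\Meas(\dsh G_1,\dsh\wt))$, the last equality being the square-move invariance of boundary measurements applied on each side separately.

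The main obstacle I anticipate is step (2): making the local compatibility check rigorous rather than merely ``evident from the figure.'' The cleanest route is probably not to chase face labels by hand, but to lift to the $3$-dimensional picture — a single mutation (flip) of a fine zonotopal tiling $\Sigma$ of a cyclic polytope in $\R^3$ induces the move on the section $z=k$ (which is \Mb or \Mcb depending on whether the flipped cube meets the plane in a triangle-down or triangle-up configuration) and simultaneously induces the corresponding move on the section $z=k-1$ (which is then \Maw or \Mw). Since the shift $G\mapsto\dsh G$ was \emph{defined} in \cref{prop:dsh} via precisely this passage between adjacent horizontal sections, the intertwining of the moves is then a formal consequence of the fact that both sections are taken of the \emph{same} post-flip tiling. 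I would phrase step (2) this way to avoid any figure-dependent argument, citing~\cite[Figure~8 and Remark~1.5]{chord_sep} for the trisection of a single zonotopal flip into the three moves (M1), (M2), (M3). One should also note the minor point that \Mb and \Mcb as drawn in \cref{fig:sq-mv-1} may involve auxiliary degree-$2$ vertices (to keep graphs bipartite), but these are invisible to both $\Measop$ and the shift map after the conventional gauge-fixing described in the footnote to \cref{sec:plan-bipart-graphs}, so they do not affect the argument.
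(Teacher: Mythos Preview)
Your proposal is correct and follows essentially the same approach as the paper. The paper's argument is very brief: it observes directly from \cref{fig:sq-mv-1} that the shift map~\eqref{eq:bgau_wgau} transforms \Mcb into \Mw and \Mb into \Maw, and then states the corollary as an immediate consequence of \cref{prop:MeMe}; your steps (1)--(3) unpack precisely this reasoning, with your zonotopal-lifting justification for step~(2) being a welcome elaboration of what the paper leaves to the figure.
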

\begin{remark}\label{rmk:monodromy}
It follows that if a sequence of moves relates $G\in\BTV(f)$ to itself, the induced map  $\Rtp^{E(G)}/\bGau\xrasim\Rtp^{E(G)}/\bGau$ must be the identity map since after applying the homeomorphism~\eqref{eq:Meas_times}, we get the identity map $\Ptp_f\times\Ptp_{\dsh f}\xrasim \Ptp_f\times\Ptp_{\dsh f}$ by \cref{cor:monodromy}. This confirms that the space $\Pmid_f$ depends canonically only on $f$ and not on the choice of $G\in\BTV(f)$.
\end{remark}

\subsection{Shift for critical varieties}\label{sec:shift_commut_diag}

Even though the above constructions follow naturally from the geometry of zonotopal tilings, we discovered them in relation to critical varieties, as we now explain. Let $f\in\Bkn$ be loopless. Observe first that $\bth\in\R^n$ is $f$-admissible if and only if it is $\dsh f$-admissible, since the reduced strand diagram of $f$ coincides with the dual reduced strand diagram of $\dsh f$; compare e.g. \figref{fig:pis-pel}(b) and \figref{fig:pis-pel}(d). Thus we have a homeomorphism $\THtp_f\xrasim \THtpd_{\dsh f}$ given by the identity map $\bth\mapsto\bth$.

\begin{proposition}\label{prop:commut_diag}
For each loopless $f\in\Bkn$,  we have the commutative diagram in~\figref{fig:cube}(a). Moreover, if the injectivity conjecture (\cref{conj:inj}) holds for $f$, we have the commutative diagram in~\figref{fig:cube}(b), where the dashed arrow is the composition of the three homeomorphisms in the square on the left hand side.
\begin{figure}[h]
\begin{tabular}{c|c}%
$\begin{tikzcd}[column sep=3.1ex,row sep=0ex]
 & & \Rtp^E/\bGau\arrow[dr,twoheadrightarrow] \arrow[ddd,pos=0.2,swap,"\sim"] & \\
 \THtp_f\arrow[urr]\arrow[ddd,"\sim"]\arrow[dr,twoheadrightarrow, swap,near end] & & & \Ptp_f \arrow[from=3-2,hookrightarrow,crossing over]\\
 & \Ctp_f  &  & \\
 & & \Rtp^{\dsh E}/\wGau\arrow[dr,twoheadrightarrow] & \\
 \THtpd_{\dsh f}\arrow[urr]\arrow[dr,swap,near end,twoheadrightarrow,] & & & \Ptp_{\dsh f}\\
 & \Ctpd_{\dsh f} \arrow[urr,hookrightarrow]  &  & 
  \end{tikzcd}$ &
  $\begin{tikzcd}[column sep=3.1ex,row sep=0ex]
 & & \Rtp^E/\bGau\arrow[dr,twoheadrightarrow] \arrow[ddd,pos=0.2,swap,"\sim"] & \\
 \THtp_f\arrow[urr]\arrow[ddd,"\sim"]\arrow[dr,swap,"\sim",near end] & & & \Ptp_f \arrow[from=3-2,hookrightarrow,crossing over]\\
 & \Ctp_f  &  & \\
 & & \Rtp^{\dsh E}/\wGau\arrow[dr,twoheadrightarrow] & \\
 \THtpd_{\dsh f}\arrow[urr]\arrow[dr,"\sim",swap,near end] & & & \Ptp_{\dsh f}\\
 & \Ctpd_{\dsh f} \arrow[urr,hookrightarrow] \arrow[from=3-2,dashed,crossing over,near start,swap,"\sim"] &  & 
  \end{tikzcd}$ 
\\%
(a) arbitrary loopless $f\in\Bkn$ & (b) assuming the injectivity conjecture for $f$ %
\end{tabular}
  \caption{\label{fig:cube} The shift map for critical varieties.}
\end{figure}
\end{proposition}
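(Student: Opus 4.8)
The plan is to assemble the cube in \figref{fig:cube}(a) from pieces that are already available, checking that each face commutes, and then deduce part (b) by a purely formal argument. The vertical square on the right (the face with vertices $\Rtp^E/\bGau$, $\Ptp_f$, $\Rtp^{\dsh E}/\wGau$, $\Ptp_{\dsh f}$) is exactly the content of \cref{prop:MeMe}: the map $(\Measop_G,\Measop_{\dsh G})\colon\Pmid_f\xrasim\Ptp_f\times\Ptp_{\dsh f}$ provides the homeomorphism $\Rtp^E/\bGau\xrasim\Rtp^{\dsh E}/\wGau$, and the two triangles expressing it as $\Measop_G$ on one projection and $\Measop_{\dsh G}$ on the other commute tautologically. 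By \cref{rmk:monodromy} this identification is canonical, i.e.\ independent of the choice of $G\in\BTV(f)$. The two ``front'' faces — $\THtp_f\to\Rtp^E/\bGau\to\Ctp_f$ and $\THtpd_{\dsh f}\to\Rtp^{\dsh E}/\wGau\to\Ctpd_{\dsh f}$ — commute by the very definitions of $\Measf$ and $\Measdop$: composing the weight-assignment $\bth\mapsto\wt_\bth$ (landing in $\Rtp^E/\bGau$ after \cref{prop:edges_conn_cpts} guarantees positivity of interior weights and boundary weights are $1$) with $\Measop_G$ gives $\Meas(f,\bth)$, and likewise on the dual side with $\wtd_\bth$ and $\Measop_{\dsh G}$.

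The one genuinely new commutativity to establish is that of the leftmost vertical square, i.e.\ that the identity map $\THtp_f\xrasim\THtpd_{\dsh f}$, $\bth\mapsto\bth$, is compatible with the map $\wt\mapsto\dsh\wt$ of~\eqref{eq:bgau_wgau} under the two weight assignments $\bth\mapsto\wt_\bth$ and $\bth\mapsto\wtd_\bth$. Concretely: starting from $\bth$, I form $\wt_\bth$ on $G\in\BTV(f)$, apply the shift $\wt\mapsto\dsh\wt$ to land on $\dsh G\in\WTV(\dsh f)$, and must check this equals $\wtd_\bth$ on $\dsh G$ up to gauge equivalence at white trivalent vertices. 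For an \emph{interior} edge $e$ of $G$ labeled $\{b,c\}$, \cref{prop:dsh} shows the corresponding edge $\dsh e$ of $\dsh G$ is also labeled $\{b,c\}$, and by construction $\dsh\wt(\dsh e)=\wt_\bth(e)=\sin(\th_c-\th_b)=\wtd_\bth(\dsh e)$. For a \emph{boundary} edge $e_p$ adjacent to $b_p$ in $G$, we have $\wt_\bth(e_p)=1$ but $\dsh\wt(\dsh e_p)=1/\wt_\bth(e_p)=1$, whereas $\wtd_\bth$ assigns $\dsh e_p$ the weight $\sin(\th_q-\th_p)$ for an appropriate $q$ (coming from the dual strand diagram); here I need to verify that the discrepancy between the constant weight $1$ and these sine weights on the boundary of $\dsh G$ is absorbed by a gauge transformation at the white degree-$2$ vertices sitting on the boundary edges of $\dsh G$ — this is the same kind of bookkeeping done in the proof of \cref{prop:duality}\itemref{item:duality:Meas}. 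I would reuse the identification of the reduced strand diagram of $f$ with the dual reduced strand diagram of $\dsh f$ (noted at the start of \cref{sec:shift_commut_diag}), which also explains why $f$-admissibility and dual $\dsh f$-admissibility of $\bth$ coincide, so the identity map $\THtp_f\to\THtpd_{\dsh f}$ is well defined to begin with.

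With all six faces of the cube commuting, the surjectivity of $\Measf$ and $\Measdop$ onto $\Ctp_f$ and $\Ctpd_{\dsh f}$ (which is how these spaces are defined) forces the induced map $\Ctp_f\to\Ctpd_{\dsh f}$ through the common middle object to be well defined, proving part (a). For part (b): if \cref{conj:inj} holds for $f$, then $\Measf\colon\THtp_f\xrasim\Ctp_f$ is a homeomorphism, hence so is the composite $\THtpd_{\dsh f}\xrasim\THtp_f\xrasim\Rtp^E/\bGau$-image $\xrasim\Ctp_f$; chasing the cube, the dashed arrow $\Ctpd_{\dsh f}\dashrightarrow\Ctp_f$ (defined as the composite $\Ctpd_{\dsh f}\xleftarrow{\sim}\THtpd_{\dsh f}\xrasim\THtp_f\xrasim\Ctp_f$ using that $\Measdop$ is now also bijective on this identified domain) is a homeomorphism making the bottom triangle commute.

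The main obstacle I anticipate is the boundary-edge bookkeeping in the leftmost square: matching the constant weight-$1$ convention on boundary edges of $G$ (and hence $\dsh G$, via inversion) against the genuine sine weights dictated by $\wtd_\bth$ on $\dsh G$, and confirming these agree modulo white gauge transformations. The subtlety is that a white vertex of $G$ can be incident to a mix of interior and boundary edges (as in the proof of \cref{prop:MeMe}, cf. \cref{fig:dsh-DW}), so the gauge transformation on the $\dsh G$ side that witnesses the equivalence is not completely local; one must track how the inversion of boundary weights interacts with the face-labels and the strand structure. I expect this to go through by the same combinatorial argument used for \itemref{item:duality:Meas} of \cref{prop:duality}, but it is the step that requires care rather than being automatic.
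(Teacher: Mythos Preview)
Your plan is essentially the same as the paper's: verify each face of the cube commutes, with the only nontrivial face being the back-left square
\[
\begin{tikzcd}
\THtp_f\arrow[r]\arrow[d,"\sim"'] & \Rtp^E/\bGau\arrow[d,"\sim"]\\
\THtpd_{\dsh f}\arrow[r] & \Rtp^{\dsh E}/\wGau
\end{tikzcd}
\]
The paper dispatches this in a single sentence, invoking only the observation (from \cref{sec:shift-positr-cells}) that whenever an interior edge $e\in E$ is labeled $\{p,q\}$, the edge $\dsh e\in\dsh E$ is also interior and carries the same label $\{p,q\}$; hence $\dsh(\wt_\bth)$ and $\wtd_\bth$ agree on all interior edges on the nose, no gauge needed.

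Where you diverge is in anticipating a serious obstacle on boundary edges. This is more caution than the paper exercises, and the paper does not explicitly address it. Your proposed resolution via white degree-$2$ vertices is slightly off the mark: in the white-trivalent graph $\dsh G$ the boundary vertices are white of degree~$1$ (not degree~$2$), and any degree-$2$ vertices on boundary edges would be black. The cleanest resolution is simply that $\wt_\bth(e_p)=1$ on every boundary edge $e_p$ of $G$, so $\dsh\wt(\dsh e_p)=1$ as well; combined with the interior-edge observation, this already pins down the image in $\Rtp^{\dsh E}/\wGau$, and the discrepancy with the sine-weight convention of $\wtd_\bth$ on white-boundary boundary edges is invisible once one passes to boundary measurements (both sides map to $\Measd(\dsh f,\bth)$ under $\Measop_{\dsh G}$, which is invariant under all gauge). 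So your anticipated main obstacle does not materialize: it is a bookkeeping triviality rather than the careful combinatorial argument you sketch.
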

\noindent Note that $\Ptp_f\not\cong\Ptp_{\dsh f}$ since these spaces usually have different dimension. Instead, they have the same \emph{codimension} $\ell(f)=\ell(\dsh f)$ inside $\Gr(k,n)$, resp., $\Gr(k-1,n)$.

\begin{remark}
By \cref{thm:inj_fkn}, the dashed arrow in~\figref{fig:cube}(b) yields a homeomorphism
\begin{equation*}%
  \Ctp_{k,n}\xrasim \Ctpd_{k-1,n}
\end{equation*}
for each $1\leq k\leq n$.
\end{remark}

\begin{proof}[Proof of \cref{prop:commut_diag}]
  First, let us explain the maps in \cref{fig:cube}. The map $\THtp_f\xrasim \THtpd_{\dsh f}$ is the identity, and  $\THtp_f\to \Ctp_f$ and $\THtpd_{\dsh f}\to\Ctpd_{\dsh f}$ are given by $\Measop_f$ and $\Measdop_{\dsh f}$, respectively. The maps $\Ctp_f\hookrightarrow \Ptp_f$ and $\Ctpd_f\hookrightarrow \Ptp_{\dsh f}$ are inclusions of subsets, while  $\THtp_f\to\Rtp^E/\bGau$ and $\THtpd_{\dsh f}\to\Rtp^{\dsh E}/\wGau$ are given by $\bth\mapsto \wt_\bth$, resp., $\bth\mapsto \wtd_\bth$. We have already explained the dashed arrow in~\figref{fig:cube}(b), and the remaining map $\Rtp^E/\bGau\xrasim \Rtp^{\dsh E}/\wGau$ is given by~\eqref{eq:bgau_wgau}. 

The commutativity of the top and bottom squares is obvious. The commutativity of 
\begin{equation*}%
  \begin{tikzcd}%
 \THtp_f\arrow[r]\arrow[d,swap,"\sim"] &  \Rtp^E/\bGau \arrow[d,swap,"\sim"]\\
 \THtpd_f\arrow[r] &  \Rtp^{\dsh E}/\wGau
  \end{tikzcd}
\end{equation*}
follows from the observation in \cref{sec:shift-positr-cells} that whenever an interior edge $e\in E$ is labeled by $\{p,q\}$, the edge $\dsh e\in \dsh E$ is also interior and labeled by $\{p,q\}$. The left square in~\figref{fig:cube}(b) commutes by construction.
\end{proof}

\begin{question}
Which of the above results extend to the complex algebraic setting (with $\Pio_f$ replacing $\Ptp_f$ and $\Cio_f$ replacing $\Ctp_f$)?
\end{question}

\section{Proofs of the results for the top cell}\label{sec:proofs_top_cell}
Our goal is to prove the injectivity conjecture (\cref{thm:inj_fkn}) and the description (\cref{thm:real_fkn}) of the  real points of an open critical variety in the case $f=\fkn$. We start by stating several auxiliary lemmas.

\begin{lemma}%
Let $(t_1,t_2,t_3)\in(\Cast)^3$ be a generic triple. 
\begin{theoremlist}
\item \label{lemma:triang:R} If $\frac{\br[t_2,t_1]}{\br[t_3,t_1]},\frac{\br[t_3,t_2]}{\br[t_3,t_1]}\in\R$ then either $|t_1|=|t_2|=|t_3|$ or $\frac{t_1}{t_2},\frac{t_2}{t_3}\in\R$.
\vspace{0.05in}
\item \label{lemma:triang:iR} If $\frac{\br[t_2,t_1]}{\br[t_3,t_1]},\frac{\br[t_3,t_2]}{\br[t_3,t_1]}\in i\R$ then $\frac{t_1}{t_2},\frac{t_2}{t_3}\in i\R$.%
\end{theoremlist}
\end{lemma}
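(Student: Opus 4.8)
The key observation is that the quantity $\br[x,y]=\frac xy-\frac yx$ depends only on the ratio $x/y$, and in fact $\br[x,y]=\frac{x^2-y^2}{xy}$, so setting $u_j:=t_j^2$ we can rewrite $\frac{\br[t_2,t_1]}{\br[t_3,t_1]}=\frac{(u_2-u_1)/(t_1t_2)}{(u_3-u_1)/(t_1t_3)}=\frac{t_3(u_2-u_1)}{t_2(u_3-u_1)}$. A cleaner substitution: divide everything through by $t_1$ and set $x:=t_2/t_1$, $y:=t_3/t_1$; then $\frac{\br[t_2,t_1]}{\br[t_3,t_1]}=\frac{\br[x,1]}{\br[y,1]}=\frac{(x-1/x)}{(y-1/y)}=\frac{y(x^2-1)}{x(y^2-1)}$ and similarly $\frac{\br[t_3,t_2]}{\br[t_3,t_1]}=\frac{\br[y,x]}{\br[y,1]}=\frac{y^2-x^2}{xy}\cdot\frac{y}{y^2-1}=\frac{y^2-x^2}{x(y^2-1)}$. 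So the two hypotheses become statements about two rational expressions in the two complex variables $x,y$ being real (resp.\ purely imaginary), and the conclusion "$|t_1|=|t_2|=|t_3|$" becomes "$|x|=|y|=1$", while "$t_1/t_2,t_2/t_3\in\R$" becomes "$x\in\R$ and $x/y\in\R$" (equivalently $x,y$ both real).

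\textbf{Part (i).} Assuming $A:=\frac{y(x^2-1)}{x(y^2-1)}\in\R$ and $B:=\frac{y^2-x^2}{x(y^2-1)}\in\R$, I would first note $A$ and $B$ have the same denominator, so $A/B=\frac{y(x^2-1)}{y^2-x^2}\in\R$ and also $A-B = \frac{y(x^2-1)-(y^2-x^2)}{x(y^2-1)} = \frac{x^2(1+y) - y(1+y)}{x(y^2-1)} = \frac{(1+y)(x^2-y)}{x(y^2-1)} = \frac{x^2-y}{x(y-1)}\in\R$ (using $y^2-1=(y-1)(y+1)$). Likewise I can form $1/A + \text{const}$ or other combinations; the aim is to extract enough real quantities that I can pin down $\arg x$ and $\arg y$. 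A slick route: the three numbers $\br[t_2,t_1],\br[t_3,t_1],\br[t_3,t_2]$ all being real multiples of one another means the three points $\br[t_j,t_i]$ in $\C$ (one for each pair) are collinear through the origin, i.e.\ all lie on a line through $0$. Writing $t_j=r_je^{i\phi_j}$, one has $\br[t_j,t_i]=\frac{r_j}{r_i}e^{i(\phi_j-\phi_i)}-\frac{r_i}{r_j}e^{-i(\phi_j-\phi_i)}$; its imaginary part is $\big(\frac{r_j}{r_i}-\frac{r_i}{r_j}\big)\sin(\phi_j-\phi_i)$ and real part $\big(\frac{r_j}{r_i}+\frac{r_i}{r_j}\big)\cos(\phi_j-\phi_i)$. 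The condition that all three lie on one line through $0$ gives two equations among the three phase differences and three radius ratios; I would solve them and show the only solutions (for generic, i.e.\ distinct-and-not-negatives, $t_j$) are either all $r_i$ equal, or all $\sin(\phi_j-\phi_i)=0$, i.e.\ all $\phi_i$ congruent mod $\pi$ — which translates exactly to $|t_1|=|t_2|=|t_3|$ or $t_1/t_2,t_2/t_3\in\R$. The bookkeeping is: the ratio of two such complex numbers is real iff $\Im$ of the ratio vanishes, giving a polynomial relation; combining the two relations and using $t_i\neq\pm t_j$ to cancel factors isolates the dichotomy.

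\textbf{Part (ii).} Here $\frac{\br[t_2,t_1]}{\br[t_3,t_1]},\frac{\br[t_3,t_2]}{\br[t_3,t_1]}\in i\R$. Equivalently, using $\br[ix,iy]=\br[x,y]$... actually note that $\br[t,t']$ is not simply related to $\br[it,t']$; instead I would argue directly: a ratio $\br[t_2,t_1]/\br[t_3,t_1]$ being purely imaginary means $\br[t_2,t_1]$ and $\br[t_3,t_1]$ are perpendicular as vectors in $\C=\R^2$. Using the $r,\phi$ expansion above, $\br[t_2,t_1]/\br[t_3,t_1]\in i\R$ forces a relation whose real part vanishes; combined with the analogous relation from the second hypothesis, I expect to derive $\cos(\phi_j-\phi_i)=0$ for the relevant differences, i.e.\ all phase differences are $\pm\pi/2$ mod $\pi$, which says exactly $t_1/t_2,t_2/t_3\in i\R$. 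Again genericity ($t_i\neq\pm t_j$, equivalently $\phi_i\not\equiv\phi_j\bmod\pi$ and the radii behaving) is used to discard degenerate branches.

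\textbf{Main obstacle.} The computational heart is the elimination: turning "two rational functions of $x,y$ are real (or imaginary)" into the clean geometric dichotomy, while correctly tracking which factors may be cancelled thanks to genericity. I expect the cleanest presentation is via the $r_je^{i\phi_j}$ parametrization, reducing to trigonometric identities among $\phi_2-\phi_1,\phi_3-\phi_1,\phi_3-\phi_2$ (which sum appropriately) and the radius ratios; the risk is sign/branch errors, and I would double-check against the toy case $t_1=1$ and against the fact (used elsewhere in the paper) that $\br[t,t']=2i\sin(\theta'-\theta)$ when $|t|=|t'|=1$, which makes the "$|t_1|=|t_2|=|t_3|$" branch of (i) transparent: then all three $\br$-values are real, consistent with the hypothesis, and the "$i\R$" hypothesis of (ii) would then force $\sin=0$, i.e.\ collapse — confirming (ii) has no same-modulus branch.
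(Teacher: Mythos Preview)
Your approach is essentially the same as the paper's: exploit the scaling invariance of $\br[x,y]$ to normalize one variable (you set $t_1=1$ via $x=t_2/t_1,\,y=t_3/t_1$; the paper sets $t_2=1$), then solve the resulting real/imaginary--part equations explicitly. The paper's proof is in fact even terser than yours---it simply asserts that after normalizing $t_2=1$ the system ``may be solved explicitly'' and ``both statements are then verified directly''---so your write-up, with the polar parametrization $t_j=r_je^{i\phi_j}$ and the sanity check on the unit-modulus branch, already goes beyond what the paper records.
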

\begin{proof}
Since $\br[t_q,t_p]$ is invariant under rescaling $t_p$ and $t_q$ by the same nonzero constant, we may assume that $t_2=1$. In this case, the resulting systems of equations on the real and imaginary parts of $t_1$ and $t_3$ may be solved explicitly, and both statements are then verified directly in a straightforward fashion.
\end{proof}
 
\begin{lemma} \label{lemma:quad}%
Let $(t_1,t_2,t_3,t_4)\in(\Cast)^4$ be a generic quadruple satisfying
\begin{equation*}%
  \frac{\br[t_2,t_1]}{\br[t_3,t_2]},  \frac{\br[t_3,t_2]}{\br[t_4,t_3]},   \frac{\br[t_4,t_3]}{\br[t_4,t_1]},  \frac{\br[t_4,t_1]}{\br[t_2,t_1]}\in\R.
\end{equation*}
Then either
\begin{equation}\label{eq:quad}
  |t_1|=|t_2|=|t_3|=|t_4|, \quad \frac{t_1}{t_2},\frac{t_2}{t_3},\frac{t_3}{t_4},\frac{t_4}{t_1}\in \R, \quad\text{or}\quad \frac{t_1}{t_2},\frac{t_2}{t_3},\frac{t_3}{t_4},\frac{t_4}{t_1}\in i\R.
\end{equation}
\end{lemma}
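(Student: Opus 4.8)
The plan is to reduce \cref{lemma:quad} to the triple case \cref{lemma:triang:R}--\cref{lemma:triang:iR}, exactly as one would expect since the four ratios naturally split into two overlapping pieces. First I would use the scaling invariance of $\br[x,y]$ to normalize, say $t_1=1$; this removes one complex degree of freedom and makes everything a concrete computation in the real and imaginary parts of $t_2,t_3,t_4$. The key structural observation is that the hypothesis bundles the four consecutive ratios around the ``cycle'' $1\to2\to3\to4\to1$, and one can extract from it the reality (or pure-imaginariness) of the ratios $\br[t_2,t_1]/\br[t_3,t_1]$ and $\br[t_3,t_2]/\br[t_3,t_1]$ associated to the triple $(t_1,t_2,t_3)$, plus the analogous statement for the triple $(t_1,t_3,t_4)$, by taking suitable products and quotients of the four given ratios together with the Ptolemy-type identity~\eqref{eq:Pluck_t} rewritten as $\br[t_2,t_1]\br[t_4,t_3]+\br[t_4,t_1]\br[t_3,t_2]=\br[t_3,t_1]\br[t_4,t_2]$. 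The point of invoking~\eqref{eq:Pluck_t} is that it lets one express $\br[t_4,t_2]$ (the ``diagonal'' not appearing in the hypothesis) in terms of the others, so that knowing the four boundary ratios determines the relevant triangle ratios up to the ambiguities recorded in the triple lemma.

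Concretely, from $\br[t_2,t_1]/\br[t_3,t_2]\in\R$ and $\br[t_3,t_2]/\br[t_4,t_3]\in\R$ one gets $\br[t_2,t_1]/\br[t_4,t_3]\in\R$, and combining with $\br[t_4,t_3]/\br[t_4,t_1]\in\R$ gives $\br[t_2,t_1]/\br[t_4,t_1]\in\R$; multiplying the four hypotheses together is automatically $1\in\R$, so the genuine content is that all four of $\br[t_2,t_1],\br[t_3,t_2],\br[t_4,t_3],\br[t_4,t_1]$ are real multiples of one another. I would then feed the triple $(t_1,t_2,t_3)$ and the triple $(t_1,t_3,t_4)$ into \cref{lemma:triang:R} after checking that the required ratios $\br[t_2,t_1]/\br[t_3,t_1]$, $\br[t_3,t_2]/\br[t_3,t_1]$ (and the $(t_1,t_3,t_4)$ analogs) are real — this is where~\eqref{eq:Pluck_t} is needed to handle $\br[t_3,t_1]$ and $\br[t_4,t_2]$, which do not appear directly in the hypothesis. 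From \cref{lemma:triang:R} applied to $(t_1,t_2,t_3)$ we get that either $|t_1|=|t_2|=|t_3|$ or $t_1/t_2,t_2/t_3\in\R$; similarly for $(t_1,t_3,t_4)$.

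The last step is to glue the two trichotomies into the clean statement~\eqref{eq:quad}. If both triples land in the ``equal moduli'' case we immediately get $|t_1|=|t_2|=|t_3|=|t_4|$, and then each boundary ratio $\br[t_q,t_p]$ with $|t_p|=|t_q|$ is (a real scalar times) something of the form $\overline{t_pt_q}(t_q^2-t_p^2)$ up to normalization, from which the reality of $t_p/t_q$ follows by a short direct argument — landing in the first alternative of~\eqref{eq:quad}. If both triples land in the ``real ratios'' case then $t_1/t_2,t_2/t_3,t_3/t_4$ are all real, hence so is $t_4/t_1$, giving the second alternative. The potentially delicate point, and the main obstacle I anticipate, is the genuinely mixed case where one triple is of ``equal moduli'' type and the other of ``real ratio'' type: here one must show this forces the third alternative ($t_1/t_2,\dots,t_4/t_1\in i\R$) or is altogether impossible for generic $(t_1,t_2,t_3,t_4)$. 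I expect this to be settled by writing out the constraints explicitly after the normalization $t_1=1$ — the ``equal moduli'' condition $|t_2|=|t_3|$ together with a real ratio $t_3/t_4$ pins down enough equations that one can read off $t_p/t_{p+1}\in i\R$ directly, using genericity to discard degenerate coincidences. This final case analysis is the one place I would actually grind through the algebra rather than quote a prior lemma.
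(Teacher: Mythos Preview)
Your plan has a genuine gap at the step where you claim that the Ptolemy identity~\eqref{eq:Pluck_t} lets you ``check that the required ratios $\br[t_2,t_1]/\br[t_3,t_1]$, $\br[t_3,t_2]/\br[t_3,t_1]$ \ldots\ are real.'' It does not. Writing $z_1=\br[t_2,t_1]$, $z_2=\br[t_3,t_2]$, $z_3=\br[t_4,t_3]$, $z_4=\br[t_4,t_1]$, $x=\br[t_3,t_1]$, $y=\br[t_4,t_2]$, the hypothesis says exactly that there is some $\eps\in\Cast$ with $\eps z_j\in\R$ for all $j$. Ptolemy gives only $xy=z_1z_3+z_2z_4$, hence $\eps^2 xy\in\R$; this says nothing about $\eps x$ by itself, so you cannot feed $(t_1,t_2,t_3)$ into \cref{lemma:triang:R}. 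The missing ingredient, which the paper supplies, is the pair of algebraic identities
\[
x^2=\frac{(z_1z_3+z_2z_4)(z_1z_4+z_2z_3)}{z_1z_2+z_3z_4},\qquad
y^2=\frac{(z_1z_3+z_2z_4)(z_1z_2+z_3z_4)}{z_1z_4+z_2z_3},
\]
from which $(\eps x)^2,(\eps y)^2\in\R$ and hence $\eps x,\eps y\in\R\cup i\R$. Combined with $\eps x\cdot\eps y\in\R$ from Ptolemy, this forces either $\eps x,\eps y\in\R$ or $\eps x,\eps y\in i\R$.

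This also explains why your account of where the third alternative comes from is off. It does \emph{not} arise from a ``mixed case'' of two applications of \cref{lemma:triang:R} with different outcomes. It arises from the case $\eps x,\eps y\in i\R$, where the relevant triangle ratios are purely imaginary and one applies \cref{lemma:triang:iR} instead; that lemma then directly gives $t_1/t_2,t_2/t_3,t_3/t_4\in i\R$. In the case $\eps x,\eps y\in\R$ one applies \cref{lemma:triang:R} to both triples $(t_1,t_2,t_3)$ and, say, $(t_2,t_3,t_4)$ (using $\eps y\in\R$), and the two outcomes glue without any mixed subcase to worry about. (A minor side point: the first alternative in~\eqref{eq:quad} is just $|t_1|=|t_2|=|t_3|=|t_4|$; no reality of $t_p/t_q$ is asserted or needed there, so your extra argument in that branch is unnecessary.)
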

\begin{proof}
Denote
\begin{equation*}%
  z_1:=\br[t_2,t_1],\quad z_2:=\br[t_3,t_2],\quad z_3:=\br[t_4,t_3],\quad z_4:=\br[t_4,t_1],\quad x:=\br[t_3,t_1],\quad y:=\br[t_4,t_2].
\end{equation*}
Then we have relations
\begin{equation*}%
  x^2=\frac{(z_1z_3+z_2z_4)(z_1z_4+z_2z_3)}{z_1z_2+z_3z_4} \quad\text{and}\quad   y^2=\frac{(z_1z_3+z_2z_4)(z_1z_2+z_3z_4)}{z_1z_4+z_2z_3}.
\end{equation*}
Let $\eps\in\Cast$, $|\eps|=1$ be such that $\eps z_j\in\R$ for $j=1,2,3,4$. Then $(\eps x)^2,(\eps y)^2\in\R$, thus $\eps x,\eps y\in \R\cup i\R$. Additionally, by~\eqref{eq:Pluck_t}, we have $xy=z_1z_3+z_2z_4$, so $\eps x\cdot \eps y\in\R$. Thus either $\eps x,\eps y\in \R$ or $\eps x, \eps y\in i\R$. In the former case, by \cref{lemma:triang:R}, we get either $|t_1|=|t_2|=|t_3|=|t_4|$ or $\frac{t_1}{t_2},\frac{t_2}{t_3},\frac{t_3}{t_4},\frac{t_4}{t_1}\in \R$. In the latter case, by \cref{lemma:triang:iR}, we get $\frac{t_1}{t_2},\frac{t_2}{t_3},\frac{t_3}{t_4},\frac{t_4}{t_1}\in i\R$.
\end{proof}

The next well-known result states that knowing the ratios of side lengths of an inscribed convex polygon is sufficient to reconstruct its angles.
\begin{lemma}\label{lemma:inscribed_polygon}
Let $(a_1,a_2,\dots,a_m)\in\Rtp^m$ be such that no $a_p$ is greater than $\sum_{q\neq p} a_q$. Then there exists a unique tuple $(\th_1,\th_2,\dots,\th_m)$ such that $0=\th_1<\th_2<\dots<\th_m<\pi$, and 
\begin{equation}\label{eq:inscribed_polygon}
  \frac{\sin(\th_{p+1}-\th_p)}{\sin(\th_{q+1}-\th_{q})}=\frac{a_p}{a_q} \quad\text{for all $p, q\in [m]$,}
\end{equation}
 where we set $\th_{m+1}:=\pi$. \qed
\end{lemma}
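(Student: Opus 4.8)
The plan is to remove the trigonometry by passing to the consecutive gaps $\delta_p := \theta_{p+1}-\theta_p$. Since $0=\theta_1<\theta_2<\dots<\theta_m<\theta_{m+1}=\pi$, the tuples $(\theta_1,\dots,\theta_m)$ with $\theta_{m+1}=\pi$ correspond bijectively to tuples $(\delta_1,\dots,\delta_m)\in(0,\pi)^m$ with $\sum_p\delta_p=\pi$, via $\theta_p=\delta_1+\dots+\delta_{p-1}$; and~\eqref{eq:inscribed_polygon} says exactly that $\sin\delta_p=\lambda a_p$ for a common constant $\lambda$, which is then automatically positive. So I must show: there is a unique pair $(\lambda,(\delta_p))$ with $\lambda>0$, $\delta_p\in(0,\pi)$, $\sin\delta_p=\lambda a_p$, and $\sum_p\delta_p=\pi$. (I will take the inequalities $a_p<\sum_{q\neq p}a_q$ to be strict and $m\geq3$: for $m\leq2$ the hypothesis is vacuous or forces $a_1=a_2$, and allowing equality makes the conclusion fail, e.g.\ for $(a_1,a_2,a_3)=(2,1,1)$, so strictness is what is intended here.) Relabelling, I may assume $a_1\geq a_2\geq\dots\geq a_m$, since the $\delta_p$ are determined coordinatewise from $\lambda$ together with a choice of acute/obtuse branch. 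The structural fact I will use repeatedly is that for $m\geq3$ at most one $\delta_p$ can be $\geq\pi/2$: two such angles, together with any one of the remaining positive ones, would already sum to more than $\pi$.

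For a given $\lambda\in(0,1/a_1]$ each $\delta_p$ equals $\arcsin(\lambda a_p)$ (acute branch) or $\pi-\arcsin(\lambda a_p)$ (obtuse branch), and at most one index takes the obtuse branch; I split into two cases. \emph{Case I --- all branches acute.} Here the constraint $\sum_p\delta_p=\pi$ reads $S(\lambda):=\sum_p\arcsin(\lambda a_p)=\pi$; the function $S$ is continuous and strictly increasing on $(0,1/a_1]$ with $S(0^+)=0$, so this has at most one solution, and it has one iff $S(1/a_1)\geq\pi$. \emph{Case II --- exactly one branch, say at index $j$, obtuse.} The constraint becomes $\sum_{p\neq j}\arcsin(\lambda a_p)=\arcsin(\lambda a_j)$; since the left side is a sum of $m-1\geq2$ positive terms, each strictly below the right side, we get $a_j>a_p$ for all $p\neq j$, so $j=1$. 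Thus I must analyze the zeros of $H(\mu):=\sum_{p\geq2}\arcsin(\mu a_p)-\arcsin(\mu a_1)$ on $(0,1/a_1)$. Here $H(0)=0$ and $H'(0)=\sum_{p\geq2}a_p-a_1>0$ by the strict polygon inequality, and the equation $H'(\mu)=0$, after dividing through by $a_1/\sqrt{1-\mu^2a_1^2}$, becomes $\sum_{p\geq2}\tfrac{a_p}{a_1}\sqrt{\tfrac{1-\mu^2a_1^2}{1-\mu^2a_p^2}}=1$, whose left-hand side I will check (a one-line derivative computation shows each radicand $\tfrac{1-\mu^2a_1^2}{1-\mu^2a_p^2}$ is strictly decreasing for $\mu>0$) decreases strictly from $\tfrac1{a_1}\sum_{p\geq2}a_p>1$ at $\mu=0$ to $0$ at $\mu=1/a_1$. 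Hence $H$ is strictly increasing and then strictly decreasing, so it has at most one zero in $(0,1/a_1)$, and it has one precisely when $H((1/a_1)^-)=\sum_{p\geq2}\arcsin(a_p/a_1)-\pi/2\leq0$, i.e.\ precisely when $S(1/a_1)\leq\pi$.

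Combining the two cases gives the lemma: if $S(1/a_1)>\pi$ there is a unique Case I solution and no Case II solution; if $S(1/a_1)<\pi$ there is no Case I solution and a unique Case II solution (note $a_1$ must then be the strict maximum, else $S(1/a_1)\geq\pi$); and if $S(1/a_1)=\pi$ both cases produce the same solution, with $\delta_1=\pi/2$. In every case there is exactly one admissible $(\lambda,(\delta_p))$, hence exactly one $(\theta_1,\dots,\theta_m)$. (Existence alone also follows from a softer argument: the function $\delta\mapsto\sum_p a_p\log\tan(\delta_p/2)$ on the open simplex $\{\delta_p>0,\ \sum_p\delta_p=\pi\}$ has Lagrange critical points exactly at the solutions and tends to $-\infty$ on the boundary --- using the strict polygon inequality for the behaviour near the vertices --- so it attains its maximum in the interior.) The step I expect to be the real work is the uniqueness in Case II, i.e.\ the unimodality of $H$: this is where the strict polygon inequality is genuinely used, and it amounts to ruling out a second coincidence of the total-central-angle function with $\pi$ once one side of the polygon is so long that the circumcircle's centre has left the polygon.
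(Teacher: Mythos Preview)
The paper does not actually prove this lemma: it is introduced as a ``well-known result'' about inscribed polygons and marked with a \qed\ symbol without argument. Your proof is therefore not comparable to anything in the paper, but it is a correct and clean self-contained argument. The reduction to gaps $\delta_p$ with $\sin\delta_p=\lambda a_p$, the observation that at most one $\delta_p$ can lie in $[\pi/2,\pi)$, and the case split according to whether all branches are acute or one is obtuse, followed by the monotonicity/unimodality analysis of $S$ and $H$, are all valid. Your computation that $\sum_{p\geq 2}\frac{a_p}{a_1}\sqrt{\frac{1-\mu^2 a_1^2}{1-\mu^2 a_p^2}}$ is strictly decreasing (using $a_p<a_1$ for $p\geq 2$ in Case~II) is correct and gives the unimodality of $H$, which is indeed the crux.

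Two small remarks. First, your observation that the hypothesis must be taken with \emph{strict} inequality is right: as you note, $(a_1,a_2,a_3)=(2,1,1)$ admits no solution with $0=\theta_1<\theta_2<\theta_3<\pi$, so the paper's non-strict phrasing is a minor slip (harmless in the applications, where strictness always holds). Second, your handling of the borderline case $S(1/a_1)=\pi$ is slightly loose: strictly speaking Case~II has no solution with $\mu$ in the open interval $(0,1/a_1)$ there (since $H>0$ on the interior and only vanishes at the endpoint), so the unique solution comes from Case~I alone with $\delta_1=\pi/2$. This does not affect the final conclusion.
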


\begin{proof}[Proof of \cref{thm:inj_fkn,thm:real_fkn}]
Let $f=\fkn$. Recall that \cref{thm:inj_fkn} is stated for $1\leq k\leq n$ while \cref{thm:real_fkn} only deals with the case $2\leq k\leq n-2$. First, \cref{thm:inj_fkn} is trivial to see when $k=1$ or $k=n$ because each of $\THtp_f$ and $\Ctp_{f}$ consists of a single point in these cases. When $k=n-1$, $\Meas(f,\bth)$ records the ratios of sines on the left hand side of~\eqref{eq:inscribed_polygon}, and thus the injectivity conjecture in this case follows from \cref{lemma:inscribed_polygon}. From now on, we restrict to the case $2\leq k\leq n-2$ and prove \cref{thm:inj_fkn,thm:real_fkn} simultaneously. (Note in particular that each $\bt$ corresponding to $\bth\in \THtp_f$ via~\eqref{eq:th_to_t} is automatically generic.)

By the results of~\cite{OPS}, for all quadruples $1\leq a<b<c<d\leq n$, there exists a graph $G\in\Gred(f)$ containing a square face $F$ whose boundary edges are labeled by $\{a,b\}$, $\{b,c\}$, $\{c,d\}$, and $\{a,d\}$. Denote $\v_p:=t_p^2$ for $p\in[n]$ as above. Consider a \emph{cross-ratio} 
\begin{equation}\label{eq:cross_rat}
\crat(a,b;c,d):=\frac{\H ca \H db}{\H cb\H da}=\frac{(\v_c-\v_a)(\v_d-\v_b)}{(\v_c-\v_b)(\v_d-\v_a)}.
\end{equation}
We have $\crat(a,b;c,d)=1-\crat(a,c;b,d)$, and by~\cite[Corollary~5.11]{MuSp}, $\crat(a,c;b,d)$ may be written as a ratio of Pl\"ucker coordinates of the left twist $\cev\tau(\Meas(f,\bt))$ corresponding to the four faces of $G$ adjacent to $F$. All such Pl\"ucker coordinates are monomials in the edge weights which are nonzero since $\bt$ is generic. In particular, $\crat(a,b;c,d)\in \R$ when $\Meas(f,\bt)\in \Cio_f(\R)$, and moreover, $\crat(a,b;c,d)$ may be reconstructed from $\Meas(f,\bt)$. We observe that in the setting of \cref{thm:real_fkn}, all cross-ratios $\crat(a,b;c,d)$ are real, and therefore the (pairwise distinct) points $(\v_p)_{p\in[n]}$ all belong to the same circle or to the same line. 

Next, we see that $f$ has a bridge at $r$ for all $r\in[n]$. By~\cite[Lemma 7.6]{LamCDM}, we find that for each $p\in[n]$, $\H p{p-k}/\H p{p-1}$ may be written as a ratio of two Pl\"ucker coordinates of $\Meas(f,\bt)$, where the indices are taken modulo $n$. We thus see that
\begin{equation}\label{eq:bridge_R}
  \H p{p-k}/\H p{p-1}\in\R \quad\text{for $p\in[n]$}
\end{equation}
and $\H p{p-k}/\H p{p-1}$ may be reconstructed from $\Meas(f,\bt)$.

The cross-ratio $\crat(a,b;c,d)\in\R$ changes predictably under permuting the indices. Substituting $\{a,b,c,d\}=\{1,2,k,k+1\}$ or $\{a,b,c,d\}=\{1,2,k+1,k+2\}$ into~\eqref{eq:cross_rat} and $p=k+1$ or $p=k+2$ into~\eqref{eq:bridge_R}, we find 
\begin{equation}\label{eq:quad_proof}
  \frac{\br[t_2,t_1]}{\br[t_k,t_2]},  \frac{\br[t_k,t_2]}{\br[t_{k+1},t_k]},   \frac{\br[t_{k+1},t_k]}{\br[t_{k+1},t_1]},  \frac{\br[t_{k+1},t_1]}{\br[t_2,t_1]}\in\R
\end{equation}
and that all these ratios may be reconstructed from $\Meas(f,\bt)$. Applying \cref{lemma:quad}, we find that the points $t_1,t_2,t_k,t_{k+1}$ satisfy the conditions in~\eqref{eq:quad}. Recall that we have shown above that all points $(\v_p)_{p\in[n]}$ belong to either a common circle or a common line. By~\eqref{eq:quad}, if it is a circle then its center must pass through $0$, and if it is a line then it must pass through the origin. In the case of the line, we note additionally that~\eqref{eq:quad} implies $\frac{t_1}{t_k}\in\R$, and by cyclic symmetry, we must have $\frac{t_p}{t_{p+k-1}}\in\R$ for all $p\in[n]$ (taken modulo $n$). By the definition of $\Tspace_f$, we have, say, $t_1=1$, and therefore we either have $|t_p|=1$ for all $p\in[n]$ or $t_p\in \R\cup i\R$ for all $p\in[n]$. This finishes the proof of \cref{thm:real_fkn}.

We now focus on \cref{thm:inj_fkn}. Recall that the ratios in~\eqref{eq:quad_proof} may be reconstructed from $\Meas(f,\bt)$. Combining this with \cref{lemma:inscribed_polygon}, we can uniquely reconstruct (modulo shift) the quadruple $(\th_1,\th_2,\th_k,\th_{k+1})$ from $\Meas(f,\bt)$. For each $q\notin \{1,k,k+1\}$, we may plug in $\{a,b,c,d\}=\{1,q,k,k+1\}$ into~\eqref{eq:cross_rat} and $p=k+1$ into~\eqref{eq:bridge_R} to see that $\H q1/\H qk$ can be uniquely reconstructed from $\Meas(f,\bt)$, and therefore having already recovered $\th_1$ and $\th_k$, we can now also recover $\th_q$. We have shown that $\Measop_f:\THtp_f\to \Ctp_f$ is injective (and thus bijective). It is clearly continuous and its inverse is also continuous since the map in \cref{lemma:inscribed_polygon} is continuous. Thus $\Measop_f:\THtp_f\xrasim \Ctp_f$ is a homeomorphism, and it remains to note that $\THtp_f$ is homeomorphic to the interior of an $(n-1)$-dimensional simplex. We are done with the proof of \cref{thm:inj_fkn}.
\end{proof}

\section{Further directions}
In addition to the multiple conjectures mentioned in the body of the text, we list a few other questions which arise in relation to critical varieties. The general philosophy is that critical varieties should be considered as \emph{critical parts} of positroid varieties, and thus for each existing result for positroid varieties (resp., open positroid varieties, their totally positive and totally nonnegative parts) an immediate direction is to investigate analogs of that result for critical varieties and critical cells.

\parag{The totally nonnegative part} Taking the closure of a positroid cell $\Ptp_f$ inside $\Grtnn(k,n)$ gives rise to a remarkable topological space denoted $\Ptnn_f$. For instance, it was recently shown in~\cite{GKL3} that $\Ptnn_f$ is a regular CW complex homeomorphic to a closed ball.
\begin{definition}
Let $f\in\Bkn$ be loopless. The \emph{totally nonnegative part} $\Ctnn_f$ of $\Crit_f$ is defined as the closure of $\Ctp_f$ in the usual (Hausdorff) topology on $\Grtnn(k,n)$.
\end{definition}
The topology and combinatorics of the cell structure of $\Ctnn_f$ is explored in a separate paper~\cite{crit_tnn}.
Among other things, we establish the following result.
\begin{theorem}[\cite{crit_tnn}]
  For $2\leq k\leq n-1$, $\Ctnn_{k,n}$ is homeomorphic (via a stratification-preserving homeomorphism) to the hypersimplex $\Delta_{2,n}$.
\end{theorem}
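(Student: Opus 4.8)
The plan is to build an explicit stratification-preserving homeomorphism $\Ctnn_{k,n}\xrasim\Delta_{2,n}$ by combining the boundary measurement formula (\cref{thm:intro:bound_meas}, or its affine refinement via $\curvec_{f,\bt}$) with the cell decomposition of $\Ctnn_{k,n}$ coming from the strata $\Ctnn_{g}$ for $g$ in the closure of $f_{k,n}$. First I would recall from \cref{thm:inj_fkn} that the open part $\Ctp_{k,n}$ is homeomorphic to the interior of an $(n-1)$-simplex, parametrized (modulo a global shift) by a tuple $0=\th_1<\th_2<\dots<\th_n<\pi$; equivalently, by the $n$ side lengths $a_r=\sin(\th_{r+1}-\th_r)$ (with $\th_{n+1}:=\pi$) of a convex inscribed $n$-gon, normalized so that $\sum_r a_r$ is fixed. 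Thus $\Ctp_{k,n}$ is naturally identified with the \emph{interior of the hypersimplex} $\Delta_{2,n}$: the point $(a_1/\!\sum a, \dots, a_n/\!\sum a)$ lies in the open hypersimplex precisely by the strict triangle-type inequalities $a_p<\sum_{q\neq p}a_q$ of \cref{lemma:inscribed_polygon} (the ``$2$'' reflects that this is a single inscribed polygon, i.e.\ two Grassmann-necklace-type constraints degenerate). The content of the theorem is then that this identification extends continuously to the closures on both sides.

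The key steps, in order, would be: (1) Show that the closure $\Ctnn_{k,n}$ in the Hausdorff topology on $\Grtnn(k,n)$ is obtained from $\Ctp_{k,n}$ by \emph{degenerating angles}, i.e.\ by allowing some consecutive $\th_r$'s to collide (some side lengths $a_r\to 0$) and/or allowing $\th_n\to\pi=\th_1+\pi$ (so $a_n\to0$). Concretely, one shows that every limit point of $\Meas(f_{k,n},\bth_m)$ for a convergent sequence of admissible $\bth_m$ is again of the form $\Meas(g,\bth)$ for a suitable degeneration $g$ of $f_{k,n}$, and that the limit depends only on the limiting normalized side-length vector $(a_1,\dots,a_n)\in\Delta_{2,n}$. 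This uses the boundary measurement formula: the Plücker coordinates of $\Meas(f_{k,n},\bth)$ are the maximal minors of the Fourier-coefficient matrix $F_{f_{k,n},\bt}$ from \cref{prop:Fourier}, which are continuous (indeed polynomial, after the canonical gauge-fix and clearing denominators) in the $\th_r$; one tracks which minors vanish in the limit and matches this vanishing pattern to the positroid stratum of the degenerate permutation. (2) Identify the stratum of $\Ctnn_{k,n}$ corresponding to a given face of $\Delta_{2,n}$: a face of $\Delta_{2,n}$ is cut out by setting a subset $\{a_{r}\}_{r\in T}$ of coordinates to $0$ (subject to the remaining $a_r$ still satisfying the triangle inequalities), which corresponds to collapsing the strand diagram of $f_{k,n}$ accordingly; show this collapsed diagram is exactly the reduced strand diagram $D_g$ of the appropriate $g$, and that on that face the map restricts to the analogous homeomorphism for $\Ctnn_g$ (which one may set up by the same argument, or invoke \cref{prop:factor} after the diagram disconnects). (3) Assemble: the map $\Delta_{2,n}\to\Ctnn_{k,n}$ sending the normalized side-length vector to the span of the associated curve is well defined and continuous by (1), bijective by \cref{thm:inj_fkn} on each open face by (2), and a homeomorphism by compactness of $\Delta_{2,n}$ (a continuous bijection from a compact space to a Hausdorff space).

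I expect the main obstacle to be Step (1): controlling the limits of $\Meas(f_{k,n},\bth)$ as angles degenerate, and in particular proving that the limiting point depends \emph{only} on the normalized side-length vector and not on the rate at which angles collide. The subtlety is that when $\th_{r+1}-\th_r\to 0$, the naive limit of the curve $\curvec_{f,\bth}(t)$ has a drop in dimension, so one must rescale appropriately (differentiate, as in $\Measop$ via \cref{thm:arb_reg} with its $\GGG^{(m_r)}$ correction terms) before taking the limit; ensuring these rescalings are compatible across all the vanishing $a_r$ simultaneously, and that the resulting element of $\Gr(k,n)$ matches the positroid stratum predicted by the combinatorics of the collapsed strand diagram, is where the real work lies. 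A secondary difficulty is verifying that the cell structure on $\Ctnn_{k,n}$ produced this way is genuinely the hypersimplicial one (faces indexed by subsets $T\subsetneq[n]$ with the appropriate triangle inequalities), rather than some coarsening or refinement; this amounts to checking that distinct faces of $\Delta_{2,n}$ land in distinct positroid strata, which should follow from tracking Grassmann necklaces through the degeneration. Since the detailed analysis is carried out in the companion paper~\cite{crit_tnn}, here it suffices to indicate that the boundary measurement formula reduces everything to the explicit, manifestly continuous parametrization by inscribed polygons.
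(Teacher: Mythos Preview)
The paper does not contain a proof of this theorem. It appears in the ``Further directions'' section as a result established in the companion paper~\cite{crit_tnn}, with no argument given here beyond the remark that the structure of $\Ctnn_{k,n}$ is ``intimately tied with the cyclohedron.'' There is therefore no proof in the present paper to compare your proposal against.

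That said, a few comments on your sketch. Your identification of the interior $\Ctp_{k,n}$ with the open hypersimplex via normalized side lengths $a_r=\sin(\th_{r+1}-\th_r)$ is correct in spirit: after normalizing $\sum_r a_r=2$, the triangle inequalities $a_p<\sum_{q\neq p}a_q$ are exactly the conditions $a_p<1$ defining the interior of $\Delta_{2,n}$, and \cref{thm:inj_fkn} together with \cref{lemma:inscribed_polygon} gives the homeomorphism on the interior. The real content is in the boundary, and here you should be aware of the cyclohedron remark at the end of the paper: the natural compactification of the parameter space $\THtp_{k,n}$ (points on a circle up to rotation) is the cyclohedron, not the hypersimplex, and the cyclohedron has strictly more faces than $\Delta_{2,n}$. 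So your Step~(1) is not just ``the limit depends only on the normalized side-length vector''; it is the nontrivial assertion that the composite map from the cyclohedron to $\Ctnn_{k,n}$ factors through the collapse to $\Delta_{2,n}$. Concretely, when several consecutive $\th_r$ collide, the cyclohedron records the order and relative rates of collision (a nested tube structure), and you must show $\Meas(f_{k,n},\bth)$ forgets this extra data. This is exactly the difficulty you flag, and it is genuine. Your Step~(2) also needs care: the faces of $\Delta_{2,n}$ are of two types (setting some $a_r=0$, and setting some $a_r=1$), and the latter correspond to a single side becoming a diameter, which on the strand-diagram side is a different kind of degeneration than mere collision. Your description only treats the $a_r=0$ faces.
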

Thus, unlike $\Ptnn_{k,n}$, the space $\Ctnn_{k,n}$ is homeomorphic to a polytope (which does not depend on $k$). What happens in the case of arbitrary loopless $f\in \Bkn$ remains an open question which depends, among other things, on the injectivity conjecture (\cref{conj:inj}). The properties of the associated polytopes are developed in~\cite{crit_polyt}. In particular, the structure of $\Ctnn_{k,n}$ is intimately tied with the \emph{cyclohedron}~\cite{BoTa,Simion}, which is a natural compactification of the space $\THtp_{k,n}$; see~\cite[Theorem~1.3]{crit_tnn}.

\parag{Algebraic geometry of critical varieties} Besides the topology of $\Ctnn_f$, it is also interesting to study $\Crit_f$ and $\Cio_f$ as algebraic varieties, since their positroid counterparts possess many nice properties (normal, Cohen--Macaulay, etc.); see~\cite{KLS}. One particularly optimistic direction is to investigate the cohomology of $\Crit_f$ and $\Cio_f$. The analogous questions for $\Pio_f$ have recently attracted some interest~\cite{GL2}. 

\parag{Peterson variety} On the surface, critical varieties look very similar to the \emph{Peterson variety} $\Pet_n$, which is a certain subvariety of the flag variety introduced by D.~Peterson in the 1990s; see e.g.~\cite{Kostant,Rietsch03}. For example, both $\Pet_n$ and $\Critkn$ have dimension $n-1$. The coordinate ring of $\Pet_n$ is of great importance since it computes the quantum cohomology ring of the flag variety~\cite{Rietsch03}. It would be interesting to see whether there is some projection relating $\Pet_n$ to $\Crit_{k,n}$, and whether the coordinate ring of $\Crit_{k,n}$ (or, more generally, of $\Crit_f$) also recovers some well-studied cohomology ring.

\parag{Chow quotient}
The \emph{torus} $T$ consisting of diagonal $n\times n$ matrices acts on $\Gr(k,n)$ by right multiplication. One can consider the corresponding \emph{Chow quotient} and study its totally nonnegative part, see e.g. the recent results of~\cite{AHLS,LPW}. Observe that the critical varieties and the dual critical varieties only differ by the action of $T$, thus the resulting quotients coincide. The cross-ratios~\eqref{eq:cross_rat} that we used above yield natural coordinates on the Chow quotient, thus it seems plausible that the questions we studied for critical varieties have simpler Chow quotient analogs.

\parag{Boundary measurements for reduced strand diagrams}
Our results (especially \cref{prop:edges_conn_cpts,prop:Meas_exists}) suggest that there could be a formula for $\Measop_f$ directly in terms of the reduced strand diagram of $f$. (For instance, it could be some ``oriented strand version'' of the \emph{six-vertex model}.) An optimistic hope would be that such a formula would potentially provide a positive answer to~\cite[Question~7.6]{scsv}.

\parag{More general edge weights}
Given a reduced graph $G\in\Gred(f)$ and an edge $e\in E(G)$ labeled by $\{p,q\}$ with $1\leq p<q\leq n$, we have set the edge weight to either $\sin(\th_q-\th_p)$ or to $\H qp$. One can consider the following more general assignment of edge weights: choose an element $\BT\in \Gr(2,n)$, and then set $\wt_\BT(e):=\Delta_{p,q}(\BT)$. This weight assignment includes the ones we have considered as special cases, and the resulting dimer model is still invariant under square moves. It appears that most of our constructions extend to this more general set up in a straightforward fashion, in particular, we have the corresponding versions of critical cells ($\BT\in\Grtp(2,n)$) and (open) critical varieties, where the condition $t_p\neq\pm t_q$ is replaced with $\Delta_{p,q}(\BT)\neq0$. The underlying combinatorics is still dictated by reduced strand diagrams. It would be interesting to expand this further, for instance, to extend the boundary measurement formula or the injectivity conjecture to such weights.

\parag{Plabic tilings as isoradial embeddings}
The starting point for this work was the observation that the plabic tilings of~\cite{OPS} may be naturally considered as special cases of isoradial embeddings of planar bipartite graphs. Thus one can study the asymptotic properties of this special family of isoradial embeddings, and potentially apply the vast literature on such embeddings to study convergence and conformal invariance questions for families of plabic tilings. For instance, an interesting question appears to be whether the results of~\cite{CLR} apply in this case, since plabic tilings are easily seen to satisfy the \emph{small origami} property introduced in~\cite{CLR}. We thank Marianna Russkikh for  discussions related to these questions.

\bibliographystyle{alpha_tweaked} 
\bibliography{crit}

\end{document}